\theoremstyle{plain}
\newtheorem{Thm}{Theorem}[section]
\newtheorem{Lem}[Thm]{Lemma}
\newtheorem{Cor}[Thm]{Corollary}
\newtheorem{Pro}[Thm]{Proposition}
\newtheorem{Prp}[Thm]{Properties}
\newtheorem{Sub}[Thm]{Sublemma}
\theoremstyle{definition}
\newtheorem{Def}[Thm]{Definition}
\newtheorem{Exm}[Thm]{Example}
\newtheorem{Exs}[Thm]{Examples}
\theoremstyle{remark}
\newtheorem{Rem}[Thm]{Remark}
\newtheorem{Rms}[Thm]{Remarks}
\newtheorem*{Com}{Commentary}
\newcommand{\myEmail}{piotr.niemiec@uj.edu.pl}
\newcommand{\myAddress}{\noindent{}Piotr Niemiec\\{}Jagiellonian University\\{}Institute of Mathematics\\{}
   ul. \L{}ojasiewicza 6\\{}30-348 Krak\'{o}w\\{}Poland}
\newcommand{\myData}{\author[P. Niemiec]{Piotr Niemiec}\address{\myAddress}\email{\myEmail}}
\newcommand{\FFF}{{\mathbb F}}
\newcommand{\GGG}{{\mathbb G}}
\newcommand{\KKK}{{\mathbb K}}
\newcommand{\PPP}{{\mathbb P}}\newcommand{\QQQ}{{\mathbb Q}}\newcommand{\RRR}{{\mathbb R}}
\newcommand{\UUU}{{\mathbb U}}
\newcommand{\VVV}{{\mathbb V}}
\newcommand{\ZZZ}{{\mathbb Z}}
\newcommand{\CCc}{{\mathcal C}}
\newcommand{\FFf}{{\mathcal F}}
\newcommand{\OOo}{{\mathcal O}}
\newcommand{\CcC}{{\mathscr C}}
\newcommand{\EeE}{{\mathscr E}}
\newcommand{\LlL}{{\mathscr L}}
\newcommand{\PpP}{{\mathscr P}}
\newcommand{\VvV}{{\mathscr V}}
\newcommand{\Ff}{{\mathfrak F}}
\newcommand{\Gg}{{\mathfrak G}}\newcommand{\Hh}{{\mathfrak H}}
\newcommand{\Ll}{{\mathfrak L}}
\newcommand{\eE}{{\mathfrak e}}
\newcommand{\mM}{{\mathfrak m}}
\newcommand{\SECT}[1]{\section{#1}\renewcommand{\theequation}{\thesection-\arabic{equation}}\setcounter{equation}{0}}
\newcounter{help}
\newcommand{\ITE}[3]{\ifthenelse{#1}{#2}{#3}}\newcommand{\ITEE}[3]{\ITE{\equal{#1}{#2}}{#3}{}}
\newcommand{\diam}{\operatorname{diam}}
\newcommand{\dist}{\operatorname{dist}}\newcommand{\supp}{\operatorname{supp}}
\newcommand{\id}{\operatorname{id}}
\newcommand{\lin}{\operatorname{lin}}
\newcommand{\card}{\operatorname{card}}
\newcommand{\leqsl}{\leqslant}\newcommand{\geqsl}{\geqslant}
\newcommand{\epsi}{\varepsilon}\newcommand{\varempty}{\varnothing}\newcommand{\dd}{\colon}
\newcommand{\dint}[1]{\,\textup{d} #1}
\newcommand{\tfcae}{the following conditions are equivalent:}
\newcommand{\THM}[1]{Theorem~\textup{\ref{thm:#1}}}
\newcommand{\DEF}[1]{Definition~\textup{\ref{def:#1}}}\newcommand{\COR}[1]{Corollary~\textup{\ref{cor:#1}}}
\newcommand{\LEM}[1]{Lemma~\textup{\ref{lem:#1}}}\newcommand{\PRO}[1]{Proposition~\textup{\ref{pro:#1}}}
\newcommand{\REM}[1]{Remark~\textup{\ref{rem:#1}}}
\newcommand{\EXM}[1]{Example~\textup{\ref{exm:#1}}}
\newenvironment{thm}[1]{\begin{Thm}\label{thm:#1}}{\end{Thm}}\newenvironment{lem}[1]{\begin{Lem}\label{lem:#1}}{\end{Lem}}
\newenvironment{cor}[1]{\begin{Cor}\label{cor:#1}}{\end{Cor}}\newenvironment{pro}[1]{\begin{Pro}\label{pro:#1}}{\end{Pro}}
\newenvironment{dfn}[1]{\begin{Def}\label{def:#1}}{\end{Def}}
\newenvironment{exm}[1]{\begin{Exm}\label{exm:#1}}{\end{Exm}}
\newenvironment{rem}[1]{\begin{Rem}\label{rem:#1}}{\end{Rem}}
\newcommand{\bibITEM}[2]{\ITE{\equal{#2}{}}{\bibitem{#1} }{\bibitem[#2]{#1} }}
\newcommand{\BIB}[8]{
   \bibITEM{#1}{#8} #2, \textit{#3}, #4{} \textbf{#5} (#6), #7.}
\newcommand{\myBIB}[6][P. Niemiec]{#1, \textit{#2}, #3{}\ITE{\equal{#4}{}}{}{ \textbf{#4}} (#5), #6.}
\newcommand{\BIb}[6]{
   \bibITEM{#1}{#6} #2, \textit{#3}, #4, #5.}
\newcommand{\BiB}[9]{
   \bibITEM{#1}{#9} #2, \textit{#3}, #4{} \textit{#5}, #6, #7, #8.}
\newcommand{\myBAPP}[3][P. Niemiec]{
   #1, \textit{#2}, #3}
\newcommand{\jRN}[2][]{
   \ITEE{#2}{ActaM}{\ITE{\equal{#1}{+}}
      {Acta Mathematica}{Acta Math.}}
   \ITEE{#2}{AdvM}{\ITE{\equal{#1}{+}}
      {Advances in Mathematics}{Adv. in Math.}}
   \ITEE{#2}{ACS}{\ITE{\equal{#1}{+}}
      {Applied Categorical Structures}{Appl. Categor. Struct.}}
   \ITEE{#2}{ActaSM}{\ITE{\equal{#1}{+}}
      {Acta Scientiarum Mathematicarum}{Acta Sci. Math.}}
   \ITEE{#2}{AmJM}{\ITE{\equal{#1}{+}}
      {American Journal of Mathematics}{Amer. J. Math.}}
   \ITEE{#2}{AmMMon}{\ITE{\equal{#1}{+}}
      {American Mathematical Monthly}{Amer. Math. Mon.}}
   \ITEE{#2}{AnnSciEcNormSupT}{\ITE{\equal{#1}{+}}
      {Annales Scientifiques de l'\'{E}cole Normale Sup\'{e}rieure (3)}{Ann. Sci. \'{E}c. Norm. Sup\'{e}r. (3)}}
   \ITEE{#2}{AnnM}{\ITE{\equal{#1}{+}}
      {Annals of Mathematics}{Ann. Math.}}
   \ITEE{#2}{AnnProb}{\ITE{\equal{#1}{+}}
      {The Annals of Probability}{Ann. Probab.}}
   \ITEE{#2}{AnnPALog}{\ITE{\equal{#1}{+}}
      {Annals of Pure and Applied Logic}{Ann. Pure Appl. Logic}}
   \ITEE{#2}{ArchM}{\ITE{\equal{#1}{+}}
      {Archiv der Mathematik}{Arch. Math.}}
   \ITEE{#2}{AttiAccLincRendNat}{\ITE{\equal{#1}{+}}
      {Atti della Accademia Nazionale dei Lincei. Rendiconti. Classe di Scienze Fisiche, Matematiche e Naturali}
      {Atti Accad. Naz. Lincei Rend. Cl. Sci. Fis. Mat. Nat.}}
   \ITEE{#2}{BAMS}{\ITE{\equal{#1}{+}}
      {Bulletin of the American Mathematical Society}{Bull. Amer. Math. Soc.}}
   \ITEE{#2}{BLondMS}{\ITE{\equal{#1}{+}}
      {Bulletin of the London Mathematical Sociecy}{Bull. Lond. Math. Soc.}}
   \ITEE{#2}{BAPolSSSM}{\ITE{\equal{#1}{+}}
      {Bulletin de l'Acad\'{e}mie Polonaise des Sciences. S\'{e}rie des Sciences Math\'{e}matiques}
      {Bull. Acad. Pol. Sci. S\'{e}r. Sci. Math.}}
   \ITEE{#2}{BullSM}{\ITE{\equal{#1}{+}}
      {Bulletin des Sciences Math\'{e}matiques}{Bull. Sci. Math.}}
   \ITEE{#2}{BullPol}{\ITE{\equal{#1}{+}}
      {Bulletin of the Polish Academy of Sciences: Mathematics}{Bull. Polish Acad. Sci. Math.}}
   \ITEE{#2}{CanadJM}{\ITE{\equal{#1}{+}}
      {Canadian Journal Mathematics}{Canad. J. Math.}}
   \ITEE{#2}{CollectM}{\ITE{\equal{#1}{+}}
      {Collectanea Mathematica}{Collect. Math.}}
   \ITEE{#2}{CMUC}{\ITE{\equal{#1}{+}}
      {Commentationes Mathematicae Universitatis Carolinae}{Comment. Math. Univ. Carolin.}}
   \ITEE{#2}{CRASParis}{\ITE{\equal{#1}{+}}
      {Comptes Rendus de l'Acad\'{e}mie des Sciences. Paris}{C. R. Acad. Sci. Paris}}
   \ITEE{#2}{CEurJM}{\ITE{\equal{#1}{+}}
      {Central European Journal of Mathematics}{Cent. Eur. J. Math.}}
   \ITEE{#2}{CMHelv}{\ITE{\equal{#1}{+}}
      {Commentarii Mathematici Helvetici}{Comment. Math. Helv.}}
   \ITEE{#2}{CollM}{\ITE{\equal{#1}{+}}
      {Colloquium Mathematicum}{Coll. Math.}}
   \ITEE{#2}{ComposM}{\ITE{\equal{#1}{+}}
      {Compositio Mathematica}{Compos. Math.}}
   \ITEE{#2}{CzMJ}{\ITE{\equal{#1}{+}}
      {Czechoslovak Mathematical Journal}{Czech. Math. J.}}
   \ITEE{#2}{DissM}{\ITE{\equal{#1}{+}}
      {Dissertationes Mathematicae}{Diss. Math.}}
   \ITEE{#2}{DANSSSR}{\ITE{\equal{#1}{+}}
      {Doklady Akademii Nauk SSSR}{Dokl. Akad. Nauk SSSR}}
   \ITEE{#2}{DukeMJ}{\ITE{\equal{#1}{+}}
      {Duke Mathematical Journal}{Duke Math. J.}}
   \ITEE{#2}{ELA}{\ITE{\equal{#1}{+}}
      {The Electronic Journal of Linear Algebra}{Electron. J. Linear Algebra}}
   \ITEE{#2}{ExtrM}{\ITE{\equal{#1}{+}}
      {Extracta Mathematicae}{Extracta Math.}}
   \ITEE{#2}{FM}{\ITE{\equal{#1}{+}}
      {Fundamenta Mathematicae}{Fund. Math.}}
   \ITEE{#2}{FAA}{\ITE{\equal{#1}{+}}
      {Functional Analysis and its Applications}{Funct. Anal. Appl.}}
   \ITEE{#2}{FunkAnalPril}{\ITE{\equal{#1}{+}}
      {Funktsional'ny\u{\i} Analiz i Ego Prilozheniya}{Funkts. Anal. Prilozh.}}
   \ITEE{#2}{GTopA}{\ITE{\equal{#1}{+}}
      {General Topology and its Applications}{General Topol. Appl.}}
   \ITEE{#2}{IllinoisJM}{\ITE{\equal{#1}{+}}
      {Illinois Journal of Mathematics}{Illinois J. Math.}}
   \ITEE{#2}{IndagMP}{\ITE{\equal{#1}{+}}
      {Indagationes Mathematicae (Proceedings)}{Indagationes Math. Proc.}}
   \ITEE{#2}{InHauEtSPM}{\ITE{\equal{#1}{+}}
      {Inst. Hautes \'{E}tudes Sci. Publ. Math.}{Inst. Hautes \'{E}tudes Sci. Publ. Math.}}
   \ITEE{#2}{IEOT}{\ITE{\equal{#1}{+}}
      {Integral Equations and Operator Theory}{Integr. Equ. Oper. Theory}}
   \ITEE{#2}{IsraelJM}{\ITE{\equal{#1}{+}}
      {Israel Journal of Mathematics}{Israel J. Math.}}
   \ITEE{#2}{JAusMSA}{\ITE{\equal{#1}{+}}
      {Journal of the Australian Mathematical Society. Series A}{J. Aust. Math. Soc. Ser. A}}
   \ITEE{#2}{JCA}{\ITE{\equal{#1}{+}}
      {Journal of Convex Analysis}{J. Convex Anal.}}
   \ITEE{#2}{JChinUST}{\ITE{\equal{#1}{+}}
      {J. China Univ. Sci. Tech.}{J. China Univ. Sci. Tech.}}
   \ITEE{#2}{JFA}{\ITE{\equal{#1}{+}}
      {Journal of Functional Analysis}{J. Funct. Anal.}}
   \ITEE{#2}{JMAnApp}{\ITE{\equal{#1}{+}}
      {J. Math. Anal. Appl.}{J. Math. Anal. Appl.}}
   \ITEE{#2}{JOT}{\ITE{\equal{#1}{+}}
      {Journal of Operator Theory}{J. Operator Theory}}
   \ITEE{#2}{KodaiMSemRep}{\ITE{\equal{#1}{+}}
      {Kodai Math. Sem. Rep.}{Kodai Math. Sem. Rep.}}
   \ITEE{#2}{LAA}{\ITE{\equal{#1}{+}}
      {Linear Algebra and its Applications}{Linear Algebra Appl.}}
   \ITEE{#2}{LMLA}{\ITE{\equal{#1}{+}}
      {Linear and Multilinear Algebra}{Linear Multilinear Algebra}}
   \ITEE{#2}{MLQ}{\ITE{\equal{#1}{+}}
      {Mathematical Logic Quarterly}{Math. Log. Q.}}
   \ITEE{#2}{MProcCambPhS}{\ITE{\equal{#1}{+}}
      {Mathematical Proceedings of the Cambridge Philosophical Society}{Math. Proc. Cambridge Phil. Soc.}}
   \ITEE{#2}{MMag}{\ITE{\equal{#1}{+}}
      {Mathematics Magazine}{Math. Mag.}}
   \ITEE{#2}{MSb}{\ITE{\equal{#1}{+}}
      {Matematicheski\u{\i} Sbornik}{Mat. Sb.}}
   \ITEE{#2}{MStud}{\ITE{\equal{#1}{+}}
      {Matematychni Studi\"{\i}}{Mat. Stud.}}
   \ITEE{#2}{MAnn}{\ITE{\equal{#1}{+}}
      {Mathematische Annalen}{Math. Ann.}}
   \ITEE{#2}{MAMS}{\ITE{\equal{#1}{+}}
      {Memoirs of the American Mathematical Society}{Mem. Amer. Math. Soc.}}
   \ITEE{#2}{MichMJ}{\ITE{\equal{#1}{+}}
      {Michigan Mathematical Journal}{Mich. Math. J.}}
   \ITEE{#2}{MonatM}{\ITE{\equal{#1}{+}}
      {Monatshefte f\"{u}r Mathematik}{Mh. Math.}}
   \ITEE{#2}{NonlinA}{\ITE{\equal{#1}{+}}
      {Nonlinear Analysis: Theory, Methods \& Applications}{Nonlinear Anal.}}
   \ITEE{#2}{OpusM}{\ITE{\equal{#1}{+}}
      {Opuscula Mathematica}{Opuscula Math.}}
   \ITEE{#2}{PacJM}{\ITE{\equal{#1}{+}}
      {Pacific Journal of Mathematics}{Pacific J. Math.}}
   \ITEE{#2}{PeriodMHung}{\ITE{\equal{#1}{+}}
      {Periodica Mathematica Hungarica}{Period. Math. Hungarica}}
   \ITEE{#2}{PAMS}{\ITE{\equal{#1}{+}}
      {Proceedings of the American Mathematical Society}{Proc. Amer. Math. Soc.}}
   \ITEE{#2}{ProcCambPhS}{\ITE{\equal{#1}{+}}
      {Proceedings of the Cambridge Philosophical Society}{Proc. Cambridge Phil. Soc.}}
   \ITEE{#2}{ProcImpAcadTokyo}{\ITE{\equal{#1}{+}}
      {Proc. Imp. Acad. Tokyo}{Proc. Imp. Acad. Tokyo}}
   \ITEE{#2}{PLondMS}{\ITE{\equal{#1}{+}}
      {Proceedings of the London Mathematical Society}{Proc. London Math. Soc.}}
   \ITEE{#2}{PNatlUSA}{\ITE{\equal{#1}{+}}
      {Proceedings of the National Academy of Sciences of the United States of America}{Proc. Natl. Acad. Sci. USA}}
   \ITEE{#2}{PublRIMSKyoto}{\ITE{\equal{#1}{+}}
      {Publ. Res. Inst. Math. Sci. Kyoto Univ.}{Publ. Res. Inst. Math. Sci.}}
   \ITEE{#2}{PWN}{\ITE{\equal{#1}{+}}
      {PWN -- Polish Scientific Publishers, Warszawa}{PWN -- Polish Scientific Publishers, Warszawa}}
   \ITEE{#2}{RCMP}{\ITE{\equal{#1}{+}}
      {Rendiconti del Circolo Matematico di Palermo}{Rend. Circ. Mat. Palermo}}
   \ITEE{#2}{RussMS}{\ITE{\equal{#1}{+}}
      {Russian Mathematical Surveys}{Russian Math. Surveys}}
   \ITEE{#2}{SbM}{\ITE{\equal{#1}{+}}
      {Sbornik: Mathematics}{Sb. Math.}}
   \ITEE{#2}{SciRepTokyoA}{\ITE{\equal{#1}{+}}
      {Science Reports of Tokyo Kyoiku Daigaku, Section A}{Sci. Rep. Tokyo Kyoiku Daigaku Sect. A}}
   \ITEE{#2}{SeminProbStras}{\ITE{\equal{#1}{+}}
      {S\'{e}minaire de probabilit\'{e}s de Strasbourg}{S\'{e}min. Prob. Strasbourg}}
   \ITEE{#2}{SIAMJMAA}{\ITE{\equal{#1}{+}}
      {SIAM Journal on Matrix Analysis and Applications}{SIAM J. Matrix Anal. Appl.}}
   \ITEE{#2}{SibirMZ}{\ITE{\equal{#1}{+}}
      {Sibirski\v{\i} Mat. \v{Z}hurnal}{Sibirsk. Mat. \v{Z}.}}
   \ITEE{#2}{SM}{\ITE{\equal{#1}{+}}
      {Studia Mathematica}{Studia Math.}}
   \ITEE{#2}{TAMS}{\ITE{\equal{#1}{+}}
      {Transactions of the American Mathematical Society}{Trans. Amer. Math. Soc.}}
   \ITEE{#2}{TomskUnivRev}{\ITE{\equal{#1}{+}}
      {Tomsk Universitet Review}{Tomsk. Univ. Rev.}}
   \ITEE{#2}{TopA}{\ITE{\equal{#1}{+}}
      {Topology and its Applications}{Topology Appl.}}
   \ITEE{#2}{TsukubaJM}{\ITE{\equal{#1}{+}}
      {Tsukuba Journal of Mathematics}{Tsukuba J. Math.}}
   \ITEE{#2}{UspekhiMN}{\ITE{\equal{#1}{+}}
      {Uspekhi Matem. Nauk}{Uspekhi Mat. Nauk}}
   }
\newcommand{\paplist}[3][]{
   \ITEE{#3}{NIAkhiezer,IMGlazman1993}{
      \BIb{#2}{N.I. Akhiezer and I.M. Glazman}
         {Theory of Linear Operators in Hilbert Space}
         {Dover Publications, Inc., New York}{1993}{#1}}
   \ITEE{#3}{RDAnderson1967}{
      \BIB{#2}{R.D. Anderson}
         {On topological infinite deficiency}
         {\jRN{MichMJ}}{14}{1967}{365--383}{#1}}
   \ITEE{#3}{RDAnderson,JMcCharen1970}{
      \BIB{#2}{R.D. Anderson and J. McCharen}
         {On extending homeomorphisms to Fr\'{e}chet manifolds}
         {\jRN{PAMS}}{25}{1970}{283--289}{#1}}
   \ITEE{#3}{RDAnderson,DWCurtis,JVanMill1982}{
      \BIB{#2}{R.D. Anderson, D.W. Curtis, J. van Mill}
         {A fake topological Hilbert space}
         {\jRN{TAMS}}{272}{1982}{311--321}{#1}}
   \ITEE{#3}{RArens,JEells1956}{
      \BIB{#2}{R. Arens and J. Eells}
         {On embedding uniform and topological spaces}
         {\jRN{PacJM}}{6}{1956}{397--403}{#1}}
   \ITEE{#3}{NAronszajn,PPanitchpakdi1956}{
      \BIB{#2}{N. Aronszajn and P. Panitchpakdi}
         {Extension of uniformly continuous transformations and hyperconvex metric spaces}
         {\jRN{PacJM}}{6}{1956}{405--439}{#1}}
   \ITEE{#3}{KJBabenko1948}{
      \BIB{#2}{K.J. Babenko}
         {On conjugate functions}
         {\jRN{DANSSSR}}{62}{1948}{157--160}{#1}}
   \ITEE{#3}{TBanakh1995}{
      \BIB{#2}{T.O. Banakh}
         {Topology of spaces of probability measures, I}
         {\jRN{MStud}}{5}{1995}{65--87 (Russian)}{#1}}
   \ITEE{#3}{TBanakh1995a}{
      \BIB{#2}{T.O. Banakh}
         {Topology of spaces of probability measures, II}
         {\jRN{MStud}}{5}{1995}{88--106 (Russian)}{#1}}
   \ITEE{#3}{TBanakh1998}{
      \BIB{#2}{T. Banakh}
         {Characterization of spaces admitting a homotopy dense embedding into a Hilbert manifold}
         {\jRN{TopA}}{86}{1998}{123--131}{#1}}
   \ITEE{#3}{TBanakh,TNRadul1997}{
      \BIB{#2}{T.O. Banakh and T.N. Radul}
         {Topology of spaces of probability measures}
         {\jRN{SbM}}{188}{1997}{973--995}{#1}}
   \ITEE{#3}{TBanakh,TRadul,MZarichnyi1996}{
      \BIb{#2}{T. Banakh, T. Radul, M. Zarichnyi}
         {Absorbing sets in infinite-dimensional manifolds}
         {VNTL Publishers, Lviv}{1996}{#1}}
   \ITEE{#3}{TBanakh,IZarichnyy2008}{
      \BIB{#2}{T. Banakh and I. Zarichnyy}
         {Topological groups and convex sets homeomorphic to non-separable Hilbert spaces}
         {\jRN{CEurJM}}{6}{2008}{77--86}{#1}}
   \ITEE{#3}{HBecker,ASKechris1996}{
      \BIb{#2}{H. Becker and A.S. Kechris}
         {The Descriptive Set Theory of Polish Group Actions \textup{(London Math. Soc. Lecture Note Series, vol. 232)}}
         {University Press, Cambridge}{1996}{#1}}
   \ITEE{#3}{NEBenamara,NNikolski1999}{
      \BIB{#2}{N.E. Benamara and N. Nikolski}
         {Resolvent tests for similarity to a normal operator}
         {\jRN{PLondMS}}{78}{1999}{585--626}{#1}}
   \ITEE{#3}{YBenyamini,JLindenstrauss2000}{
      \BIb{#2}{Y. Benyamini and J. Lindenstrauss}
         {Geometric nonlinear functional analysis I}
         {AMS Colloquium Publications 48}{2000}{#1}}
   \ITEE{#3}{SKBerberian1974}{
      \BIb{#2}{S.K. Berberian}
         {Lectures in Functional Analysis and Operator Theory}
         {Graduate Texts in Mathematics 15, Springer-Verlag, New York}{1974}{#1}}
   \ITEE{#3}{SNBernstein1954}{
      \BIb{#2}{S.N. Bernstein}
         {Collected Works II}
         {Akad. Nauk SSSR, Moscow}{1954 (Russian)}{#1}}
   \ITEE{#3}{CzBessaga,APelczynski1972}{
      \BIB{#2}{Cz. Bessaga and A. Pe\l{}czy\'{n}ski}
         {On spaces of measurable functions}
         {\jRN{SM}}{44}{1972}{597--615}{#1}}
   \ITEE{#3}{CzBessaga,APelczynski1975}{
      \BIb{#2}{Cz. Bessaga and A. Pe\l{}czy\'{n}ski}
         {Selected topics in infinite-dimensional topology}
         {\jRN{PWN}}{1975}{#1}}
   \ITEE{#3}{MBestvina,JMogilski1986}{
      \BIB{#2}{M. Bestvina and J. Mogilski}
         {Characterizing certain incomplete infinite-dimensional absolute retracts}
         {\jRN{MichMJ}}{33}{1986}{291--313}{#1}}
   \ITEE{#3}{MBestvina,PBowers,JMogilsky,JWalsh1986}{
      \BIB{#2}{M. Bestvina, P. Bowers, J. Mogilsky, J. Walsh}
         {Characterization of Hilbert space manifolds revisited}
         {\jRN{TopA}}{24}{1986}{53--69}{#1}}
   \ITEE{#3}{RBhatia1997}{
      \BIb{#2}{R. Bhatia}
         {Matrix Analysis}
         {Springer, New York}{1997}{#1}}
   \ITEE{#3}{GBirkhoff1936}{
      \BIB{#2}{G. Birkhoff}
         {A note on topological groups}
         {\jRN{ComposM}}{3}{1936}{427--430}{#1}}
   \ITEE{#3}{MSBirman,MZSolomjak1987}{
      \BIb{#2}{M.S. Birman and M.Z. Solomjak}
         {Spectral Theory of Self-Adjoint Operators in Hilbert Space}
         {D. Reidel Publishing Co., Dordrecht}{1987}{#1}}
   \ITEE{#3}{EBishop1961}{
      \BIB{#2}{E. Bishop}
         {A generalization of the Stone-Weierstrass theorem}
         {\jRN{PacJM}}{11}{1961}{777--783}{#1}}
   \ITEE{#3}{JBlass,WHolsztynski1972}{
      \BIB{#2}{J. Blass and W. Holszty\'{n}ski}
         {Cubical polyhedra and homotopy III}
         {\jRN{AttiAccLincRendNat}}{53}{1972}{275--279}{#1}}
   \ITEE{#3}{FFBonsall,NJDuncan1973}{
      \BIb{#2}{F.F. Bonsall and N.J. Duncan}
         {Complete Normed Algebras}
         {Springer Verlag, Berlin}{1973}{#1}}
   \ITEE{#3}{NBourbaki2002}{
      \BIb{#2}{N. Bourbaki}
         {Lie Groups and Lie Algebras, Chapters 4--6}
         {Springer, New York}{2002}{#1}}
   \ITEE{#3}{PLBowers1989}{
      \BIB{#2}{P.L. Bowers}
         {Limitation topologies on function spaces}
         {\jRN{TAMS}}{314}{1989}{421--431}{#1}}
   \ITEE{#3}{AMBruckner,JBBruckner,BSThomson1997}{
      \BIb{#2}{A.M. Bruckner, J.B. Bruckner, B.S. Thomson}
         {Real Analysis}
         {Prentice-Hall, New Jersey}{1997}{#1}}
   \ITEE{#3}{PJCameron,AMVershik2006}{
      \BIB{#2}{P.J. Cameron and A.M. Vershik}
         {Some isometry groups of Urysohn space}
         {\jRN{AnnPALog}}{143}{2006}{70--78}{#1}}
   \ITEE{#3}{JAVanCasteren1980}{
      \BIB{#2}{J.A. van Casteren}
         {A problem of Sz.-Nagy}
         {\jRN{ActaSM}}{42}{1980}{189--194}{#1}}
   \ITEE{#3}{JAVanCasteren1983}{
      \BIB{#2}{J.A. van Casteren}
         {Operators similar to unitary or selfadjoint ones}
         {\jRN{PacJM}}{104}{1983}{241--255}{#1}}
   \ITEE{#3}{RCauty1994}{
      \BIB{#2}{R. Cauty}
         {Un espace m\'{e}trique lin\'{e}aire qui n'est pas un r\'{e}tracte absolu}
         {\jRN{FM}}{146}{1994}{85--99, (French)}{#1}}
   \ITEE{#3}{TAChapman1971}{
      \BIB{#2}{T.A. Chapman}
         {Deficiency in infinite-dimensional manifolds}
         {\jRN{GTopA}}{1}{1971}{263--272}{#1}}
   \ITEE{#3}{TAChapman1976}{
      \BIb{#2}{T.A. Chapman}
         {Lectures on Hilbert cube manifolds}
         {C.B.M.S. Regional Conference Series in Math. No 28, Amer. Math. Soc.}{1976}{#1}}
   \ITEE{#3}{RBChuaqui1977}{
      \BIB{#2}{R.B. Chuaqui}
         {Measures invariant under a group of transformations}
         {\jRN{PacJM}}{68}{1977}{313--329}{#1}}
   \ITEE{#3}{JBConway1985}{
      \BIb{#2}{J.B. Conway}
         {A Course in Functional Analysis}
         {Springer-Verlag, New York}{1985}{#1}}
   \ITEE{#3}{JBConway2000}{
      \BIb{#2}{J.B. Conway}
         {A Course in Operator Theory}
         {(Graduate Studies in Mathematics, vol. 21) Amer. Math. Soc., Providence}{2000}{#1}}
   \ITEE{#3}{GCorach,AMaestripieri,MMbekhta2009}{
      \BIB{#2}{G. Corach, A. Maestripieri, M. Mbekhta}
         {Metric and homogeneous structure of closed range operators}
         {\jRN{JOT}}{61}{2009}{171--190}{#1}}
   \ITEE{#3}{MJCowen,RGDouglas1978}{
      \BIB{#2}{M.J. Cowen and R.G. Douglas}
         {Complex geometry and operator theory}
         {\jRN{ActaM}}{141}{1978}{187--261}{#1}}
   \ITEE{#3}{DWCurtis1985}{
      \BIB{#2}{D.W. Curtis}
         {Boundary sets in the Hilbert cube}
         {\jRN{TopA}}{20}{1985}{201--221}{#1}}
   \ITEE{#3}{MMDay1958}{
      \BIb{#2}{M.M. Day}
         {Normed Linear Spaces}
         {Springer Verlag, Berlin}{1958}{#1}}
   \ITEE{#3}{CDellacherie1967}{
      \BIB{#2}{C. Dellacherie}
         {Un compl\'{e}ment au th\'{e}or\`{e}me de Weierstrass-Stone}
         {\jRN{SeminProbStras}}{1}{1967}{52--53}{#1}}
   \ITEE{#3}{JJDijkstra1987}{
      \BIB{#2}{J.J. Dijkstra}
         {Strong negligibility of $\sigma$-compacta does not characterize Hilbert space}
         {\jRN{PacJM}}{127}{1987}{19--30}{#1}}
   \ITEE{#3}{JJDijkstra1990}{
      \BIB{#2}{J.J. Dijkstra}
         {Characterizing Hilbert space topology in terms of strong negligibility}
         {\jRN{ComposM}}{75}{1990}{299--306}{#1}}
   \ITEE{#3}{TDobrowolski,WMarciszewski2002}{
      \BIB{#2}{T. Dobrowolski and W. Marciszewski}
         {Failure of the Factor Theorem for Borel pre-Hilbert spaces}
         {\jRN{FM}}{175}{2002}{53--68}{#1}}
   \ITEE{#3}{TDobrowolski,JMogilski1990}{
      \BiB{#2}{T. Dobrowolski and J. Mogilski}
         {Problems on Topological Classification of Incomplete Metric Spaces}{Chapter 25 in:}
         {Open Problems in Topology}{J. van Mill and G.M. Reed (eds.), North-Holland Amsterdam}{1990}{411--429}{#1}}
   \ITEE{#3}{TDobrowolski,HTorunczyk1981}{
      \BIB{#2}{T. Dobrowolski and H. Toru\'{n}czyk}
         {Separable complete ANR's admitting a group structure are Hilbert manifolds}
         {\jRN{TopA}}{12}{1981}{229--235}{#1}}
   \ITEE{#3}{RGDouglas1966}{
      \BIB{#2}{R.G. Douglas}
         {On majorization, factorization and range inclusion of operators in Hilbert space}
         {\jRN{PAMS}}{17}{1966}{413--416}{#1}}
   \ITEE{#3}{CHDowker1947}{
      \BIB{#2}{C.H. Dowker}
         {Mapping theorems for non-compact spaces}
         {\jRN{AmJM}}{69}{1947}{200--242}{#1}}
   \ITEE{#3}{CHDowker1952}{
      \BIB{#2}{C.H. Dowker}
         {Topology of metric complexes}
         {\jRN{AmJM}}{74}{1952}{555--577}{#1}}
   \ITEE{#3}{JDugundji1951}{
      \BIB{#2}{J. Dugundji}
         {An extension of Tietze's theorem}
         {\jRN{PacJM}}{1}{1951}{353--367}{#1}}
   \ITEE{#3}{JDugundji1958}{
      \BIB{#2}{J. Dugundji}
         {Absolute neighborhood retracts and local connectedness for arbitrary metric spaces}
         {\jRN{ComposM}}{13}{1958}{229--246}{#1}}
   \ITEE{#3}{JDugundji1965}{
      \BIB{#2}{J. Dugundji}
         {Locally equiconnected spaces and absolute neighborhood retracts}
         {\jRN{FM}}{57}{1965}{187--193}{#1}}
   \ITEE{#3}{NDunford,JTSchwartz1958}{
      \BIb{#2}{N. Dunford and J.T. Schwartz}
         {Linear Operators, part I}
         {Interscience Publishers, New York}{1958}{#1}}
   \ITEE{#3}{NDunford,JTSchwartz1963}{
      \BIb{#2}{N. Dunford and J.T. Schwartz}
         {Linear Operators, part II}
         {Interscience Publishers, New York}{1963}{#1}}
   \ITEE{#3}{NDunford,JTSchwartz1971}{
      \BIb{#2}{N. Dunford and J.T. Schwartz}
         {Linear Operators, part III}
         {Wiley-Interscience, New York}{1971}{#1}}
   \ITEE{#3}{MLEaton,MDPerlman1977}{
      \BIB{#2}{M.L. Eaton and M.D. Perlman}
         {Reflection groups, generalized Schur functions and the geometry of majorization}
         {\jRN{AnnProb}}{5}{1977}{829--860}{#1}}
   \ITEE{#3}{EGEffros1966}{
      \BIB{#2}{E.G. Effros}
         {Global structure in von Neumann algebras}
         {\jRN{TAMS}}{121}{1966}{434--454}{#1}}
   \ITEE{#3}{REspinola,MAKhamsi2001}{
      \BiB{#2}{R. Espinola and M.A. Khamsi}
         {Introduction to hyperconvex spaces}{Chapter XIII in:}{Handbook of Metric Fixed Point Theory}
         {W.A. Kirk and B. Sims (editors), Kluwer Academic Publishers}{2001}{391--435}{#1}}
   \ITEE{#3}{PAFillmore,JPWilliams1971}{
      \BIB{#2}{P.A. Fillmore and J.P. Williams}
         {On operator ranges}
         {\jRN{AdvM}}{7}{1971}{254--281}{#1}}
   \ITEE{#3}{JEells,NHKuiper1969}{
      \BIB{#2}{J. Eells and N.H. Kuiper}
         {Homotopy negligible subsets in infinite-dimensional manifolds}
         {\jRN{ComposM}}{21}{1969}{151--161}{#1}}
   \ITEE{#3}{REngelking1977}{
      \BIb{#2}{R. Engelking}
         {General Topology}
         {\jRN{PWN}}{1977}{#1}}
   \ITEE{#3}{REngelking1978}{
      \BIb{#2}{R. Engelking}
         {Dimension Theory}
         {\jRN{PWN}}{1978}{#1}}
   \ITEE{#3}{PErdos,RDMauldin1976}{
      \BIB{#2}{P. Erd\"{o}s and R.D. Mauldin}
         {The nonexistence of certain invariant measures}
         {\jRN{PAMS}}{59}{1976}{321--322}{#1}}
   \ITEE{#3}{RHFox1943}{
      \BIB{#2}{R.H. Fox}
         {On fiber spaces, II}
         {\jRN{BAMS}}{49}{1943}{733--735}{#1}}
   \ITEE{#3}{NAFriedman1970}{
      \BIb{#2}{N.A. Friedman}
         {Introduction to ergodic theory}
         {Van Nostrand Reinhold Company}{1970}{#1}}
   \ITEE{#3}{SGao,ASKechris2003}{
      \BIB{#2}{S. Gao and A.S. Kechris}
         {On the classification of Polish metric spaces up to isometry}
         {\jRN{MAMS}}{161}{2003}{viii+78}{#1}}
   \ITEE{#3}{MIGarrido,FMontalvo1991}{
      \BIB{#2}{M.I. Garrido and F. Montalvo}
         {On some generalizations of the Kakutani-Stone and Stone-Weierstrass theorems}
         {\jRN{ExtrM}}{6}{1991}{156--159}{#1}}
   \ITEE{#3}{IMGelfand,MANaimark1943}{
      \BIB{#2}{I.M. Gelfand and M.A. Naimark}
         {On the embedding of normed rings into the ring of operators in Hilbert space}
         {\jRN{MSb}}{12}{1943}{197--213}{#1}}
   \ITEE{#3}{FGesztesy,MMalamud,MMitrea,SNaboko2009}{
      \BIB{#2}{F. Gesztesy, M. Malamud, M. Mitrea, S. Naboko}
         {Generalized polar decompositions for closed operators in Hilbert spaces and some applications}
         {\jRN{IEOT}}{64}{2009}{83--113}{#1}}
   \ITEE{#3}{JGlimm1960}{
      \BIB{#2}{J. Glimm}
         {A Stone-Weierstrass theorem for $\CCc^*$-algebras}
         {\jRN{AnnM}}{72}{1960}{216--244}{#1}}
   \ITEE{#3}{GGodefroy,NJKalton2003}{
      \BIB{#2}{G. Godefroy and N.J. Kalton}
         {Lipschitz-free Banach spaces}
         {\jRN{SM}}{159}{2003}{121--141}{#1}}
   \ITEE{#3}{ICGohberg,MGKrein1967}{
      \BIB{#2}{I.C. Gohberg and M.G. Krein}
         {On a description of contraction operators similar to unitary ones}
         {\jRN{FunkAnalPril}}{1}{1967}{38--60}{#1}}
   \ITEE{#3}{MGromov1981}{
      \BIB{#2}{M. Gromov}
         {Groups of polynomial growth and expanding maps}
         {\jRN{InHauEtSPM}}{53}{1981}{53--73}{#1}}
   \ITEE{#3}{MGromov1999}{
      \BIb{#2}{M. Gromov}
         {Metric Structures for Riemannian and Non-Riemannian Spaces}
         {Progress in Math. \textbf{152}, Birkh\"{a}user}{1999}{#1}}
   \ITEE{#3}{JDeGroot1956}{
      \BIB{#2}{J. de Groot}
         {Non-archimedean metrics in topology}
         {\jRN{PAMS}}{7}{1956}{948--953}{#1}}
   \ITEE{#3}{LCGrove,CTBenson1985}{
      \BIb{#2}{L.C. Grove and C.T. Benson}
         {Finite Reflection Group}
         {2nd ed., Springer-Verlag}{1985}{#1}}
   \ITEE{#3}{VIGurarii1966}{
      \BIB{#2}{V.I. Gurari\v{\i}}{Spaces of universal placement, isotropic spaces and a problem of Mazur 
         on rotations of Banach spaces \textup{(Russian)}}
         {\jRN{SibirMZ}}{7}{1966}{1002--1013}{#1}}
   \ITEE{#3}{HHahn1932}{
      \BIb{#2}{H. Hahn}
         {Reelle Funktionen I}
         {Leipzig}{1932}{#1}}
   \ITEE{#3}{PRHalmos1950}{
      \BIb{#2}{P.R. Halmos}
         {Measure theory}
         {Van Nostrand, New York}{1950}{#1}}
   \ITEE{#3}{PRHalmos1951}{
      \BIb{#2}{P.R. Halmos}
         {Introduction to Hilbert Space and the Theory of Spectral Multiplicity}
         {Chelsea Publishing Company, New York}{1951}{#1}}
   \ITEE{#3}{PRHalmos1956}{
      \BIb{#2}{P.R. Halmos}
         {Lectures on Ergodic Theory}
         {Publ. Math. Soc. Japan, Tokyo}{1956}{#1}}
   \ITEE{#3}{PRHalmos1982}{
      \BIb{#2}{P.R. Halmos}
         {A Hilbert Space Problem Book}
         {Springer-Verlag New York Inc.}{1982}{#1}}
   \ITEE{#3}{RWHansell1972}{
      \BIB{#2}{R.W. Hansell}
         {On the nonseparable theory of Borel and Souslin sets}
         {\jRN{BAMS}}{78}{1972}{236--241}{#1}}
   \ITEE{#3}{FHausdorff1930}{
      \BIB{#2}{F. Hausdorff}
         {Erweiterung einer Hom\"{o}omorphie}
         {\jRN{FM}}{16}{1930}{353--360}{#1}}
   \ITEE{#3}{FHausdorff1934}{
      \BIB{#2}{F. Hausdorff}
         {\"{U}ber innere Abbildungen}
         {\jRN{FM}}{23}{1934}{279--291}{#1}}
   \ITEE{#3}{FHausdorff1938}{
      \BIB{#2}{F. Hausdorff}
         {Erweiterung einer stetigen Abbildung}
         {\jRN{FM}}{30}{1938}{40--47}{#1}}
   \ITEE{#3}{DWHenderson1971}{
      \BIB{#2}{D.W. Henderson}
         {Corrections and extensions of two papers about infinite-dimensional manifolds}
         {\jRN{GTopA}}{1}{1971}{321--327}{#1}}
   \ITEE{#3}{DWHenderson1975}{
      \BIB{#2}{D.W. Henderson}
         {$Z$-sets in ANR's}
         {\jRN{TAMS}}{213}{1975}{205--216}{#1}}
   \ITEE{#3}{DWHenderson,RMSchori1970}{
      \BIB{#2}{D.W. Henderson and R.M. Schori}
         {Topological classification of infinite-dimensional manifolds by homotopy type}
         {\jRN{BAMS}}{76}{1970}{121--124}{#1}}
   \ITEE{#3}{DWHenderson,JEWest1970}{
      \BIB{#2}{D.W. Henderson and J.E. West}
         {Triangulated infinite-dimensional manifolds}
         {\jRN{BAMS}}{76}{1970}{655--660}{#1}}
   \ITEE{#3}{BHoffmann1979}{
      \BIB{#2}{B. Hoffmann}
         {A compact contractible topological group is trivial}
         {\jRN{ArchM}}{32}{1979}{585--587}{#1}}
   \ITEE{#3}{DHofmann2002}{
      \BIB{#2}{D. Hofmann}
         {On a generalization of the Stone-Weierstrass theorem}
         {\jRN{ACS}}{10}{2002}{569--592}{#1}}
   \ITEE{#3}{GHognas,AMukherjea1995}{
      \BIb{#2}{G. H\"ogn\"as and A. Mukherjea}
         {Probability Measures on Semigroups. Convolution Products, Random Walks, and Random Matrices}
         {Plenum Press, New York}{1995}{#1}}
   \ITEE{#3}{MRHolmes1992}{
      \BIB{#2}{M.R. Holmes}
         {The universal separable metric space of Urysohn and isometric embeddings thereof in Banach spaces}
         {\jRN{FM}}{140}{1992}{199--223}{#1}}
   \ITEE{#3}{MRHolmes2008}{
      \BIB{#2}{M.R. Holmes}
         {The Urysohn space embeds in Banach spaces in just one way}
         {\jRN{TopA}}{155}{2008}{1479--1482}{#1}}
   \ITEE{#3}{RRHolmes,TYTam1999}{
      \BIB{#2}{R.R. Holmes and T.Y. Tam}
         {Distance to the convex hull of an orbit under the action of a compact group}
         {\jRN{JAusMSA}}{66}{1999}{331--357}{#1}}
   \ITEE{#3}{RHorn,RMathias1990}{
      \BIB{#2}{R. Horn and R. Mathias}
         {Cauchy-Schwartz inequalities associated with positive semidefinite matrices}
         {\jRN{LAA}}{142}{1990}{63--82}{#1}}
   \ITEE{#3}{GEHuhunaisvili1955}{
      \BIB{#2}{G.E. Huhunai\v{s}vili}
         {On a property of Urysohn's universal metric space}
         {\jRN{DANSSSR}}{101}{1955}{607--610 (Russian)}{#1}}
   \ITEE{#3}{JEHumphreys1990}{
      \BIb{#2}{J.E. Humphreys}
         {Reflection Groups and Coxeter Groups}
         {Cambridge University Press}{1990}{#1}}
   \ITEE{#3}{JRIsbell1964}{
      \BIB{#2}{J.R. Isbell}
         {Six theorems about injective metric spaces}
         {\jRN{CMHelv}}{39}{1964}{65--76}{#1}}
   \ITEE{#3}{SIzumino,YKato1985}{
      \BIB{#2}{S. Izumino and Y. Kato}
         {The closure of invertible operators on Hilbert space}
         {\jRN{ActaSM}}{49}{1985}{321--327}{#1}}
   \ITEE{#3}{CJiang2004}{
      \BIB{#2}{C. Jiang}
         {Similarity classification of Cowen-Douglas operators}
         {\jRN{CanadJM}}{56}{2004}{742--775}{#1}}
   \ITEE{#3}{WBJohnson,JLindenstrauss2001}{
      \BiB{#2}{W.B. Johnson and J. Lindenstrauss}{Basic Concepts in the Geometry of Banach Spaces}
         {Chapter 1 in:}{Handbook of the Geometry of Banach Spaces, Vol. 1}
         {W.B. Johnson and J. Lindenstrauss (editors), Elsevier Science B.V., Amsterdam}{2001}{1--84}{#1}}
   \ITEE{#3}{IBJung,JStochel2008}{
      \BIB{#2}{I.B. Jung and J. Stochel}
         {Subnormal operators whose adjoints have rich point spectrum}
         {\jRN{JFA}}{255}{2008}{1797--1816}{#1}}
   \ITEE{#3}{SKakutani1936}{
      \BIB{#2}{S. Kakutani}
         {\"{U}ber die Metrisation der topologischen Gruppen}
         {\jRN{ProcImpAcadTokyo}}{12}{1936}{82--84}{#1}}
   \ITEE{#3}{SKakutani1938}{
      \BIB{#2}{S. Kakutani}
         {Two fixed-point theorems concerning bicompact convex sets}
         {\jRN{ProcImpAcadTokyo}}{14}{1938}{242--245}{#1}}
   \ITEE{#3}{SKakutani1941}{
      \BIB{#2}{S. Kakutani}
         {Concrete representation of abstract L-spaces}
         {\jRN{AnnM}}{42}{1941}{523--537}{#1}}
   \ITEE{#3}{SKakutani1941a}{
      \BIB{#2}{S. Kakutani}
         {Concrete representation of abstract M-spaces}
         {\jRN{AnnM}}{42}{1941}{994--1024}{#1}}
   \ITEE{#3}{NKalton2007}{
      \BIB{#2}{N. Kalton}
         {Extending Lipschitz maps into $\CCc(K)$-spaces}
         {\jRN{IsraelJM}}{162}{2007}{275--315}{#1}}
   \ITEE{#3}{RKane2001}{
      \BIb{#2}{R. Kane}
         {Reflection Groups and Invariant Theory}
         {Canadian Mathematical Society, Springer}{2001}{#1}}
   \ITEE{#3}{VKannan,SRRaju1980}{
      \BIB{#2}{V. Kannan and S.R. Raju}
         {The nonexistence of invariant universal measures on semigroups}
         {\jRN{PAMS}}{78}{1980}{482--484}{#1}}
   \ITEE{#3}{MKatetov1988}{
      \BiB{#2}{M. Kat\v{e}tov}{On universal metric spaces}{in: Frolik (ed.),}
         {General Topology and its Relations to Modern Analysis and Algebra VI. Proceedings of the Sixth Prague 
         Topological Symposium 1986}{Heldermann Verlag Berlin}{1988}{323--330}{#1}}
   \ITEE{#3}{YKatznelson1960}{
      \BIB{#2}{Y. Katznelson}
         {Sur les alg\'{e}bres dont les \'{e}l\'{e}ments non n\'{e}gatifs admettent des racines carr\'{e}es}
         {\jRN{AnnSciEcNormSupT}}{77}{1960}{167--174}{#1}}
   \ITEE{#3}{OHKeller1931}{
      \BIB{#2}{O.H. Keller}
         {Die Homoiomorphie der kompakten konvexen Mengen in Hilbertschen Raum}
         {\jRN{MAnn}}{105}{1931}{748--758}{#1}}
   \ITEE{#3}{MAKhamsi,WAKirk,CMartinez2000}{
      \BIB{#2}{M.A. Khamsi, W.A. Kirk, C. Martinez}
         {Fixed point and selection theorems in hyperconvex spaces}
         {\jRN{PAMS}}{128}{2000}{3275--3283}{#1}}
   \ITEE{#3}{ABKhararazishvili1998}{
      \BIb{#2}{A.B. Khararazishvili}
         {Transformation groups and invariant measures. Set-theoretic aspects}
         {World Scientific Publishing Co., Inc., River Edge, NJ}{1998}{#1}}
   \ITEE{#3}{YKijima1987}{
      \BIB{#2}{Y. Kijima}
         {Fixed points of nonexpansive self-maps of a compact metric space}
         {\jRN{JMAnApp}}{123}{1987}{114--116}{#1}}
   \ITEE{#3}{JKindler1995}{
      \BIB{#2}{J. Kindler}
         {Minimax theorems with applications to convex metric spaces}
         {\jRN{CollM}}{68}{1995}{179--186}{#1}}
   \ITEE{#3}{WAKirk1998}{
      \BIB{#2}{W.A. Kirk}
         {Hyperconvexity of $\RRR$-trees}
         {\jRN{FM}}{156}{1998}{67--72}{#1}}
   \ITEE{#3}{VLKleeJr1952}{
      \BIB{#2}{V.L. Klee Jr.}
         {Invariant metrics in groups (solution of a problem of Banach)}
         {\jRN{PAMS}}{3}{1952}{484--487}{#1}}
   \ITEE{#3}{HJKowalsky1957}{
      \BIB{#2}{H.J. Kowalsky}
         {Einbettung metrischer R\"{a}ume}
         {\jRN{ArchM}}{8}{1957}{336--339}{#1}}
   \ITEE{#3}{WKubis,MRubin2010}{
      \BIB{#2}{W. Kubi\'{s} and M. Rubin}
         {Extension and reconstruction theorems for the Urysohn universal metric space}
         {\jRN{CzMJ}}{60}{2010}{1--29}{#1}}
   \ITEE{#3}{KKuratowski1966}{
      \BIb{#2}{K. Kuratowski}
         {Topology. \textup{Vol. I}}
         {\jRN{PWN}}{1966}{#1}}
   \ITEE{#3}{KKuratowski,BKnaster1927}{
      \BIB{#2}{K. Kuratowski and B. Knaster}
         {A connected and connected im kleinen point set which contains no perfect subset}
         {\jRN{BAMS}}{33}{1927}{106--109}{#1}}
   \ITEE{#3}{KKuratowski,AMostowski1976}{
      \BIb{#2}{K. Kuratowski and A. Mostowski}
         {Set Theory with an Introduction to Descriptive Set Theory}
         {\jRN{PWN}}{1976}{#1}}
   \ITEE{#3}{GLewicki1992}{
      \BIB{#2}{G. Lewicki}
         {Bernstein's ``lethargy'' theorem in metrizable topological linear spaces}
         {\jRN{MonatM}}{113}{1992}{213--226}{#1}}
   \ITEE{#3}{ASLewis1996}{
      \BIB{#2}{A.S. Lewis}
         {Group invariance and convex matrix analysis}
         {\jRN{SIAMJMAA}}{17}{1996}{927--949}{#1}}
   \ITEE{#3}{C-KLi,N-KTsing1991}{
      \BIB{#2}{C.-K. Li and N.-K. Tsing}
         {$G$-invariant norms and $G(c)$-radii}
         {\jRN{LAA}}{150}{1991}{179--194}{#1}}
   \ITEE{#3}{AJLazar,JLindenstrauss1971}{
      \BIB{#2}{A.J. Lazar and J. Lindenstrauss}
         {Banach spaces whose duals are $L_1$ spaces and their representing matrices}
         {\jRN{ActaM}}{126}{1971}{165--193}{#1}}
   \ITEE{#3}{ALindenbaum1926}{
      \BIB{#2}{A. Lindenbaum}
         {Contributions \`{a} l'\'{e}tude de l'espace m\'{e}trique I}
         {\jRN{FM}}{8}{1926}{209--222}{#1}}
   \ITEE{#3}{DLindenstrauss,LTzafriri1971}{
      \BIB{#2}{D. Lindenstrauss and L. Tzafriri}
         {On the complemented subspaces problem}
         {\jRN{IsraelJM}}{9}{1971}{263--269}{#1}}
   \ITEE{#3}{LHLoomis1945}{
      \BIB{#2}{L.H. Loomis}
         {Abstract congruence and the uniqueness of Haar measure}
         {\jRN{AnnM}}{46}{1945}{348--355}{#1}}
   \ITEE{#3}{LHLoomis1949}{
      \BIB{#2}{L.H. Loomis}
         {Haar measure in uniform structures}
         {\jRN{DukeMJ}}{16}{1949}{193--208}{#1}}
   \ITEE{#3}{ERLorch1939}{
      \BIB{#2}{E.R. Lorch}
         {Bicontinuous linear transformation in certain vector spaces}
         {\jRN{BAMS}}{45}{1939}{564--569}{#1}}
   \ITEE{#3}{ATLundell,SWeingram1969}{
      \BIb{#2}{A.T. Lundell and S. Weingram}
         {The topology of CW-complexes}
         {Litton Educ. Publ.}{1969}{#1}}
   \ITEE{#3}{WLusky1976}{
      \BIB{#2}{W. Lusky}
         {The Gurarij spaces are unique}
         {\jRN{ArchM}}{27}{1976}{627--635}{#1}}
   \ITEE{#3}{WLusky1977}{
      \BIB{#2}{W. Lusky}
         {On separable Lindenstrauss spaces}
         {\jRN{JFA}}{26}{1977}{103--120}{#1}}
   \ITEE{#3}{DMaharam1942}{
      \BIB{#2}{D. Maharam}
         {On homogeneous measure algebras}
         {\jRN{PNatlUSA}}{28}{1942}{108--111}{#1}}
   \ITEE{#3}{MMalicki,SSolecki2009}{
      \BIB{#2}{M. Malicki and S. Solecki}
         {Isometry groups of separable metric spaces}
         {\jRN{MProcCambPhS}}{146}{2009}{67--81}{#1}}
   \ITEE{#3}{PMankiewicz1972}{
      \BIB{#2}{P. Mankiewicz}
         {On extension of isometries in normed linear spaces}
         {\jRN{BAPolSSSM}}{20}{1972}{367--371}{#1}}
   \ITEE{#3}{JMartinezMaurica,MTPellon1987}{
      \BIB{#2}{J. Martinez-Maurica and M.T. Pell\'{o}n}
         {Non-archimedean Chebyshev centers}
         {\jRN{IndagMP}}{90}{1987}{417--421}{#1}}
   \ITEE{#3}{KMaurin1980}{
      \BIb{#2}{K. Maurin}
         {Analysis, Part II}
         {D. Reidel, Dordrecht-Boston-London}{1980}{#1}}
   \ITEE{#3}{SMazur,SUlam1932}{
      \BIB{#2}{S. Mazur and S. Ulam}
         {Sur les transformationes isom\'{e}triques d'espaces vectoriels norm\'{e}s}
         {\jRN{CRASParis}}{194}{1932}{946--948}{#1}}
   \ITEE{#3}{SMazurkiewicz1920}{
      \BIB{#2}{S. Mazurkiewicz}
         {Sur les lignes de Jordan}
         {\jRN{FM}}{1}{1920}{166--209}{#1}}
   \ITEE{#3}{SMazurkiewicz,WSierpinski1920}{
      \BIB{#2}{S. Mazurkiewicz and W. Sierpi\'{n}ski}
         {Contributions a la topologie des ensembles denombrables}
         {\jRN{FM}}{1}{1920}{17--27}{#1}}
   \ITEE{#3}{MMbekhta1992}{
      \BIB{#2}{M. Mbekhta}
         {Sur la structure des composantes connexes semi-Fredholm de $B(H)$}
         {\jRN{PAMS}}{116}{1992}{521--524}{#1}}
   \ITEE{#3}{JEMcCarthy1996}{
      \BIB{#2}{J.E. McCarthy}
         {Boundary values and Cowen-Douglas curvature}
         {\jRN{JFA}}{137}{1996}{1--18}{#1}}
   \ITEE{#3}{JMelleray2007}{
      \BIB{#2}{J. Melleray}
         {Computing the complexity of the relation of isometry between separable Banach spaces}
         {\jRN{MLQ}}{53}{2007}{128--131}{#1}}
   \ITEE{#3}{JMelleray2007a}{
      \BIB{#2}{J. Melleray}
         {On the geometry of Urysohn's universal metric space}
         {\jRN{TopA}}{154}{2007}{384--403}{#1}}
   \ITEE{#3}{JMelleray2008}{
      \BIB{#2}{J. Melleray}
         {Some geometric and dynamical properties of the Urysohn space}
         {\jRN{TopA}}{155}{2008}{1531--1560}{#1}}
   \ITEE{#3}{JMelleray,FVPetrov,AMVershik2008}{
      \BIB{#2}{J. Melleray, F.V. Petrov, A.M. Vershik}
         {Linearly rigid metric spaces and the embedding problem}
         {\jRN{FM}}{199}{2008}{177--194}{#1}}
   \ITEE{#3}{EMichael1953}{
      \BIB{#2}{E. Michael}
         {Some extension theorems for continuous functions}
         {\jRN{PacJM}}{3}{1953}{789--806}{#1}}
   \ITEE{#3}{EMichael1954}{
      \BIB{#2}{E. Michael}
         {Local properties of topological spaces}
         {\jRN{DukeMJ}}{21}{1954}{163--171}{#1}}
   \ITEE{#3}{EMichael1956}{
      \BIB{#2}{E. Michael}
         {Selected selection theorems}
         {\jRN{AmMMon}}{58}{1956}{233--238}{#1}}
   \ITEE{#3}{EMichael1956a}{
      \BIB{#2}{E. Michael}
         {Continuous selections. I}
         {\jRN{AnnM}}{63}{1956}{361--382}{#1}}
   \ITEE{#3}{EMichael1956b}{
      \BIB{#2}{E. Michael}
         {Continuous selections. II}
         {\jRN{AnnM}}{64}{1956}{562--580}{#1}}
   \ITEE{#3}{EMichael1959}{
      \BIB{#2}{E. Michael}
         {A theorem on semi-continuous set-valued functions}
         {\jRN{DukeMJ}}{26}{1959}{647--652}{#1}}
   \ITEE{#3}{JVanMill1986}{
      \BIB{#2}{J. van Mill}
         {Another counterexample in ANR theory}
         {\jRN{PAMS}}{97}{1986}{136--138}{#1}}
   \ITEE{#3}{JVanMill2001}{
      \BIb{#2}{J. van Mill}
         {The Infinite-Dimensional Topology of Function Spaces 
         \textup{(North-Holland Mathematical Library, vol. 64)}}
         {Elsevier, Amsterdam}{2001}{#1}}
   \ITEE{#3}{WMlak1991}{
      \BIb{#2}{W. Mlak}
         {Hilbert Spaces and Operator Theory}
         {PWN --- Polish Scientific Publishers and Kluwer Academic Publishers, Warszawa-Dordrecht}{1991}{#1}}
   \ITEE{#3}{JMogilski1979}{
      \BIB{#2}{J. Mogilski}
         {$CE$-decomposition of $l_2$-manifolds}
         {\jRN{BAPolSSSM}}{27}{1979}{309--314}{#1}}
   \ITEE{#3}{RLMoore1916}{
      \BIB{#2}{R.L. Moore}
         {On the foundations of plane analysis situs}
         {\jRN{TAMS}}{17}{1916}{131--164}{#1}}
   \ITEE{#3}{KMorita1955}{
      \BIB{#2}{K. Morita}
         {A condition for the metrizability of topological spaces and for $n$-dimensionality}
         {\jRN{SciRepTokyoA}}{5}{1955}{33--36}{#1}}
   \ITEE{#3}{AMukherjea,NATserpes1976}{
      \BIb{#2}{A. Mukherjea and N.A. Tserpes}
         {Measures on topological semigroups}
         {Springer Lecture Notes in Math. Vol. 547, Berlin}{1976}{#1}}
   \ITEE{#3}{JMycielski1974}{
      \BIB{#2}{J. Mycielski}
         {Remarks on invariant measures in metric spaces}
         {\jRN{CollM}}{32}{1974}{105--112}{#1}}
   \ITEE{#3}{SNNaboko1984}{
      \BIB{#2}{S.N. Naboko}
         {Conditions for similarity to unitary and selfadjoint operators}
         {\jRN{FunkAnalPril}}{18}{1984}{16--27}{#1}}
   \ITEE{#3}{LNachbin1965}{
      \BIb{#2}{L. Nachbin}
         {The Haar Integral}
         {D. Van Nostrand Company, Inc., Princeton-New Jersey-Toronto-New York-London}{1965}{#1}}
   \ITEE{#3}{BSz-Nagy1947}{
      \BIB{#2}{B. Sz.-Nagy}
         {On uniformly bounded linear transformations in Hilbert space}
         {\jRN{ActaSM}}{11}{1947}{152--157}{#1}}
   \ITEE{#3}{TDNarang,SKGarg1991}{
      \BIB{#2}{T.D. Narang and S.K. Garg}
         {On the uniqueness of best approximation in non-archimedian spaces}
         {\jRN{PeriodMHung}}{22}{1991}{121--124}{#1}}
   \ITEE{#3}{JVonNeumann1934}{
      \BIB{#2}{J. von Neumann}
         {Zum Haarschen Mass in topologischen Gruppen}
         {\jRN{ComposM}}{1}{1934}{106--114}{#1}}
   \ITEE{#3}{JVonNeumann1937}{
      \BiB{#2}{J. von Neumann}
         {Some matrix-inequalities and metrization of matrix-space}{\jRN{TomskUnivRev}{} \textbf{1} (1937), 286--300; 
         in }{Collected Works}{Pergamon, New York}{1962}{Vol. 4, 205--219}{#1}}
   \ITEE{#3}{pn2002}{\bibITEM{#2}{#1} \mypaplist{pn1}}
   \ITEE{#3}{pn2006a}{\bibITEM{#2}{#1} \mypaplist{pn2}}
   \ITEE{#3}{pn2006b}{\bibITEM{#2}{#1} \mypaplist{pn3}}
   \ITEE{#3}{pn2007}{\bibITEM{#2}{#1} \mypaplist{pn4}}
   \ITEE{#3}{pn2008a}{\bibITEM{#2}{#1} \mypaplist{pn5}}
   \ITEE{#3}{pn2008b}{\bibITEM{#2}{#1} \mypaplist{pn6}}
   \ITEE{#3}{pn2009a}{\bibITEM{#2}{#1} \mypaplist{pn7}}
   \ITEE{#3}{pn2009b}{\bibITEM{#2}{#1} \mypaplist{pn8}}
   \ITEE{#3}{pn2009c}{\bibITEM{#2}{#1} \mypaplist{pn9}}
   \ITEE{#3}{pn2010a}{\bibITEM{#2}{#1} \mypaplist{pn12}}
   \ITEE{#3}{pn2010b}{\bibITEM{#2}{#1} \mypaplist{pn13}}
   \ITEE{#3}{pn2011a}{\bibITEM{#2}{#1} \mypaplist{pn10}}
   \ITEE{#3}{pn2011b}{\bibITEM{#2}{#1} \mypaplist{pn15}}
   \ITEE{#3}{pn2009x}{
      \bibITEM{#2}{#1} \mypaplist{pn11}}
   \ITEE{#3}{pn2010x}{
      \bibITEM{#2}{#1} \mypaplist{pn14}}
   \ITEE{#3}{pn2010y}{
      \bibITEM{#2}{#1} \mypaplist{pn15}}
   \ITEE{#3}{pnXXXXa}{
      \bibITEM{#2}{#1} \mypaplist{pnX1}}
   \ITEE{#3}{pnXXXXb}{
      \bibITEM{#2}{#1} \mypaplist{pnX2}}
   \ITEE{#3}{pnXXXXc}{
      \bibITEM{#2}{#1} \mypaplist{pnX3}}
   \ITEE{#3}{pnXXXXd}{
      \bibITEM{#2}{#1} \mypaplist{pnX13}}
   \ITEE{#3}{MNiezgoda1998}{
      \BIB{#2}{M. Niezgoda}
         {Group majorization and Schur type inequalities}
         {\jRN{LAA}}{268}{1998}{9--30}{#1}}
   \ITEE{#3}{MNiezgoda1998a}{
      \BIB{#2}{M. Niezgoda}
         {An analytical characterization of effective and of irreducible groups inducing cone orderings}
         {\jRN{LAA}}{269}{1998}{105--114}{#1}}
   \ITEE{#3}{MNiezgoda,TYTam2001}{
      \BIB{#2}{M. Niezgoda and T.Y. Tam}
         {On norm property of $G(c)$-radii and Eaton triples}
         {\jRN{LAA}}{336}{2001}{119--130}{#1}}
   \ITEE{#3}{APazy1983}{
      \BIb{#2}{A. Pazy}{Semigroups of Linear Operators 
         and Applications to Partial Differential Equations \textup{(Applied Mathematical Sciences, vol. 44)}}
         {Springer-Verlag, New York}{1983}{#1}}
   \ITEE{#3}{APelc1982}{
      \BIB{#2}{A. Pelc}
         {Semiregular invariant measures on abelian groups}
         {\jRN{PAMS}}{86}{1982}{423--426}{#1}}
   \ITEE{#3}{RPenrose1955}{
      \BIB{#2}{R. Penrose}
         {A generalized inverse for matrices}
         {\jRN{ProcCambPhS}}{51}{1955}{406--413}{#1}}
   \ITEE{#3}{VPestov2006}{
      \BIb{#2}{V. Pestov}
         {Dynamics of infinite-dimensional groups. The Ramsey-Dvoretzky-Milman phenomenon}
         {University Lecture Series \textbf{40}, AMS, Providence, RI}{2006}{#1}}
   \ITEE{#3}{VPestov2007}{
      \BiB{#2}{V. Pestov}
         {Forty-plus annotated questions about large topological groups}
         {in:}{Open Problems in Topology II}{Elliot Pearl (editor), Elsevier B.V., Amsterdam}{2007}{439--450}{#1}}
   \ITEE{#3}{PVPetersen1993}{
      \BiB{#2}{P.V. Petersen}
         {Gromov-Hausdorff convergence of metric spaces}{in book:}{Differential Geometry: Riemannian Geometry 
         (Los Angeles, CA, 1990)}{Amer. Math. Soc., Providence, RI}{1993}{489--504}{#1}}
   \ITEE{#3}{DRamachandran,MMisiurewicz1982}{
      \BIB{#2}{D. Ramachandran and M. Misiurewicz}
         {Hopf's theorem on invariant measures for a group of transformations}
         {\jRN{SM}}{74}{1982}{183--189}{#1}}
   \ITEE{#3}{JMRosenblatt1974}{
      \BIB{#2}{J.M. Rosenblatt}
         {Equivalent invariant measures}
         {\jRN{IsraelJM}}{17}{1974}{261--270}{#1}}
   \ITEE{#3}{WRudin1962}{
      \BIb{#2}{W. Rudin}
         {Fourier Analysis on Groups \textup{(Interscience Tracts in Pure and Applied Mathematics, Number 12)}}
         {Interscience Publishers, New York}{1962}{#1}}
   \ITEE{#3}{WRudin1991}{
      \BIb{#2}{W. Rudin}
         {Functional Analysis}
         {McGraw-Hill Science}{1991}{#1}}
   \ITEE{#3}{KSakai,MYaguchi2003}{
      \BIB{#2}{K. Sakai and M. Yaguchi}
         {Characterizing manifolds modeled on certain dense subspaces of non-separable Hilbert spaces}
         {\jRN{TsukubaJM}}{27}{2003}{143--159}{#1}}
   \ITEE{#3}{RSchori1971}{
      \BIB{#2}{R. Schori}
         {Topological stability for infinite-dimensional manifolds}
         {\jRN{ComposM}}{23}{1971}{87--100}{#1}}
   \ITEE{#3}{ZSemadeni1971}{
      \BIb{#2}{Z. Semadeni}
         {Banach Spaces of Continuous Functions (Vol. I)}
         {\jRN{PWN}}{1971}{#1}}
   \ITEE{#3}{JPSerre1951}{
      \BIB{#2}{J.-P. Serre}
         {Homologie singuli\`{e}re des espaces fibr\'{e}s}
         {\jRN{AnnM}}{54}{1951}{425--505}{#1}}
   \ITEE{#3}{WSierpinski1928}{
      \BIB{#2}{W. Sierpi\'nski}
         {Sur les projections des ensembles compl\'{e}mentaires aux ensembles \textup{(A)}}
         {\jRN{FM}}{11}{1928}{117--122}{#1}}
   \ITEE{#3}{RCSteinlage1975}{
      \BIB{#2}{R.C. Steinlage}
         {On Haar measure in locally compact $T_2$ spaces}
         {\jRN{AmJM}}{97}{1975}{291--307}{#1}}
   \ITEE{#3}{JStochel,FHSzafraniec1989}{
      \BIB{#2}{J. Stochel and F.H. Szafraniec}
         {On normal extensions of unbounded operators. III. Spectral properties}
         {\jRN{PublRIMSKyoto}}{25}{1989}{105--139}{#1}}
   \ITEE{#3}{AHStone1962}{
      \BIB{#2}{A.H. Stone}
         {Absolute $\FFf_{\sigma}$-spaces}
         {\jRN{PAMS}}{13}{1962}{495--499}{#1}}
   \ITEE{#3}{AHStone1962a}{
      \BIB{#2}{A.H. Stone}
         {Non-separable Borel sets}
         {\jRN{DissM}}{28}{1962}{41 pages}{#1}}
   \ITEE{#3}{AHStone1972}{
      \BIB{#2}{A.H. Stone}
         {Non-separable Borel sets II}
         {\jRN{GTopA}}{2}{1972}{249--270}{#1}}
   \ITEE{#3}{MHStone1937}{
      \BIB{#2}{M.H. Stone}
         {Application of the theory of Boolean rings to general topology}
         {\jRN{TAMS}}{41}{1937}{375--481}{#1}}
   \ITEE{#3}{MHStone1948}{
      \BIB{#2}{M.H. Stone}
         {The generalized Weierstrass approximation theorem}
         {\jRN{MMag}}{21}{1948}{167--184}{#1}}
   \ITEE{#3}{WTakahashi1970}{
      \BIB{#2}{W. Takahashi}
         {A convexity in metric space and nonexpansive mappings, I}
         {\jRN{KodaiMSemRep}}{22}{1970}{142--149}{#1}}
   \ITEE{#3}{MTakesaki2002}{
      \BIb{#2}{M. Takesaki}
         {Theory of Operator Algebras I \textup{(Encyclopaedia of Mathematical Sciences, Volume 124)}}
         {Springer-Verlag, Berlin-Heidelberg-New York}{2002}{#1}}
   \ITEE{#3}{MTakesaki2003}{
      \BIb{#2}{M. Takesaki}
         {Theory of Operator Algebras II \textup{(Encyclopaedia of Mathematical Sciences, Volume 125)}}
         {Springer-Verlag, Berlin-Heidelberg-New York}{2003}{#1}}
   \ITEE{#3}{MTakesaki2003a}{
      \BIb{#2}{M. Takesaki}
         {Theory of Operator Algebras III \textup{(Encyclopaedia of Mathematical Sciences, Volume 127)}}
         {Springer-Verlag, Berlin-Heidelberg-New York}{2003}{#1}}
   \ITEE{#3}{TYTam1999}{
      \BIB{#2}{T.Y. Tam}
         {An extension of a result of Lewis}
         {\jRN{ELA}}{5}{1999}{1--10}{#1}}
   \ITEE{#3}{TYTam2000}{
      \BIB{#2}{T.Y. Tam}
         {Group majorization, Eaton triples and numerical range}
         {\jRN{LMLA}}{47}{2000}{11--28}{#1}}
   \ITEE{#3}{TYTam2002}{
      \BIB{#2}{T.Y. Tam}
         {Generalized Schur-concave functions and Eaton triples}
         {\jRN{LMLA}}{50}{2002}{113--120}{#1}}
   \ITEE{#3}{TYTam,WCHill2001}{
      \BIB{#2}{T.Y. Tam and W.C. Hill}
         {On $G$-invariant norms}
         {\jRN{LAA}}{331}{2001}{101--112}{#1}}
   \ITEE{#3}{AFTiman,IAVestfrid1983}{
      \BIB{#2}{A.F. Timan and I.A. Vestfrid}
         {Any separable ultrametric space can be isometrically imbedded in $l_2$}
         {\jRN{FAA}}{17}{1983}{70--71}{#1}}
   \ITEE{#3}{HTorunczyk1970}{
      \BIB{#2}{H. Toru\'{n}czyk}
         {Remarks on Anderson's paper ``On topological infinite deficiency''}
         {\jRN{FM}}{66}{1970}{393--401}{#1}}
   \ITEE{#3}{HTorunczyk1970a}{
      \BIb{#2}{H. Toru\'{n}czyk}
         {$G$-$K$-absorbing and skeletonized sets in metric spaces}
         {Ph.D. thesis, Inst. Math. Polish Acad. Sci., Warszawa}{1970}{#1}}
   \ITEE{#3}{HTorunczyk1972}{
      \BIB{#2}{H. Toru\'{n}czyk}
         {A short proof of Hausdorff's theorem on extending metrics}
         {\jRN{FM}}{77}{1972}{191--193}{#1}}
   \ITEE{#3}{HTorunczyk1974}{
      \BIB{#2}{H. Toru\'{n}czyk}
         {Absolute retracts as factors of normed linear spaces}
         {\jRN{FM}}{86}{1974}{53--67}{#1}}
   \ITEE{#3}{HTorunczyk1975}{
      \BIB{#2}{H. Toru\'{n}czyk}
         {On Cartesian factors and the topological classification of linear metric spaces}
         {\jRN{FM}}{88}{1975}{71--86}{#1}}
   \ITEE{#3}{HTorunczyk1978}{
      \BIB{#2}{H. Toru\'{n}czyk}
         {Concerning locally homotopy negligible sets and characterization of $l_2$-manifolds}
         {\jRN{FM}}{101}{1978}{93--110}{#1}}
   \ITEE{#3}{HTorunczyk1980}{
      \BiB{#2}{H. Toru\'{n}czyk}{Characterization of infinite-dimensional manifolds}{in:}
         {Proceedings of the International Conference on Geometric Topology (Warsaw, 1978)}
         {\jRN{PWN}}{1980}{431--437}{#1}}
   \ITEE{#3}{HTorunczyk1981}{
      \BIB{#2}{H. Toru\'{n}czyk}
         {Characterizing Hilbert space topology}
         {\jRN{FM}}{111}{1981}{247--262}{#1}}
   \ITEE{#3}{HTorunczyk1985}{
      \BIB{#2}{H. Toru\'{n}czyk}
         {A correction of two papers concerning Hilbert manifolds}
         {\jRN{FM}}{125}{1985}{89--93}{#1}}
   \ITEE{#3}{KTsuda1985}{
      \BIB{#2}{K. Tsuda}
         {A note on closed embeddings of finite dimensional metric spaces}
         {\jRN{BLondMS}}{17}{1985}{273--278}{#1}}
   \ITEE{#3}{PSUrysohn1925}{
      \BIB{#2}{P.S. Urysohn}
         {Sur un espace m\'{e}trique universel}
         {\jRN{CRASParis}}{180}{1925}{803--806}{#1}}
   \ITEE{#3}{PSUrysohn1927}{
      \BIB{#2}{P.S. Urysohn}
         {Sur un espace m\'{e}trique universel}
         {\jRN{BullSM}}{51}{1927}{43--64, 74--96}{#1}}
   \ITEE{#3}{VVUspenskij1986}{
      \BIB{#2}{V.V. Uspenskij}
         {A universal topological group with a countable basis}
         {\jRN{FAA}}{20}{1986}{86--87}{#1}}
   \ITEE{#3}{VVUspenskij1990}{
      \BIB{#2}{V.V. Uspenskij}
         {On the group of isometries of the Urysohn universal metric space}
         {\jRN{CMUC}}{31}{1990}{181--182}{#1}}
   \ITEE{#3}{VVUspenskij2004}{
      \BIB{#2}{V.V. Uspenskij}
         {The Urysohn universal metric space is homeomorphic to a Hilbert space}
         {\jRN{TopA}}{139}{2004}{145--149}{#1}}
   \ITEE{#3}{VVUspenskij2008}{
      \BIB{#2}{V.V. Uspenskij}
         {On subgroups of minimal topological groups}
         {\jRN{TopA}}{155}{2008}{1580--1606}{#1}}
   \ITEE{#3}{VSVaradarajan1963}{
      \BIB{#2}{V.S. Varadarajan}
         {Groups of automorphisms of Borel spaces}
         {\jRN{TAMS}}{109}{1963}{191--220}{#1}}
   \ITEE{#3}{AMVershik1998}{
      \BIB{#2}{A.M. Vershik}
         {The universal Urysohn space, Gromov's metric triples, and random metrics on the series of natural numbers}
         {\jRN{UspekhiMN}}{53}{1998}{57--64}{#1} English translation: \jRN{RussMS}{} \textbf{53} (1998), 921--928. 
         Correction: \jRN{UspekhiMN}{} \textbf{56} (2001), p. 207. English translation: \jRN{RussMS}{} \textbf{56} 
         (2001), p. 1015.}
   \ITEE{#3}{AMVershik2002}{
      \BIb{#2}{A.M. Vershik}
         {Random metric spaces and the universal Urysohn space}
         {Fundamental Mathematics Today. 10th anniversary of the Independent Moscow University. MCCME Publ.}{2002}{#1}}
   \ITEE{#3}{NWeaver1999}{
      \BIb{#2}{N. Weaver}
         {Lipschitz Algebras}
         {World Scientific}{1999}{#1}}
   \ITEE{#3}{JWeidmann1980}{
      \BIb{#2}{J. Weidmann}
         {Linear Operators in Hilbert Spaces}
         {(Graduate Texts in Mathematics, vol. 68) Springer-Verlag New York Inc.}{1980}{#1}}
   \ITEE{#3}{JEWest1969}{
      \BIB{#2}{J.E. West}
         {Approximating homotopies by isotopies in Fr\'{e}chet manifolds}
         {\jRN{BAMS}}{75}{1969}{1254--1257}{#1}}
   \ITEE{#3}{JEWest1969a}{
      \BIB{#2}{J.E. West}
         {Fixed-point sets of transformation groups on infinite-product spaces}
         {\jRN{PAMS}}{21}{1969}{575--582}{#1}}
   \ITEE{#3}{JEWest1970}{
      \BIB{#2}{J.E. West}
         {The ambient homeomorphy of infinite-dimensional Hilbert spaces}
         {\jRN{PacJM}}{34}{1970}{257--267}{#1}}
   \ITEE{#3}{JHCWhitehead1949}{
      \BIB{#2}{J.H.C. Whitehead}
         {Combinatorial homotopy I}
         {\jRN{BAMS}}{55}{1949}{213--245}{#1}}
   \ITEE{#3}{GTWhyburn1942}{
      \BIb{#2}{G. T. Whyburn}
         {Analytic Topology}
         {Amer. Math. Soc. Colloquium Publications (vol. XXVIII), New York}{1942}{#1}}
   \ITEE{#3}{RYTWong1967}{
      \BIB{#2}{R.Y.T. Wong}
         {On homeomorphisms of certain infinite dimensional spaces}
         {\jRN{TAMS}}{128}{1967}{148--154}{#1}}
   \ITEE{#3}{LYang,JZhang1987}{
      \BIB{#2}{L. Yang and J. Zhang}
         {Average distance constants of some compact convex space}
         {\jRN{JChinUST}}{17}{1987}{17--23}{#1}}
   \ITEE{#3}{PZakrzewski1993}{
      \BIB{#2}{P. Zakrzewski}
         {The existence of invariant $\sigma$-finite measures for a group of transformations}
         {\jRN{IsraelJM}}{83}{1993}{275--287}{#1}}
   \ITEE{#3}{PZakrzewski2002}{
      \BIb{#2}{P. Zakrzewski}
         {Measures on Algebraic-Topological Structures, Handbook of Measure Thoery}
         {E. Pap, ed., Elsevier, Amsterdam}{2002, 1091--1130}{#1}}
   \ITEE{#3}{KZhu2000}{
      \BIB{#2}{K. Zhu}
         {Operators in Cowen-Douglas classes}
         {\jRN{IllinoisJM}}{44}{2000}{767--783}{#1}}
   }
\newcommand{\mypaplist}[2][]{
   \ITEE{#2}{pn1}{
      \myBIB{Separate and joint similarity to families of normal operators}
         {\jRN[#1]{SM}}{149}{2002}{39--62}}
   \ITEE{#2}{pn2}{
      \myBIB{Locally arcwise connected metrizable spaces with the fixed point property are complete-metrizable}
         {\jRN[#1]{TopA}}{153}{2006}{1639--1642}}
   \ITEE{#2}{pn3}{
      \myBIB{Invariant measures for equicontinuous semigroups of continuous transformations 
         of a compact Hausdorff space}{\jRN[#1]{TopA}}{153}{2006}{3373--3382}}
   \ITEE{#2}{pn4}{
      \myBIB{Approximation of the Hausdorff distance by the distance of continuous surjections}
         {\jRN[#1]{TopA}}{154}{2007}{655--664}}
   \ITEE{#2}{pn5}{
      \myBIB{Generalized Haar integral}
         {\jRN[#1]{TopA}}{155}{2008}{1323--1328}}
   \ITEE{#2}{pn6}{
      \myBIB{Integration and Lipschitz functions}
         {\jRN[#1]{RCMP}}{57}{2008}{391--399}}
   \ITEE{#2}{pn7}{
      \myBIB{Canonical Banach function spaces generated by Urysohn universal spaces. Measures as Lipschitz maps}
         {\jRN[#1]{SM}}{192}{2009}{97--110}}
   \ITEE{#2}{pn8}{
      \myBIB{Urysohn universal spaces as metric groups of exponent $2$}
         {\jRN[#1]{FM}}{204}{2009}{1--6}}
   \ITEE{#2}{pn9}{
      \myBIB{Central subsets of Urysohn universal spaces}
         {\jRN[#1]{CMUC}}{50}{2009}{445--461}}
   \ITEE{#2}{pn10}{
      \myBIB[P. Niemiec and T.Y. Tam]{A representation of $G$-in\-variant norms for Eaton triple}
         {\jRN[#1]{JCA}}{18}{2011}{59--65}}
   \ITEE{#2}{pn11}{
      \myBIB{Functor of extension of contractions on Urysohn universal spaces}
         {\jRN[#1]{ACS}}{}{2009}{\texttt{DOI: 10.1007/s10485-009-9218-z}}}
   \ITEE{#2}{pn12}{
      \myBIB{Ultra-$\mM$-separability}
         {\jRN[#1]{TopA}}{157}{2010}{669--673}}
   \ITEE{#2}{pn13}{
      \myBIB{Functor of extension of $\Lambda$-isometric maps between central subsets 
         of the unbounded Urysohn universal space}{\jRN[#1]{CMUC}}{51}{2010}{541--549}}
   \ITEE{#2}{pn14}{
      \myBIB{Normed topological pseudovector groups}{\jRN[#1]{ACS}}{}{2010}
         {\ITE{\equal{#1}{}}{\texttt{DOI: 10.1007/s10485\-010-9239-7}}{\texttt{DOI: 10.1007/s10485-010-9239-7}}}}
   \ITEE{#2}{pn15}{
      \myBIB{Topological structure of Urysohn universal spaces}
         {\jRN[#1]{TopA}}{158}{2011}{352--359}}
   \ITEE{#2}{pnX1}{
      \myBAPP{Strengthened Stone-Weierstrass type theorem}
         {accepted for publication in \jRN[#1]{OpusM}}}
   \ITEE{#2}{pnX2}{
      \myBAPP{Functor of continuation in Hilbert cube and Hilbert space}
         {to appear in \jRN[#1]{FM}}}
   \ITEE{#2}{pnX3}{
      \myBAPP{Norm closures of orbits of bounded operators}
         {to appear.}}
   \ITEE{#2}{pnX4}{
      \myBAPP{A note on invariant measures}
         {accepted for publication in \jRN[#1]{OpusM}}}
   \ITEE{#2}{pnX6}{
      \myBAPP{Extending maps by injective $\sigma$-$Z$-maps in Hilbert manifolds}
         {to appear in \jRN[#1]{BullPol}}}
   \ITEE{#2}{pnX7}{
      \myBAPP{Spaces of measurable functions}
         {submitted to \jRN[#1]{CollectM}}}
   \ITEE{#2}{pnX8}{
      \myBAPP{Normal systems over ANR's, rigid embeddings and nonseparable absorbing sets}
         {submitted to \jRN[#1]{MichMJ}}}
   \ITEE{#2}{pnX9}{
      \myBAPP{Borel structure of the spectrum of a closed operator}
         {submitted to \jRN[#1]{JFA}}}
   \ITEE{#2}{pnX10}{
      \myBAPP{Central points and measures and dense subsets of compact metric spaces}
         {submitted to \jRN[#1]{NonlinA}}}
   \ITEE{#2}{pnX11}{
      \myBAPP{Generalized absolute values and polar decompositions of a bounded operator}
         {submitted to \jRN[#1]{IEOT}.}}
   \ITEE{#2}{pnX12}{
      \myBAPP{Ultrametrics, extending of Lipschitz maps and nonexpansive selections}
         {submitted to \jRN[#1]{MProcCambPhS}}}
   \ITEE{#2}{pnX13}{
      \myBAPP{A note on ANR's}
         {submitted to \jRN[#1]{TopA}}}
   }
\newcommand{\rank}{\operatorname{rank}}\newcommand{\grp}[1]{\langle#1\rangle}
\begin{document}

\title{Universal valued Abelian groups}
\myData
\begin{abstract}
The counterparts of the Urysohn universal space in category of metric spaces and the Gurari\v{\i} space in category
of Banach spaces are constructed for separable valued Abelian groups of fixed (finite) exponents (and for valued groups
of similar type) and their uniqueness is established. Geometry of these groups, denoted by $\GGG_r(N)$, is investigated
and it is shown that each of $\GGG_r(N)$'s is homeomorphic to the Hilbert space $l^2$. Those of $\GGG_r(N)$'s which are
Urysohn as metric spaces are recognized. `Linear-like' structures on $\GGG_r(N)$ are studied and it is proved that every
separable metrizable topological vector space may be enlarged to $\GGG_r(0)$ with a `linear-like' structure which extends
the linear structure of the given space.\\
\textit{2010 MSC: Primary 54H11; Secondary 22K45, 22A05, 46A99.}\\
Key words: valued Abelian group; Abelian group of finite exponent; Polish group; universal Polish Abelian group;
Urysohn universal metric space; extending continuous homomorphisms; universal disposition property; topological
pseudovector group.
\end{abstract}
\maketitle


\SECT{Introduction}

In several branches of mathematics, such as metric space theory or functional analysis, there are known examples
of the so-called \textit{universal} spaces. Probably the most famous example in this subject is the Banach space
$\CCc([0,1])$ of all real-valued continuous functions on $[0,1]$ which is universal for separable normed vector spaces
(over $\RRR$) as well as for separable metric spaces. Here universality means that every separable normed vector space
(respectively separable metric space) is isometrically-linearly isomorphic (repsectively isometric) to a linear subspace
(to a subset) of $\CCc([0,1])$. This is known as the Banach-Mazur theorem. Undoubtedly, the `concreteness' of the universal
space in this theorem merits its fame. However, it is no so easy to characterize $\CCc([0,1])$ among Banach (or metric)
spaces. There is also a general idea, in the spirit of Fra\"{\i}ss\'{e} limits, of constructing universal spaces which
are sumiltaneously uniquely determined by certain conditions related to the so-called \textit{universal disposition
property} (see below; with this terminology we follow e.g. Wies\l{}aw Kubi\'{s}). Surely, the first example of a universal
space of this kind was given by Urysohn \cite{ur1,ur2}. The Urysohn universal metric space, usually denoted by $\UUU$, is
a unique (up to isometry) separable complete metric space which has universal disposition property for finite metric
spaces. That is:
\begin{quote}
\textit{Every isometric map of a subset of a finite metric space $X$ into $\UUU$ is extendable to an isometric map
of the whole space $X$ into $\UUU$.}
\end{quote}
There is also a bounded version of $\UUU$. It may be defined in the following way. A metric space $X$ is said to be
\textit{Urysohn} iff
\begin{enumerate}[(U1)]\addtocounter{enumi}{-1}
\item $X$ is separable and complete,
\item (universality) every separable metric space of diameter no greater than the diameter of $X$ is isometrically
   embeddable into $X$,
\item ($\omega$-homogeneity) every isometry between two arbitrary finite subsets of $X$ is extendable to an isometry
   of $X$ onto $X$.
\end{enumerate}
A fundamental result on Urysohn spaces states that for every $r \in [0,\infty]$ there is a unique (up to isometry) Urysohn
metric space (denoted by $\UUU_r$) of diameter $r$. It is also easily shown that $\UUU_r$ is uniquely determined (among
separable complete metric spaces of diameter no greater than $r$) by universal disposition property for finite metric
spaces of diameter no greater than $r$.\par
If we pass from the category of metric spaces to Banach spaces, universal disposition property may be defined as follows
(cf. \cite{gur}). A Banach space $E$ is said to have \textit{universal disposition property (for finite dimensional Banach
spaces)} iff every isometric linear map of a linear subspace of a finite dimensional Banach space $F$ into $E$ is
extendable to an isometric linear map of $F$ into $E$. Gurari\v{\i} \cite{gur} has proved that there is no separable Banach
space with universal disposition property. In the same paper he has constructed a separable Banach space $\GGG$, which is
nowadays called the \textit{Gurari\v{\i}} space, with \textit{almost universal disposition property} defined
in the following way (the same space was also built by Lazar and Lindenstrauss \cite{l-l} in a different context):
\begin{quote}
\textit{Every isometric linear map $\psi\dd E \to \GGG$ of a linear subspace $E$ of a finite dimensional Banach space $F$
admits a linear extension $\widehat{\psi}\dd F \to \GGG$ such that $(1-\epsi) \|x\| \leqsl \|\widehat{\psi}(x)\| \leqsl
(1+\epsi) \|x\|$ for all $x \in F$ where $\epsi$ is an arbitrarily given real number from $(0,1)$.}
\end{quote}
Gurari\v{\i} has also shown that $\GGG$ is universal for separable Banach spaces (i.e. that every separable Banach space
admits an isometric linear embedding into $\GGG$). It was Lusky \cite{lus} who first proved the uniqueness of $\GGG$
up to isometric linear isomorphism (other proof of the uniqueness is contained in \cite{lu2}; the proof which involves
the back-and-forth technique was given by Solecki \cite{sol}). Thus, the Gurari\v{\i} space is a unique separable Banach
space with almost universal disposition property.\par
There is a striking resemblance between (almost) universal disposition properties of $\UUU$ and $\GGG$. In a sense, both
these spaces correspond to each other in categories of metric spaces and Banach spaces. The aim of this paper is to prove
the existence (together with uniqueness) of a universal (in the sense of embedding by isometric group homomorphisms) group
with universal disposition property (for finite groups) in the class (and some of their subclasses) of all separable valued
Abelian groups of class $\OOo_0$, defined by the following condition:
\begin{quote}
A valued Abelian group $(G,+,p)$ is said to be \textit{of class $\OOo_0$} iff $\lim_{n\to\infty} \frac{p(na)}{n} = 0$
for every $a \in G$.
\end{quote}
(In particular, if $p$ is a value on an Abelian group $(G,+)$, then each of the valued groups $(G,+,\min(p,1))$,
$(G,+,\frac{p}{p+1})$ and $(G,+,p^{\alpha})$ for $0 < \alpha < 1$ is of class $\OOo_0$.) What we exactly mean is explained
below.\par
Denote by $\Gg$ the class of all separable valued Abelian groups. Let $\Gg_{\infty}(0)$ and $\Gg_1(0)$ stand
for the classes of all groups $(G,+,p) \in \Gg$ of class $\OOo_0$ and, respectively, for which $p \leqsl 1$. Note that
$\Gg_1(0) \subset \Gg_{\infty}(0)$. Additionally, for natural $N > 1$ let $\Gg_{\infty}(N)\ (\subset \Gg_{\infty}(0))$
consist of all groups $(G,+,p) \in \Gg$ of exponent $N$. Finally, put $\Gg_1(N) = \Gg_1(0) \cap \Gg_{\infty}(N)$.\par
We say a function $\omega\dd [0,\infty) \to [0,\infty)$ is a \textit{modulus of continuity} iff
\begin{enumerate}[\upshape($\omega$1)]
\item $\omega$ is monotone increasing, that is, $\omega(x) \leqsl \omega(y)$ provided $0 \leqsl x \leqsl y$,
\item $\omega(x + y) \leqsl \omega(x) + \omega(y)$ for any $x, y \geqsl 0$,
\item $\lim_{t\to0^+} \omega(t) = \omega(0) = 0$.
\end{enumerate}
(Observe that we allow the zero function to be a modulus of continuity.)\par
The main two results of the paper are:

\begin{thm}{main1}
Let $r \in \{1,\infty\}$ and $N \in \{0,2,3,4,\ldots\}$. There is a unique (up to isometric group isomorphism) valued
Abelian group, denoted by $\GGG_r(N)$, with the following three properties:
\begin{enumerate}[\upshape(G1)]
\item $\GGG_r(N)$ is complete and $\GGG_r(N) \in \Gg_r(N)$,
\item whenever $(H,+,q)$ is a finite Abelian group (of exponent $N$, provided $N \neq 0$) with $q \leqsl r$, $K$ is
   a subgroup of $H$ and $\varphi\dd K \to \GGG_r(N)$ is an isometric group homomorphism, then for every $\epsi \in (0,1)$
   there is a group homomorphism $\varphi_{\epsi}\dd H \to \GGG_r(N)$ such that
   $$
   \max_{x \in K} p(\varphi(x) - \varphi_{\epsi}(x)) \leqsl \epsi
   $$
   and
   \begin{equation}\label{eqn:alm-iso}
   (1-\epsi) q(y) \leqsl p(\varphi_{\epsi}(y)) \leqsl (1+\epsi) q(y)
   \end{equation}
   for $y \in H$, where $p$ is the value of $\GGG_r(N)$,
\item if $N = 0$, finite rank elements of $\GGG_r(N)$ form a dense subset of $\GGG_r(N)$.
\end{enumerate}
\end{thm}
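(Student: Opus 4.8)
The plan is to follow the Fra\"{\i}ss\'{e}/Gurari\v{\i} scheme: build $\GGG_r(N)$ as the completion of a countable ``generic'' group produced by iterated amalgamation, and then establish uniqueness by an approximate back-and-forth argument. The conceptual core is an \emph{exact} amalgamation lemma inside the class $\Gg_r(N)$; from it the \emph{almost}-disposition property \POINT{G2} will be recovered by a relative (multiplicative) approximation of values, so that the genuine difficulty is separated into one clean algebraic lemma and one convergence argument.

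First I would prove the amalgamation lemma: given $G \in \Gg_r(N)$, a finite valued group $(H,q)$ (of exponent $N$ when $N \neq 0$, with $q \leqsl r$), a subgroup $K \leqsl H$, and an isometric homomorphism $\varphi \dd K \to G$, there exist $G' \in \Gg_r(N)$ containing $G$ isometrically and an isometric homomorphism $\psi \dd H \to G'$ with $\psi|_K = \varphi$ and $G' = \langle G \cup \psi(H)\rangle$. I would take the pushout $P = (G \oplus H)/\Delta$ with $\Delta = \{(\varphi(k),-k) : k \in K\}$ and equip it with the infimal-convolution value
\[
P([g,h]) = \min\Bigl(r,\ \inf_{k \in K}\bigl(p(g-\varphi(k)) + q(h+k)\bigr)\Bigr).
\]
Subadditivity and symmetry follow by splitting $k = k_1+k_2$ and replacing $k$ by $-k$. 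The triangle inequality together with $p(\varphi(k)) = q(k)$ shows $P([g,0]) = p(g)$ and $P([0,h]) = q(h)$, so both $G$ and $H$ embed isometrically and $\psi(k) = [0,k] = [\varphi(k),0]$ extends $\varphi$; truncation by $r$ keeps the value $\leqsl r$ without disturbing the (already $\leqsl r$) values on $G$ and $H$. Exponent $N$ passes to the quotient, and class $\OOo_0$ is preserved because $H$ is finite: taking $k=0$ gives $P([ng,nh]) \leqsl p(ng) + \max_{x\in H} q(x)$, whence $P(n[g,h])/n \to 0$ as $G$ is of class $\OOo_0$. Passing to the separated quotient yields the required $G' \in \Gg_r(N)$.

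For existence I would fix a countable family $\FFf \subset \Gg_r(N)$ of finite valued groups that is dense in the multiplicative sense (on each of the countably many finite underlying groups, and on each separated quotient, choose rationally---hence relatively---dense values). Using a bookkeeping enumeration of all amalgamation problems $(K,(H,q),\varphi)$ with $(H,q)\in\FFf$ and $\varphi$ mapping a subgroup into the current group, I would build an increasing chain $G_0 \subseteq G_1 \subseteq \cdots$ of \emph{finite} valued groups in $\Gg_r(N)$ in which every such problem is eventually solved exactly by the amalgamation lemma. Set $G_\infty = \bigcup_n G_n$ (countable, locally finite, in $\Gg_r(N)$) and let $\GGG_r(N)$ be its completion; one checks completeness, separability, exponent $N$, $p\leqsl r$, and class $\OOo_0$ (the last via $\limsup_n p(na)/n \leqsl p(a-a_k) \to 0$) are inherited, giving \POINT{G1}. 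Property \POINT{G2} then follows by approximation: given arbitrary finite $(H,q)$, $K\leqsl H$ and isometric $\varphi\dd K\to\GGG_r(N)$, first reduce to the separated case, then choose $(H,q')\in\FFf$ with $q'$ relatively $\epsi$-close to $q$ and an exactly $q'|_K$-isometric perturbation of $\varphi$ landing in the dense subgroup $G_\infty$; the exact extension furnished by the construction is the desired $\varphi_\epsi$. Finally, since only finite groups are ever amalgamated, $G_\infty$ is locally finite, so when $N=0$ its elements have finite order, i.e.\ finite rank, and their density gives \POINT{G3}.

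Uniqueness is where the main obstacle lies. Given two groups $\GGG,\GGG'$ satisfying \POINT{G1}--\POINT{G3}, I would run an approximate back-and-forth on their dense subgroups of finite-rank (finite-order) elements---guaranteed by \POINT{G3}, and automatic when $N\neq0$. Enumerating these elements on both sides and alternating, I would use \POINT{G2} to extend a finite isometric partial map on a subgroup $A$ to $\langle A,a\rangle$, obtaining at each stage a homomorphism that only $\epsi_n$-agrees with the previous one and is only $(1\pm\epsi_n)$-isometric. The hard part is the error control: one must choose $\epsi_n$ summable and arrange the successive perturbations so that the partial maps (and their partial inverses) converge uniformly on the dense sets to a single map that is \emph{exactly} isometric, bijective on the dense parts, and hence extends to an isometric group isomorphism of the completions. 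This is the group-theoretic analogue of Solecki's back-and-forth proof of the uniqueness of the Gurari\v{\i} space; the supporting perturbation lemmas (relative approximation of values and moving exactly isometric finite configurations into the dense subgroup while preserving homomorphism relations in the torsion setting) are routine but delicate, and together with the telescoping of errors constitute the bulk of the technical work.
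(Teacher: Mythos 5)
Your plan is structurally the paper's own: an exact amalgamation lemma via an infimal-convolution value on a pushout (this is precisely \LEM{A1}), a countable Fra\"{\i}ss\'{e}-type union of finite groups with values in a countable dense set $Q\subset\RRR_+$ (\THM{QG}), completion, recovery of \POINT{G2} by perturbing the data into the countable dense subgroup, and a back-and-forth with error control for uniqueness. All of that is sound, and your verification that $\Gg_r(N)$ is closed under the amalgamation (truncation by $r$, exponent, class $\OOo_0$ via finiteness of $H$) is correct.

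The step you dismiss as ``routine but delicate'' --- producing an exactly $q'$-isometric homomorphism $\varphi'\dd K\to G_\infty$ near a given isometric $\varphi\dd K\to\GGG_r(N)$ --- is in fact the crux, and the obvious density argument fails. An element of $G_\infty$ close to $\varphi(k_0)$ need not have the same order as $\varphi(k_0)$; if the order changes, the nearby assignment is not a homomorphism at all, or acquires a kernel and then cannot be almost isometric. The paper's resolution (\LEM{QG1} and \LEM{QG2}) is a bootstrap that uses the extension property of $G_\infty$ \emph{itself}: given a finite-rank $a$, take $c\in G_\infty$ near $a$, form the finite group $\grp{a,c}$, push its value into $Q$ by \LEM{valQ}, and apply the almost-extension property of $G_\infty$ to $\id\dd\grp{c}\to G_\infty$; the image of $a$ is an element $b\in G_\infty$ of the \emph{same} rank with $p(ja-jb)$ small for all $j$, and $\varphi'$ is then assembled generator by generator from a cyclic decomposition of $K$. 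You need this (or an equivalent device) before your approximation argument for \POINT{G2} can even start. The second substantive item you defer, the telescoping in the uniqueness proof, rests on a quantitative correction lemma --- an $\epsi$-almost isometric extension that is $\epsi$-close to $\varphi$ on $K$ can be replaced by a $\delta$-almost isometric one that is $\delta$-close on $K$ and $C\epsi$-close to the old map everywhere, with $C$ depending only on $\diam(H,q)$ --- which in turn needs a second, asymmetric amalgamation lemma (\LEM{A2}, used in \LEM{QG4}). The paper applies it once to upgrade \POINT{G2} to exact one-step extensions (\THM{UEP}) and then runs an exact back-and-forth, whereas you run the approximate back-and-forth directly; the two are equivalent in substance, but the correction lemma is unavoidable either way and is not supplied by your outline.
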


\begin{thm}{main2}
Let $r \in \{1,\infty\}$ and $N \in \{0,2,3,4,\ldots\}$ and let $p$ stand for the value of $\GGG_r(N)$.
\begin{enumerate}[\upshape(A)]
\item Let $(H,+,q) \in \Gg_r(N)$, $K$ be a closed subgroup of $H$ and $\varphi\dd K \to \GGG_r(N)$ be a continuous group
   homomorphism whose range has compact closure in $\GGG_r(N)$.
   \begin{enumerate}[\upshape(i)]
   \item There are moduli of continuity $\omega \not\equiv 0$, $\varrho \not\equiv 0$ and $\tau$ such that for each
      $x \in K$,
      \begin{equation}\label{eqn:om}
      p(\varphi(x)) \leqsl (\omega \circ q)(x),
      \end{equation}
      \begin{equation}\label{eqn:tau-rho}
      \tau(\dist_q(x,\ker \varphi)) \leqsl (\varrho \circ p)(\varphi(x))
      \end{equation}
      (where $\dist_q(x,\ker \varphi) = \inf\{q(x-y)\dd\ y \in \ker \varphi\}$) and
      \begin{equation*}\tag{$\omega$-$\varrho$-$\tau$}\begin{cases}
      \tau(t) + \varrho(s) \leqsl \varrho(\omega(t) + s) & \textup{for any } t, s \geqsl 0,\\
      \tau(1) \leqsl \varrho(1) & \textup{provided } r = 1.
      \end{cases}\end{equation*}
      If $\varphi$ is open as a map of $K$ onto $\varphi(K)$, the above $\tau$ may be chosen nonzero.
   \item For every modulus of continuity $\omega \not\equiv 0$ such that \eqref{eqn:om} is fulfilled for all $x \in K$
      there is a continuous group homomorphism $\varphi_{\omega}\dd H \to \GGG_r(N)$ extending $\varphi$ and satisfying
      \eqref{eqn:om} for each $x \in H$ with $\varphi$ replaced by $\varphi_{\omega}$, and such that $\ker \varphi_{\omega}
      = \ker \varphi$ and whenever \eqref{eqn:tau-rho} and \textup{($\omega$-$\varrho$-$\tau$)} are fulfilled for $\tau$
      and $\varrho \not\equiv 0$ (and $x \in K$), then \eqref{eqn:tau-rho} is satisfied for any $x \in H$ with $\varphi$
      replaced by $\varphi_{\omega}$.
   \end{enumerate}
   In particular, if $\varphi$ is isometric, it admits an extension being an isometric group homomorphism.
\item Every (topological) isomorphism between two compact subgroups of $\GGG_r(N)$ is extendable to an automorphism
   of the topological group $\GGG_r(N)$.
   What is more, if $K$ is a compact subgroup of $\GGG_r(N)$, $\varphi\dd K \to \GGG_r(N)$ is a group homomorphism,
   $\omega \not\equiv 0$ and $\tau \not\equiv 0$ are moduli of continuity such that \textup{($\omega$-$\tau$-$\id$)} and
   \textup{($\tau$-$\omega$-$\id$)} are fulfilled (where $\id$ denotes the identity map on $[0,\infty)$) and for each
   $x \in K$,
   \begin{equation}\label{eqn:om-tau}
   p(\varphi(x)) \leqsl (\omega \circ p)(x) \qquad \textup{and} \qquad p(x) \leqsl (\tau \circ p)(\varphi(x)),
   \end{equation}
   then there is a group automorphism $\psi\dd \GGG_r(N) \to \GGG_r(N)$ extending $\varphi$ such that \eqref{eqn:om-tau}
   is satisfied for each $x \in \GGG_r(N)$ with $\varphi$ replaced by $\psi$.\par
   In particular, if $\varphi$ is isometric, it admits an extension being an isometric automorphism.
\end{enumerate}
\end{thm}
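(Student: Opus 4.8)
The plan is to derive both parts from the almost universal disposition property \textup{(G2)} of \THM{main1} by an amalgamation-and-limit argument, treating \textup{(A)} as the core and deducing \textup{(B)} by running the construction back and forth. Throughout, $q$ denotes the value of $H$ (restricted to $K$ when needed) and $p$ the value of $\GGG_r(N)$.

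For \textup{(A)(i)} I would produce the moduli directly from the analytic data of $\varphi$. Since $\varphi$ is a continuous homomorphism with $\overline{\varphi(K)}$ compact, the function $t \mapsto \sup\{p(\varphi(x))\dd x \in K,\ q(x) \leqsl t\}$ is finite and, after replacing it by its least monotone subadditive majorant, yields a modulus satisfying \eqref{eqn:om} that can be taken $\not\equiv 0$; subadditivity is inherited from the triangle inequality for $q$ together with the homomorphism property. For the lower estimate one may always take $\tau \equiv 0$ and $\varrho = \id$, which satisfies \textup{($\omega$-$\varrho$-$\tau$)} trivially because $\varrho$ is monotone; the content lies in the case when $\varphi$ is open onto $\varphi(K)$. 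Then $\overline{\varphi(K)}$ being compact forces the induced bijection $K/\ker\varphi \to \varphi(K)$ to have a uniformly continuous inverse, whose modulus provides a nonzero $\tau$ with \eqref{eqn:tau-rho} for $\varrho = \id$; to force \textup{($\omega$-$\varrho$-$\tau$)} with this fixed pair $(\omega,\tau)$ I would enlarge $\varrho$ to a modulus for which $\varrho(\omega(t)+s) - \varrho(s) \geqsl \tau(t)$, constructing $\varrho$ by an iterative (integral) formula and rescaling so that $\tau(1) \leqsl \varrho(1)$ when $r = 1$.

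The heart is \textup{(A)(ii)}. First I would reduce to the countable case: choose a countable dense subgroup $D$ of $H$ with $D \cap K$ dense in $K$, extend $\varphi$ over $D$, and then use the uniform continuity encoded by $\omega$ together with completeness of $\GGG_r(N)$ to extend continuously to $H = \overline{D}$. Over $D$ I would enumerate the generators and build an increasing chain of finitely generated subgroups; for $N \neq 0$ each such subgroup is finite, while for $N = 0$ \textup{(G3)} lets me approximate generators by finite-rank elements and thus work inside finite subgroups up to arbitrarily small error. At each stage I adjoin one generator and invoke \textup{(G2)} to extend the current approximation almost isometrically with error $\epsi_n$, where $\sum_n \epsi_n < \infty$; the values $\varphi_n(d)$ then form a Cauchy sequence in $\GGG_r(N)$ (the increments on $K$ stay in the compact set $\overline{\varphi(K)}$ and the new coordinates are controlled by $\omega$), so $\varphi_{\omega}(d) := \lim_n \varphi_n(d)$ is well defined. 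That $\varphi_{\omega}$ is a homomorphism extending $\varphi$ and obeying \eqref{eqn:om} is preserved in the limit; the delicate point is $\ker\varphi_{\omega} = \ker\varphi$ together with the propagation of \eqref{eqn:tau-rho}, and here \textup{($\omega$-$\varrho$-$\tau$)} is used exactly as an additivity device: writing a new element as $x = k + m d$, the inequality $\tau(t)+\varrho(s) \leqsl \varrho(\omega(t)+s)$ lets the lower bound for $k$ and the $\omega$-controlled displacement coming from $d$ be combined into the lower bound for $x$.

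Finally, \textup{(B)} follows by applying \textup{(A)} in both directions. A compact subgroup $K$ has $\varphi(K)$ compact, so the range hypothesis of \textup{(A)} is automatic, and the second inequality in \eqref{eqn:om-tau} forces $\varphi$ injective; reading \eqref{eqn:om-tau} as \eqref{eqn:om} and \eqref{eqn:tau-rho} with $\ker\varphi = \{0\}$ and $\varrho = \id$, the hypotheses \textup{($\omega$-$\tau$-$\id$)} and \textup{($\tau$-$\omega$-$\id$)} are precisely the compatibility conditions needed to run the extension for $\varphi$ and for $\varphi^{-1}$. To obtain an automorphism rather than an endomorphism I would interleave these two extensions in a back-and-forth on a countable dense subgroup of $\GGG_r(N)$, alternately enlarging the domain (using the $\omega$-side) and the range (using the $\tau$-side) so that every element of a fixed countable dense set eventually lies in both; completing the resulting modulus-controlled bijection gives the desired $\psi$. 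Taking $\omega = \varrho = \tau = \id$ throughout recovers the isometric assertions stated at the ends of \textup{(A)} and \textup{(B)}. \textbf{The main obstacle} I anticipate is the bookkeeping in \textup{(A)(ii)}: maintaining the two-sided bounds simultaneously across the amalgamation steps, so that the limit both preserves the kernel and satisfies \eqref{eqn:tau-rho}, and ensuring in \textup{(B)} that the back-and-forth achieves surjectivity without destroying the moduli.
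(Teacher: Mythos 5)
Your (A)(i) is essentially the paper's argument (\COR{bdhom} plus \EXM{o-r-t}), and your reading of (B) as a back-and-forth over (A) matches the paper. The gaps are in (A)(ii). First, your inductive step ``invoke (G2) to extend the current approximation almost isometrically with error $\epsi_n$'' cannot be iterated as written: (G2) takes an \emph{isometric} homomorphism as input and returns only an almost-isometric one, so at the next stage you have nothing admissible to feed back in. The paper needs dedicated correction lemmas (\LEM{A2}, \LEM{A3}, \LEM{QG4}, \LEM{K4}) to upgrade an $\epsi$-almost isometric map to a $\delta$-almost isometric one while moving it by only $O(\epsi)$; without them your sequence need not be Cauchy and its limit need not restrict to $\varphi$ on $K$. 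Second, you never explain how the \emph{compact} subgroup $K$ enters the finite-group machinery. The paper spends half the proof (\LEM{K1}, \LEM{K2}, \LEM{K3}, \LEM{K4}, \THM{CEP}) showing that compact subgroups of $\GGG_r(N)$ are Hausdorff-approximable by finite subgroups and that isometric homomorphisms of a compact-plus-finite subgroup extend isometrically. For $N=0$ this is genuinely delicate: approximating a generator $a$ by a finite-rank $c$ with $p(a-c)$ small does \emph{not} make $\grp{c}$ Hausdorff-close to $\overline{\grp{a}}$, since the powers can drift apart; \LEM{K1} builds $b$ with $p(ja-jb)$ small for \emph{all} $j$ simultaneously via a bespoke value on $\grp{c}\times\ZZZ_m$ and (UEP). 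Your chain of finitely generated subgroups of a countable dense $D$ does not interact with $K$ in any controlled way.

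Third, the mechanism by which the prescribed $\omega$, the kernel equality, and \eqref{eqn:tau-rho} are to survive the limit is missing. The paper does not push these properties through an approximation at all: it replaces $q$ on $H$ by the semivalue $\lambda(x)=\inf\{\omega(q(x-h))+p(\varphi(h))\dd\ h\in K\}$ (\LEM{valom}), for which $\lambda^{-1}(\{0\})=\ker\varphi$ and the ($\omega$-$\varrho$-$\tau$) inequality propagates \eqref{eqn:tau-rho} by construction, passes to $H/\ker\varphi$ with the induced value, and thereby reduces (A)(ii) to the purely \emph{isometric} statement (CEP); all three delicate conclusions then follow from isometry with respect to $\lambda$. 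In your plan, applying (G2) to $(F_n,q|_{F_n})$ yields a map with $p(\psi(x))\approx q(x)$ rather than $p(\psi(x))\leqsl\omega(q(x))$, and I see no device that would produce the required upper bound for a given $\omega$, let alone preserve the kernel exactly in the limit. The value-modification step is the key idea you are missing; once you have it, the rest of your outline collapses into the paper's route.
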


Note that the point (G2) (in the statement of \THM{main1}) is a counterpart of the condition, proposed by Solecki
\cite{sol}, which characterizes the above mentioned Gurari\v{\i} space up to isometric linear isomorphism. Observe also
that (G3) is quite natural, since (G2) refers only to finite rank elements of the group.\par
\THM{main2} implies that every member of $\Gg_r(N)$ admits an embedding to $\GGG_r(N)$ by an isometric group homomorphism
and that in condition (G2) one may substitute $\epsi = 0$, that is, $\GGG_r(N)$ has universal disposition property
for finite groups of class $\Gg_r(N)$.\par
We shall also show that the group $\GGG_r(N)$ as a metric space is universal for separable metric spaces of diameter
no greater than $r$, that is, that every such space is isometrically embeddable into the metric space $\GGG_r(N)$. It turns
out that $\GGG_r(N)$ (as a metric space) is Urysohn iff $N \in \{0,2\}$. In particular, the groups $\GGG_r(2)$ are Boolean
Urysohn groups introduced by us in \cite{pn1}. What is more, for different pairs $(N,r)$ and $(M,s)$ the metric spaces
$\GGG_r(N)$ and $\GGG_s(M)$ are isometric iff $r=s$ and $\{N,M\} = \{0,2\}$. Also all the groups $\GGG_r(N)$ are pairwise
nonisomorphic as topological groups.\par
In Section 8 we prove that each of the topological spaces $\GGG_r(N)$ is homeomorphic to the Hilbert space $l^2$.
To establish this result we develop our earlier study of the so-called \textit{topological pseudovector groups},
introduced in \cite{pn2}. Namely, a \textit{pseudovector Abelian group} is a triple $(G,+,*)$ such that $(G,+)$ is
an Abelian group, `$*$' is an action of $[0,\infty)$ on $G$ satisfying the following axioms: $0 * x = 0_G$, $1 * x = x$,
$(st) * x = s * (t * x)$ for all $x \in G$ and $s, t \geqsl 0$, and for every $t \geqsl 0$ the function $G \ni x \mapsto
t * x \in G$ is a group homomorphism. If, in addition, $G$ is a topological group, it is said to be a \textit{topological
pseudovector group} provided the action `$*$', as a function of $[0,\infty) \times G$ into $G$, is continuous. A value
$p$ on the pseudovector Abelian group $G$ is called a \textit{norm} if $p(t * x) = t p(x)$ for any $t \geqsl 0$ and $x \in
G$, and it is called a \textit{subnorm} iff $p(x) \leqsl p(s * x) \leqsl s p(x)$ for each $s \geqsl 1$ and $x \in G$.
If a (sub)norm $p$ induces the topology on $G$ with respect to which the action `$*$' is continuous, then the quadruple
$(G,+,*,p)$ is called a \textit{(sub)normed} topological pseudovector group. In Section~7 we show that every metrizable
topological pseudovector group admits a subnorm inducing its topology. The main result on pseudovector structures
of $\GGG_r(N)$ is the following

\begin{thm}{main3}
Let $r \in \{1,\infty\}$ and $N \in \{0,2,3,4,\ldots\}$. Every nontrivial (sub)normed topological pseudovector group
$(G,+,*,\|\cdot\|_G)$ such that $(G,+,\|\cdot\|_G) \in \Gg_r(N)$ may be enlarged to a (sub)normed pseudovector topological
group $(\tilde{G},+,*,\|\cdot\|)$ such that the valued groups $\tilde{G}$ and $\GGG_r(N)$ are isometrically group
isomorphic.
\end{thm}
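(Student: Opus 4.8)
The plan is to avoid equipping the already–fixed group $\GGG_r(N)$ with a pseudovector action by hand, and instead to build the enlargement $\tilde G$ from scratch inside the category of (sub)normed topological pseudovector groups, recognising it as $\GGG_r(N)$ afterwards through the uniqueness clause of \THM{main1}. Concretely, it suffices to produce a complete separable (sub)normed pseudovector group $(\tilde G,+,*,\|\cdot\|)$ with $(\tilde G,+,\|\cdot\|) \in \Gg_r(N)$, containing $(G,+,*,\|\cdot\|_G)$ as a pseudovector subgroup carrying its own value, and such that the valued group $(\tilde G,+,\|\cdot\|)$ satisfies \textup{(G1)}--\textup{(G2)} (and \textup{(G3)} when $N=0$); for then \THM{main1} forces $(\tilde G,+,\|\cdot\|)$ to be isometrically group isomorphic to $\GGG_r(N)$, which is exactly the conclusion. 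Observe that a \emph{normed} pseudovector group with $r=1$ must be trivial, since $\|t*x\| = t\|x\| \leqsl 1$ for all $t \geqsl 0$ forces $\|x\|=0$ and hence $x=0$; thus in the norm case nontriviality already restricts us to $r=\infty$, and both cases can be handled uniformly through the subnorm sandwich inequalities, equality holding in the norm case.

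The heart of the argument is an amalgamation lemma within the pseudovector category. Suppose $A$ is a (sub)normed pseudovector group in $\Gg_r(N)$, $K$ is a finite subgroup of $A$, $(H,+,q)$ is a finite Abelian group (of exponent $N$ when $N\neq0$) with $q \leqsl r$ containing $K$ isometrically, and $\epsi \in (0,1)$ is given. I would produce a (sub)normed pseudovector group $B \in \Gg_r(N)$ together with an isometric inclusion $A \inj B$ fixing the pseudovector structure of $A$, and a group homomorphism $H \to B$ realising the inclusion $K \inj H$ up to the $(1\pm\epsi)$ estimate of \textup{(G2)}. The underlying group of $B$ begins as the amalgamated sum (pushout) of $A$ and $H$ over $K$ in the category of Abelian groups of exponent $N$, but one cannot stop there: the action `$*$' must be propagated, so $B$ is the sub-pseudovector-group generated by $A$ together with all new elements $t*h$ ($t \geqsl 0$, $h \in H$). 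The action is defined on the new generators so as to respect $0*x=0_B$, $1*x=x$ and $(st)*x = s*(t*x)$, and the value $\|\cdot\|_B$ is then prescribed by an explicit infimal-convolution formula mixing $\|\cdot\|_A$, $q$ and the action, chosen so that it restricts to $\|\cdot\|_A$ on $A$, is almost isometric on the image of $H$, satisfies the triangle inequality together with the sandwich inequalities $\|x\|_B \leqsl \|s*x\|_B \leqsl s\|x\|_B$ for $s\geqsl1$, and renders $(t,x)\mapsto t*x$ continuous.

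With the lemma in hand, $\tilde G$ is assembled by the usual inductive bookkeeping. Starting from $G_0 = G$ and a countable dense subgroup, one enumerates all amalgamation tasks $(H,K,\varphi,\epsi)$ over the finite subgroups occurring in the groups constructed so far (with $H$ ranging over a fixed countable family of finite Abelian groups of exponent $N$ with rational values $\leqsl r$ and $\epsi$ rational), and applies the lemma repeatedly to obtain an increasing chain $G_0 \subseteq G_1 \subseteq \cdots$ of (sub)normed pseudovector groups in $\Gg_r(N)$, each step addressing one task. Let $\tilde G$ be the metric completion of $\bigcup_n G_n$. The sandwich inequalities make each $m_t\dd x\mapsto t*x$ uniformly continuous and the action jointly continuous, so `$*$' extends to the completion and $\tilde G$ is again a (sub)normed topological pseudovector group lying in $\Gg_r(N)$ (in particular of class $\OOo_0$ when $N=0$, which is automatic there since the amalgamands are torsion); completeness and separability are clear. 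The bookkeeping yields the almost universal disposition property \textup{(G2)}, and for $N=0$ property \textup{(G3)} persists because each $m_t$ carries finitely generated subgroups into finitely generated ones, so the finite-rank elements contributed by the finite amalgamands remain finite-rank and dense. Thus $\tilde G$ meets \textup{(G1)}--\textup{(G3)} and \THM{main1} identifies it with $\GGG_r(N)$.

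The main obstacle is precisely this amalgamation lemma, and within it the interaction of the continuous action with the group pushout. Unlike the plain valued-group amalgamation used to build $\GGG_r(N)$ itself, where adjoining a finite group introduces only finitely many new elements, here every new element $h$ drags along its entire one-parameter orbit $\{t*h\dd\ t\geqsl0\}$, so one must define the value on continuum-many new points at once while honouring the triangle inequality, the subnorm sandwich, and joint continuity of the action, and still keep $(1-\epsi)q \leqsl \|\cdot\|_B \leqsl (1+\epsi)q$ on the copy of $H$. Verifying that the proposed infimal-convolution formula is a genuine value compatible with the extended action, rather than a mere invariant pseudometric, is the delicate computation; the remainder is routine countable bookkeeping, the completion argument, and the appeal to \THM{main1}.
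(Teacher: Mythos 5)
Your skeleton coincides with the paper's: a countable chain of amalgamations carried out inside the pseudovector category, followed by metric completion and identification of the result with $\GGG_r(N)$ (the paper identifies via \PRO{completion} rather than by re-verifying (G1)--(G3) directly, but that difference is immaterial). The problem is that both genuinely hard ingredients are left unproved, and your argument for (G3) is wrong. The amalgamation lemma you posit --- extend an isometric group homomorphism $\varphi\dd K \to A$ from a subgroup $K$ of a finite valued group $H$ to (almost) all of $H$ inside a larger (sub)normed PV group $B \supset A$ --- is exactly \THM{g-PV}, and its proof reduces to \THM{g-PVG}: one passes to the free PV group $L_0[H]$ and extends a Lipschitz $\kappa$-seminorm from $L_0[K]$ to $L_0[H]$ by an explicit infimal formula whose verification is a long multi-case induction on the number of steps of a step function. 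You correctly name the difficulty ("every new element drags along its one-parameter orbit") and then defer precisely the computation that constitutes the theorem; no formula is written down and nothing is verified, so this part is a plan rather than a proof.

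Separately, your verification of (G3) for $N=0$ fails. The initial group $G$ may contain no nonzero finite-rank elements at all (take $(E,+,*,\|\cdot\|\wedge 1)$ for a Banach space $E$: a nontrivial subnormed topological PV group in $\Gg_1(0)$). Then every finite subgroup of $G$ is trivial, so all of your amalgamation tasks glue finite groups to $G$ over $\{0\}$, and such an amalgam keeps the adjoined finite-rank elements at distance on the order of $\|g\|_G$ from each $g \in G$. Nothing in your construction approximates the elements of $G$ themselves by finite-rank elements, so the finite-rank elements are not dense in $\tilde G$ and the uniqueness clause of \THM{main1} cannot be invoked. This is exactly why the paper proves \LEM{N-PV}, \LEM{00-PV} and \THM{GrN-PV} (the PV analogue of \THM{o0}): a dedicated one-generator-plus-orbit enlargement making $\LlL(\bar G)_{fin}$ dense, which is another substantial computation preceding the Fra\"{\i}ss\'{e} induction. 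Two smaller defects: for $N=0$ "finitely generated" is not "finite rank", so that step of your (G3) argument does not parse; and the subnorm sandwich inequalities give only boundedness of each map $x \mapsto t*x$, not continuity of $t \mapsto t*x$, so joint continuity of the action on the completion is not automatic and requires tracking continuous and Lipschitz elements as in \PRO{cont} and \PRO{compl}.
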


In the above result one may erase the word `nontrivial' provided that $r = \infty$ or the final value $\|\cdot\|$ has to be
a subnorm rather than a norm. As a main application of the above result we obtain the above mentioned theorem which
says that $\GGG_r(N)$ is homeomorphic to $l^2$. (This result may be immediately deduced for $N = 0,2$ from the fact that
the metric spaces $\GGG_r(N)$ with $N=0,2$ are Urysohn and from Uspenskij's theorem \cite{usp} on the topology
of the Urysohn space.)\par
The proof of the existence of the groups $\GGG_r(N)$ is based on the general technique of Fra\"{\i}ss\'{e} limits. First
we construct a countable valued Abelian group $\QQQ\GGG_r(N)$, a counterpart of the so-called rational Urysohn metric
space, and then we prove that its completion is the group we search for. Although this approach is just an adaptation
of the original construction of the Urysohn space $\UUU$ described in \mbox{\cite{ur1,ur2}}, the details are more
complicated, mainly because finite groups admit no one-point extensions, in the opposite to finite metric spaces.
The unusual property of compact subgroups of $\GGG_r(N)$, formulated in \THM{main2}, corresponds to Huhunai\v{s}vili's
theorem \cite{huh} for compact subsets of $\UUU$ (which says that every isometry between two compact subsets of $\UUU$ is
extendable to an isometry of $\UUU$ onto itself). It is not so strong for $N \neq 0$, since every compact metric Abelian
group of finite exponent is totally disconnected. In contrast, $G_r(0)$ contains a copy of every metrizable compact Abelian
group, among which one may find groups which are universal (in the sense of topological embedding), as topological spaces,
for separable metrizable topological spaces, such as the countable infinite Cartesian power of $\RRR / \ZZZ$.\par
There are known examples of the so-called universal Polish groups, that is, of completely metrizable separable
(non-Abelian) topological groups $G$ such that every Polish group is isomorphic (as a topological group) to a closed
subgroup of $G$. For example, Uspenskij have shown that the homeomorphism group of the Hilbert cube \cite{us2} as well
as the isometry group of the Urysohn universal metric space \cite{us3} are (nonisomorphic) universal Polish groups
(cf. Remark just after Theorem~5.2 of \cite{me2}). In the opposite to this, the author knows no example of a universal
Polish Abelian group (i.e. a Polish Abelian group which is universal for Polish Abelian groups). In this paper we give two
such examples: $\GGG_1(0)$ and $\GGG_{\infty}(0)$. (Both these valued groups are of course `metrically' universal
for separable valued Abelian groups with values bounded by $1$.)\par
The part on pseudovector structures extends our earlier (introductory) work \cite{pn2} in this subject. The proofs
presented here are quite new and much more general. \THM{main3} generalizes and strengthens Theorem~4.3 of \cite{pn2}.
Since every norm on a nontrivial pseudovector group is unbounded, to equip the groups $\GGG_1(N)$ with `normed-like'
pseudovector structures we have to extend the notion of a norm to a subnorm, which is done in the recent paper.
It seems to be interesting whether one may distinguish a special subnormed topological pseudovector structure
on $\GGG_r(N)$ which will make this group universal for subnormed topological pseudovector groups of `$\Gg_r(N)$-like'
class (this cannot be infered from \THM{main3}). The one idea is to define a counterpart of the Gurari\v{\i} space for
pseudovector groups of suitable class (this is discussed in details in Section~9). We do not know yet whether such
a pseudovector group, denoted by $\PPP\VVV\GGG_r(N)$, exists and we leave this as an open problem. However, we prove that
if it only exists, it has to be isometrically group isomorphic to $\GGG_r(N)$ and that $\PPP\VVV\GGG_r(N)$ as a PV group
is unique up to isometric linear isomorphism. What is more, every subnormed topological PV group of suitable class
(to which $\PPP\VVV\GGG_r(N)$ belongs) is embeddable into $\PPP\VVV\GGG_r(N)$ by means of a isometric linear
homomorphism.\par
For well understanding of this exposition it is enough to know basic facts on metrizable and valued Abelian groups,
e.g. the material of Chapter~1 of \cite{ber}. The reader interested in Urysohn universal
space is referred to a survey article on the subject \cite{me2} or to \cite{gk2}, \cite{kat}, \cite{ur1,ur2}.
\vspace{0.3cm}

\textbf{Notation and terminology.} In this paper $\RRR$, $\QQQ$ and $\ZZZ$ stand for the sets of all real, all rational
and, respectively, all integer numbers. The symbol `$\id$' is reserved to denote the identity map. All groups are Abelian
and we use the additive notation. For simplicity, the action of any group is always denoted by (the same sign) `$+$' and
its neutral element is denoted by $0$. If $G$ is a group, $a \in G$ and $k \in \ZZZ$, the value at $k$ of a unique group
homomorphism of $\ZZZ$ into $G$ which sends $1$ to $a$ is denoted by $ka$ or $k \cdot a$. The group $G$ is said to be
\textit{of exponent $N$} (where $N \in \ZZZ$, $N \geqsl 2$) iff $N \cdot x = 0$ for any $x \in G$. The subgroup of $G$
consisting of all finite rank elements of $G$ is denoted by $G_{fin}$.\par
For $s,t \in [0,\infty]$ let $s \wedge t$ and $s \vee t$ stand for the minimum and the maximum of $s$ and $t$
(respectively). Similarly, if $f$ is a real-valued function and $t \in [0,\infty]$, $f \wedge t$ is a function with
the same domain as $f$ and $(f \wedge t)(x) = f(x) \wedge t$. In a similar manner we define $f \vee t$, and $f \wedge g$
and $f \vee g$ for two real-valued functions $f$ and $g$ with common domain.\par
A \textit{value} on a group $G$ is a function $p\dd G \to [0,\infty)$ such that for any $x, y \in G$,
\begin{enumerate}[(V1)]
\item $p(x) = 0 \iff x = 0$,
\item $p(-x) = p(x)$,
\item $p(x + y) \leqsl p(x) + p(y)$.
\end{enumerate}
If in the above the condition (V1) is replaced by `$p(0) = 0$', $p$ is called a \textit{semivalue}. Every value $p$ on $G$
induces an invariant metric $G \times G \ni (x,y) \mapsto p(x - y) \in [0,\infty)$. The topology induced by the value $p$
is the topology induced by the latter metric. So, we may speak of a separable, complete, compact (etc.) valued group.
Two values on a group are \textit{equivalent} iff they induce the same topology. A value on a topological group is said
to be \textit{compatible} if it induces the given topology of the group. By $\delta_G$ we denote the \textit{discrete}
value on $G$ defined by $\delta_G(g) = 1$ for $g \in G \setminus \{0\}$. For two valued groups $(G,+,p)$ and $(G',+,p')$
we shall write $(G',+,p') \supset (G,+,p)$ iff $G \subset G'$, $p'$ extends $p$ and the addition of $G'$ extends
the addition of $G$. If this happens, we say that $(G,+,p)$ is \textit{enlarged} to $(G',+,p')$.\par
Subgroups need not be closed. A subgroup generated by a subset $A$ of a group $G$ is denoted by $\grp{A}$. We write
$\grp{a_1,\ldots,a_n}$ instead of $\grp{\{a_1,\ldots,a_n\}}$. If $\psi\dd G \to H$ is a homomorphism between valued groups
$(G,+,p)$ and $(H,+,q)$, we use the term a \textit{group homomorphism} to underline that we make no additional assumptions
on the topological behavior of $\psi$. The group homomorphism is \textit{isometric} if $q(\psi(x)) = p(x)$ for each
$x \in G$. Adapting terminology of functional analysis, we say $\psi$ is \textit{$\epsi$-almost isometric} with $\epsi \in
(0,1)$ provided $(1-\epsi) p(x) \leqsl q(\psi(x)) \leqsl (1+\epsi) p(x)$ for every $x \in G$ (compare with
\eqref{eqn:alm-iso}).\par
The classes $\Gg$ and $\Gg_r(N)$ (with $r \in \{1,\infty\}$ and $N \in \ZZZ_+ \setminus \{1\}$) have the same meaning
as in Introduction. A metric space $(X,d)$ is said to be \textit{topologically} (respectively \textit{metrically})
\textit{universal} for a class of metric spaces provided every member of the class is homeomorphic (respectively
isometric) to a subset of $X$.\par
Whenever $f$ and $g$ are two real-valued functions defined on a common nonempty domain, the supremum distance
($\in [0,\infty]$) of $f$ and $g$ is denoted by $\|f - g\|_{\infty}$. Similarly, if $f$ and $g$ take values in a valued
group $(G,+,p)$ and there is no danger of confusion, we shall also write $\|f - g\|_{\infty}$ for the supremum distance
of $f$ and $g$ induced by $p$. The diameter of a metric space $(X,d)$ is denoted by $\diam X$ or $\diam (X,d)$. When
$(G,+,p)$ is a valued group, we shall also write $\diam (G,p)$.\par
For any subset $A$ of $\RRR$ let $A_+$ stand for the set $A \cap [0,\infty)$. In particular, $\RRR_+ = [0,\infty)$ and
$\ZZZ_+ = \{0,1,2,3,\ldots\}$. For two integers $k$ and $l$ we write $k | l$ if $k$ divides $l$ (i.e. if $l = mk$ for some
$m \in \ZZZ$). Notice that $k | 0$ for each $k \in \ZZZ$ and $0 | l$ iff $l = 0$.

\SECT{Preliminaries}

The results of this section will be used later. To make the paper better organized, the section is divided into parts.
In some of results we will deal with values on groups which takes values in a subset $Q$ of $\RRR$ such that:
\begin{equation}\label{eqn:Q}
Q_+ \textup{ is dense in } \RRR_+, \qquad 0 \in Q  \qquad \textup{and} \qquad Q_+ + Q_+ \subset Q.
\end{equation}
Such considerations will find applications in next sections.
\vspace{0.3cm}

\noindent\textbf{\thesection A. Boundedness.}
Denote by $\Omega$ the set of all moduli of continuity, that is, all functions (including the zero one)
satisfying conditions ($\omega$1)--($\omega$3). Additionally, let $\Omega^* = \Omega \setminus \{0\}$. Note that whenever
$\omega \in \Omega^*$ and $p$ is a (semi-)value on a group $G$, so is $\omega \circ p$ and if $q$ is a semivalue on $G$
such that
\begin{equation}\label{eqn:bdval}
q \leqsl \omega \circ p,
\end{equation}
then $q$ is continuous with respect to $p$. However, it may turn out that for some continuous (with respect to $p$)
semivalue $q$ there is no $\omega \in \Omega$ witnessing \eqref{eqn:bdval} (for example, this happens if $p$ is bounded
and $q$ is not, e.g. $p = q \wedge 1$). So, the condition \eqref{eqn:bdval} is stronger than continuity. Whenever it is
fulfilled for some $\omega \in \Omega^*$, we say that $q$ is \textit{$p$-bounded}. With every group homomorphism $\psi\dd
(G,+,p) \to (G',+,p')$ we may associate a semivalue $p_{\psi}$ on $G$: $p_{\psi} = p' \circ \psi$. It is easily seen that
$\psi$ is continuous iff $p_{\psi}$ is continuous (with respect to $p$). We say that $\psi$ is \textit{bounded} iff
$p_{\psi}$ is $p$-bounded. That is:
$$
\psi \textup{ is bounded} \iff \exists\,\omega \in \Omega\dd\ p' \circ \psi \leqsl \omega \circ p.
$$
The following are left as exercises. Some of them are quite easy, other are well-known results of real analysis:
\begin{enumerate}[\upshape(MC1)]
\item For every $\omega \in \Omega$ there is a finite limit $\lim_{x\to\infty} \frac{\omega(x)}{x}$ and it is equal to
   $\inf_{x>0} \frac{\omega(x)}{x}$.
\item If $\omega, \tau \in \Omega$, then $\omega \circ \tau \in \Omega$.
\item Every $\omega \in \Omega$ is uniformly continuous, and $\omega^{-1}(\{0\}) = \{0\}$ provided $\omega \not\equiv 0$.
\item If $\omega \in \Omega^*$ and $(x_n)_{n=1}^{\infty} \subset \RRR_+$, then $\lim_{n\to\infty} \omega(x_n) = 0 \iff
   \lim_{n\to\infty} x_n = 0$.
\end{enumerate}
For each $r \in [0,\infty)$ denote by $\Omega_r$ the set of all $\omega \in \Omega$ such that $\lim_{x\to\infty}
\frac{\omega(x)}{x} \leqsl r$ (cf. (MC1)). For us, the set $\Omega_0$ is of great importance.

\begin{exm}{om0}
Every bounded modulus of continuity belongs to $\Omega_0$. If $\omega \in \Omega$ and $\tau \in \Omega_0$, then
$\omega \circ \tau, \tau \circ \omega \in \Omega_0$. The following functions are members of $\Omega_0$: $x \mapsto
x \wedge 1$, $x \mapsto \frac{x}{x+1}$ and $x \mapsto x^{\alpha}$ ($0 < \alpha < 1$).
\end{exm}

The next result is well-known. It is a variation of \cite[\S1, Theorem~1]{a-p}. We shall use it
to characterize bounded group homomorphisms.

\begin{lem}{bdom}
For a function $f\dd \RRR_+ \to \RRR_+$ \tfcae
\begin{enumerate}[\upshape(i)]
\item there is $\omega \in \Omega$ such that $f \leqsl \omega$,
\item $\limsup_{x\to0^+} f(x) = f(0) = 0$ and $$r := \limsup_{n\to\infty} \frac{\sup f([0,2^n])}{2^n} < \infty.$$
\end{enumerate}
What is more, if \textup{(ii)} is fulfilled [and $f$ is bounded by $M$], there is $\omega \in \Omega_{2r}$ [bounded by $M$]
such that $f \leqsl \omega$.
\end{lem}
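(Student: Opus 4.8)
The plan is to prove the substantive implication (ii)$\Rightarrow$(i) by exhibiting $\omega$ as the least concave majorant of $f$ (after first replacing $f$ by its monotone envelope), and to read off the converse and the quantitative refinements from elementary properties of concave functions on $\RRR_+$. First I would dispose of (i)$\Rightarrow$(ii): if $f \leqsl \omega$ with $\omega \in \Omega$, then $0 \leqsl f(0) \leqsl \omega(0) = 0$ and $0 \leqsl \limsup_{x\to0^+} f(x) \leqsl \lim_{x\to0^+}\omega(x) = 0$ by ($\omega$3), while ($\omega$1) gives $\sup f([0,2^n]) \leqsl \omega(2^n)$, so that $r \leqsl \lim_{x\to\infty}\omega(x)/x < \infty$ by (MC1).

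For (ii)$\Rightarrow$(i), set $g(x) = \sup f([0,x])$. Then $g \geqsl f$, $g$ is monotone increasing, $g(0) = 0$, and $\lim_{x\to0^+} g(x) = 0$ because $\limsup_{x\to0^+} f = f(0) = 0$; moreover $g(2^n) = \sup f([0,2^n])$, whence $\limsup_n g(2^n)/2^n = r$. Using monotonicity and comparing $x \in [2^{n-1},2^n]$ with $2^n$, I would record two consequences of the dyadic condition: $g$ grows at most linearly, i.e.\ $g(x) \leqsl Cx + D$ for all $x$ with (say) $C = 2(r+1)$ and $D = g(2^{n_0})$; and $\limsup_{x\to\infty} g(x)/x \leqsl 2r$. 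Now define $\omega$ to be the pointwise infimum of all affine functions dominating $g$. Being an infimum of affine functions it is concave; it satisfies $\omega \geqsl g \geqsl f$; and it is finite since $x \mapsto Cx + D$ is one such affine competitor.

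The axioms now follow from standard facts about concave functions on $\RRR_+$: a concave function that is bounded below is nondecreasing, so ($\omega$1) holds because $\omega \geqsl 0$; and a concave function with $\omega(0) \geqsl 0$ is subadditive, giving ($\omega$2). The delicate point, which I expect to be the \textbf{main obstacle}, is ($\omega$3): ruling out a downward jump of the concave majorant at the origin (a concave $\omega$ with $\omega(0)=0$ may well have $\lim_{x\to0^+}\omega(x) > 0$). I would settle it by an explicit affine majorant: given $\epsi > 0$, choose $\delta > 0$ with $g \leqsl \epsi$ on $[0,\delta]$ and set $K = \sup_{x > \delta}\bigl(g(x)-\epsi\bigr)/x$, which is finite by the linear bound. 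Then $\ell(x) = \epsi + Kx$ satisfies $\ell \geqsl g$, so $\omega(x) \leqsl \epsi + Kx$; letting $\epsi \to 0$ yields $\omega(0) = 0$ and $\lim_{x\to0^+}\omega(x) = 0$. Hence $\omega \in \Omega$ and $f \leqsl \omega$.

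For the quantitative addendum I would use that, for concave $\omega$ with $\omega(0) = 0$, the ratio $\omega(x)/x$ is nonincreasing, so $\lim_{x\to\infty}\omega(x)/x$ exists. Putting $s = \limsup_{x\to\infty} g(x)/x \leqsl 2r$ and choosing, for each $\epsi > 0$, a large $X$ with $g(x) \leqsl (s+\epsi)x$ for $x \geqsl X$, the affine function $x \mapsto (s+\epsi)x + g(X)$ dominates $g$, so $\lim_{x\to\infty}\omega(x)/x \leqsl s + \epsi$; letting $\epsi \to 0$ gives $\omega \in \Omega_{2r}$. Finally, in the bounded case $f \leqsl M$ one has $g \leqsl M$, and the constant function $M$ is itself an affine majorant, so $\omega \leqsl M$; here $r = 0$ and $\Omega_{2r} = \Omega_0$, in agreement with the stated conclusion.
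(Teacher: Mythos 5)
Your proof is correct. A point of comparison worth noting: the paper does not prove this lemma at all --- it is dismissed as ``well-known'' with a citation to \cite[\S1, Theorem~1]{a-p} --- so your argument is a self-contained substitute rather than a variant of something in the text. The route you take (monotone envelope $g(x)=\sup f([0,x])$, then the infimum of all affine majorants of $g$) is the classical construction of a concave, hence monotone and subadditive, majorant, and you correctly identify the only delicate point: preventing the concave envelope from jumping at $0$, which your affine competitors $\ell(x)=\epsi+Kx$ handle. The dyadic hypothesis in (ii) enters exactly where you use it, via $g(x)\leqsl g(2^n)$ for $x\in[2^{n-1},2^n]$, which is what produces both the finite affine majorant and the factor $2$ in $\Omega_{2r}$; the membership in $\Omega_{2r}$ via monotonicity of $\omega(x)/x$ and the bounded refinement (where $r=0$) are also right. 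One microscopic repair: in the ($\omega$3) step you should use $K\vee 0$ in place of $K$, since if $g\leqsl\epsi$ globally then $K$ can be negative and $\ell(x)=\epsi+Kx$ need not dominate $g$ on $(0,\delta]$; with $K\vee 0$ the function is still an affine majorant and the bound $\omega(x)\leqsl\epsi+(K\vee 0)x$ still gives $\omega(0)=\lim_{x\to 0^+}\omega(x)=0$. This is a one-character fix, not a gap.
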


As an immediate consequence of \LEM{bdom} we obtain

\begin{pro}{bdhom}
Let $\psi\dd (G,+,p) \to (G',+,p')$ be a group homomorphism and let
$$
f\dd \RRR_+ \ni \xi \mapsto \sup \{p'(\psi(x))\dd\ x \in G,\ p(x) \leqsl \xi\} \in [0,\infty].
$$
Then:
\begin{enumerate}[\upshape(a)]
\item $\psi$ is continuous iff $\lim_{t\to0^+} f(t) = 0$,
\item $\psi$ is bounded iff $\lim_{t\to0^+} f(t) = 0$ and $\limsup_{t\to\infty} \frac{f(t)}{t} < \infty$.
\end{enumerate}
\end{pro}

\begin{cor}{bdhom}
If $\psi\dd (G,+,p) \to (G',+,p')$ is a continuous group homomorphism such that the set $\psi(G)$ is bounded in the metric
space $(G',p')$, then $\psi$ is bounded.
\end{cor}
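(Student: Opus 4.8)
The plan is to deduce the corollary directly from \PRO{bdhom}, so the only real work is to verify the two conditions appearing in its part (b). Throughout, write $f$ for the function associated to $\psi$ exactly as in the statement of \PRO{bdhom}, namely $f(\xi) = \sup\{p'(\psi(x))\dd\ x \in G,\ p(x) \leqsl \xi\}$.

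First I would translate the hypothesis that $\psi(G)$ is bounded into a uniform bound on the numbers $p'(\psi(x))$. The key point is that the metric on $G'$ is invariant and $\psi$ is a group homomorphism: for any $x, y \in G$ one has $\psi(x) - \psi(y) = \psi(x - y)$, and as $x, y$ range over $G$ the difference $x - y$ ranges over all of $G$. Hence the diameter of $\psi(G)$ equals $\sup\{p'(\psi(z))\dd\ z \in G\}$. Boundedness of $\psi(G)$ thus yields a finite constant $M := \diam \psi(G) = \sup_{z \in G} p'(\psi(z)) < \infty$, and consequently $f(\xi) \leqsl M$ for every $\xi \in \RRR_+$. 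This recasting is the one genuinely substantive step; everything afterwards is immediate.

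Next, since $\psi$ is assumed continuous, \PRO{bdhom}(a) gives $\lim_{t\to0^+} f(t) = 0$, which is precisely the first of the two conditions in part (b). For the second condition, the uniform bound $f \leqsl M$ forces $\frac{f(t)}{t} \leqsl \frac{M}{t}$ for all $t > 0$, whence $\limsup_{t\to\infty} \frac{f(t)}{t} \leqsl \lim_{t\to\infty} \frac{M}{t} = 0 < \infty$. Both hypotheses of \PRO{bdhom}(b) are therefore satisfied, and that proposition lets me conclude that $\psi$ is bounded.

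I do not anticipate any serious obstacle here: once the boundedness of $\psi(G)$ has been rewritten as the estimate $f \leqsl M$, the conclusion is a one-line consequence of \PRO{bdhom}. The only point requiring any care is the bookkeeping in the first paragraph, where invariance of the metric and the homomorphism property are used to pass from ``$\psi(G)$ has finite diameter'' to ``$p'(\psi(z))$ is uniformly bounded over $z \in G$''.
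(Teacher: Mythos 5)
Your proof is correct and follows exactly the route the paper intends: the corollary is stated without proof as an immediate consequence of \PRO{bdhom}, and your argument—translating boundedness of $\psi(G)$ into a uniform bound $f \leqsl M$ via invariance of the metric and the homomorphism property, then checking both conditions of part (b)—is precisely that deduction.
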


\begin{cor}{conthom}
A group homomorphism $\psi\dd (G,+,p) \to (G',+,p')$ is continuous iff there are $\omega, \tau \in \Omega^*$ such that
$\tau \circ p' \circ \psi \leqsl \omega \circ p$.
\end{cor}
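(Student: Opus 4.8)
The plan is to obtain both directions from \PRO{bdhom} (which characterises continuity through the auxiliary function $f$) and from the elementary property (MC4) of moduli of continuity, together with ($\omega$3). The decisive idea is that the extra factor $\tau$ on the left converts the \emph{boundedness} majorization $p' \circ \psi \leqsl \omega \circ p$ of \LEM{bdom} --- which is false as soon as $p$ is bounded while $p' \circ \psi$ is not --- into a weaker estimate encoding exactly continuity. The natural choice will be the truncation $\tau = \id \wedge 1$, a nonzero bounded modulus of continuity (so $\tau \in \Omega^*$, cf. \EXM{om0}).

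For the forward implication I would assume $\psi$ continuous and reuse the function $f(\xi) = \sup\{p'(\psi(x))\dd\ x \in G,\ p(x) \leqsl \xi\}$ from \PRO{bdhom}. Because $\psi(0) = 0$ and $p$ is a value one has $f(0) = 0$, $f$ is monotone increasing, and $\lim_{t\to0^+} f(t) = 0$ by \PRO{bdhom}(a). I would then apply \LEM{bdom} to $g := f \wedge 1$: since $g$ is bounded by $1$, the growth parameter $r = \limsup_{n\to\infty} 2^{-n} \sup g([0,2^n])$ is automatically $0$, while $\limsup_{x\to0^+} g(x) = g(0) = 0$; hence the quantitative part of \LEM{bdom} yields $\omega \in \Omega_0$, bounded by $1$, with $g \leqsl \omega$. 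Evaluating at $\xi = p(x)$ gives, for all $x \in G$, the chain $(\tau \circ p' \circ \psi)(x) = p'(\psi(x)) \wedge 1 \leqsl f(p(x)) \wedge 1 = g(p(x)) \leqsl (\omega \circ p)(x)$, which is the required inequality.

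For the converse I would start from $\tau \circ p' \circ \psi \leqsl \omega \circ p$ with $\omega, \tau \in \Omega^*$ and test continuity of the semivalue $p_{\psi} = p' \circ \psi$ at $0$: if $p(x_n) \to 0$ then $\omega(p(x_n)) \to 0$ by ($\omega$3), so $\tau(p'(\psi(x_n))) \to 0$, and (MC4) applied to $\tau \in \Omega^*$ forces $p'(\psi(x_n)) \to 0$. The inequality $|p_{\psi}(x) - p_{\psi}(y)| \leqsl p_{\psi}(x - y)$, coming from subadditivity of $p'$ and additivity of $\psi$, then upgrades continuity at $0$ to continuity everywhere, i.e. to continuity of $\psi$.

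I expect no serious obstacle; the only points needing care are the degenerate homomorphism and the nonvanishing of $\omega$. If $\psi \not\equiv 0$ then $g \not\equiv 0$ forces $\omega \geqsl g \not\equiv 0$, so indeed $\omega \in \Omega^*$ as demanded; if $\psi \equiv 0$ the left-hand side vanishes identically and one may simply take $\omega = \tau = \id \in \Omega^*$. Truncating by $1$ is precisely what makes $g$ bounded, hence $r = 0$, so that \LEM{bdom} returns a modulus in $\Omega_0$ with no growth hypothesis left to verify.
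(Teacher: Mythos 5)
Your proof is correct and follows essentially the same route as the paper: sufficiency of the inequality via (MC4) applied to $\tau$, and necessity by taking $\tau = \id \wedge 1$ and applying \LEM{bdom} to the truncated function $f \wedge 1$ (which is exactly the ``suitable $f\dd \RRR_+ \to [0,1]$'' the paper alludes to). The extra care you take with the degenerate case $\psi \equiv 0$ and with $\omega \not\equiv 0$ is sound and fills in details the paper leaves implicit.
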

\begin{proof}
Sufficiency is clear (thanks to (MC4)). To prove the necessity, put $\tau(t) = t \wedge 1$ and apply \LEM{bdom}
for suitable $f\dd \RRR_+ \to [0,1]$.
\end{proof}
\vspace{0.3cm}

\noindent\textbf{\thesection B. Extending a value.}

\begin{lem}{valQ}
Let $Q$ be as in \eqref{eqn:Q}. Let $(D,+,\lambda)$ be a finite valued group and $D_0$ its subgroup such that
$\lambda(D_0) \subset Q$. Then for every $\epsi > 0$ there is a value $\bar{\lambda}$ on $D$ extending
$\lambda\bigr|_{D_0}$ such that $\bar{\lambda}(D) \subset Q$ and $\|\lambda - \bar{\lambda}\|_{\infty} \leqsl \epsi$.
What is more, if $\lambda \leqsl 1$, then $\bar{\lambda} \leqsl 1$.
\end{lem}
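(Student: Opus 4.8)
The plan is to realize $\bar{\lambda}$ as the \emph{largest value dominated by a suitably chosen $Q_+$-valued target}. First I would pick a symmetric function $t\dd D \to Q_+$ with $t(0)=0$ and $t(x)>0$ for $x\neq 0$, agreeing with $\lambda$ on $D_0$ and uniformly close to $\lambda$ off $D_0$, and then set
\[
\bar{\lambda}(x) = \min\Bigl\{ \sum_{i=1}^n t(x_i) \dd\ n \geqsl 1,\ x_1,\ldots,x_n \in D,\ x_1 + \cdots + x_n = x \Bigr\}.
\]
Because $D$ is finite and $t$ is bounded below by a positive constant on $D \setminus \{0\}$, any near-optimal decomposition uses a bounded number of nonzero summands, so the infimum is attained and equals a \emph{finite} sum of $t$-values; since $0 \in Q$ and $Q_+$ is closed under addition (by \eqref{eqn:Q}), this lands in $Q$. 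Symmetry of $t$ gives (V2), concatenation of decompositions gives (V3), and the uniform positive lower bound gives (V1). Thus $\bar{\lambda}$ is automatically a $Q$-valued value for \emph{any} admissible $t$, and the whole problem reduces to choosing $t$ correctly.

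For the target I set $t = \lambda$ on $D_0$ (legitimate since $\lambda(D_0) \subset Q$), and on each $\pm$-pair off $D_0$ I use density of $Q_+$ to pick $t(x)=t(-x) \in Q_+$ within $\epsi$ of $\lambda(x)$ (an element with $2x = 0$ is its own negative, so no conflict arises). For the bare statement I would round \emph{up}, arranging $\lambda \leqsl t \leqsl \lambda + \epsi$. Then for every $x$ and every decomposition $x = \sum_i x_i$ one has $\sum_i t(x_i) \geqsl \sum_i \lambda(x_i) \geqsl \lambda(x)$ by (V3) for $\lambda$, whence $\bar{\lambda} \geqsl \lambda$; combined with $\bar{\lambda} \leqsl t$ (take $n=1$) this gives at once $\lambda \leqsl \bar{\lambda} \leqsl \lambda + \epsi$, so $\|\lambda - \bar{\lambda}\|_{\infty} \leqsl \epsi$ and exact agreement $\bar{\lambda} = \lambda$ on $D_0$.

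The refinement $\lambda \leqsl 1 \Rightarrow \bar{\lambda} \leqsl 1$ is where the main obstacle lies. Since $\bar{\lambda} \leqsl t$, it suffices to keep $t \leqsl 1$; this is free on $D_0$ and for off-$D_0$ points with $\lambda(x) < 1$ (round up into the nonempty set $Q_+ \cap (\lambda(x), \min(\lambda(x)+\epsi,1))$). The genuine difficulty is an off-$D_0$ element with $\lambda(x) = 1$: here $1$ need not belong to $Q$, so I cannot keep $t(x) \geqsl \lambda(x)$ while staying in $Q_+ \cap [0,1]$, and must instead round \emph{down}, taking $t(x) \in Q_+ \cap (1-s,1)$ for a small $s>0$. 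The hard part is that this down-rounding endangers exact agreement on $D_0$, as a tight relation $z = \sum_i x_i$ with $z \in D_0$ could now satisfy $\sum_i t(x_i) < \lambda(z)$. The saving observation is that any decomposition of a $z \in D_0$ involving a down-rounded term (which, being off $D_0$, cannot equal $z$) must contain a further nonzero summand, so $\sum_i \lambda(x_i) \geqsl 1 + c_0 \geqsl \lambda(z) + c_0$ with $c_0 = \min\{\lambda(y)\dd y \neq 0\} > 0$; choosing $s$ (and the up-rounding scale $\epsi$, relative to the uniformly bounded number of summands in optimal decompositions) small enough makes the total deficit strictly below $c_0$, so $\sum_i t(x_i) > \lambda(z)$ persists and $\bar{\lambda}(z) = \lambda(z)$ survives. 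I expect verifying this reconciliation of the cap $\bar{\lambda}\leqsl 1$ with exactness on $D_0$, precisely when the extremal value $1$ falls outside $Q$, to be the only delicate step.
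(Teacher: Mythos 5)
Your construction of $\bar{\lambda}$ as the infimal convolution over all decompositions $x=\sum_i x_i$ of a symmetric $Q_+$-valued target $t$ that rounds $\lambda$ up off $D_0$ is exactly the paper's construction (there the target is written $\lambda+\lambda'$ with $\lambda'=0$ on $D_0$ and $\lambda'(h)\in[0,\epsi]$ chosen so that $\lambda(h)+\lambda'(h)\in Q$), and your verification of the main assertions matches: $\lambda\leqsl\bar{\lambda}\leqsl t$, finiteness of $D$ forces the infimum to be attained as a finite sum of elements of $Q_+$, hence lies in $Q$. The two arguments part ways only on the refinement $\lambda\leqsl 1\Rightarrow\bar{\lambda}\leqsl 1$. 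You attack it by rounding \emph{down} at off-$D_0$ points with $\lambda(x)=1$ when $1\notin Q$, and then run a perturbation argument (bounded number of nonzero summands in any competitive decomposition, total deficit below $\mu:=\min\{\lambda(y)\dd\ y\neq 0\}$) to rescue exactness on $D_0$; this is correct but delicate, as you note. The paper instead keeps the rounded-up $\bar{\lambda}$ untouched and simply truncates at the end: pick $M\in Q\cap[1-\epsi,1]$ with $\lambda(D_0)\subset[0,M]$ (possible since $\lambda(D_0)$ is a finite subset of $Q\cap[0,1]$ and $Q_+$ is dense) and replace $\bar{\lambda}$ by $\bar{\lambda}\wedge M$, which is still a $Q$-valued value, still agrees with $\lambda$ on $D_0$, is bounded by $1$, and stays within $\epsi$ of $\lambda$. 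That truncation trick buys you the entire third paragraph of your argument for free; your route works but trades a one-line observation for a quantitative bookkeeping step.
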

\begin{proof}
We may assume that $\epsi \in (0,1)$. For $h \in D_0$ let $\lambda'(h) = 0$ and for $h \in D \setminus D_0$ let
$\lambda'(h) \in [0,\epsi]$ be such that $\lambda'(-h) = \lambda'(h)$ and $\lambda(h) + \lambda'(h) \in Q$. For $x \in D$
put
$$
\bar{\lambda}(x) = \inf\{\sum_{j=1}^n (\lambda(h_j) + \lambda'(h_j))\dd\ n \geqsl 1,\ h_1,\ldots,h_n \in D,\
x = \sum_{j=1}^n h_j\}.
$$
It is easily seen that $\bar{\lambda}$ is a value on $D$ such that $\lambda \leqsl \bar{\lambda} \leqsl \lambda +
\lambda'$. This yields that $\|\lambda - \bar{\lambda}\|_{\infty} \leqsl \epsi$ and $\bar{\lambda} = \lambda$ on $D_0$.
Further, since $D$ is finite, the infimum in the formula for $\bar{\lambda}(x)$ is reached and therefore $\bar{\lambda}(D)
\subset Q$. Finally, if $\lambda \leqsl 1$, take $M \in Q \cap [1-\epsi,1]$ such that $\lambda(D_0) \subset [0,M]$ and
replace $\bar{\lambda}$ by $\bar{\lambda} \wedge M$.
\end{proof}

\begin{lem}{valom}
Let $r \in \{1,\infty\}$. Let $(D,+,\lambda)$ be a valued group with $\lambda \leqsl r$, $D_0$ its closed subgroup,
$\lambda_0$ a semivalue on $D_0$ and let $\omega \in \Omega^*$ be such that $\lambda_0 \leqsl
(\omega \circ \lambda)\bigr|_{D_0}$. Then there is a semivalue $\bar{\lambda}$ on $D$ which extends $\lambda_0$ and
satisfies the following conditions:
\begin{enumerate}[\upshape(a)]
\item $\bar{\lambda} \leqsl \omega \circ \lambda$,
\item $\bar{\lambda}^{-1}(\{0\}) = \lambda_0^{-1}(\{0\})$; in particular: $\bar{\lambda}$ is a value provided so is
   $\lambda_0$,
\item if $\lambda_0 \leqsl r$, then $\bar{\lambda} \leqsl r$,
\item if $\lambda_0$ is a value equivalent to $\lambda\bigr|_{D_0}$, then $\bar{\lambda}$ is equivalent to $\lambda$,
\item if for some $\tau, \varrho \in \Omega$ with $\varrho \not\equiv 0$ and \textup{($\omega$-$\varrho$-$\tau$)} one has
   $$\tau(\dist_{\lambda}(h,\lambda_0^{-1}(\{0\}))) \leqsl (\varrho \circ \lambda_0)(h)$$ for every $h \in D_0$, then
   $\tau(\dist_{\lambda}(x,\lambda_0^{-1}(\{0\}))) \leqsl (\varrho \circ \bar{\lambda})(x)$ for $x \in D$.
\end{enumerate}
\end{lem}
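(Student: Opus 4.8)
The plan is to build $\bar{\lambda}$ by an infimal-convolution (McShane--Whitney type) formula. Concretely, I would set
$$
\mu(x) = \inf_{h \in D_0}\bigl[\lambda_0(h) + (\omega \circ \lambda)(x - h)\bigr] \qquad (x \in D),
$$
and take $\bar{\lambda} = \mu \wedge 1$ in the case $r = 1$ and $\lambda_0 \leqsl 1$, and $\bar{\lambda} = \mu$ otherwise. The first task is to verify that $\mu$ is a semivalue: $\mu(0) = 0$ and $\mu(-x) = \mu(x)$ follow by taking $h = 0$ and by replacing $h$ with $-h$ (using that $D_0$ is a subgroup and that $\lambda_0$, $\lambda$ are even), while the triangle inequality $\mu(x+y) \leqsl \mu(x) + \mu(y)$ results from combining, for $h,k \in D_0$, the triangle inequality for $\lambda_0$, the subadditivity ($\omega$2) and monotonicity ($\omega$1) of $\omega$, and the triangle inequality for $\lambda$. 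That $\mu$ extends $\lambda_0$ is where the hypothesis $\lambda_0 \leqsl (\omega \circ \lambda)\bigr|_{D_0}$ enters: for $x \in D_0$ the choice $h = x$ gives $\mu(x) \leqsl \lambda_0(x)$, whereas $\lambda_0(x) \leqsl \lambda_0(h) + \lambda_0(x-h) \leqsl \lambda_0(h) + (\omega\circ\lambda)(x-h)$ gives the reverse after taking the infimum. Property (a) is immediate on taking $h = 0$; capping only decreases $\mu$, so (a) survives, and capping leaves $\mu$ unchanged on $D_0$ precisely because $\lambda_0 \leqsl 1$ exactly when we cap, so the extension property persists.

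For (b), I would argue that $\mu(x) = 0$ forces a minimizing sequence $h_n \in D_0$ with $\lambda_0(h_n) \to 0$ and $(\omega\circ\lambda)(x - h_n) \to 0$; by (MC4) the latter yields $\lambda(x - h_n) \to 0$, so $h_n \to x$ and closedness of $D_0$ gives $x \in D_0$, whence $\lambda_0(x) \leqsl \lambda_0(h_n) + (\omega\circ\lambda)(x-h_n) \to 0$. Thus $\mu^{-1}(\{0\}) = \lambda_0^{-1}(\{0\})$ (the reverse inclusion being trivial from the extension property), and capping at $1$ does not affect the zero set. Property (c) is arranged by the capping itself ($r = \infty$ being vacuous), and (d) follows from a two-sided comparison: $\bar{\lambda} \leqsl \omega\circ\lambda$ sends $\lambda$-null sequences to $\bar{\lambda}$-null ones, while the same minimizing-sequence argument—now using equivalence of $\lambda_0$ and $\lambda\bigr|_{D_0}$ to pass from $\lambda_0(h_n) \to 0$ to $\lambda(h_n) \to 0$—sends $\bar{\lambda}$-null sequences to $\lambda$-null ones.

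The heart of the matter, and the step I expect to be the main obstacle, is (e). Writing $Z = \lambda_0^{-1}(\{0\})$ and $d(\cdot) = \dist_{\lambda}(\cdot, Z)$, I would first record the estimate $d(x) \leqsl \lambda(x - h) + d(h)$ for $h \in D_0$. Then, for each such $h$, chaining the monotonicity and subadditivity of $\tau$, the hypothesis $\tau(d(h)) \leqsl (\varrho \circ \lambda_0)(h)$, and finally ($\omega$-$\varrho$-$\tau$) with $t = \lambda(x-h)$ and $s = \lambda_0(h)$, yields
$$
\tau(d(x)) \leqsl \tau(\lambda(x-h)) + \varrho(\lambda_0(h)) \leqsl \varrho\bigl((\omega\circ\lambda)(x-h) + \lambda_0(h)\bigr).
$$
Taking the infimum over $h \in D_0$ and commuting $\varrho$ past the infimum—legitimate since $\varrho$ is monotone and continuous by (MC3)—gives $\tau(d(x)) \leqsl \varrho(\mu(x))$, which is exactly the claim when $\bar{\lambda} = \mu$. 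In the capped case I would split on $\mu(x) \leqsl 1$ versus $\mu(x) > 1$: the former reduces to the previous estimate, while in the latter $\bar{\lambda}(x) = 1$ and $d(x) \leqsl \lambda(x) \leqsl 1$, so monotonicity of $\tau$ together with the second clause $\tau(1) \leqsl \varrho(1)$ of ($\omega$-$\varrho$-$\tau$)—available exactly because $r = 1$—closes the estimate. The delicate points are the bookkeeping of the capping, so that a single $\bar{\lambda}$ meets all of (a)--(e) at once, and the precise deployment of both clauses of ($\omega$-$\varrho$-$\tau$).
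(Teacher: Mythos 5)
Your proposal is correct and follows essentially the same route as the paper: the same infimal-convolution formula $\inf_{h\in D_0}[(\omega\circ\lambda)(x-h)+\lambda_0(h)]$, the same truncation at $r$ when $\lambda_0\leqsl r$, and the same chain for (e) — bounding $\tau(\dist_\lambda(x,Z))$ by $\tau(\lambda(x-h))+\varrho(\lambda_0(h))$ and then invoking ($\omega$-$\varrho$-$\tau$) with $t=\lambda(x-h)$, $s=\lambda_0(h)$, with the clause $\tau(1)\leqsl\varrho(1)$ absorbing the cap. You merely spell out the verifications of (a)--(d) that the paper leaves to the reader, and you do them correctly.
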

\begin{proof}
For $x \in D$ put
$$
\bar{\lambda}(x) = \inf\{(\omega \circ \lambda)(x-h) + \lambda_0(h)\dd\ h \in D_0\}.
$$
Further, if $\lambda_0 \leqsl r$, replace $\bar{\lambda}$ by $\bar{\lambda} \wedge r$. One easily verifies that
$\bar{\lambda}$ is a semivalue extending $\lambda_0$ which satisfies (a) and (d). The point (b) follows from the closedness
of $D_0$. We shall only show (e). For simplicity, put $q(x) = \dist_{\lambda}(x,\lambda_0^{-1}(\{0\}))$ ($x \in D$). Thanks
to the continuity of $\varrho$, it suffices to check that $(\tau \circ q)(x) \leqsl \varrho((\omega \circ \lambda)(x-h) +
\lambda_0(h))$ for $x \in D$ and $h \in D_0$ (the second condition in ($\omega$-$\varrho$-$\tau$) allows us to replace
$\bar{\lambda}$ by $\bar{\lambda} \wedge r$). But $(\tau \circ q)(x) \leqsl (\tau \circ \lambda)(x-h) + (\varrho \circ
\lambda_0)(h)$ and hence it is enough to show that
$$
(\tau \circ \lambda)(x-h) + (\varrho \circ \lambda_0)(h) \leqsl \varrho((\omega \circ \lambda)(x-h) + \lambda_0(h)),
$$
which is the first condition in ($\omega$-$\varrho$-$\tau$) with $t = \lambda(x-h)$ and $s = \lambda_0(h)$.
\end{proof}

\begin{exm}{o-r-t}
Let $\omega_0, \varrho_0 \in \Omega^*$, $\tau_0 \in \Omega$ and $r \in \{1,\infty\}$. Put $\omega = \omega_0 \vee \tau_0$,
$\varrho = \varrho_0 + \id$ and $\tau = \tau_0 \wedge \varrho(r)$ ($\varrho(\infty) := \lim_{t\to\infty} \varrho(t)$). Then
$\omega, \varrho$ and $\tau$ are moduli of continuity such that $\tau \leqsl \tau_0$, $\varrho_0 \leqsl \varrho$, $\omega_0
\leqsl \omega$ and ($\omega$-$\varrho$-$\tau$) is fulfilled. The example shows that we may always replace moduli
of continuity appearing in the inequalities in points (a) and (e) of \LEM{valom} by other moduli in such a way that these
inequalities are still satisfied and the strange condition ($\omega$-$\varrho$-$\tau$) is fulfilled.
\end{exm}
\vspace{0.3cm}

\noindent\textbf{\thesection C. The class $\OOo_0$.}
We call a group \textit{of class $\OOo_{fin}$} iff every its element has finite rank. That is, $G$ is of class $\OOo_{fin}$
if $G = G_{fin}$.\par
Let $(G,+,p)$ be an arbitrary valued group and $x$ its element. Since the sequence $a_n := p(nx)\ (n \geqsl 1)$ satisfies
the condition
$$
a_{n+m} \leqsl a_n + a_m \quad (n,m \geqsl 1),
$$
the sequence $(\frac{a_n}{n})_{n=1}^{\infty}$ has finite limit and $\lim_{n\to\infty} \frac{a_n}{n} = \inf_{n\geqsl1}
\frac{a_n}{n}$. Put $p_{0*}\dd G \to \RRR_+$,
$$
p_{0*}(x) = \lim_{n\to\infty} \frac{p(nx)}{n}.
$$
Observe that $p_{0*}$ is a semivalue on $G$ such that $p_{0*} \leqsl p$. Thus $p_{0*}$ is $p$-bounded and hence the set
$G_{0*} = p_{0*}^{-1}(\{0\})$ is a closed subgroup of $(G,+,p)$. Let $G'$ be the quotient group $G / G_{0*}$ equipped with
the value $p'$ (naturally) induced by $p_{0*}$. Observe that
\begin{equation}\label{eqn:00}
p'(kx) = |k| p'(x)
\end{equation}
for every $x \in G'$ and $k \in \ZZZ$ and hence $p'_{0*} = p'$.\par
We say the valued group $(G,+,p)$ is \textit{of class $\OOo_0$} iff $G_{0*} = G$ or, equivalently, if $p_{0*} \equiv 0$.
It is \textit{of class $\OOo_{\infty}$} iff $p_{0*} = p$ (that is, if \eqref{eqn:00} is fulfilled with $p'$ replaced
by $p$). Finally, $(G,+,p)$ is \textit{of class $\OOo_1$} iff $p_{0*}$ is a value on $G$ or, equivalently, if $G_{0*}$ is
trivial. Note that $\OOo_\infty \subset \OOo_1$.\par
We begin with an interesting characterization of members of the above mentioned classes.\par
Making use of \eqref{eqn:00} and repeating the proof of the Hahn-Banach theorem one easily gets

\begin{lem}{H-B}
Let $(G,+,p)$ be a valued group.
\begin{enumerate}[\upshape(A)]
\item If $\psi\dd G \to \RRR$ is a group homomorphism such that for each $x \in G$,
   \begin{equation}\label{eqn:nonexp}
   |\psi(x)| \leqsl p(x),
   \end{equation}
   then $|\psi(x)| \leqsl p_{0*}(x)$ for every $x \in G$.
\item For each $a \in G$ there is a group homomorphism $\psi\dd G \to \RRR$ satisfying \eqref{eqn:nonexp} for all $x \in G$
   such that $\psi(a) = p_{0*}(a)$.
\end{enumerate}
\end{lem}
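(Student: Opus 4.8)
The plan is to treat the two parts separately: part (A) is a one-line limiting computation, while part (B) is a Hahn--Banach argument adapted to the fact that $G$ is merely a $\ZZZ$-module. The single structural ingredient that makes everything work is that $p_{0*}$ is homogeneous under integer multiplication, $p_{0*}(kx) = |k|\,p_{0*}(x)$ for all $k \in \ZZZ$ and $x \in G$; this is \eqref{eqn:00} read for $p_{0*}$ itself, together with (V2), and it is exactly what replaces positive $\RRR$-homogeneity in the classical proof.

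For (A) I would use only that $\psi$ is a group homomorphism into $\RRR$, so $\psi(nx) = n\psi(x)$ for every $n \geqsl 1$. Then \eqref{eqn:nonexp} gives $|\psi(x)| = \frac{|\psi(nx)|}{n} \leqsl \frac{p(nx)}{n}$, and letting $n \to \infty$ yields $|\psi(x)| \leqsl p_{0*}(x)$ directly from the definition of $p_{0*}$; no extension theorem is needed. For (B) I would run the Hahn--Banach machine via \LKZ{} on the family of group homomorphisms $\chi\dd H \to \RRR$, with $H$ ranging over subgroups of $G$ containing $a$, subject to $\chi \leqsl p_{0*}$ on $H$ (which forces $|\chi| \leqsl p_{0*} \leqsl p$, since $\chi(-x) = -\chi(x)$ and $p_{0*}$ is symmetric). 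The starting point is $H = \grp{a}$ with $\chi(ka) = k\,p_{0*}(a)$; this is well defined because a torsion $a$ has $p_{0*}(a) = 0$ by homogeneity, it satisfies $\chi(a) = p_{0*}(a)$ as required, and it is dominated since $|\chi(ka)| = |k|\,p_{0*}(a) = p_{0*}(ka)$. A maximal element is then defined on all of $G$, as otherwise the one-step extension below would enlarge it, and the forced value $\chi(a) = p_{0*}(a)$ is preserved throughout.

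The crux is the one-step extension from a subgroup $H$ carrying a dominated $\chi$ to $H + \grp{b}$ with $b \notin H$, and here two cases arise. If no nonzero multiple of $b$ lies in $H$, every element of $H + \grp{b}$ is uniquely $h + kb$, and choosing $c := \chi(b)$ is feasible precisely when $\sup_{m \geqsl 1,\, h' \in H} \frac{\chi(h') - p_{0*}(h' - mb)}{m} \leqsl \inf_{k \geqsl 1,\, h \in H} \frac{p_{0*}(h + kb) - \chi(h)}{k}$; I would verify this by clearing denominators and using the identity $kh' + mh = k(h' - mb) + m(h + kb)$ together with subadditivity and homogeneity of $p_{0*}$, which gives $\chi(kh' + mh) \leqsl p_{0*}(kh' + mh) \leqsl k\,p_{0*}(h' - mb) + m\,p_{0*}(h + kb)$. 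If instead the least positive multiple satisfies $k_0 b = h_0 \in H$, the value $c$ is forced to equal $\chi(h_0)/k_0$ and $\chi(h + kb) := \chi(h) + kc$ is well defined (if $h + kb = \tilde h + \tilde k b$ then $k_0 \mid (k - \tilde k)$ and the two expressions agree); domination $\chi(h) + kc \leqsl p_{0*}(h + kb)$ then follows by multiplying through by $k_0$ and invoking homogeneity, since $k_0\bigl(\chi(h) + kc\bigr) = \chi(k_0 h + k h_0) \leqsl p_{0*}(k_0 h + k h_0) = p_{0*}\bigl(k_0(h+kb)\bigr) = k_0\,p_{0*}(h + kb)$.

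I expect the finite-order (torsion) case of the one-step extension to be the main obstacle, since it is the genuinely new feature compared with the vector-space Hahn--Banach theorem: the value on $b$ is not free but forced, so one must check that the forced value remains dominated, and this is exactly where integer homogeneity of $p_{0*}$ is indispensable. A slicker alternative would be to pass to the quotient $G' = G/G_{0*}$, observe that $G'$ is torsion-free (since $p'$ is a value satisfying \eqref{eqn:00}, so $k\bar x = 0$ with $k \neq 0$ forces $p'(\bar x) = 0$, i.e.\ $\bar x = 0$), embed $G'$ into the $\QQQ$-vector space $G' \otimes \QQQ$ on which $p'$ extends to a $\QQQ$-homogeneous norm, and apply the ordinary Hahn--Banach theorem over $\QQQ$ before pulling back along the quotient map. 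I would keep the direct $\ZZZ$-module argument as the primary one, in keeping with the remark ``repeating the proof of the Hahn--Banach theorem'' preceding the statement.
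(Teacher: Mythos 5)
Your proposal is correct and follows exactly the route the paper intends: the paper's entire proof is the one-line remark that the lemma follows from \eqref{eqn:00} by ``repeating the proof of the Hahn--Banach theorem,'' and your write-up supplies precisely that argument --- the limiting computation for (A), and for (B) the Zorn's-lemma extension with the integer homogeneity $p_{0*}(kx)=|k|p_{0*}(x)$ replacing real homogeneity, including the genuinely new torsion case where the extended value is forced. The details you fill in (well-definedness on $\grp{a}$, the sup--inf feasibility via $kh'+mh=k(h'-mb)+m(h+kb)$, and domination of the forced value after multiplying by $k_0$) are all sound.
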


Let us call a group homomorphism $\psi\dd G \to \RRR$ satisfying \eqref{eqn:nonexp} \textit{nonexpansive}.
The above result yields

\begin{thm}{ooo}
Let $(G,+,p)$ be a valued group.
\begin{enumerate}[\upshape(I)]
\item $G$ is of class $\OOo_0$ iff $G$ admits no nonzero bounded group homomorphisms into Banach spaces.
\item $G$ is of class $\OOo_{\infty}$ iff $G$ admits an isometric group homomorphism into a Banach space.
\item $G$ is of class $\OOo_1$ iff $G$ admits a nonexpansive group homomorphism with trivial kernel into a Banach space,
   iff $G$ admits a bounded group homomorphism with trivial kernel into a Banach space.
\end{enumerate}
\end{thm}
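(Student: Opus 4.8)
The plan is to reduce all three equivalences to the behaviour of the semivalue $p_{\psi} = p' \circ \psi$ associated with a group homomorphism $\psi\dd (G,+,p) \to (E,\|\cdot\|)$ into a Banach space, exploiting the one special feature of the target: since $\psi$ is a homomorphism and the norm is homogeneous, $\|\psi(nx)\| = n\|\psi(x)\|$, so $p_{\psi}$ is itself a semivalue of class $\OOo_{\infty}$, i.e. $(p_{\psi})_{0*} = p_{\psi}$. The engine of the whole argument is the estimate \emph{if $\psi$ is bounded, say $p_{\psi} \leqsl \omega \circ p$ with $\omega \in \Omega$, then $p_{\psi} \leqsl L \cdot p_{0*}$, where $L = \lim_{t\to\infty}\omega(t)/t$.} I expect this estimate to be the only real obstacle; everything else is bookkeeping on top of \LEM{H-B}.

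To prove the estimate I would combine homogeneity with the definition of $p_{0*}$. For every $x \in G$ and $n \geqsl 1$,
$$
p_{\psi}(x) = \frac{\|\psi(nx)\|}{n} = \frac{p_{\psi}(nx)}{n} \leqsl \frac{\omega(p(nx))}{n}.
$$
By (MC1) the limit $L = \lim_{t\to\infty}\omega(t)/t$ exists and is finite; given $\epsi > 0$ pick $T$ with $\omega(t) \leqsl (L+\epsi)t$ for $t \geqsl T$, whence by monotonicity $\omega(t) \leqsl (L+\epsi)t + \omega(T)$ for all $t \geqsl 0$. Since $p(nx)/n \to p_{0*}(x)$, letting $n \to \infty$ gives $p_{\psi}(x) \leqsl (L+\epsi)\,p_{0*}(x)$, and then $\epsi \to 0^+$ yields $p_{\psi} \leqsl L\,p_{0*}$.

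For the existence (``if'') directions I would use one universal construction. Let $B$ be the set of all nonexpansive group homomorphisms $G \to \RRR$ and set $\Psi\dd G \to \ell^{\infty}(B)$, $\Psi(x)(\varphi) = \varphi(x)$. Each coordinate is a homomorphism, so $\Psi$ is a group homomorphism into a Banach space, and \LEM{H-B} gives $\|\Psi(x)\|_{\infty} = \sup_{\varphi \in B}|\varphi(x)| = p_{0*}(x)$ for every $x$ (part (A) bounds each $|\varphi(x)|$ by $p_{0*}(x)$, while part (B) produces a $\varphi \in B$ attaining it). Hence $\Psi$ is always nonexpansive, it is isometric exactly when $p_{0*} = p$, and $\ker \Psi = p_{0*}^{-1}(\{0\}) = G_{0*}$.

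It remains to assemble the statements. For (I): if $G$ is of class $\OOo_0$ then $p_{0*} \equiv 0$, so the estimate forces $p_{\psi} \equiv 0$, i.e. $\psi = 0$, for every bounded $\psi$ into a Banach space; conversely, if $p_{0*}(a) > 0$ for some $a$, then \LEM{H-B}(B) supplies a nonexpansive (hence bounded) homomorphism $G \to \RRR$ nonvanishing at $a$. For (II): an isometric $\psi$ into a Banach space gives $p = p_{\psi} = (p_{\psi})_{0*} = p_{0*}$, so $G$ is of class $\OOo_{\infty}$; conversely $p_{0*} = p$ makes $\Psi$ an isometric embedding. For (III): if $p_{0*}$ is a value then $\Psi$ is nonexpansive with $\ker\Psi = G_{0*}$ trivial; every nonexpansive map is bounded; and if $\psi$ is bounded with trivial kernel, the estimate gives $p_{0*}(x) = 0 \Rightarrow p_{\psi}(x) = 0 \Rightarrow \psi(x) = 0 \Rightarrow x = 0$, so $p_{0*}$ is a value, i.e. $G$ is of class $\OOo_1$.
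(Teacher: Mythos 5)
Your proposal is correct and follows essentially the same route as the paper: the same universal map into the space of bounded functions on the set of nonexpansive homomorphisms $G \to \RRR$ (powered by \LEM{H-B}), and the same homogeneity-plus-subadditivity computation for bounded homomorphisms. Your key estimate $p_{\psi} \leqsl L\, p_{0*}$ with $L = \lim_{t\to\infty}\omega(t)/t$ is just a more carefully quantified form of the chain \eqref{eqn:aux1} that the paper uses in parts (I) and (III).
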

\begin{proof}
(I): Suppose $(G,+,p)$ is of class $\OOo_0$, $(E,\|\cdot\|)$ is a Banach space, $\omega \in \Omega$ and $\psi\dd G \to E$
is a group homomorphism such that $\|\psi(x)\| \leqsl (\omega \circ p)(x)$ for $x \in G$. Then we have
\begin{equation}\label{eqn:aux1}
\|\psi(x)\| \leqsl \frac{(\omega \circ p)(nx)}{n} = \frac{\omega(n \cdot \frac{p(nx)}{n})}{n} \leqsl \frac{p(nx)}{n} \to 0.
\end{equation}
The inverse implication follows from the point (B) of \LEM{H-B}.\par
(II): Assume $(G,+,p)$ is of class $\OOo_{\infty}$. Let $Z$ be the set of all nonexpansive group homomorphisms of $G$ into
$\RRR$ and let $E$ be the Banach space of all real-valued bounded functions on $Z$, equipped with the supremum norm. For
$g \in G$ let $e_g \in E$ be given by $e_g(\xi) = \xi(g)$. Then the function $G \ni g \mapsto e_g \in E$ is an isometric
group homomorphism (again by \LEM{H-B}). The inverse implication is immediate.\par
(III): If $(G,+,p)$ is of class $\OOo_1$, then $(G,+,p_{0*})$ is a valued group of class $\OOo_{\infty}$ and thus
the assertion follows from (II). Conversely, if $\psi\dd G \to E$ is a bounded group homomorphism of $G$ into a Banach
space $E$, \eqref{eqn:aux1} shows that $p_{0*}(x) \geqsl \|\psi(x)\|$.
\end{proof}

\begin{rem}{bd-cont}
\THM{ooo} expresses how far is boundedness from continuity: only valued groups of class $\OOo_1$ admit bounded group
homomorphisms with trivial kernels into Banach spaces. In contrast, there are valued groups of class $\OOo_0$ which are
group isomorphic to Banach spaces: if $(E,\|\cdot\|)$ is a Banach space, then $(E,\|\cdot\| \wedge 1)$ is of class
$\OOo_0$. By the way, the latter example shows that membership to any of the classes $\OOo_0$, $\OOo_1$, $\OOo_{\infty}$
is not a topological group invariant.\par
\THM{ooo} also implies that every valued group $G$ has a unique closed subgroup $H$ (namely, $G_{0*}$) such that
$H$ is of class $\OOo_0$ and $G / H$ admits a nonexpansive group homomorphism with trivial kernel into a Banach space.\par
It also follows from \THM{ooo} and the well-known theorem of Banach and Mazur that the Banach space $\CCc([0,1])$ of all
continuous real-valued functions on $[0,1]$, as a valued group, is universal for separable valued Abelian groups of class
$\OOo_{\infty}$.
\end{rem}

From now on, we are only interested in valued groups of class $\OOo_0$. The reader will easily check that every group
with bounded value is of this class and that $\OOo_{fin} \subset \OOo_0$. It turns out that, in a sense, $\OOo_0$ is
the `closure' of $\OOo_{fin}$. Formally this is formulated in the following

\begin{thm}{o0}
A valued group $(G,+,p)$ is of class $\OOo_0$ iff it may be enlarged to a valued group $(\tilde{G},+,\tilde{p})$ such that
$\tilde{G}_{fin}$ is dense in $\tilde{G}$.
\end{thm}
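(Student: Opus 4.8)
The plan is to prove the two implications separately, the forward one being the substantial part.

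For the ``if'' direction, suppose $(G,+,p)$ is enlarged to $(\tilde{G},+,\tilde{p})$ with $\tilde{G}_{fin}$ dense. I would first observe that $\tilde{G}$ itself is of class $\OOo_0$: the semivalue $\tilde{p}_{0*}$ satisfies $\tilde{p}_{0*}\leqsl\tilde{p}$ and is therefore continuous (as already noted for $p_{0*}$ above), while it vanishes on every finite rank element $y$, because then $\tilde{p}(ny)$ runs through a finite set and $\tilde{p}(ny)/n\to0$; being continuous and zero on the dense subgroup $\tilde{G}_{fin}$, it vanishes identically. Finally, membership in $\OOo_0$ passes to subgroups with the restricted value, since for $x\in G$ one has $p_{0*}(x)=\lim_n p(nx)/n=\lim_n \tilde{p}(nx)/n=\tilde{p}_{0*}(x)=0$. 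Hence $(G,+,p)\in\OOo_0$.

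For the ``only if'' direction the heart of the matter is a single enlargement step: given $(G,+,p)\in\OOo_0$ and $\delta>0$, I would build an enlargement in which every element of $G$ lies within $\delta$ of a finite rank element while class $\OOo_0$ is preserved. For each infinite rank $x\in G$ pick $N_x\geqsl2$ with $\epsi_x:=p(N_x x)/N_x<\delta$ (possible precisely because $p_{0*}\equiv0$), adjoin a formal generator $e_x$ subject only to $N_x e_x=-N_x x$, and put $b_x:=x+e_x$, so that $N_x b_x=0$ is of finite rank while ``$b_x\approx x$''. Concretely, let $G'=\bigl(G\oplus\bigoplus_x\ZZZ e_x\bigr)/R$, where $R$ is generated by the elements $N_x x+N_x e_x$, and equip $G'$ with the inf-convolution value
\[
p'(\xi)=\inf\Bigl\{\,p(g)+\sum_x|j_x|\,\epsi_x\dd\ \xi=g+\sum_x j_x e_x,\ g\in G\,\Bigr\}.
\]

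The verification that $p'$ is a genuine value enlarging $p$ is exactly where the choice $N_x\epsi_x=p(N_x x)$ does the work: extending $p$ amounts to $p\bigl(g+\sum_x k_x N_x x\bigr)+\sum_x|k_x|p(N_x x)\geqsl p(g)$, which is just the triangle inequality, so the infimum over representatives of $g\in G$ is attained at $k\equiv0$; positive definiteness follows by reducing each coordinate $j_x$ modulo $N_x$ (absorbing each multiple of $N_x e_x=-N_x x$ into $g$), since a nonzero residue forces a summand $\geqsl\epsi_x>0$, whereas all-zero residues return us to $G$, where $p'=p$. A short computation of the same flavour as in the proof of \THM{ooo}—reducing $M\xi$ modulo $R$ leaves a bounded $e_x$-part together with $M(g-\sum_x j_x x)$, and $p(M\cdot)/M\to0$ on $G$—shows $p'_{0*}\equiv0$, so $G'\in\OOo_0$. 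Since $p'(b_x-x)=p'(e_x)\leqsl\epsi_x<\delta$, every element of $G$ is $\delta$-approximated by a finite rank element of $G'$. Iterating with $\delta=2^{-n}$ yields an increasing chain $G=G_0\subseteq G_1\subseteq\cdots$ of valued groups in $\OOo_0$; its union $\tilde{G}=\bigcup_n G_n$ (or, if a complete group is wanted, its completion) is the required enlargement, for any $\xi\in\tilde{G}$ lies in some $G_n$ and is then $2^{-m}$-close to a finite rank element of $G_{m+1}\subseteq\tilde{G}$ for every $m\geqsl n$, so $\tilde{G}_{fin}$ is dense.

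The main obstacle I anticipate is the bookkeeping in the enlargement lemma: checking that $p'$ is well defined, symmetric, subadditive and positive definite when infinitely many generators are adjoined at once, so that the infimum genuinely ranges over all representatives modulo $R$, and verifying cleanly that $p'_{0*}\equiv0$ despite the correction terms $-q_M N_x x$ produced when reducing $M\xi$ by the relations. Each of these reduces to the single identity $N_x\epsi_x=p(N_x x)$ together with subadditivity of $p$, but packaging them uniformly—rather than adjoining one generator at a time by transfinite recursion, which would force reapproximating the elements manufactured at earlier stages—is the delicate point.
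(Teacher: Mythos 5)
Your proof is correct, and the engine is the same as the paper's: adjoin a cyclic torsion element at distance at most $\delta$ from a given infinite-rank element, with an inf-convolution value whose verification rests precisely on the calibration $p(N_x x)=N_x\epsi_x$ together with the triangle inequality, then iterate and take the union. The differences are organizational. The paper proceeds one element at a time: for a single $a$ with $p_{0*}(a)=0$ it forms $G\times\grp{b}$ with $\grp{b}$ cyclic of rank $N$ and value $\inf_k\{p(g-ka)+|k|\epsi+M\delta_H(kb+h)\}$, where the extra capped term $M\delta_H$ (with $M\geqsl p(ja)$ for $j\leqsl N$) handles the multiples $0<|k|<N$, and then invokes transfinite induction. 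You adjoin generators for \emph{all} infinite-rank elements at once as a quotient of $G\oplus\bigoplus_x\ZZZ e_x$ by the relations $N_x(x+e_x)$; your single linear penalty $|j_x|\epsi_x$ makes the cap $M$ unnecessary, and the well-definedness issues you flag do all go through (each coset has a unique representative with $0\leqsl j_x<N_x$, and the cost of any representative of $g\in G$ dominates $p(g)$ by subadditivity). For a single generator the two constructions are in fact isomorphic via $(g,ne_x)\mapsto(g-nx,nb)$, which carries your $b_x=x+e_x$ to the paper's $b$. What your all-at-once version buys is the elimination of transfinite recursion within each approximation level; what it loses is the paper's remark that $\tilde G$ can be kept separable when $G$ is, which the one-element-at-a-time construction gives for free and which you would recover by restricting at each stage to a countable dense set of infinite-rank elements.
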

\begin{proof}
The sufficiency is clear, since $\tilde{G}_{0*}$ is closed and contains $\tilde{G}_{fin}$. To prove the necessity,
thanks to transfinite induction, it is enough to show that if $p_{0*}(a) = 0$, then for every $\epsi > 0$ there is
a valued group $(\tilde{G},+,\tilde{p}) \supset (G,+,p)$ and $b \in \tilde{G}_{fin}$ such that $\tilde{p}(a - b) \leqsl
\epsi$.\par
Let $N \geqsl 2$ be such that
\begin{equation}\label{eqn:aux2}
p(na) \leqsl \epsi n \quad \textup{for } n \geqsl N.
\end{equation}
Take $M > 0$ such that $M \geqsl p(ja)$ for $j=1,\ldots,N$. Let $H = \grp{b}$ be a cyclic group of rank $N$. We identify
$g \in G$ with $(g,0) \in G \times H =: \tilde{G}$. Define $\tilde{p}\dd \tilde{G} \to \RRR_+$ by
$$
\tilde{p}(g,h) = \inf\{p(g-ka) + |k| \epsi + M \delta_H(kb+h)\dd\ k \in \ZZZ\}.
$$
It is clear that $\tilde{p}$ is a value on $\tilde{G}$ such that $\tilde{p}(g,0) \leqsl p(g)$ for $g \in G$ and
$\tilde{p}((a,0) - (0,b)) \leqsl \epsi$. So, it suffices to show that $p(g) \leqsl \tilde{p}(g,0)$, which is equivalent to
\begin{equation}\label{eqn:aux3}
p(ka) \leqsl |k| \epsi + M \delta_H(kb)
\end{equation}
for $k \in \ZZZ$. We may assume $k \neq 0$. If $|k| \geqsl N$, \eqref{eqn:aux3} is covered by \eqref{eqn:aux2}. Finally,
if $0 < |k| < N$, then $p(ka) \leqsl M = M \cdot \delta_H(kb)$ and we are done.
\end{proof}

Note that if in the above theorem $G$ is separable, $\tilde{G}$ may be constructed to be separable as well and that in that
case the proof is constructive. It is also easily seen that we may force $\tilde{G}$ to have the same diameter as $G$.

\begin{exm}{o0}
Let $(G,+,p)$ be a valued group and $\omega \in \Omega^*$. If $G$ is of class $\OOo_0$ or $\omega \in \Omega_0$, then
$(G,+,\omega \circ p)$ is of class $\OOo_0$. In particular, for every separable valued group $(G,+,p)$, $(G,+,p^{\alpha})
\in \Gg_{\infty}(0)$ for $0 < \alpha < 1$ and $(G,+,p \wedge 1), (G,+,\frac{p}{p+1}) \in \Gg_1(0)$. The value $p$ may be
reconstruct from each of the values $p^{\alpha}$ ($0 < \alpha < 1$) and $\frac{p}{p+1}$, and since $p(x) =
\lim_{\alpha\to1^-} p^{\alpha}(x)$ for every $x \in G$, one may say that each valued group can be `approximated' by valued
groups of class $\OOo_0$. It is also clear that every metrizable topological group admits a compatible value under which
it becomes a valued group of class $\OOo_0$.
\end{exm}

Taking into account \REM{bd-cont}, we see that the image of a valued group of class $\OOo_0$ under a continuous group
homomorphism need not be of class $\OOo_0$. However, the reader will easily check that

\begin{pro}{image}
A subgroup (equipped with the inherited value) and the completion of a valued group of class $\OOo_0$ is of class $\OOo_0$
as well. The image of a valued group of class $\OOo_0$ under a bounded group homomorphism is of class $\OOo_0$.
\end{pro}
\vspace{0.3cm}

\noindent\textbf{\thesection D. Enlarging a finite valued group.}
A very useful method of constructing the Urysohn universal space, introduced by Kat\v{e}tov \cite{kat}, involves
the technique of the so-called \textit{Kat\v{e}tov maps} which correspond to one-point extensions of metric spaces.
In this part we shall describe how to enlarge valued groups by means of these maps. As a corollary of the results presented
below we shall obtain in the sequel a rather surprising result that $\GGG_r(N)$ is Urysohn iff $N \in \{0,2\}$.\par
Recall that a Kat\v{e}tov map on a metric space $(X,d)$ is a function $f\dd X \to \RRR_+$ such that $|f(x) - f(y)| \leqsl
d(x,y) \leqsl f(x) + f(y)$ for any $x, y \in X$. The set of all Kat\v{e}tov maps on $X$ is denoted by $E(X)$. Additionally,
for $r \in [0,\infty]$ let $E_r(X)$ be the set of all $f \in E(X)$ such that $f(x) \leqsl r$ for each $x \in X$, and let
$E^i(X) = E_{\diam X}(X)$. Kat\v{e}tov maps belonging to $E^i(X)$ are called \textit{inner}. If $A \subset B \subset X$,
a Kat\v{e}tov map $f\dd B \to \RRR_+$ is said to be \textit{trivial on $A$ in $X$} iff there is $b \in X$ such that $f(a) =
d(a,b)$ for each $a \in A$. The map $f$ is \textit{trivial in $X$} if $f$ is trivial on its domain in $X$.\par
A fundamental result on the Urysohn space says that a nonempty separable complete metric space $X$ is Urysohn iff every
inner Kat\v{e}tov map is trivial in $X$ on every finite subset of the space (see e.g. \cite{me2}).\par
We begin with

\begin{pro}{kat-exp}
Let $(G,+,p)$ be a valued group of exponent $N \geqsl 3$. If $A \subset G$ and $f \in E(A)$ is trivial in $G$, then for
any $a_1,\ldots,a_N \in A$,
\begin{equation}\label{eqn:trv-N}
|p(\sum_{k=1}^N a_k) - f(a_N)| \leqsl \sum_{k=1}^{N-1} f(a_k).
\end{equation}
\end{pro}
\begin{proof}
Let $x \in G$ be such that $f(a) = p(x-a)$ for $a \in A$. Then, since $N \cdot x = 0$,
\begin{multline*}
|p(\sum_{k=1}^N a_k) - f(a_N)| \leqsl p(\sum_{k=1}^N a_k + (x-a_N)) = p(\sum_{k=1}^{N-1}(a_k - x)) \leqsl\\
\leqsl \sum_{k=1}^{N-1} p(a_k - x) = \sum_{k=1}^{N-1} f(a_k).
\end{multline*}
\end{proof}

\begin{rem}{N}
If $(G,+,p)$ is a valued group of exponent $N \geqsl 3$ and $f$ is a Kat\v{e}tov map whose domain $H$ is a subgroup of $G$,
then \eqref{eqn:trv-N} is equivalent to
\begin{equation}\label{eqn:trvH-N}
f(-\sum_{k=1}^{N-1} a_k) \leqsl \sum_{k=1}^{N-1} f(a_k) \quad \textup{for any } a_1,\ldots,a_{N-1} \in H.
\end{equation}
Indeed, if $a_1,\ldots,a_{N-1} \in H$ are fixed, then, since $-\sum_{k=1}^{N-1} a_k \in H$, $\sup_{a_N \in H}
|p(\sum_{k=1}^N a_k) - f(a_N)| = f(-\sum_{k=1}^{N-1} a_k)$.
\end{rem}

Our aim is to prove, in a sense, the converse of \PRO{kat-exp}. Namely,

\begin{thm}{trv}
Let $r \in \{0,\infty\}$, $N \in \ZZZ_+ \setminus \{1\}$ and let $(G,+,p) \in \Gg_r(N)$ be a bounded valued group. Let $A$
be a nonempty subset of $G$ and $f \in E_r(A)$. If $N > 2$, we assume that $f$ satisfies \eqref{eqn:trv-N} for all
$a_1,\ldots,a_N \in A$. Then there is a bounded valued group $(\tilde{G},+,\tilde{p}) \in \Gg_r(N)$ such that
$(\tilde{G},+,\tilde{p}) \supset (G,+,p)$ and $f$ is trivial in $\tilde{G}$.\par
If, in addition, $Q$ is as in \eqref{eqn:Q}, $G$ is finite and $p(G) \cup f(A) \subset Q$, the group $\tilde{G}$ may be
taken so that it is finite and $\tilde{p}(\tilde{G}) \subset Q$.
\end{thm}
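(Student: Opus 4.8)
The plan is to realize $b$ by adjoining to $G$ a cyclic group $H$ and equipping $\tilde{G} = G \oplus H$ with the largest invariant semivalue compatible with the prescribed distances, then quotienting out its kernel. I take $H = \ZZZ/N\ZZZ$ when $N \geqsl 2$ and $H = \ZZZ$ when $N = 0$ (for the finite refinement below, $H$ will instead be a large finite cyclic group), and write $b$ for a generator. First I extend $f$ to a Kat\v{e}tov map $\bar{f}$ on all of $G$ by the canonical formula $\bar{f}(x) = \inf_{a \in A}(f(a) + p(x-a))$ (truncated at $r$ when $r < \infty$); the Kat\v{e}tov inequalities give $\bar{f}\bigr|_A = f$ and $\bar{f} \in E_r(G)$, and a short computation from the infimum together with the triangle inequality for $p$ shows that $\bar{f}$ inherits \eqref{eqn:trv-N} on $G$ from $f$ on $A$. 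The new point $b$ is meant to sit at distance $\bar{f}(x)$ from each $x \in G$, so I assign the ``length'' $\ell(g) = p(g)$ to $g \in G$ and $\ell(g+b) = \bar{f}(-g)$, $\ell(g-b) = \bar{f}(g)$ to the two neighbouring cosets, and set
\[
\tilde{p}(x) = \inf\Big\{\sum_{i=1}^n \ell(x_i)\dd\ n \geqsl 1,\ x = \sum_{i=1}^n x_i,\ x_i \in G \cup (G+b) \cup (G-b)\Big\}.
\]
Concatenation of representations makes $\tilde{p}$ subadditive, and symmetry of the generating set gives $\tilde{p}(-x) = \tilde{p}(x)$, so $\tilde{p}$ is a semivalue with $\tilde{p}\bigr|_G \leqsl p$ and $\tilde{p}(b-a) \leqsl f(a)$.

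The heart of the matter --- and the step I expect to be the main obstacle --- is the reverse inequality: that the infimum does not collapse, i.e. $\tilde{p}(g) = p(g)$ for $g \in G$ and $\tilde{p}(b-a) = f(a)$ for $a \in A$. Both reduce to one claim: every closed $b$-loop costs at least the $p$-length of its $G$-displacement, that is, if the edges $x_1,\ldots,x_n$ carry total $b$-degree $\equiv 0 \pmod N$ (equally many $+b$ and $-b$ when $N=0$) then $\sum_i \ell(x_i) \geqsl p(\sum_i x_i)$. Here the three regimes merge. A single $+b$, $-b$ pair costs $\bar{f}(u) + \bar{f}(v) \geqsl p(u-v)$ by the lower Kat\v{e}tov inequality, which settles $N = 0$ after cancelling such pairs; for $N = 2$ one has $-b = b$ and $-g = g$, so a pair of $+b$-edges costs $\bar{f}(g_1) + \bar{f}(g_2) \geqsl p(g_1 - g_2) = p(g_1+g_2)$ by the same inequality, so no extra hypothesis is needed; and for $N \geqsl 3$ a minimal same-sign loop has exactly $N$ edges and costs $\sum_{k=1}^N \bar{f}(-g_k) \geqsl p(\sum_k g_k)$, which is precisely \eqref{eqn:trv-N} propagated to $\bar{f}$. (This is the exact converse of \PRO{kat-exp}, whose necessity part shows the hypothesis is unavoidable for $N \geqsl 3$.) The truncation at $r$ does not spoil these bounds, since any term it actually lowers is thereby $\geqsl r \geqsl p(\text{displacement})$. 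Reducing an arbitrary representation to loops plus at most one residual $+b$-edge (merging the $G$-moves by subadditivity of $p$, and invoking the $1$-Lipschitz property $\bar{f}(a) \leqsl \bar{f}(x) + p(a-x)$ for the residual edge) yields the two equalities; thus $G \hookrightarrow \tilde{G}$ is isometric and $f$ becomes trivial in $\tilde{G}$, realized by $b$.

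It remains to make $\tilde{p}$ a value of the right class. I pass to the quotient $\tilde{G}/\tilde{p}^{-1}(\{0\})$; since $\tilde{p} = p$ on $G$ the kernel meets $G$ trivially, so $G$ still embeds isometrically and $f$ stays trivial, while $\tilde{p}$ descends to a genuine value (this also disposes of the degenerate case where $f$ was already trivial on $A$ in $G$). When $N \geqsl 2$ the summand $H$ is finite, so $\tilde{G}$ is automatically bounded and of exponent $N$; when $N = 0$ the boundedness of $G$ and $\bar{f}$ lets me cap $\tilde{p}$ at a constant $D \geqsl \diam(G,p) \vee \sup \bar{f}$, which preserves the value axioms and leaves the computed distances untouched, after which $\tilde{G}$ is bounded and hence of class $\OOo_0$ (its value $p_{0*}$ vanishes because $\tilde{p}$ is bounded). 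When $r = 1$ one caps at $1$ instead, harmless as $p \leqsl 1$ and $f \leqsl 1$. Finally, for the rational refinement I keep $H$ finite --- choosing, when $N = 0$, a cyclic $H = \ZZZ/m\ZZZ$ with $m$ so large that an $m$-fold loop already exceeds the diameter and so never shortcuts --- whence $\tilde{G}$ is finite and the infima defining $\tilde{p}$ are attained as finite sums of values of $p$ and $\bar{f}$, all lying in $Q_+ \subset Q$ by \eqref{eqn:Q}; so $\tilde{p}(\tilde{G}) \subset Q$, with \LEM{valQ} available to correct values into $Q$ should the truncation force it.
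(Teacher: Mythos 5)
Your construction is in essence the paper's: the explicit infimum defining $\tilde{p}$ there is your chain formula with the $G$-moves merged by subadditivity, the $\pm b$-edges restricted to $A$, and the coset-length assignment replaced by an $M\delta_H$ penalty (the paper also keeps $H$ finite even for $N=0$, of rank $m$ with $(m-1)\inf f(A)\geqsl \sup p(G)\vee\sup f(A)$, instead of your $H=\ZZZ$ plus capping --- both work). The pieces you verify are sound: the Kat\v{e}tov extension $\bar{f}$ restricts to $f$ on $A$, the $(+b,-b)$-pair bound is the lower Kat\v{e}tov inequality, and the same-sign $N$-loop bound uses only the inequality $p(\sum_{k=1}^N x_k)\leqsl\sum_{k=1}^N\bar{f}(x_k)$, and this half of \eqref{eqn:trv-N} does pass from $f$ on $A$ to $\bar{f}$ on $G$ by inserting near-optimal $a_k\in A$.

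The gap is in the verification that $\tilde{p}(b-a)\geqsl f(a)$ when $N\geqsl 3$. A representation of $b-a$ does \emph{not} reduce to ``loops plus one residual $+b$-edge'': the total $b$-degree is only determined modulo $N$, so after removing $(+b,-b)$ pairs and same-sign $N$-loops the residue may just as well be $N-1$ edges $g_1-b,\ldots,g_{N-1}-b$ (for instance a representation using no $+b$-edge at all). For that residue the $1$-Lipschitz property of $\bar{f}$ gives nothing; what is required is
\[
f(a)\ \leqsl\ p\Bigl(a+\sum_{k=1}^{N-1}g_k\Bigr)+\sum_{k=1}^{N-1}\bar{f}(g_k),
\]
and this comes from the \emph{other} half of \eqref{eqn:trv-N}, namely $f(a_N)\leqsl p(\sum_{k=1}^N a_k)+\sum_{k=1}^{N-1}f(a_k)$, applied to near-optimal approximants $a_k\in A$ of the $g_k$ --- exactly the sub-case ``$\epsi_1=\cdots=\epsi_n=-1$'' in the paper's proof. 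You never invoke this half, and it cannot be dispensed with: if $a_1+a_2+a_3=0$ in $A$ with $f(a_1),f(a_2)$ small, then $b-a_3=(a_1-b)+(a_2-b)$ forces $\tilde{p}(b-a_3)\leqsl f(a_1)+f(a_2)$, and only the lower half of \eqref{eqn:trv-N} excludes $f(a_3)>f(a_1)+f(a_2)$. Relatedly, your blanket claim that $\bar{f}$ ``inherits \eqref{eqn:trv-N} on $G$'' is false in general: the lower inequality $\bar{f}(x_N)\leqsl p(\sum_{k=1}^N x_k)+\sum_{k=1}^{N-1}\bar{f}(x_k)$ can fail when $x_N$ is far from $A$; only the instances with $x_N\in A$ (which are the ones you actually need) survive. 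The rest --- capping, quotienting, the finite and $Q$-valued refinement --- is fine once this case is added.
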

\begin{proof}
First of all observe that \eqref{eqn:trv-N} is fulfilled for any $a_1,\ldots,a_N \in A$ also when $N = 2$, because then
$a_1 + a_2 = a_1 - a_2$. Note that if $\inf f(A) = 0$, then $f$ is trivial in the completion of $G$, so we may and do
assume that $c := \inf f(A) > 0$. Further, let $M = \sup p(G) \vee \sup f(A)$ ($M \in Q$ provided all additional conditions
of the theorem are fulfilled; $f$ is bounded because $G$ is so). If $N \neq 0$, let $m = N$. Otherwise, take $m \geqsl 2$
such that $m - 1 \geqsl \frac{M}{c}$. Let $H = \grp{b}$ be a cyclic group of rank $m$. Put $\tilde{G} = G \times H$.
We identify each $g \in G$ with $(g,0) \in \tilde{G}$. Let us agree that $\sum_{j=a}^b = 0$ provided $a > b$. Define
$\tilde{p}\dd \tilde{G} \to \RRR_+$ by the formula
\begin{multline*}
\tilde{p}(g,h) = \inf\{p(g - \sum_{j=1}^n \epsi_j a_j) + \sum_{j=1}^n f(a_j) + M \delta_H(\sum_{j=1}^n \epsi_j b - h)\dd\\
n \geqsl 0,\ a_1,\ldots,a_n \in A,\ \epsi_1,\ldots,\epsi_n \in \{-1,1\}\}.
\end{multline*}
It is easy to verify that $\tilde{p}$ is a value. What is more, if all additional conditions of the theorem are satisfied,
the infimum in the formula for $\tilde{p}(g,h)$ is reached and thus $\tilde{p}(\tilde{G}) \subset Q$. Observe that
$\tilde{p}(g,0) \leqsl p(g)$ for each $g \in G$ and $\tilde{p}(a,b) \leqsl f(a)$ for $a \in A$. If we show that in both
these inequalities one may put the equality sign, the proof will be completed. (Indeed, to make then the value $\tilde{p}$
bounded by $r$, it suffices to replace $\tilde{p}$ by $\tilde{p} \wedge M$.)\par
First we will check that $\tilde{p}(g,0) = p(g)$ for $g \in G$. Equivalently, we have to show that
\begin{equation}\label{eqn:aux10}
\sum_{j=1}^n f(a_j) + M \delta_H(\sum_{j=1}^n \epsi_j b) \geqsl p(\sum_{j=1}^n \epsi_j a_j)
\end{equation}
for any $n \geqsl 1$ (for $n=0$ it is clear), $a_1,\ldots,a_n \in A$ and $\epsi_1,\ldots,\epsi_n \in \{-1,1\}$.
If $\sum_{j=1}^n \epsi_j b \neq 0$, then $\delta_H(\sum_{j=1}^n \epsi_j b) = 1$ and hence \eqref{eqn:aux10} is fulfilled,
by the definition of $M$. Thus we may assume that $\sum_{j=1}^n \epsi_j b = 0$, that is, $m | \sum_{j=1}^n \epsi_j$.\par
If $\sum_{j=1}^n \epsi_j \neq 0$ and $N = 0$, then $n \geqsl |\sum_{j=1}^n \epsi_j| \geqsl m$, so $\sum_{j=1}^n f(a_j)
\geqsl nc \geqsl mc \geqsl M \geqsl p(\sum_{j=1}^n \epsi_j a_j)$ and consequently \eqref{eqn:aux10} is fulfilled.\par
If $\sum_{j=1}^n \epsi_j \neq 0$ and $N \neq 0$, then $m = N$ and $\sum_{j=1}^n \epsi_j = lN$ for some $l \in \ZZZ
\setminus \{0\}$. This means that, after renumeration of $\epsi_j$'s (and $a_j$'s), we may assume $\epsi_j = l/|l|$
for $j = 1,\ldots,|l|N$ and $\sum_{j=|l|N+1}^n \epsi_j = 0$. Now making use of \eqref{eqn:trv-N} we infer that
$\sum_{k=1}^{|l|} p(\sum_{j=(k-1)N+1}^{kN} a_j) \leqsl \sum_{j=1}^{|l|N} f(a_j)$. So, if only $p(\sum_{j=|l|N+1}^n \epsi_j
a_j) \leqsl \sum_{j=|l|N+1}^n f(a_j)$, \eqref{eqn:aux10} will be satisfied. This reduces the problem to the case when
$\sum_{j=1}^n \epsi_j = 0$ (and $N$ is arbitrary).\par
Finally, if $\sum_{j=1}^n \epsi_j = 0$, then $n$ is even, say $n = 2k$, and---after renumeration---we may assume $\epsi_j =
(-1)^{j-1}$ for $j = 1,\ldots,n$.  But then
\begin{multline*}
p(\sum_{j=1}^n \epsi_j a_j) = p(\sum_{j=1}^k (a_{2j-1} - a_{2j})) \leqsl\\
\leqsl \sum_{j=1}^k (f(a_{2j-1}) + f(a_{2j})) = \sum_{j=1}^n f(a_j)
\end{multline*}
which finishes the proof of \eqref{eqn:aux10}. Now we pass to proving that $\tilde{p}(a,b) = f(a)$ for $a \in A$. This is
equivalent to
\begin{equation}\label{eqn:aux11}
p(a - \sum_{j=1}^n \epsi_j a_j) + \sum_{j=1}^n f(a_j) + M \delta_H(\sum_{j=1}^n \epsi_j b - b) \geqsl f(a)
\end{equation}
for each $n \geqsl 0$ and arbitrary $a_1,\ldots,a_n \in A$ and $\epsi_1,\ldots,\epsi_n \in \{-1,1\}$. As before,
if $\sum_{j=1}^n \epsi_j b \neq b$, then, thanks to the definition of $M$, \eqref{eqn:aux11} is fulfilled. Thus we may
assume $m | \sum_{j=1}^n \epsi_j - 1$ (so, $n > 0$).\par
If $\sum_{j=1}^n \epsi_j \neq 1$ and $N = 0$, $n \geqsl |\sum_{j=1}^n \epsi_j| \geqsl m-1 \geqsl \frac{M}{c}$ and hence
$\sum_{j=1}^n f(a_j) \geqsl nc \geqsl M \geqsl f(a)$.\par
If $\sum_{j=1}^n \epsi_j \neq 1$ and $N \neq 0$, then $\sum_{j=1}^n \epsi_j = lN + 1$ for some $l \in \ZZZ \setminus
\{0\}$. Then either $\epsi_1 = \ldots = \epsi_n = -1$ or at least $|l|N$ of $\epsi_j$'s are equal to $l/|l|$. In the first
case we get $n = |l|N - 1$ and, $|l|$ times making use of \eqref{eqn:trv-N}:
\begin{multline*}
p(a + \sum_{j=1}^n a_j) + \sum_{j=1}^n f(a_j) \geqsl p(a + \sum_{j=1}^{N-1} a_{|l|N-j}) + \sum_{j=1}^{N-1} f(a_{|l|N-j})\\
+ \sum_{k=1}^{|l|-1} [\sum_{j=0}^{N-1} f(a_{kN-j}) - p(\sum_{j=0}^{N-1} a_{kN-j})] \geqsl f(a),
\end{multline*}
which gives \eqref{eqn:aux11}. In the second case we may assume, after renumeration, that $\epsi_j = l/|l|$ for $j =
1,\ldots,|l|N$ and $\sum_{j=|l|N+1}^n \epsi_j = 1$. Then, again by \eqref{eqn:trv-N}, $\sum_{k=1}^{|l|} [\sum_{j=0}^{N-1}
f(a_{kN-j}) - p(\sum_{j=0}^{N-1} a_{kN-j}] \geqsl 0$. So, if only $p(a - \sum_{j=|l|N+1}^n \epsi_j a_j) + \sum_{j=|l|N+1}^n
f(a_j) \geqsl f(a)$, \eqref{eqn:aux11} will be satisfied. This reduces the problem to the case when $\sum_{j=1}^n \epsi_j
= 1$ (and $N$ is arbitrary).\par
Finally, when $\sum_{j=1}^n \epsi_j = 1$, then $n$ is odd, say $n = 2k+1$, and (after renumeration) $\epsi_j = (-1)^{j-1}$.
But then
\begin{multline*}
p(a - \sum_{j=1}^n \epsi_j a_j) + \sum_{j=1}^n f(a_j) \geqsl p(a - a_n) + f(a_n)\\+ \sum_{j=1}^k [f(a_{2j-1}) + f(a_{2j})
- p(a_{2j} - a_{2j-1})] \geqsl f(a).
\end{multline*}
\end{proof}

In \cite{pn1} we have shown that the Urysohn universal metric space $\UUU$ admits a (unique) structure of a metric group
of exponent $2$ (which, in fact, will turn out to be isometrically group isomorphic to $\GGG_{\infty}(2)$). At the end
of \cite{pn1} we asked (following the referee) whether $U$ admits group structures of other exponents. Applying
\REM{N}, below we give a partial (negative) answer to this problem.

\begin{pro}{no3}
There is no nontrivial valued Abelian group of exponent $3$ which is Urysohn as a metric space.
\end{pro}
\begin{proof}
Let $(H,+,q)$ be a nontrivial valued Abelian group. Take $h \in H$ such that $0 < q(h) < \frac12 \sup q(H)$ (if there is
no such $h$, then $H$ is automatically non-Urysohn). Notice that $q(h) = q(-h) = q(h-(-h))$. Let $f\dd \{0,h,-h\} \to
\RRR_+$ be given by $f(h) = f(-h) = \frac12 q(h)$ and $f(0) = \frac32 q(h)$. It is easily seen that $f$ is a Kat\v{e}tov
map bounded by the diameter of $H$. If $f$ was trivial in $H$, we would have (by \eqref{eqn:trvH-N}) $f(0) \leqsl f(h) +
f(-h)$, which fails to be true.
\end{proof}

We do not know whether it is possible to show, using only \eqref{eqn:trvH-N}, the counterpart of \PRO{no3} for exponents
greater than $3$.
\vspace{0.3cm}

\noindent\textbf{\thesection E. Amalgamation lemmas.}
In constructions of Fra\"{\i}ss\'{e} limits, possibility of amalgamating (i.e. `gluing') spaces is the crucial axiom.
However, only the first of the results stated below will be used for this purpose. From now to the end of the section,
$r \in \{1,\infty\}$ and $N \in \ZZZ_+ \setminus \{1\}$ are fixed. To avoid repetitions, let us make the following
preliminary annotation (which will be used in the proofs of the next three lemmas):
\begin{equation}\label{eqn:pre}
\left\{\begin{array}{l}
\tilde{D} = (D_1 \times D_2) / \tilde{D}_0\\
\pi\dd D_1 \times D_2 \to \tilde{D} \textup{ the quotient group homomorphism}\\
\tilde{\psi}_1\dd D_1 \ni x \mapsto \pi(x,0) \in \tilde{D}\\
\tilde{\psi}_2\dd D_2 \ni y \mapsto \pi(0,y) \in \tilde{D}
\end{array}\right.
\end{equation}
Note that below \PRO{image} is applied several times without referring to it.

\begin{lem}{A1}
Let $(D_j,+,\lambda_j) \in \Gg_r(N)$ ($j=0,1,2$) and for $j=1,2$ let $\varphi_j\dd D_0 \to D_j$ be an isometric group
homomorphism. Then there exist a valued group $(D,+,\lambda) \in \Gg_r(N)$ and isometric group homomorphisms $\psi_j\dd
D_j \to D$ ($j=1,2$) such that $\psi_2 \circ \varphi_2 = \psi_1 \circ \varphi_1$. If, in addition, $Q$ is as in
\eqref{eqn:Q}, $D_1$ and $D_2$ are finite and $\lambda_j(D_j) \subset Q$ for $j=1,2$, then $D$ is finite as well
and $\lambda(D) \subset Q$.
\end{lem}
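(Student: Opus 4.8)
The plan is to realize $D$ as the pushout of $\varphi_1,\varphi_2$ in the category of groups, equipped with a value making both legs isometric. Concretely, on the product group $D_1\times D_2$ I would define
\[
\bar\lambda(x,y)=\Bigl(\inf_{a\in D_0}\bigl[\lambda_1(x-\varphi_1(a))+\lambda_2(y+\varphi_2(a))\bigr]\Bigr)\wedge r
\]
(with $\wedge\infty$ read as the identity, so that the cap bites only when $r=1$). A short computation—subadditivity term by term followed by the substitution $a\mapsto a+b$, the substitution $a\mapsto-a$ with (V2) for $\lambda_1,\lambda_2$, and the elementary inequality $(s+t)\wedge r\leqsl(s\wedge r)+(t\wedge r)$ for the cap—shows that $\bar\lambda$ is a semivalue with $\bar\lambda\leqsl r$. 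I would then set $\tilde D_0:=\bar\lambda^{-1}(\{0\})$, a closed subgroup of $D_1\times D_2$ containing the anti-diagonal $\{(\varphi_1(a),-\varphi_2(a))\dd\ a\in D_0\}$, and form $\tilde D$, $\pi$, $\tilde\psi_1$, $\tilde\psi_2$ exactly as in \eqref{eqn:pre}. By construction $\bar\lambda$ descends through $\pi$ to a genuine value $\lambda$ on $\tilde D$, and I would put $(D,+,\lambda):=(\tilde D,+,\lambda)$ and $\psi_j:=\tilde\psi_j$.

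The heart of the argument is the identity $\bar\lambda(x,0)=\lambda_1(x)$ for $x\in D_1$ (and symmetrically $\bar\lambda(0,y)=\lambda_2(y)$). The inequality $\leqsl$ is witnessed by $a=0$; for $\geqsl$ I would use that $\varphi_1,\varphi_2$ are isometric, so $\lambda_1(\varphi_1(a))=\lambda_0(a)=\lambda_2(\varphi_2(a))$, whence
\[
\lambda_1(x-\varphi_1(a))+\lambda_2(\varphi_2(a))\geqsl \lambda_1(x)-\lambda_0(a)+\lambda_0(a)=\lambda_1(x)
\]
for every $a\in D_0$ (and $\lambda_1(x)\leqsl r$ already, so the cap is harmless). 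This single identity carries the bulk of the work: it forces $\lambda(\psi_1(x))=\lambda_1(x)$, so $\psi_1$ is isometric and annihilates no nonzero element of $D_1$, and likewise for $\psi_2$; the relation $\psi_1\circ\varphi_1=\psi_2\circ\varphi_2$ is immediate since $(\varphi_1(a),-\varphi_2(a))\in\tilde D_0$. Membership $D\in\Gg_r(N)$ is then routine: $\tilde D$ is separable and satisfies $N\cdot x=0$ as a quotient of $D_1\times D_2$, $\lambda\leqsl r$ by construction, and the class $\OOo_0$ condition holds automatically when $N\neq0$ (finite exponent), while for $N=0$ it follows from $\bar\lambda(nx,ny)\leqsl\lambda_1(nx)+\lambda_2(ny)$ together with $(\lambda_1)_{0*}\equiv(\lambda_2)_{0*}\equiv0$.

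For the finite/$Q$ addendum I would note that when $D_1,D_2$ are finite the defining infimum is attained, so each value of $\bar\lambda$ is a sum $\lambda_1(\cdot)+\lambda_2(\cdot)$ of two elements of $Q_+$ and hence lies in $Q$ by \eqref{eqn:Q}; the cap (when $r=1$) may be taken at $M_0:=\max(\lambda_1(D_1)\cup\lambda_2(D_2))\in Q\cap[0,1]$ rather than at $1$, and since the minimum of two elements of $Q$ is one of them, $\lambda(\tilde D)\subset Q$ is preserved while still $\lambda\leqsl1$ and the isometry identity is left intact. The only genuinely delicate point—and the one I would watch most carefully—is that $\lambda$ be a \emph{value} rather than a mere semivalue in the non-finite case: this is exactly why I take $\tilde D_0$ to be the full null set $\bar\lambda^{-1}(\{0\})$ instead of the anti-diagonal itself, for when $D_0$ is incomplete the anti-diagonal need not be closed and a naive quotient would collapse limits of Cauchy sequences from $D_0$; passing to the null set sidesteps this, at the cost of verifying—again via the displayed identity—that $\psi_1$ and $\psi_2$ stay injective.
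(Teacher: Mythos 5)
Your proposal is correct and follows essentially the same route as the paper: the paper forms $(D_1\times D_2)/\tilde D_0$ with $\tilde D_0$ the anti-diagonal $\{(\varphi_1(x),-\varphi_2(x))\dd x\in D_0\}$, puts $\tilde\lambda(z)=\inf\{\lambda_1(x_1)+\lambda_2(x_2)\dd (x_1,x_2)\in\pi^{-1}(\{z\})\}$ (the same infimum as your $\bar\lambda$), and then quotients a second time by $\tilde\lambda^{-1}(\{0\})$ before capping at $r$ — you merely collapse the two quotients into one by dividing $D_1\times D_2$ directly by the null set of $\bar\lambda$, which yields the same group. Your verification of $\bar\lambda(x,0)=\lambda_1(x)$, of the $Q$-addendum via attainment of the infimum, and of the cap at an element of $Q$ supplies exactly the details the paper leaves as an exercise.
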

\begin{proof}
Let $\tilde{D_0} = \{(\varphi_1(x),-\varphi_2(x))\dd\ x \in D_0\} \subset D_1 \times D_2$ and $\tilde{D}$, $\pi$,
$\tilde{\psi}_1$ and $\tilde{\psi}_2$ be as in \eqref{eqn:pre}. Then $\tilde{\psi}_2 \circ \varphi_2 = \tilde{\psi}_1 \circ
\varphi_1$. Define $\tilde{\lambda}\dd \tilde{D} \to \RRR_+$ by the formula
$$
\tilde{\lambda}(z) = \inf\{\lambda_1(x_1) + \lambda_2(x_2)\dd\ (x_1,x_2) \in \pi^{-1}(\{z\})\}.
$$
It is clear that $\tilde{\lambda}$ is a well defined semivalue on $\tilde{D}$. Now put $D = \tilde{D} /
\tilde{\lambda}^{-1}(\{0\})$ and define $\lambda$, $\psi_1$ and $\psi_2$ by the rules $\lambda \circ \tilde{\pi}
= \tilde{\lambda}$ and $\psi_j = \tilde{\pi} \circ \tilde{\psi_j}$ ($j=1,2$) where $\tilde{\pi}\dd \tilde{D} \to D$
is the quotient group homomorphism. Finally, replace $\lambda$ by $\lambda \wedge 1$ if $r = 1$. We leave this as a simple
exercise that all assertions are satisfied.
\end{proof}

\begin{lem}{A2}
Let $(D_1,+,\lambda_1), (D_2,+,\lambda_2) \in \Gg_r(N)$ be finite valued groups and $D_0$ be a subgroup of $D_1$. Let
$u\dd D_0 \to D_2$ and $v\dd D_1 \to D_2$ be an isometric and, respectively, an $\epsi$-almost isometric group homomorphism
(where $\epsi \in (0,1)$) such that
\begin{equation}\label{eqn:aux19}
\|u - v\bigr|_{D_0}\|_{\infty} \leqsl \epsi.
\end{equation}
Then there are a finite valued group $(D,+,\lambda) \in \Gg_r(N)$ and isometric group homomorphisms $w_j\dd D_j \to D$
($j=1,2$) such that $w_1\bigr|_{D_0} = w_2 \circ u$ and
\begin{equation}\label{eqn:w1-w2}
\|w_1 - w_2 \circ v\|_{\infty} \leqsl A \epsi
\end{equation}
where $A = 1+\diam(D_1,\lambda_1)$.
\end{lem}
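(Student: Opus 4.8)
The plan is to follow the amalgamation template of \LEM{A1}, gluing $D_1$ and $D_2$ over $D_0$ along the inclusion $D_0 \hookrightarrow D_1$ and along $u$, but to add to the value a flat penalty that pulls the copy of $D_1$ toward $v(D_1)$. Set $A = 1 + \diam(D_1,\lambda_1)$, put $\tilde{D}_0 = \{(y,-u(y))\dd\ y \in D_0\} \subset D_1 \times D_2$, and let $\tilde{D}$, $\pi$, $\tilde{\psi}_1$, $\tilde{\psi}_2$ be as in \eqref{eqn:pre}. On $D_1\times D_2$ I would define
$$
\mu(x_1,x_2) = \inf_{c\in D_1,\, y\in D_0}\Big[\lambda_1(x_1 - c - y) + \lambda_2(x_2 + v(c) + u(y)) + A\epsi\,\delta_{D_1}(c)\Big],
$$
which is exactly the free amalgam value of \LEM{A1} augmented by a constant bridge cost $A\epsi$ charged whenever one transports an element $c \neq 0$ from $D_1$ to $v(c)\in D_2$. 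As in \LEM{A1}, $\mu$ is a semivalue (each generator type is closed under negation and the three cost functions $\lambda_1$, $\lambda_2$, $A\epsi\,\delta_{D_1}$ are subadditive, so a single term of each type realizes the infimum), and the substitution $y \mapsto y + y_0$ shows $\mu$ is constant on $\tilde{D}_0$-cosets, hence descends to a value $\tilde{\lambda}$ on $\tilde{D}$. I then put $D = \tilde{D}/\tilde{\lambda}^{-1}(\{0\})$ (truncating by $1$ when $r = 1$, exactly as in \LEM{A1}) and let $w_j$ be induced by $\tilde{\psi}_j$. Finiteness of $D$ and membership $D \in \Gg_r(N)$ are then immediate.

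The substantive part is to check that $w_1$ and $w_2$ are isometric, i.e. $\mu(x,0) = \lambda_1(x)$ and $\mu(0,z) = \lambda_2(z)$; the inequalities $\leqsl$ follow from the choice $c = y = 0$. For the reverse inequalities I would bound each summand from below after writing $v(c) + u(y) = v(c+y) + (u(y) - v(y))$ and using $\lambda_2((u-v)(y)) \leqsl \epsi$ (from \eqref{eqn:aux19}) together with $(1-\epsi)\lambda_1 \leqsl \lambda_2\circ v \leqsl (1+\epsi)\lambda_1$. When $c = 0$ the bounds collapse to the triangle inequality, because $u$ is genuinely isometric. When $c \neq 0$ the contraction of $v$ costs at most $\epsi\lambda_1(c+y) \leqsl \epsi\,\diam(D_1,\lambda_1)$ and the discrepancy between $u$ and $v$ on $D_0$ costs at most $\epsi$; writing $s = c+y$ one gets $\mu(x,0) \geqsl \lambda_1(x) - \epsi\,\diam(D_1,\lambda_1) - \epsi + A\epsi = \lambda_1(x)$, and symmetrically for $\mu(0,z)$. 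This is the single place where the exact value of $A = \epsi^{-1}(A\epsi) = 1 + \diam(D_1,\lambda_1)$ is used: the two losses are precisely offset by the penalty $A\epsi = \epsi + \epsi\,\diam(D_1,\lambda_1)$. Finally, \eqref{eqn:w1-w2} follows by taking $c = x$, $y = 0$ in the infimum for $\mu(x,-v(x))$, giving $\lambda(w_1(x) - w_2(v(x))) \leqsl A\epsi\,\delta_{D_1}(x) \leqsl A\epsi$, while $w_1\bigr|_{D_0} = w_2\circ u$ holds automatically since $(y,-u(y)) \in \tilde{D}_0$ for $y \in D_0$.

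The main obstacle is exactly this pair of isometry lower bounds: one must rule out every ``shortcut'' that routes part of an element through $D_2$ via the merely almost-isometric $v$, since such a detour could a priori shrink the value below $\lambda_j$. The calibration $A = 1 + \diam(D_1,\lambda_1)$ is what makes the constant bridge penalty simultaneously large enough to kill every such shortcut and small enough to keep \eqref{eqn:w1-w2}. A deliberate design choice feeding into this is the use of the \emph{discrete} value $\delta_{D_1}$ (a flat penalty $A\epsi$ per nonzero bridge) rather than a multiple of $\lambda_1$: because the gluing generators $(y,-u(y))$ carry zero cost, combining a bridge with a gluing step effectively reparametrises $c$ by an element of $D_0$, and a flat penalty is insensitive to this reparametrisation, so the clean offset above survives.
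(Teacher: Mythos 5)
Your proposal is correct and is essentially the paper's own construction: the paper likewise amalgamates over $\tilde{D}_0=\{(x,-u(x))\}$ and defines the value as an infimum of $\lambda_1(x_1-x_0)+A\epsi\,\delta(\cdot)+\lambda_2(x_2+v(x_0))$ over representations in the quotient, with the flat penalty charged unless the transported element already lies in $D_0$ with $u=v$ there; your explicit free $D_0$-parameter $y$ transported by $u$ encodes exactly the same infimum. Your lower-bound calibration ($\epsi\,\diam(D_1,\lambda_1)$ from the contraction of $v$ plus $\epsi$ from \eqref{eqn:aux19}, offset by $A\epsi$) matches the paper's case analysis, so the argument goes through as written.
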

\begin{proof}
Let $\tilde{D}_0 = \{(x,-u(x))\dd\ x \in D_0\} \subset D_1 \times D_2$ and $\tilde{D}$, $\pi$, $\tilde{\psi}_1$ and
$\tilde{\psi}_2$ be as in \eqref{eqn:pre}. Put $D = \tilde{D}$ and $w_j = \tilde{\psi}_j$ ($j=1,2$). It is easily checked
that $w_2 \circ u = w_1\bigr|_{D_0}$ and $D = w_1(D_1) + w_2(D_2)$. Define $\lambda\dd D \to \RRR_+$ by
\begin{multline*}
\lambda(z) = \inf\{\lambda_1(x_1-x_0) + A\epsi \delta_D(w_1(x_0) - (w_2 \circ v)(x_0)) + \lambda_2(x_2 + v(x_0))\dd\\
x_0, x_1 \in D_1,\ x_2 \in D_2,\ z = w_1(x_1) + w_2(x_2)\}.
\end{multline*}
We see that $\lambda$ is a semivalue on $D$ and, since $D_1$ and $D_2$ are finite, the infimum in the formula for
$\lambda(z)$ is reached. Therefore, if $\lambda(z) = 0$, then for some $x_0, x_1 \in D_1$ and $x_2 \in D_2$ one has $z =
w_1(x_1) + w_2(x_2)$ and $x_1 - x_0 = 0$, $w_1(x_0) - (w_2 \circ v)(x_0) = 0$ and $x_2 + v(x_0) = 0$. These yield $x_1 =
x_0$, $x_2 = -v(x_1)$ and $w_1(x_1) = -w_2(x_2)$ and consequently $z = 0$. So, $\lambda$ is a value. Moreover, it follows
from the definition of $\lambda$ that \eqref{eqn:w1-w2} is satisfied and that $\lambda(w_j(x)) \leqsl \lambda_j(x)$ for
$x \in D_j$ ($j=1,2$). We shall now prove that in the latter inequalities one may put the equality sign. Fix $j \in
\{1,2\}$. For $x \in D_j$ we have to show that
\begin{equation}\label{eqn:aux20}
\lambda_1(x_1 - x_0) + A\epsi \delta_D(w_1(x_0) - (w_2 \circ v)(x_0)) + \lambda_2(x_2 + v(x_0)) \geqsl \lambda_j(x)
\end{equation}
provided $x_0, x_1 \in D_1$, $x_2 \in D_2$ and
\begin{equation}\label{eqn:aux20a}
w_1(x_1) + w_2(x_2) = w_j(x).
\end{equation}
First assume $j=1$. In that case \eqref{eqn:aux20a} means that $(x_1 - x,x_2) \in \tilde{D}_0$ and thus $h := x - x_1 \in
D_0$ and $x_2 = u(h)$. So, \eqref{eqn:aux20} has the form $\lambda_1(x_1 - x_0) + A\epsi \delta_D(w_1(x_0)
- (w_2 \circ v)(x_0)) + \lambda_2(u(h) + v(x_0)) \geqsl \lambda_1(x_1 + h)$ which, after substitution $x_1 := x_0$, is
equivalent to
\begin{equation}\label{eqn:aux21}
A\epsi \delta_D(w_1(x_0) - (w_2 \circ v)(x_0)) + \lambda_2(u(h) + v(x_0)) \geqsl \lambda_1(x_0 + h).
\end{equation}
If $w_1(x_0) = w_2(v(x_0))$, then $x_0 \in D_0$ and $v(x_0) = u(x_0)$ and consequently \eqref{eqn:aux21} changes into
$\lambda_2(u(h) + u(x_0)) \geqsl \lambda_1(x_0 + h)$ which is fulfilled because $u$ is isometric. So, we may assume that
$w_1(x_0) \neq w_2(v(x_0))$ and then we have to show that $A\epsi + \lambda_2(u(h) + v(x_0)) \geqsl \lambda_1(x_0 + h)$.
But $v$ is $\epsi$-almost isometric, hence $\lambda_2(v(x_0 + h)) \geqsl (1 - \epsi) \lambda_1(x_0 + h)$. So, thanks
to \eqref{eqn:aux19}, we obtain
\begin{multline*}
A\epsi + \lambda_2(u(h) + v(x_0)) \geqsl \epsi \lambda_1(x_0 + h) + \epsi + \lambda_2(v(x_0) + v(h))\\
- \lambda_2(u(h) - v(h)) \geqsl \lambda_1(x_0 + h).
\end{multline*}
When $j=2$, we argue in a similar way. In that case \eqref{eqn:aux20a} gives $h := x_1 \in D_0$ and $x - x_2 = u(h)$.
After substitution $x_2 := -v(x_0)$ the inequality \eqref{eqn:aux20} changes into $\lambda_1(h - x_0) + A\epsi
\delta_D(w_1(x_0) - (w_2 \circ v)(x_0)) \geqsl \lambda_2(u(h) - v(x_0))$. As before, the situation when $w_1(x_0) =
w_2(v(x_0))$ is simple. Otherwise we have to prove that $A\epsi + \lambda_1(h - x_0) \geqsl \lambda_2(u(h) - v(x_0))$.
We have
\begin{multline*}
\lambda_2(u(h) - v(x_0)) \leqsl \lambda_2(u(h) - v(h)) + \lambda_2(v(h) - v(x_0))\\ \leqsl \epsi + (1+\epsi)
\lambda_1(h - x_0) \leqsl A\epsi + \lambda_1(h - x_0).
\end{multline*}
To end the proof, replace $\lambda$ by $\lambda \wedge r$ to guarantee that $(D,+,\lambda) \in \Gg_r(N)$.
\end{proof}

\begin{lem}{A3}
Let $(D_j,+,\lambda_j) \in \Gg_r(N)$ ($j=1,2$), $E_1$ and $E_2$ be subgroups of $D_1$ and $\varphi_j\dd E_j \to D_2$
($j=1,2$) be isometric group homomorphisms such that for all $(x_1,x_2) \in E_1 \times E_2$,
\begin{equation}\label{eqn:aux22}
|\lambda_2(\varphi_1(x_1) - \varphi_2(x_2)) - \lambda_1(x_1 - x_2)| \leqsl \epsi
\end{equation}
(where $\epsi > 0$). Then there exist a valued group $(D,+,\lambda) \in \Gg_r(N)$ and isometric group homomorphisms
$\psi_j\dd D_j \to D$ ($j=1,2$) such that $\psi_2 \circ \varphi_1 = \psi_1\bigr|_{E_1}$ and
\begin{equation}\label{eqn:aux23}
\|\psi_1\bigr|_{E_2} - \psi_2 \circ \varphi_2\|_{\infty} \leqsl \epsi.
\end{equation}
\end{lem}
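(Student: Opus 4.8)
The plan is to follow the amalgamation template of \LEM{A1} and \LEM{A2}. Since we must achieve the \emph{exact} gluing $\psi_2 \circ \varphi_1 = \psi_1\bigr|_{E_1}$, I would run the quotient construction \eqref{eqn:pre} with
$$
\tilde{D}_0 = \{(x, -\varphi_1(x)) \dd x \in E_1\} \subset D_1 \times D_2,
$$
so that already in $\tilde{D}$ one has $\tilde{\psi}_1\bigr|_{E_1} = \tilde{\psi}_2 \circ \varphi_1$ (because $(e,-\varphi_1(e)) \in \tilde{D}_0$ forces $\pi(e,-\varphi_1(e)) = 0$). The \emph{approximate} gluing $\|\psi_1\bigr|_{E_2} - \psi_2 \circ \varphi_2\|_{\infty} \leqsl \epsi$ on the second subgroup is then handled, exactly as the almost-isometric map was handled in \LEM{A2}, by a penalty term weighted by $\epsi$ and built from the discrete value $\delta_{\tilde{D}}$.

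Concretely, writing every $z \in \tilde{D}$ as $\tilde{\psi}_1(x_1) + \tilde{\psi}_2(x_2)$, I would set
$$
\tilde{\lambda}(z) = \inf\{\lambda_1(x_1 - x_0) + \epsi\, \delta_{\tilde{D}}(\tilde{\psi}_1(x_0) - \tilde{\psi}_2(\varphi_2(x_0))) + \lambda_2(x_2 + \varphi_2(x_0)) \dd x_0 \in E_2,\ z = \tilde{\psi}_1(x_1) + \tilde{\psi}_2(x_2)\}.
$$
The term involving $x_0$ represents ``shuttling'' an element $x_0 \in E_2$ from the first coordinate to the second through $\varphi_2$, paying $\epsi$ whenever that shuttle is nontrivial in $\tilde{D}$. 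Because $E_2$ is a subgroup and $\varphi_2$ and $\delta_{\tilde{D}}$ are subadditive, combining near-optimal representations of $z$ and $z'$ with shuttles $x_0,x_0'$ into one with shuttle $x_0+x_0'$ shows that $\tilde{\lambda}$ is a semivalue satisfying $\tilde{\lambda}(\tilde{\psi}_1(x_1)+\tilde{\psi}_2(x_2)) \leqsl \lambda_1(x_1)+\lambda_2(x_2)$. I would then put $D = \tilde{D}/\tilde{\lambda}^{-1}(\{0\})$ with the induced value $\lambda$, set $\psi_j = \tilde{\pi} \circ \tilde{\psi}_j$, and truncate $\lambda$ to $\lambda \wedge r$ (relevant only when $r=1$). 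That $D \in \Gg_r(N)$ is checked exactly as in \LEM{A1}: exponent and separability are inherited, class $\OOo_0$ follows from \PRO{image} applied to the nonexpansive (hence bounded) quotient homomorphism $D_1 \times D_2 \surj D$, and $\lambda \leqsl r$ is arranged by the truncation.

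The heart of the proof, and the step I expect to be the main obstacle, is verifying that $\psi_1$ and $\psi_2$ are isometric, i.e. that the trivial bounds $\tilde{\lambda}(\tilde{\psi}_j(x)) \leqsl \lambda_j(x)$ (take $x_0 = 0$) can be reversed. For $\tilde{\lambda}(\tilde{\psi}_1(x)) \geqsl \lambda_1(x)$, the constraint $z = \tilde{\psi}_1(x)$ forces $(x_1-x,x_2) \in \tilde{D}_0$, whence $x_1 = x+e$ and $x_2 = -\varphi_1(e)$ for some $e \in E_1$; I would then split on whether the shuttle is trivial. If $\tilde{\psi}_1(x_0) = \tilde{\psi}_2(\varphi_2(x_0))$, then $(x_0,-\varphi_2(x_0)) \in \tilde{D}_0$ gives $x_0 \in E_1 \cap E_2$ with $\varphi_2(x_0) = \varphi_1(x_0)$, and the claim collapses to the isometry of $\varphi_1$ plus the triangle inequality. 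If the shuttle is nontrivial, the penalty $\epsi$ is present, and I would invoke the compatibility hypothesis \eqref{eqn:aux22} (with first argument $e \in E_1$, second argument $x_0 \in E_2$) to get $\lambda_2(\varphi_1(e) - \varphi_2(x_0)) \geqsl \lambda_1(e - x_0) - \epsi$, the $-\epsi$ cancelling precisely the $+\epsi$ of the penalty. The estimate for $\psi_2$ is entirely symmetric, with distances in $D_1$ replaced by their $\varphi_1$-images in $D_2$. Finally, the exact gluing holds by the choice of $\tilde{D}_0$, and \eqref{eqn:aux23} follows by taking, for $x \in E_2$, the representation $x_1 = x$, $x_2 = -\varphi_2(x)$ with shuttle $x_0 = x$, which annihilates the first and third summands and leaves only $\epsi\, \delta_{\tilde{D}}(\cdots) \leqsl \epsi$.
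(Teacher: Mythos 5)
Your proposal is correct and follows essentially the same route as the paper: the same amalgam $\tilde{D}_0 = \{(x,-\varphi_1(x))\dd x \in E_1\}$, the same infimum formula for $\tilde{\lambda}$ with an $\epsi$-weighted discrete penalty on the $E_2$-shuttle (the paper writes the shuttle variable as $x_2$ and yours as $x_0$, but the formulas coincide), and the same case split (trivial shuttle reduces to isometricity of $\varphi_1$, nontrivial shuttle invokes \eqref{eqn:aux22}) to reverse the trivial inequalities. The final quotient by $\tilde{\lambda}^{-1}(\{0\})$ and truncation by $\wedge\, r$ also match the paper's conclusion, which refers back to the end of the proof of Lemma \REF{lem:A1}.
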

\begin{proof}
Let $\tilde{D}_0 = \{(x,-\varphi_1(x))\dd\ x \in E_1\} \subset D_1 \times D_2$ and $\tilde{D}$, $\pi$, $\tilde{\psi}_1$ and
$\tilde{\psi}_2$ be as in \eqref{eqn:pre}. Define $\tilde{\lambda}\dd \tilde{D} \to \RRR_+$ by
\begin{multline*}
\tilde{\lambda}(z) = \inf\{\lambda_1(x_1 - x_2) + \epsi \delta_{\tilde{D}}(\tilde{\psi}_1(x_2) - (\tilde{\psi}_2 \circ
\varphi_2)(x_2)) + \lambda_2(y + \varphi_2(x_2))\dd\\ x_1 \in D_1,\ x_2 \in E_2,\ y \in D_2,\ z = \tilde{\psi}_1(x_1)
+ \tilde{\psi}_2(y)\}.
\end{multline*}
As usual, $\tilde{\lambda}$ is a semivalue on $\tilde{D}$. We see that $\tilde{\lambda}(\tilde{\psi}_1(x_2)
- (\tilde{\psi}_2 \circ \varphi_2)(x_2)) \leqsl \epsi$ for $x_2 \in E_2$ (this corresponds to \eqref{eqn:aux23}) and
$\tilde{\lambda}(\tilde{\psi}_j(x)) \leqsl \lambda_j(x)$ for $x \in D_j$ ($j=1,2$). We want to show that in fact
$\tilde{\lambda}(\tilde{\psi}_j(z)) = \lambda_j(z)$ for $z \in D_j$, which is equivalent to
\begin{equation}\label{eqn:aux24}
\lambda_1(x - h) + \epsi \delta_{\tilde{D}}(\tilde{\psi}_1(h) - (\tilde{\psi}_2 \circ \varphi_2)(h))
+ \lambda_2(y + \varphi_2(h)) \geqsl \lambda_j(z)
\end{equation}
provided $x \in D_1$, $h \in E_2$, $y \in D_2$ and
\begin{equation}\label{eqn:aux25}
\tilde{\psi}_j(z) = \tilde{\psi}_1(x) + \tilde{\psi}_2(y).
\end{equation}
First assume $j=1$. We infer from \eqref{eqn:aux25} that $k := x - z \in E_1$ and $y = -\varphi_1(k)$. Then
\eqref{eqn:aux24} is equivalent to
\begin{equation}\label{eqn:aux26}
\epsi \delta_{\tilde{D}}(\tilde{\psi}_1(h) - (\tilde{\psi}_2 \circ \varphi_2)(h)) + \lambda_2(\varphi_2(h) - \varphi_1(k))
\geqsl \lambda_1(h - k).
\end{equation}
When $\tilde{\psi}_1(h) = (\tilde{\psi}_2 \circ \varphi_2)(h)$, $\varphi_2(h) = \varphi_1(h)$ and \eqref{eqn:aux26} is
satisfied, thanks to the isometricity of $\varphi_1$. Otherwise, \eqref{eqn:aux26} follows from \eqref{eqn:aux22}.\par
The case of $j=2$ is similar and is left for the reader.\par
To finish the proof, define $D$, $\lambda$, $\psi_1$ and $\psi_2$ in exactly the same way as at the end of the proof
of \LEM{A1}.
\end{proof}

\SECT{Proof of \THM{main1}}

For the duration of this section $r \in \{1,\infty\}$, $N \in \ZZZ_+ \setminus \{1\}$ and a set $Q \subset \RRR$ such as
in \eqref{eqn:Q} are fixed. Note that it is not assumed that $Q$ is countable. However, in some results its countability is
necessary and then we add this assumption to their statements. For simplicity, we put the following

\begin{dfn}{Qg}
Suppose $Q$ is countable. A valued group $(G,+,p)$ is said to be a \textit{$Q$-group} iff $G$ is finite or a countable
group of class $\OOo_{fin}$ and $p(G) \subset Q$.
\end{dfn}

As a special case of the general technique of Fra\"{\i}ss\'{e} limits, we get

\begin{thm}{QG}
Suppose $Q$ is countable. There is a unique (up to isometric group isomorphism) $Q$-group $Q\GGG_r(N) \in \Gg_r(N)$ with
the following property. Whenever $(H,+,q) \in \Gg_r(N)$ is a finite $Q$-group and $K$ is a subgroup of $H$, every isometric
group homomorphism of $K$ into $Q\GGG_r(N)$ is extendable to an isometric group homomorphism of $H$ into $Q\GGG_r(N)$. 
\end{thm}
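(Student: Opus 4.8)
The plan is to recognize \THM{QG} as the standard Fra\"{\i}ss\'{e}-limit statement for the class $\FFf$ consisting of all finite $Q$-groups belonging to $\Gg_r(N)$, and to run the two classical halves of the argument: an inductive amalgamation construction for existence together with the extension property, and a back-and-forth argument for uniqueness. First I would check that $\FFf$ is a Fra\"{\i}ss\'{e} class in the appropriate sense. Since $Q$ is countable and each member of $\FFf$ is a finite group carrying a $Q$-valued value, there are only countably many isomorphism types in $\FFf$. Heredity holds because a subgroup of a finite $Q$-group, with the inherited value, is again a finite $Q$-group lying in $\Gg_r(N)$ (the preservation of class $\OOo_0$ being covered by \PRO{image}, while exponent and the bound by $r$ are obviously inherited). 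The amalgamation property is exactly the finite, $Q$-valued case of \LEM{A1}: given $D_0,D_1,D_2 \in \FFf$ and isometric group homomorphisms $\varphi_j\dd D_0 \to D_j$, that lemma produces a finite $D \in \Gg_r(N)$ with $\lambda(D) \subset Q$ and isometric embeddings $\psi_j\dd D_j \to D$ whose composites agree on $D_0$; hence $D \in \FFf$. Taking $D_0 = \{0\}$ gives the joint embedding property as a special case.

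For existence I would build an increasing chain $G_0 \subseteq G_1 \subseteq \cdots$ of members of $\FFf$ and set $\QQQ\GGG_r(N) = \bigcup_{n} G_n$. The bookkeeping enumerates all \emph{concrete extension problems}, i.e. triples $(H,K,\varphi)$ with $H \in \FFf$, $K$ a subgroup of $H$, and $\varphi\dd K \to G_n$ an isometric group homomorphism for some stage $n$; since every $G_n$ is finite and there are countably many isomorphism types of pairs $(H,K)$, the totality of such problems is countable, so a dovetailing can schedule each of them for treatment at some finite stage. To treat $(H,K,\varphi)$ at stage $G_n$ I would apply \LEM{A1} with $D_0 = K$, $D_1 = G_n$, $D_2 = H$, $\varphi_1 = \varphi$ and $\varphi_2$ the inclusion $K \hookrightarrow H$; identifying $G_n$ with its image $\psi_1(G_n)$ inside the amalgam $D$ and putting $G_{n+1} = D$, the second embedding $\psi_2\dd H \to G_{n+1}$ satisfies $\psi_2\bigr|_K = \varphi$ (under the identification), so it is the desired isometric extension. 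The union is countable, hence separable; each of its elements lies in some finite $G_n$ and therefore has finite rank, so $\QQQ\GGG_r(N)$ is of class $\OOo_{fin}$ with value in $Q$, i.e. a $Q$-group; and it remains in $\Gg_r(N)$ because exponent, the bound by $r$, and membership in class $\OOo_0$ (for $x \in G_n$ the quantity $\lim_{k} p(kx)/k$ is computed already inside $G_n$) all pass to increasing unions. The scheduling guarantees that every $(H,K,\varphi)$ with $\varphi\dd K \to \QQQ\GGG_r(N)$ (whose finite image lands in some $G_n$) is resolved, which is precisely the stated extension property.

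For uniqueness I would run a back-and-forth between two $Q$-groups $G,G' \in \Gg_r(N)$ enjoying the extension property, enumerating $G = \{g_0,g_1,\dots\}$ and $G' = \{g_0',g_1',\dots\}$ and constructing an increasing sequence of isometric group isomorphisms between finite subgroups, starting from $0 \mapsto 0$. In a forth step, given an isometric isomorphism $f\dd A \to B$ ($A \leqsl G$, $B \leqsl G'$ finite) and the next $g \in G$, the subgroup $H = \grp{A \cup \{g\}} \leqsl G$ is finite (a finitely generated torsion Abelian group) and lies in $\FFf$, and $f\dd A \to G'$ is an isometric embedding of the subgroup $A$ of $H$; the extension property of $G'$ yields an isometric $\widehat{f}\dd H \to G'$ extending $f$, enlarging the domain to contain $g$. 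Back steps are symmetric, using the extension property of $G$ on $\grp{B \cup \{g'\}}$ and the inverse map. The union of this chain is an isometric group isomorphism $G \to G'$.

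The genuinely substantive ingredient is \LEM{A1}: the fact that amalgamation can be performed \emph{within} $\Gg_r(N)$ while simultaneously keeping the amalgam finite and $Q$-valued is what makes the Fra\"{\i}ss\'{e} scheme applicable here, and that lemma is already established. Consequently, the only point requiring care in writing up \THM{QG} is the bookkeeping in the second paragraph—ensuring that every concrete extension problem arising at every finite stage is eventually handled—rather than any new structural difficulty; this is the routine but slightly delicate part of the argument, and everything else reduces to standard back-and-forth and the preservation properties recorded above.
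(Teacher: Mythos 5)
Your proposal is correct and follows essentially the same route as the paper: the paper likewise enumerates the (countably many) finite $Q$-groups in $\Gg_r(N)$ with infinite repetition, builds an increasing chain of finite $Q$-groups by repeated application of \LEM{A1} so that each extension problem is eventually solved, takes the union, and gets uniqueness by back-and-forth. Your bookkeeping via concrete extension problems $(H,K,\varphi)$ rather than via a repeating enumeration of the groups $H_n$ is only a cosmetic variant of the same argument.
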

\begin{proof}
The uniqueness follows from the back-and-forth method. The existence may be provide in a standard way. There are only
countably many (up to isometric group isomorphism) finite $Q$-groups belonging to $\Gg_r(N)$ and thus there is a sequence
$(H_n,+,q_n)_{n=1}^{\infty} \subset \Gg_r(N)$ of finite $Q$-groups in which every such group appears infinitely many times.
Using repeatedly \LEM{A1}, inductively define a sequence of finite $Q$-groups $(G_n,+,p_n) \in \Gg_r(N)$, starting with
$G_0 = \{0\}$, such that $(G_n,+,p_n) \subset (G_{n+1},+,p_{n+1})$ and every isometric group homomorphism of a subgroup
of $H_n$ is extendable to an isometric group homomorphism of $H_n$ into $G_{n+1}$. Finally, put $(Q\GGG_r(N),+,p) =
\bigcup_{n=1}^{\infty} (G_n,+,p_n)$. The details are left for the reader.
\end{proof}

It is clear that the completion of a member of $\Gg_r(N)$ is of the same class. In what follows, we fix the following
situation. We assume that $(G,+,p) \in \Gg_r(N)$ is complete and for some \textbf{dense} subgroup $G_0$ of $G$
the following conditions are fulfilled:
\begin{enumerate}[\upshape(QG1)]
\item whenever $(H,+,q) \in \Gg_r(N)$ is a finite group with $Q$-valued value, $K$ is its subgroup and $\varphi\dd K \to
   G_0$ is an isometric group homomorphism, then for every $\epsi \in (0,1)$ there is an $\epsi$-almost isometric group
   homomorphism $\psi\dd H \to G_0$ such that $\|\psi\bigr|_K - \varphi\|_{\infty} \leqsl \epsi$,
\item $p(G_0) \subset Q$ and the set of finite rank elements of $G_0$ is dense in $G_0$.
\end{enumerate}
(We underline that it is not assumed here that $Q$ is countable.) Our aim is to show that
\begin{itemize}
\item[(UEP)] whenever $(H,+,q) \in \Gg_r(N)$ is a finite valued group, $K$ is its subgroup and $\varphi\dd K \to G$ is
   an isometric group homomorphism, there is an isometric group homomorphism $\psi\dd H \to G$ which extends $\varphi$.
\end{itemize}
The proof of (UEP) is preceded by a few lemmas.

\begin{lem}{QG1}
Let $a \in G$ be of finite rank $k \geqsl 2$. For every $\epsi > 0$ there is $b \in G_0$ such that $\rank(b) = k$ and
$p(ja - jb) \leqsl \epsi$ for each $j \in \ZZZ$.
\end{lem}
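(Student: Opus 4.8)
Let me understand what Lemma \ref{lem:QG1} asks. We have a complete valued group $(G,+,p) \in \Gg_r(N)$ with a dense subgroup $G_0$ satisfying (QG1) and (QG2). We're given $a \in G$ of finite rank $k \geq 2$ (so $ka = 0$ but $ja \neq 0$ for $0 < j < k$), and $\epsilon > 0$. We want to find $b \in G_0$ with $\text{rank}(b) = k$ and $p(ja - jb) \leq \epsilon$ for all $j \in \mathbb{Z}$.

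Key observations:
- Since $a$ has rank $k$, the cyclic subgroup $\langle a \rangle$ has exactly $k$ elements: $0, a, 2a, \ldots, (k-1)a$.
- We need $b \in G_0$ (the dense subgroup) that is "close" to $a$ in a strong sense: not just $p(a-b)$ small, but $p(ja - jb)$ small for ALL $j$.
- The condition $p(ja-jb) \leq \epsilon$ for all $j \in \mathbb{Z}$ is equivalent to requiring it for $j = 0, 1, \ldots, k-1$ if $b$ also has rank $k$ (since then $j(a-b)$ is periodic with period $k$... wait, need to be careful).

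**Strategy using density + (QG1).** Since $G_0$ is dense, there's $b_0 \in G_0$ with $p(a - b_0)$ small. But $b_0$ might not have rank $k$, and $p(ja - jb_0)$ might blow up.

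The natural approach: Consider the finite cyclic group $H = \langle a \rangle \cong \mathbb{Z}/k\mathbb{Z}$ with value $q = p|_{\langle a \rangle}$. We want to "re-embed" this via (QG1). But (QG1) needs the value to be $Q$-valued, and $p(ja)$ might not be in $Q$.

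Better: use Lemma \ref{lem:valQ} to approximate the value on $\langle a \rangle$ by a $Q$-valued value $\bar{\lambda}$ with $\|p|_{\langle a\rangle} - \bar\lambda\|_\infty \leq \delta$. Then $(\langle a \rangle, +, \bar\lambda)$ is a finite $Q$-group in $\Gg_r(N)$ (it's cyclic of rank $k$; need $k | N$ or $N=0$ — this should be automatic since $\langle a\rangle \subset G$ and $G$ has exponent $N$). Apply (QG1) with $K = \{0\}$ (trivial subgroup) and $H = (\langle a\rangle, \bar\lambda)$: this gives an $\epsilon$-almost isometric homomorphism $\psi: \langle a\rangle \to G_0$. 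Set $b = \psi(a)$.

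**Checking the conclusion.** With $b = \psi(a)$, since $\psi$ is a homomorphism, $jb = \psi(ja)$. I need:
1. $\text{rank}(b) = k$: Since $\psi$ is $\epsilon$-almost isometric with $\epsilon < 1$, it's injective on $\langle a\rangle$, so $b$ has rank exactly $k$.
2. $p(ja - jb) \leq \epsilon$ for all $j$: Here $p(ja - jb) = p(ja - \psi(ja))$. I'd want the approximation $\|\psi - \text{inclusion}\|_\infty \leq \epsilon$ on $\langle a\rangle$.

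But wait — (QG1) compares $\psi|_K$ with $\varphi$, and with $K = \{0\}$ this gives nothing. So I must take $K$ nontrivial or use a different comparison. Let me reconsider: I think the right move is to take $\varphi$ to be the inclusion $\langle a\rangle \hookrightarrow G$ restricted appropriately, but $\varphi$ must map into $G_0$, and $a \notin G_0$ in general.

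**The real approach.** I suspect the correct strategy iterates (QG1) together with completeness. First use density to get $b_1 \in G_0$ close to $a$. The obstacle is controlling $p(ja - jb_1)$ for all $j$ simultaneously. I'd set up a recursive scheme: using (QG1) applied to successively better finite models, build a Cauchy sequence $(b_n) \subset G_0$ whose limit equals $a$ but where each $b_n$ already has rank $k$ and satisfies the uniform estimate. The main obstacle — and where I'd focus effort — is ensuring rank $k$ is preserved under the limiting/correction process while keeping all $k$ estimates $p(ja - jb) \leq \epsilon$ simultaneously controlled; this likely requires choosing the $Q$-approximation $\bar\lambda$ of $p|_{\langle a\rangle}$ fine enough (via Lemma \ref{lem:valQ}) and invoking almost-isometry to transfer the comparison from $K = \langle a\rangle$ itself, approximated inside $G_0$, back to $a$.
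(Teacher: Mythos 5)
Your proposal stops short of a proof. You correctly diagnose why applying (QG1) with $K=\{0\}$ fails (no comparison with the inclusion is available), but the ``real approach'' you then sketch --- a Cauchy sequence $(b_n)\subset G_0$ whose limit equals $a$ --- cannot work as stated: the element $b$ you must produce has to lie in $G_0$ itself, whereas a limit of elements of $G_0$ only lies in its closure; so some single $b_n$ would already have to satisfy the conclusion, and your sketch offers no mechanism for producing it. Moreover the difficulty you flag as the main obstacle (controlling $p(ja-jb)$ for all $j$ simultaneously) is not actually an obstacle at all: once $\rank(b)=\rank(a)=k$, the element $ja-jb=j(a-b)$ depends only on $j\bmod k$, and subadditivity of the value gives $p(ja-jb)\leqsl (k-1)\,p(a-b)$, so the single estimate $p(a-b)\leqsl\epsi/k$ suffices. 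Missing this reduction is what sends you toward an unnecessary iterative scheme.

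The idea you are missing is how to anchor the almost-isometric copy near $a$: take $K$ to be generated not by $0$ but by a finite-rank approximant of $a$ inside $G_0$. Concretely, with $\delta\leqsl\epsi/(6k)$, use (QG2) to find a finite-rank $c\in G_0$ with $p(a-c)\leqsl\delta$; set $H=\grp{a,c}$ (finite) and $K=\grp{c}$, so that $p(K)\subset Q$; use \LEM{valQ} to replace $p\bigr|_H$ by a $Q$-valued value $q$ extending $p\bigr|_K$ with $\|p\bigr|_H-q\|_{\infty}\leqsl\delta$; and apply (QG1) to the identity $K\to G_0$. The resulting $\delta$-almost isometric $\varphi\dd H\to G_0$ is injective, so $b:=\varphi(a)$ has rank exactly $k$, and the chain $a\approx c\approx\varphi(c)\approx\varphi(a)$ (the last step because $q(a-c)$ is small and $\varphi$ is almost isometric) yields $p(a-b)\leqsl 6\delta\leqsl\epsi/k$, whence $p(ja-jb)\leqsl\epsi$ for all $j$. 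Your first attempt was close in spirit; the fix is precisely this choice of a nontrivial $K$ inside $G_0$.
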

\begin{proof}
Let $\delta \in (0,1)$ be such that $\delta \leqsl \frac{\epsi}{6k}$. By (QG2), there is a finite rank element $c \in G_0$
for which $p(a - c) \leqsl \delta$. Let $H = \grp{a,c}$ and $K = \grp{c} \subset H$. Notice that $p(K) \subset Q$.
We conclude from \LEM{valQ} that there is a $Q$-valued value $q$ on $H$ which extends $p\bigr|_K$, is bounded by $r$ and
satisfies $\|p\bigr|_H - q\|_{\infty} \leqsl \delta$. We see that $(H,+,q) \in \Gg_r(N)$. So, by (QG1) applied to $\id\dd
K \to G_0$, there is a $\delta$-almost isometric (with respect to $q$) group homomorphism $\varphi\dd H \to G_0$ such that
$p(x - \varphi(x)) \leqsl \delta$ for $x \in K$. Put $b = \varphi(a)$. Then $\rank(b) = \rank(a)$ and
\begin{multline*}
p(a - b) \leqsl p(a - c) + p(c - \varphi(c)) + p(\varphi(c) - \varphi(a))\\ \leqsl 2\delta + (1 + \delta) q(c - a)
\leqsl 2\delta + 2(p(c - a) + \delta) \leqsl 6\delta \leqsl \frac{\epsi}{k},
\end{multline*}
so $p(ja - jb) \leqsl \epsi$ for $j = 0,1,\ldots,k-1$ and we are done.
\end{proof}

\begin{lem}{QG2}
Let $H$ be a finite subgroup of $G$. For each $\epsi > 0$ there is a group homomorphism $\varphi\dd H \to G_0$ with trivial
kernel such that $p(\varphi(x) - x) \leqsl \epsi$ for each $x \in H$.
\end{lem}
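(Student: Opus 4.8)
The plan is to reduce the statement to the cyclic case already settled in \LEM{QG1} by means of the structure theory of finite Abelian groups. If $H = \{0\}$ there is nothing to prove, so assume $H \neq \{0\}$ and write $H = \grp{a_1} \oplus \dots \oplus \grp{a_s}$ as an internal direct sum of nontrivial cyclic subgroups, where each generator $a_i \in H \subset G$ has finite rank $n_i = \rank(a_i) \geqsl 2$. Since $H$ is finite, the separation constant $d := \min\{p(x)\dd\ x \in H \setminus \{0\}\}$ is well defined and strictly positive (by (V1)). The idea is to perturb each generator $a_i$ to a nearby element $b_i \in G_0$ of the \emph{same} rank and then take the direct sum of the resulting cyclic homomorphisms.

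Concretely, I would fix the tolerance $\eta := \tfrac{1}{2s}\min(\epsi,d)$ and, for each $i$, invoke \LEM{QG1} to produce $b_i \in G_0$ with $\rank(b_i) = n_i$ and $p(j a_i - j b_i) \leqsl \eta$ for all $j \in \ZZZ$. Because $a_i$ and $b_i$ have equal rank, the rule $j a_i \mapsto j b_i$ is a well-defined group homomorphism $\grp{a_i} \to G_0$; its direct sum is a group homomorphism $\varphi\dd H \to G_0$ given by $\varphi(\sum_i m_i a_i) = \sum_i m_i b_i$. Here well-definedness amounts precisely to the implication $n_i \mid m_i \Rightarrow m_i b_i = 0$, which holds since $\rank(b_i) = n_i$.

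It then remains to check the two asserted properties. For the approximation bound, writing $x = \sum_i m_i a_i$ gives $\varphi(x) - x = \sum_i m_i(b_i - a_i)$, so by subadditivity and evenness of $p$ together with the choice of $\eta$,
\[
p(\varphi(x) - x) \leqsl \sum_{i=1}^{s} p(m_i a_i - m_i b_i) \leqsl s\eta = \tfrac{1}{2}\min(\epsi,d) \leqsl \epsi .
\]
The step I expect to be the real (though short) crux is the triviality of $\ker\varphi$, and it is exactly here that the finiteness of $H$ is used: if some $x \neq 0$ lay in $\ker\varphi$, then $p(x) = p(x - \varphi(x)) \leqsl s\eta < d$, contradicting $p(x) \geqsl d$. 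Hence $\varphi$ is injective, i.e. has trivial kernel. All the remaining work — the cyclic decomposition, verifying the homomorphism property, and the triangle-inequality estimate — is routine; the only genuine design choice is calibrating $\eta$ so that the perturbation stays below the separation constant $d$ of the finite group $H$.
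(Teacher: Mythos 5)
Your proposal is correct and follows essentially the same route as the paper: decompose $H$ into cyclic factors, perturb each generator via \LEM{QG1} to an element of $G_0$ of the same rank, sum the resulting cyclic homomorphisms, and use the separation constant $\min\{p(x)\dd x\in H\setminus\{0\}\}$ to force triviality of the kernel. The only cosmetic difference is that the paper works with an external direct product and explicit isomorphisms $\Phi$, $\Lambda$, $\Psi$, whereas you use the internal direct-sum description; the calibration of the tolerance against $\min(\epsi,d)$ is the same idea as the paper's assumption $\epsi<\mu$.
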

\begin{proof}
We assume $H$ is nontrivial. Since every finite Abelian group is group isomorphic to a direct product of cyclic groups,
there is $s \geqsl 1$ and $a_1,\ldots,a_s$ such that the function $\Phi\dd \grp{a_1} \times \ldots \times \grp{a_s} \ni
(x_1,\ldots,x_s) \mapsto \sum_{j=1}^s x_j \in H$ is a group isomorphism. Let $\mu = \min\{p(x)\dd\ x \in H \setminus
\{0\}\} > 0$. We assume $\epsi < \mu$. By \LEM{QG1}, there are $b_1,\ldots,b_s \in G_0$ with $\rank(b_j) = \rank(a_j)$
and $p(lb_j - la_j) \leqsl \epsi / s$ for $j = 1,\ldots,s$ and $l \in \ZZZ$. We infer from this that there are group
isomorphisms $\Lambda_j\dd \grp{a_j} \to \grp{b_j}$ such that $\Lambda_j(a_j) = b_j$. Let $$\Lambda = \Lambda_1 \times
\ldots \times \Lambda_s\dd \grp{a_1} \times \ldots \times \grp{a_s} \to \grp{b_1} \times \ldots \times \grp{b_s}$$
($\Lambda(x_1,\ldots,x_s) = (\Lambda_1(x_1),\ldots,\Lambda_s(x_s))$). Further, if $l_1,\ldots,l_s \in \ZZZ$, then
$p(\sum_{j=1}^s l_j a_j - \sum_{j=1}^s l_j b_j) \leqsl \sum_{j=1}^s p(l_j a_j - l_j b_j) \leqsl \epsi < \mu$ and thus
$\sum_{j=1}^s l_j a_j = 0$ iff $\sum_{j=1}^s l_j b_j = 0$. This yields that the group homomorphism $$\Psi\dd \grp{b_1}
\times \ldots \times \grp{b_s} \ni (y_1,\ldots,y_s) \mapsto \sum_{j=1}^s y_j \in G_0$$ has trivial kernel. Now it suffices
to put $\varphi = \Psi \circ \Lambda \circ \Phi^{-1}$.
\end{proof}

\begin{lem}{QG3}
Let $(H,+,q) \in \Gg_r(N)$ be a finite group, $K$ its subgroup and let $\varphi\dd K \to G$ be an isometric group
homomorphism. Then for every $\epsi \in (0,1)$ there is an $\epsi$-almost isometric group homomorphism $\psi\dd H \to G$
such that $\|\psi\bigr|_K - \varphi\|_{\infty} \leqsl \epsi$.
\end{lem}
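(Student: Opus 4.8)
The plan is to reduce the statement to the defining property \textup{(QG1)} of the dense subgroup $G_0$. That property applies only when the value on the finite group is $Q$-valued and when the isometric homomorphism lands in $G_0$; the hypotheses of \LEM{QG3} guarantee neither. Accordingly I would first push $\varphi$ into $G_0$ and replace $q$ by a nearby $Q$-valued value whose restriction to $K$ turns the pushed-in map into a genuine isometry, and only then invoke \textup{(QG1)}. Throughout, fix auxiliary tolerances $\delta_0,\delta_2,\delta_3>0$ to be chosen small at the end, and set $\mu = \min\{q(x)\dd x \in H \setminus \{0\}\} > 0$, which is available because $H$ is finite.

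First I would apply \LEM{QG2} to the finite subgroup $\varphi(K) \subset G$, obtaining an injective group homomorphism $\alpha\dd \varphi(K) \to G_0$ with $p(\alpha(y) - y) \leqsl \delta_0$ for $y \in \varphi(K)$. Put $\varphi_0 = \alpha \circ \varphi\dd K \to G_0$; it is injective, satisfies $p(\varphi_0(x) - \varphi(x)) \leqsl \delta_0$, and hence $|p(\varphi_0(x)) - q(x)| \leqsl \delta_0$ for $x \in K$ (recall $\varphi$ is isometric). Now define $q_0 = p \circ \varphi_0$ on $K$: since $\varphi_0$ is injective with range in $G_0$ and $p(G_0) \subset Q$, $q_0$ is a $Q$-valued value on $K$ with $q_0 \leqsl r$, it makes $\varphi_0$ isometric by construction, and $\|q_0 - q\bigr|_K\|_{\infty} \leqsl \delta_0$.

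Next I would transport $q_0$ to all of $H$. Choosing $\omega(t) = (1 + \delta_0/\mu)t \in \Omega^*$ one has $q_0 \leqsl (\omega \circ q)\bigr|_K$, so \LEM{valom} (with $D = H$, $D_0 = K$, $\lambda = q$, $\lambda_0 = q_0$) produces a value $\hat q$ on $H$ with $\hat q\bigr|_K = q_0$, $\hat q \leqsl \omega \circ q$ and $\hat q \leqsl r$; inspecting the infimum defining $\hat q$ in the proof of \LEM{valom} also gives the lower bound $\hat q \geqsl q - \delta_0$, so $\hat q$ is two-sidedly close to $q$. Then \LEM{valQ} (applied to $(H,\hat q)$ and $K$, on which $\hat q = q_0$ is $Q$-valued) yields a $Q$-valued value $\bar q$ on $H$ extending $q_0$, with $\|\bar q - \hat q\|_{\infty} \leqsl \delta_2$ and $\bar q \leqsl r$ (the $\wedge r$ truncations in both lemmas leave $q_0$ untouched because $q_0 \leqsl r$). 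Thus $(H,+,\bar q)$ is a finite $Q$-group in $\Gg_r(N)$ with $\bar q\bigr|_K = q_0$, close to $q$ both additively and multiplicatively.

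Finally I would apply \textup{(QG1)} to $(H,+,\bar q)$, its subgroup $K$, and the isometric homomorphism $\varphi_0\dd (K,q_0) \to G_0$, obtaining a $\delta_3$-almost isometric group homomorphism $\psi\dd H \to G_0 \subset G$ with $\|\psi\bigr|_K - \varphi_0\|_{\infty} \leqsl \delta_3$. Then $\|\psi\bigr|_K - \varphi\|_{\infty} \leqsl \delta_3 + \delta_0$, and for $x \in H$ the chain $p(\psi(x)) \approx \bar q(x) \approx q(x)$ (the first approximation being multiplicative with factor $1 \pm \delta_3$, the second following from $|\bar q(x) - q(x)| \leqsl \delta_0 + \delta_2$ together with $q(x) \geqsl \mu$ for $x \neq 0$) shows that $\psi$ is $\epsi$-almost isometric with respect to $q$ once $\delta_0,\delta_2,\delta_3$ are chosen small relative to $\mu$ and $\epsi$. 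I expect the main obstacle to be exactly this last reconciliation: arranging that the value carried on $K$ equals $q_0$ precisely --- so that $\varphi_0$ is honestly isometric and \textup{(QG1)} applies --- while simultaneously keeping $\bar q$ two-sidedly near $q$, and then converting the resulting additive estimates on values into the multiplicative almost-isometry bound, which is where the finiteness of $H$ (through $\mu > 0$) is essential.
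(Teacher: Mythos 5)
Your proposal is correct and follows essentially the same route as the paper's own proof: push $\varphi$ into $G_0$ via \LEM{QG2}, pull back $p$ to a $Q$-valued value on $K$, extend it to $H$ with two-sided control via \LEM{valom}, adjust to a $Q$-valued value via \LEM{valQ}, and then invoke \textup{(QG1)}. The only cosmetic difference is that you obtain the lower bound on the extended value by inspecting the infimum in the proof of \LEM{valom}, whereas the paper gets the same control from point (e) of that lemma with $\tau=\id$ and $\varrho(t)=t/(1-\delta)$; both are sound.
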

\begin{proof}
Again, we assume $H$ is nontrivial. Let $\mu = \min\{q(h)\dd\ h \in H \setminus \{0\}\}$. Take $\delta$ such that
\begin{equation}\label{eqn:auxd}
\delta \in (0,\frac12), \quad \delta < c, \quad (1 + 2\delta)^2 \leqsl 1 + \epsi, \quad (1 - 2\delta)^2 \geqsl 1 - \epsi.
\end{equation}
By \LEM{QG2}, there is a group homomorphism $\kappa\dd \varphi(K) \to G_0$ with trivial kernel such that $p(\kappa(x) - x)
\leqsl \delta^2$. Put $\psi_0 = \kappa \circ \varphi\dd K \to G_0$. Then $\psi_0$ has trivial kernel and
\begin{equation}\label{eqn:aux27}
\|\psi_0 - \varphi\|_{\infty} \leqsl \delta^2.
\end{equation}
Let $\lambda_0\dd K \ni x \mapsto p(\psi_0(x)) \in \RRR_+$. Observe that $\lambda_0$ is a value bounded by $r$ and
$\lambda_0(K) \subset Q$ (see (QG2)). Moreover, for $x \in K \setminus \{0\}$ we have (thanks to \eqref{eqn:auxd} and
\eqref{eqn:aux27}):
$$
q(x) = p(\varphi(x)) \leqsl p(\psi_0(x)) + \delta^2 \leqsl \lambda_0(x) + \delta q(x)
$$
and
$$
\lambda_0(x) \leqsl p(\varphi(x)) + \delta^2 \leqsl q(x) + \delta q(x) = (1 + \delta) q(x).
$$
The above estimations gives $q\bigr|_K \leqsl \frac{1}{1 - \delta} \lambda_0$ and $\lambda_0 \leqsl (1 + \delta)
q\bigr|_K$. Now by \LEM{valom} (with $\omega(t) = (1 + \delta) t$, $\varrho(t) = \frac{t}{1 - \delta}$ and $\tau(t) = t$),
there is a value $\lambda_1$ on $H$ which extends $\lambda_0$, is bounded by $r$ and satisfies
\begin{equation}\label{eqn:aux28}
(1 - \delta) q \leqsl \lambda_1 \leqsl (1 + \delta) q.
\end{equation}
Further, since $\lambda_1(K) = \lambda_0(K) \subset Q$, we infer from \LEM{valQ} that there is a value $\lambda$ on $H$
bounded by $r$, extending $\lambda_1\bigr|_K$ and satisfying $\lambda(H) \subset Q$ and
\begin{equation}\label{eqn:aux29}
\|\lambda_1 - \lambda\|_{\infty} \leqsl \delta^2.
\end{equation}
Note that $\psi_0\dd K \to G_0$ is isometric with respect to $\lambda$ and therefore, by (QG1), there is a $\delta$-almost
isometric group homomorphism $\psi\dd (H,+,\lambda) \to (G,+,p)$ such that $\|\psi\bigr|_K - \psi_0\|_{\infty} \leqsl
\delta$. The latter inequality combined with \eqref{eqn:auxd} and \eqref{eqn:aux27} gives $\|\psi\bigr|_K
- \varphi\|_{\infty} \leqsl \epsi$. So, we only need to check that $\psi$, as a group homomorphism of $(H,+,q)$ into
$(G,+,p)$, is $\epsi$-almost isometric. For $h \in H \setminus \{0\}$ we have, thanks to \eqref{eqn:auxd},
\eqref{eqn:aux28} and \eqref{eqn:aux29}:
\begin{multline*}
p(\psi(h)) \leqsl (1 + \delta) \lambda(h) \leqsl (1 + \delta)(\lambda_1(h) + \delta^2)\\ \leqsl (1 + \delta)[(1 + \delta)
q(h) + \delta q(h)] \leqsl (1 + 2\delta)^2 q(h) \leqsl (1 + \epsi) q(h)
\end{multline*}
and
\begin{multline*}
p(\psi(h)) \geqsl (1 - \delta) \lambda(h) \geqsl (1 - \delta)(\lambda_1(h) - \delta^2)\\ \geqsl (1 - \delta)[(1 - \delta)
q(h) - \delta q(h)) \geqsl (1 - 2\delta)^2 q(h) \geqsl (1 - \epsi) q(h)
\end{multline*}
which finishes the proof.
\end{proof}

\begin{rem}{interesting}
From the beginning of the section to this moment we have never used the assumption that $G$ is complete. So, the assertion
of \LEM{QG3} is fulfilled for every group $G$ which is `between' $G_0$ and its completion. In particular, \LEM{QG3} holds
true for $G = G_0$.
\end{rem}

\begin{lem}{QG4}
Let $(H,+,q) \in \Gg_r(N)$ be a finite group and $K$ its subgroup. Further, let $\varphi\dd K \to G$ and $\psi\dd H \to G$
be, respectively, an isometric and an $\epsi$-almost isometric group homomorphism (where $\epsi \in (0,1)$) such that
$\|\psi\bigr|_K - \varphi\|_{\infty} \leqsl \epsi$. For every $\delta \in (0,\epsi)$ there is a $\delta$-almost isometric
group homomorphism $\psi'\dd H \to G$ such that $\|\psi'\bigr|_K - \varphi\|_{\infty} \leqsl \delta$ and
$\|\psi - \psi'\|_{\infty} \leqsl C \epsi$ where $C = 3 + 2 \diam (H,q)$.
\end{lem}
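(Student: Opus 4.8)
The plan is to build an abstract amalgam that records the current placement $\psi$ and then re-embed it into $G$ using the approximate extension property \LEM{QG3}, anchoring the re-embedding so that the new map is forced to stay close to $\psi$. First I would isolate a finite subgroup of $G$ carrying all the relevant data: put $L = \grp{\psi(H) \cup \varphi(K)} \subset G$. Since $H$ and $K$ are finite, their images $\psi(H)$ and $\varphi(K)$ consist of elements of finite order, so $L$ is a finitely generated torsion group, hence finite; equipped with $p\bigr|_L$ it lies in $\Gg_r(N)$ (using \PRO{image} when $N=0$). Let $\iota\dd L \to G$ denote the inclusion, an isometric group homomorphism.

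Next I would amalgamate $H$ with $L$ along $K$ by applying \LEM{A2} with $D_1 = (H,+,q)$, $D_0 = K$, $D_2 = (L,+,p\bigr|_L)$, $u = \varphi\dd K \to L$ (isometric) and $v = \psi\dd H \to L$ ($\epsi$-almost isometric); the hypothesis $\|u - v\bigr|_{D_0}\|_{\infty} \leqsl \epsi$ is exactly the given bound $\|\psi\bigr|_K - \varphi\|_{\infty} \leqsl \epsi$. This produces a finite valued group $(D,+,\lambda) \in \Gg_r(N)$ together with isometric group homomorphisms $w_1\dd H \to D$ and $w_2\dd L \to D$ such that $w_1\bigr|_K = w_2 \circ \varphi$ and $\|w_1 - w_2 \circ \psi\|_{\infty} \leqsl A\epsi$ with $A = 1 + \diam(H,q)$.

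Now I would re-embed $D$ into $G$ keeping $w_2(L)$ pinned to $L$. Fix $\delta' = \delta \wedge \epsi$. As $w_2$ is isometric, hence injective, the map $\iota \circ w_2^{-1}\dd w_2(L) \to G$ is an isometric group homomorphism of the subgroup $w_2(L)$ of $D$, so \LEM{QG3} (valid for the present $G$ by \REM{interesting}) yields a $\delta'$-almost isometric group homomorphism $\Psi\dd D \to G$ with $\|\Psi\bigr|_{w_2(L)} - \iota \circ w_2^{-1}\|_{\infty} \leqsl \delta'$, i.e.\ $p(\Psi(w_2(\ell)) - \ell) \leqsl \delta'$ for all $\ell \in L$. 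Setting $\psi' = \Psi \circ w_1\dd H \to G$, the verifications are routine. Since $w_1$ is isometric and $\Psi$ is $\delta'$-almost isometric, $\psi'$ is $\delta'$-almost isometric, hence $\delta$-almost isometric; and for $x \in K$ we have $\psi'(x) = \Psi(w_2(\varphi(x)))$, so $p(\psi'(x) - \varphi(x)) \leqsl \delta' \leqsl \delta$. For the nearness bound, fix $h \in H$ and estimate
$$p(\psi'(h) - \psi(h)) \leqsl p(\Psi(w_1(h)) - \Psi(w_2(\psi(h)))) + p(\Psi(w_2(\psi(h))) - \psi(h)).$$
The second term is $\leqsl \delta'$ by the pinning property, while the first is $\leqsl (1+\delta')\lambda(w_1(h) - w_2(\psi(h))) \leqsl (1+\delta')A\epsi$ by almost isometry of $\Psi$ and the \LEM{A2} bound. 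Since $\delta' \leqsl \epsi < 1$, the sum is at most $2A\epsi + \epsi = (3 + 2\diam(H,q))\epsi = C\epsi$, as required.

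The only genuine obstacle is conceptual rather than computational: one must recognize that the right object to feed into \LEM{QG3} is not $\varphi$ itself but the amalgam $D$ anchored along the finite subgroup $L$ that \emph{already contains} $\psi(H)$. It is precisely this anchoring — re-embedding $D$ while keeping $w_2(L)$ within $\delta'$ of $L \subset G$ — that forces $\psi'$ to remain within $C\epsi$ of $\psi$, with the constant $C$ dropping out of the $A\epsi$ estimate. A minor point to confirm along the way is the finiteness of $L$, and hence its membership in $\Gg_r(N)$, which rests on images of finite groups being torsion.
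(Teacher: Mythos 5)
Your proposal is correct and follows essentially the same route as the paper: form the finite subgroup $L = \varphi(K)+\psi(H)$ of $G$, amalgamate $H$ with $L$ along $K$ via \LEM{A2}, then re-embed the amalgam into $G$ by applying \LEM{QG3} to the inverse of the isometric copy of $L$, with the same constant bookkeeping yielding $C\epsi$. The only differences are cosmetic (your $\delta' = \delta\wedge\epsi$ equals $\delta$ since $\delta<\epsi$ by hypothesis).
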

\begin{proof}
Let $L = \varphi(K) + \psi(H)$ and $p' = p\bigr|_L$. Then $(L,+,p') \in \Gg_r(N)$ and $L$ is finite. By \LEM{A2}, there are
a finite valued group $(D,+,\lambda) \in \Gg_r(N)$ and isometric group homomorphisms $w_H\dd H \to D$ and $w_L\dd L \to D$
such that
\begin{equation}\label{eqn:aux30}
w_L \circ \varphi = w_H\bigr|_K \qquad \textup{and} \qquad \|w_H - w_L \circ \psi\|_{\infty} \leqsl A\epsi
\end{equation}
where $A = 1 + \diam (H,q)$. Put $D_0 = w_L(L)$. Observe that $w_L^{-1}\dd D_0 \to L \subset G$ is isometric. Hence,
by \LEM{QG3}, there is a $\delta$-almost isometric group homomorphism $v\dd D \to G$ such that
\begin{equation}\label{eqn:aux31}
\|v\bigr|_{D_0} - w_L^{-1}\|_{\infty} \leqsl \delta.
\end{equation}
Then $\psi' = v \circ w_H\dd H \to G$ is $\delta$-almost isometric. Now if $x \in K$, then by \eqref{eqn:aux30},
$w_H(x) = (w_L \circ \varphi)(x)$ and thus
$$
p(\varphi(x) - \psi'(x)) = p(w_L^{-1}((w_L \circ \varphi)(x)) - v((w_L \circ \varphi)(x))) \leqsl \delta,
$$
thanks to \eqref{eqn:aux31}. Finally, if $h \in H$, then $\psi(h) \in L$ and (by \eqref{eqn:aux30} and \eqref{eqn:aux31}):
\begin{multline*}
p(\psi(h) - \psi'(h)) \leqsl p(w_L^{-1}((w_L \circ \psi)(h)) - v((w_L \circ \psi)(h)))\\
+ p(v((w_L \circ \psi)(h)) - v(w_H(h))) \leqsl \|w_L^{-1} - v\bigr|_{D_0}\|_{\infty}\\
+ (1 + \delta) p((w_L \circ \psi)(h) - w_H(h)) \leqsl \delta + 2 \|w_L \circ \psi - w_H\|_{\infty}\\\leqsl \epsi + 2A \epsi
= C \epsi.
\end{multline*}
\end{proof}

Now we are ready to prove

\begin{thm}{UEP}
The group $G$ satisfies \textup{(UEP)}.
\end{thm}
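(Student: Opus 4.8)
The plan is to realize the desired isometric extension $\psi$ as a uniform limit of a sequence of successively better almost isometric homomorphisms, invoking the completeness of $G$ only in the last step. The tools are already in place: \LEM{QG3} supplies a first almost isometric homomorphism that is close to $\varphi$ on $K$, while \LEM{QG4} lets me improve the degree of almost-isometry while keeping tight control on how far the correction moves.

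First I would fix a strictly decreasing sequence $(\epsi_n)_{n=1}^{\infty} \subset (0,1)$ with $\sum_{n=1}^{\infty} \epsi_n < \infty$ (for instance $\epsi_n = 2^{-n}$) and set $C = 3 + 2\diam(H,q)$, which is a fixed finite constant because $H$ is finite. Applying \LEM{QG3} with $\epsi = \epsi_1$ I obtain an $\epsi_1$-almost isometric group homomorphism $\psi_1 \dd H \to G$ with $\|\psi_1|_K - \varphi\|_{\infty} \leqsl \epsi_1$.

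Next I would build the sequence $(\psi_n)$ inductively. Assuming $\psi_n \dd H \to G$ is $\epsi_n$-almost isometric and satisfies $\|\psi_n|_K - \varphi\|_{\infty} \leqsl \epsi_n$, I apply \LEM{QG4} to the isometric map $\varphi$ and to $\psi_n$, with its parameter taken equal to $\epsi_n$ and with $\delta = \epsi_{n+1} \in (0,\epsi_n)$. This yields an $\epsi_{n+1}$-almost isometric group homomorphism $\psi_{n+1} \dd H \to G$ with $\|\psi_{n+1}|_K - \varphi\|_{\infty} \leqsl \epsi_{n+1}$ and, crucially, $\|\psi_n - \psi_{n+1}\|_{\infty} \leqsl C \epsi_n$, so that the correction at each stage is summable.

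Finally I would pass to the limit. Since $\sum_n \|\psi_n - \psi_{n+1}\|_{\infty} \leqsl C \sum_n \epsi_n < \infty$ and $H$ is finite, for each $h \in H$ the sequence $(\psi_n(h))$ is Cauchy in the complete valued group $G$ and hence converges; I set $\psi(h)$ to be its limit, the convergence being uniform on $H$. As a pointwise limit of group homomorphisms, $\psi$ is again a group homomorphism (by continuity of the addition of $G$). From $(1-\epsi_n) q(h) \leqsl p(\psi_n(h)) \leqsl (1+\epsi_n) q(h)$ together with $\epsi_n \to 0$ I obtain $p(\psi(h)) = q(h)$, so $\psi$ is isometric, and $\|\psi|_K - \varphi\|_{\infty} \leqsl \lim_n \epsi_n = 0$ shows that $\psi$ extends $\varphi$; this establishes (UEP). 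I expect no genuine obstacle here: the real content has already been absorbed into \LEM{QG3} and \LEM{QG4}, and the present argument is a routine Cauchy-sequence passage to the limit. The only point demanding a little care is the bookkeeping that keeps $C$ a fixed constant throughout the induction (which is guaranteed because $H$ is fixed and finite), so that the telescoping distances stay summable and completeness of $G$ can deliver the limit.
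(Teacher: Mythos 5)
Your proposal is correct and follows essentially the same route as the paper's own proof: a first application of \LEM{QG3} to get $\psi_1$, an inductive application of \LEM{QG4} with $\delta = \epsi_{n+1}$ to produce $\epsi_{n+1}$-almost isometric maps with $\|\psi_n - \psi_{n+1}\|_{\infty} \leqsl C\epsi_n$, and a passage to the limit via completeness of $G$. Nothing is missing.
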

\begin{proof}
Let $(H,+,q)$, $K$ and $\varphi$ be as in (UEP). Put $C = 3 + 2 \diam (H,q)$ and $\epsi_n = 2^{-n}$ ($n \geqsl 1$).
By \LEM{QG3}, there is an $\epsi_1$-almost isometric group homomorphism $\psi_1\dd H \to G$ such that $\|\psi_1\bigr|_K
- \varphi\|_{\infty} \leqsl \epsi_1$. Suppose we have $\psi_{n-1}$ for some $n \geqsl 2$. Applying \LEM{QG4}, we obtain
$\psi_n\dd H \to G$ such that
\begin{enumerate}[(1$_n$)]
\item $\psi_n$ is an $\epsi_n$-almost isometric group homomorphism,
\item $\|\psi_n\bigr|_K - \varphi\|_{\infty} \leqsl \epsi_n$,
\item $\|\psi_n - \psi_{n-1}\|_{\infty} \leqsl C \epsi_{n-1}$.
\end{enumerate}
The condition (3$_n$) ensures that the sequence $(\psi_n(h))_{n=1}^{\infty}$ converges for every $h \in H$ and thus we may
define a group homomorphism $\psi\dd H \to G$ by $\psi(h) = \lim_{n\to\infty} \psi_n(h)$. Then (1$_n$) yields that $\psi$
is isometric and (2$_n$) gives $\psi\bigr|_K = \varphi$.
\end{proof}

\textit{Proof of \THM{main1}}. Let $G$ be the completion of $\QQQ\GGG_r(N)$ (cf. \THM{QG}).
By \THM{UEP}, $G$ satisfies (UEP). This gives the existence of $\GGG_r(N)$. The uniqueness follows again from \THM{UEP},
applied to the situation when $Q = \RRR$, (G3) and the back-and-forth method.\qed
\vspace{0.3cm}

Note that we have proved that $\GGG_r(N)$ fulfills (UEP). This fact will be used in the next section. The results of this
section gives more than just the assertion of \THM{main1}. Namely,

\begin{pro}{completion}
In each of the following cases the completion of a valued group $(G,+,p)$ is isometrically group isomorphic to $\GGG_r(N)$.
\begin{enumerate}[\upshape(A)]
\item $G = Q\GGG_r(N)$ where $Q$ is a countable set as in \eqref{eqn:Q} (cf. \THM{QG}).
\item $(G,+,p) \in \Gg_r(N)$ and $G$ satisfies conditions \textup{(G2)} and \textup{(G3)} of \THM{main1} (with $\GGG_r(N)$
   replaced by $G$).
\item $(G,+,p)$ satisfies conditions \textup{(QG1)} and \textup{(QG2)} (with $G_0$ replaced by $G$) for some set $Q$
   as in \eqref{eqn:Q}.
\end{enumerate}
\end{pro}

\SECT{Proof of \THM{main2}}

As in the previous section, we fix $r \in \{1,\infty\}$ and $N \in \ZZZ_+ \setminus \{1\}$. Our first aim of this part
is to show that
\begin{itemize}
\item[(CEP)] Whenever $(L,+,q) \in \Gg_r(N)$, $K$ and $H$ are, respectively, a compact and a finite subgroup of $L$, and
$\varphi\dd K \to \GGG_r(N)$ is an isometric group homomorphism, then there is an isometric group homomorphism $\psi\dd
K + H \to \GGG_r(N)$ which extends $\varphi$.
\end{itemize}
Similarly as in the previous section, the proof of (CEP) is preceded by a few auxiliary lemmas. In some of them we use
the Hausdorff distance, which is denoted by us by $\dist_q(A,B)$ if only $A$ and $B$ are two compact nonempty subsets
of a valued group with value $q$.

\begin{lem}{K1}
Let $a \in \GGG_r(0)$ be such that the closure $K$ of $\grp{a}$ is compact. Then for every $\epsi > 0$ there is a finite
rank element $b$ of $\GGG_r(0)$ such that $\dist_p(\grp{b},K) \leqsl \epsi$ where $p$ is the value of $\GGG_r(0)$.
\end{lem}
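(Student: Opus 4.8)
The plan is to dispose of the finite-rank case trivially and then reduce the infinite-rank case to producing a single finite-rank element of rank exactly $m$ that tracks the first $m$ multiples of $a$. First, if $a$ has finite rank then $\grp{a}$ is already a finite (hence closed) subgroup, so $K=\grp{a}$ and we may take $b=a$, giving $\dist_p(\grp{b},K)=0$. So assume $a$ has infinite rank; then $K=\cll\grp{a}$ is an infinite compact metrizable monothetic group. A standard compactness argument shows the forward semigroup $\{ka\dd k\geqsl 0\}$ is dense in $K$: by compactness some subsequence $n_k a$ converges, whence $0$ is a limit point of $\{ka\dd k\geqsl 1\}$, and then the closure of $\{ka\dd k\geqsl 0\}$ is a closed subgroup of $K$ containing $a$, i.e.\ all of $K$. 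Using this I would first pick $n_0$ with $\{0,a,\ldots,n_0a\}$ being $\tfrac{\epsi}{4}$-dense in $K$ (finitely many balls cover $K$), and then pick $m>n_0$ with $p(ma)\leqsl\tfrac{\epsi}{8}$ (again $0$ is a limit point of the forward orbit). Since $m>n_0$, the set $\{ja\dd 0\leqsl j<m\}$ is still $\tfrac{\epsi}{4}$-dense in $K$ and is contained in $K$. The key target becomes: \emph{find a finite-rank $b$ with $\rank(b)=m$ and $p(jb-ja)\leqsl\tfrac{\epsi}{4}$ for $0\leqsl j<m$.} Indeed, $\grp{b}=\{jb\dd 0\leqsl j<m\}$ then lies in the $\tfrac{\epsi}{4}$-neighbourhood of $K$ (each $jb$ is near $ja\in K$), while every point of $K$ is within $\tfrac{\epsi}{4}$ of some $ja$ and hence within $\tfrac{\epsi}{2}$ of $jb\in\grp{b}$; the triangle inequality for the Hausdorff distance then yields $\dist_p(\grp{b},K)\leqsl\tfrac{\epsi}{2}\leqsl\epsi$.

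To manufacture such a $b$ I would first replace $a$ by a finite-rank element and then invoke the universal extension property of $\GGG_r(0)$. By condition \textup{(G3)} of \THM{main1}, finite-rank elements are dense, so choose a finite-rank $a_1$ with $p(a_1-a)\leqsl\tfrac{\epsi}{8(m-1)}$; then $\grp{a_1}$ is a genuine \emph{finite} subgroup of $\GGG_r(0)$, $p(ja_1-ja)\leqsl\tfrac{\epsi}{8}$ for $0\leqsl j<m$, and $p(ma_1)\leqsl\tfrac{\epsi}{4}$. Now I build a finite valued group in the spirit of the construction in the proof of \THM{o0}, wrapping at $m$ rather than at the rank prescribed there. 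Let $\grp{c_0}$ be cyclic of rank $m$, put $H=\grp{a_1}\times\grp{c_0}$ (finite), identify $x\in\grp{a_1}$ with $(x,0)$, set $\theta=p(ma_1)/m$ and $M=\max p(\grp{a_1})$, and define
\[
q\big((x,h)\big)=\Big(\inf\{\,p(x-ka_1)+|k|\,\theta+M\,\delta_{\grp{c_0}}(kc_0+h)\dd\ k\in\ZZZ\,\}\Big)\wedge r .
\]
As in \THM{o0} one checks that $q$ is a value and $(H,+,q)\in\Gg_r(0)$ (being finite, $H$ is automatically of class $\OOo_0$). The crucial identity $q((x,0))=p(x)$ reduces to $p(ka_1)\leqsl|k|\,\theta+M\,\delta_{\grp{c_0}}(kc_0)$: when $m\nmid k$ the penalty term supplies $M\geqsl p(ka_1)$, while for $k=jm$ subadditivity gives $p(jm\,a_1)\leqsl j\,p(ma_1)=jm\,\theta$. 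Thus the inclusion $\varphi\dd\grp{a_1}\to\GGG_r(0)$ is isometric for $q$, and taking $k=-j$ in the infimum shows $q(jc-ja_1)\leqsl j\theta$ for the element $c=(0,c_0)$, which has rank exactly $m$.

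Finally I would apply \textup{(UEP)} (established in \THM{UEP}) to the finite group $(H,+,q)$, its subgroup $\grp{a_1}$, and the isometric inclusion $\varphi$, obtaining an isometric group homomorphism $\psi\dd H\to\GGG_r(0)$ extending $\varphi$; set $b=\psi(c)$. Being isometric, $\psi$ is injective, so $\rank(b)=\rank(c)=m$, and since $\psi(ja_1)=ja_1$ we get $p(jb-ja_1)=q(jc-ja_1)\leqsl j\theta\leqsl(m-1)\theta\leqsl\tfrac{\epsi}{8}$. Combining with $p(ja_1-ja)\leqsl\tfrac{\epsi}{8}$ yields $p(jb-ja)\leqsl\tfrac{\epsi}{4}$ for $0\leqsl j<m$, which is exactly the target, completing the Hausdorff estimate. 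The main obstacle is precisely the construction of the tracking element of rank \emph{exactly} $m$: a naive cyclic-quotient value on $\ZZZ/m\ZZZ$ fails because an almost-periodic orbit could make short ``wrap-around'' cycles cheap and collapse the value; this is overcome by the discrete penalty $\delta_{\grp{c_0}}$ together with the choice of $m$ at a place where $p(ma)$ is small, so that wrapping is forbidden except through the small element $ma_1$. The subsidiary point worth care is the reduction to a finite subgroup for \textup{(UEP)} (the infinite cyclic group $\grp{a}$ has no finite subgroups), which is handled by passing first to the finite-rank approximant $a_1$.
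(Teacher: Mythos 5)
Your argument is in substance the same as the paper's: approximate $a$ by a finite rank element via (G3), adjoin a cyclic group of rank $m$ (chosen where $p(ma)$ is small) equipped with a hand-crafted value under which the new generator tracks the multiples of $a$, and then transplant it into $\GGG_r(0)$ by (UEP). The only real difference is cosmetic: the paper defines its auxiliary value by an explicit formula $q_0(x,lc')=p(x+s(l)a)+\delta_{H'}(lc')\epsi/8$ (referencing $a$ itself) and symmetrizes, while you use an infimum formula in the style of the proof of \THM{o0}; both serve the same purpose.

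One step as written can fail: your claim that $q$ is a value breaks down when $\theta=p(ma_1)/m=0$, i.e.\ when the finite rank approximant happens to satisfy $ma_1=0$. In that case the infimand vanishes on the whole cyclic subgroup generated by $(a_1,-c_0)$, so $q$ is only a semivalue and (UEP) does not apply. This is easily patched --- either observe that in that degenerate case $\grp{a_1}$ itself is already within $\epsi$ of $K$ in Hausdorff distance (so one may take $b=a_1$), or replace $\theta$ by $\max(\theta,\eta)$ for a sufficiently small $\eta>0$, which preserves all the required inequalities --- but it should be said. A second, harmless, slip: from $p(a_1-a)\leqsl\epsi/(8(m-1))$ one only gets $p(ma_1)\leqsl 3\epsi/8$ (not $\epsi/4$) when $m=2$; the final Hausdorff bound then comes out as $3\epsi/4$ rather than $\epsi/2$, which still suffices.
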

\begin{proof}
We asssume $\rank(a) = \infty$. Since the elements of the sequence $(na)_{n=1}^{\infty}$ form a dense subset of $K$, there
is $m \geqsl 2$ such that the set $\{0,a,\ldots,(m-1)a\}$ is an ($\epsi / 4$)-net for $K$ and
\begin{equation}\label{eqn:aux32}
p(ma) \leqsl \frac{\epsi}{8}.
\end{equation}
By (G3), there is a finite rank element $c \in \GGG_r(0)$ for which
\begin{equation}\label{eqn:aux33}
p(a - c) \leqsl \frac{\epsi}{4m}.
\end{equation}
Put $H = \grp{c}$ and let $H' = \grp{c'}$ be a cyclic group of rank $m$. For each $k \in \ZZZ$ let $s(k) \in
\{0,1,\ldots,m-1\}$ be such that $m | k - s(k)$. Define $q_0\dd H \times H' \to \RRR_+$ by $q_0(x,lc') = p(x + s(l)a)
+ \delta_{H'}(lc')\epsi / 8$ ($l \in \ZZZ$). Observe that $q_0$ is well defined; $q_0(x,y) = 0$ iff $x = 0$ and $y = 0$;
and $q_0(x,0) = p(x)$. We claim that $q_0$ satisfies the triangle inequality, that is, $q_0(x + x',(l + l')c') \leqsl
q_0(x,lc') + q_0(x',l'c')$. Indeed, if $s(l) + s(l') < m$, then $s(l + l') = s(l) + s(l')$ and then the latter inequality
is immediate. And when $s(l) + s(l') \geqsl m$, we have $s(l) >0$, $s(l') > 0$ and $s(l + l') = s(l) + s(l') - m$ and
hence, by \eqref{eqn:aux32},
\begin{multline*}
q_0(x,lc') + q_0(x',l'c') \geqsl p(x + x' + (s(l) + s(l'))a) + \frac{\epsi}{8} + \frac{\epsi}{8} \geqsl\\
p(x + x' + s(l+l')a - ma) + p(ma) + \frac{\epsi}{8}\delta_{H'}((l+l')c') \geqsl q_0(x + x',(l + l')c').
\end{multline*}
Now let $q\dd H \times H' \to \RRR_+$ be given by $$q(x,y) = [\frac12(q_0(x,y) + q_0(-x,-y))] \wedge r.$$ It follows from
the properties of $q_0$ that $q$ is a value and $q(x,0) = p(x)$ for $x \in H$. Moreover, for $j \in \{0,1,\ldots,m-1\}$
one has
\begin{equation}\label{eqn:aux34}
q(jc,-jc') \leqsl \frac{\epsi}{2}.
\end{equation}
Indeed, we may assume that $j \neq 0$ and then $s(j) = j$ and $s(-j) = m - j$. So, by \eqref{eqn:aux32} and
\eqref{eqn:aux33}, $q_0(jc,-jc') \leqsl p(j(c - a)) + p(ma) + \frac{\epsi}{8} \leqsl \frac{\epsi}{2}$ and $q_0(-jc,jc') =
p(j(c - a)) + \frac{\epsi}{8} \leqsl \frac{\epsi}{2}$. Further, since the function $H \times \{0\} \ni (h,0) \mapsto h
\in \GGG_r(0)$, we conclude from (UEP) that there exists an isometric group homomorphism $\psi\dd H \times H' \to
\GGG_r(0)$ with $\psi(h,0) = h$ for any $h \in H$. Put $b = \psi(0,c')$. Then $\grp{b} = \{0,b,\ldots,(m-1)b\}$. Now if
$j \in \{0,1,\ldots,m-1\}$, we get (see \eqref{eqn:aux33} and \eqref{eqn:aux34}):
\begin{multline*}
p(jb - ja) \leqsl p(jc - jb) + p(jc - ja) \leqsl p(\psi(jc,0) - j\psi(0,c')) + j p(c - a)\\ \leqsl p(\psi(jc,-jc'))
+ \frac{\epsi}{4} = q(jc,-jc') + \frac{\epsi}{4} \leqsl \frac34\epsi.
\end{multline*}
Conversely, if $x \in K$, there is $j \in \{0,1,\ldots,m-1\}$ such that $p(x - ja) \leqsl \epsi / 4$ and then
$p(x - jb) \leqsl p(x - ja) + p(ja - jb) \leqsl \epsi$ which finally gives $\dist_p(\grp{b},K) \leqsl \epsi$.
\end{proof}

From now to the end of the \textbf{next} section $p$ denotes the value of $\GGG_r(N)$.

\begin{lem}{K2}
Let $K$ be a compact subgroup of $\GGG_r(N)$. For each $\epsi > 0$ there is a finite subgroup $H$ of $\GGG_r(N)$ such that
$\dist_p(H,K) \leqsl \epsi$.
\end{lem}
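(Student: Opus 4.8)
The plan is to build the required finite subgroup out of the ambient group $\GGG_r(N)$ rather than from inside $K$, using \LEM{K1} as the engine. First I would exploit separability: since $\GGG_r(N) \in \Gg_r(N)$ is separable, so is $K$, and I fix a sequence $(a_n)_{n=1}^{\infty}$ dense in $K$. Setting $K_s = \overline{\grp{a_1,\ldots,a_s}}$ produces an increasing chain of compact subgroups whose union is dense in $K$; by compactness of $K$ one checks that $\dist_p(K_s,K) \to 0$, so I may fix $s$ with $\dist_p(K_s,K) \leqsl \epsi/2$ and it then suffices to approximate $K_s$ to within $\epsi/2$. If $N \neq 0$ this already finishes the argument: every $a_j$ has finite order, so $K_s = \grp{a_1,\ldots,a_s}$ is itself a finite subgroup and one simply takes $H = K_s$.

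The substance is the case $N = 0$. Here I would decompose $K_s$ into finitely many monothetic pieces: putting $A_i = \overline{\grp{a_i}}$ (a compact subgroup of $\GGG_r(0)$), the set $A_1 + \cdots + A_s$ is the continuous image of the compact group $A_1 \times \cdots \times A_s$, hence a compact — and therefore closed — subgroup; since it contains the dense subgroup $\grp{a_1,\ldots,a_s}$ and is contained in $K_s$, it equals $K_s$. Now I approximate each monothetic factor separately: if $A_i$ is finite I keep $b_i = a_i$, and if $A_i$ is infinite I apply \LEM{K1} to $a_i$ to obtain a finite rank element $b_i \in \GGG_r(0)$ with $\dist_p(\grp{b_i},A_i) \leqsl \epsi/(2s)$. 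Then $H = \grp{b_1} + \cdots + \grp{b_s}$ is a finite subgroup of $\GGG_r(0)$.

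To combine the estimates I would use the elementary subadditivity of the Hausdorff metric under sums: for nonempty subsets one has $\dist_p(X_1 + X_2, Y_1 + Y_2) \leqsl \dist_p(X_1,Y_1) + \dist_p(X_2,Y_2)$, because a representative $y_1 + y_2$ close to $x_1 + x_2$ is obtained from representatives $y_i$ close to $x_i$ (and symmetrically, using invariance of $p$). Iterating over the $s$ factors gives $\dist_p(H,K_s) = \dist_p(\sum_i \grp{b_i}, \sum_i A_i) \leqsl \sum_{i=1}^s \dist_p(\grp{b_i},A_i) \leqsl \epsi/2$, whence $\dist_p(H,K) \leqsl \dist_p(H,K_s) + \dist_p(K_s,K) \leqsl \epsi$ by the triangle inequality for the Hausdorff metric.

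The main obstacle is conceptual and confined to $N = 0$: a compact subgroup of $\GGG_r(0)$ may contain no nontrivial finite subgroup whatsoever (a copy of the $p$-adic integers, or any torsion-free monothetic group, being the extreme case), so one cannot hope to find $H$ inside $K$ and must genuinely manufacture it in the ambient group. \LEM{K1} supplies precisely such an ambient finite cyclic approximation for a single monothetic subgroup, and the only remaining difficulty is to pass from one generator to finitely many. The decomposition $K_s = A_1 + \cdots + A_s$ together with the sumset Hausdorff estimate is exactly what reduces an arbitrary compact subgroup to the monothetic case handled by \LEM{K1}, while the initial density reduction absorbs the fact that $K$ itself need not be topologically finitely generated.
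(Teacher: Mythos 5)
Your proof is correct and follows essentially the same route as the paper's: reduce to finitely many generators of $K$ (the paper takes a finite $\epsi/2$-net directly, you take an increasing chain of closures of finitely generated subgroups), dispose of $N \neq 0$ by noting that a finitely generated torsion Abelian group of bounded exponent is finite, and for $N = 0$ apply \LEM{K1} to each monothetic piece $\overline{\grp{a_j}}$ and take the subgroup generated by the resulting finite rank elements. The only difference is bookkeeping: you identify $K_s$ exactly as $A_1 + \cdots + A_s$ and invoke subadditivity of the Hausdorff distance under Minkowski sums, whereas the paper verifies the two Hausdorff inequalities directly from the net property.
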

\begin{proof}
There is $s \geqsl 2$ and $a_1,\ldots,a_s \in K$ such that $\{a_1,\ldots,a_s\}$ is an $(\epsi / 2)$-net for $K$.
If $N \neq 0$, put $H = \grp{a_1,\ldots,a_s}$ and notice that $\dist_p(H,K) \leqsl \epsi / 2$ since $\{a_1,\ldots,a_s\}
\subset H \subset K$. We now assume $N = 0$. By \LEM{K1}, there are finite rank elements $b_1,\ldots,b_s \in \GGG_r(0)$
with $\dist_p(\grp{b_j},K_j) \leqsl \epsi / s$ where $K_j$ is the closure of $\grp{a_j}$ ($j=1,\ldots,s$). Let $H =
\grp{b_1,\ldots,b_s}$. For every $x \in H$ there are $x_j \in \grp{b_j}$ ($j=1,\ldots,s$) for which $x = \sum_{j=1}^s x_j$.
Then for every $j$ one can find $y_j \in K_j$ such that $p(x_j - y_j) \leqsl \epsi / s$ and hence $p(x - y) \leqsl \epsi$
for $y = \sum_{j=1}^s y_j \in K$. Conversely, if $y \in K$, there is $j$ such that $p(x - a_j) \leqsl \epsi / 2$, and there
is $w \in \grp{b_j} \subset H$ with $p(a_j - w) \leqsl \epsi / s \leqsl \epsi / 2$ which yields $p(x - w) \leqsl \epsi$
and we are done.
\end{proof}

For need of the nearest three results let $(L,+,q)$, $K$, $H$ and $\varphi$ be as in (CEP).

\begin{lem}{K3}
For every $\epsi > 0$ there is an isometric group homomorphism $\psi\dd H \to \GGG_r(N)$ such that for any $h \in H$
and $k \in K$,
\begin{equation}\label{eqn:h-k}
|p(\psi(h) - \varphi(k)) - q(h - k)| \leqsl \epsi.
\end{equation}
\end{lem}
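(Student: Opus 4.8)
The plan is to reduce \eqref{eqn:h-k} to a finite amalgamation problem and then invoke the extension property (UEP) of $\GGG_r(N)$. First I would observe that, since $K$ is compact and $\varphi$ is isometric, it suffices to secure \eqref{eqn:h-k} for $k$ ranging over a finite $\delta$-net of $K$: if $q(k-k')\leqsl\delta$, then $|q(h-k)-q(h-k')|\leqsl\delta$ and, by isometricity of $\varphi$, $p(\varphi(k)-\varphi(k'))=q(k-k')\leqsl\delta$, so an estimate at $k'$ transfers to $k$ with an extra $2\delta$. Thus everything comes down to building a \emph{finite} valued group $(D,\lambda)\in\Gg_r(N)$ carrying isometric copies of $H$ and of a finite approximant of $\varphi(K)$, glued so that the mutual distances reproduce the $q$-geometry, and then mapping $D$ isometrically into $\GGG_r(N)$ so that the copy of the approximant returns to its original position.

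When $N\neq0$ this is clean, because the finite exponent forces every finitely generated subgroup of $L$ to be finite. Here I would pick a finite $\delta$-net $S$ of $K$ (with $0\in S$), set $K_0=\grp{S}$, a finite subgroup of $K$ on which $\varphi$ is defined and isometric, and let $P=\grp{H\cup S}=H+K_0$, a finite group of $\Gg_r(N)$. Applying (UEP) to the isometric homomorphism $\varphi\bigr|_{K_0}\dd K_0\to\GGG_r(N)$ yields an isometric $\psi_0\dd P\to\GGG_r(N)$ extending it, and I put $\psi=\psi_0\bigr|_H$. For $h\in H$ and $k\in K$, choosing $k_i\in S$ with $q(k-k_i)\leqsl\delta$ gives $\psi_0(k_i)=\varphi(k_i)$, whence $p(\psi(h)-\varphi(k_i))=q(h-k_i)$ and, after the net estimate, $|p(\psi(h)-\varphi(k))-q(h-k)|\leqsl2\delta$; taking $\delta=\epsi/2$ finishes this case.

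When $N=0$ the subgroup $\grp{S}$ of $K$ may be infinite (e.g.\ $K$ could be torsion-free), so $K$ need not admit any nontrivial finite subgroup. This is exactly where \LEM{K2} enters: it provides a finite subgroup $F$ of $\GGG_r(N)$ with $\dist_p(F,\varphi(K))\leqsl\delta$, which I would enlarge so that $\varphi(K\cap H)\subset F$. Since $F$ no longer lies inside $\varphi(K)$, I cannot pull it back homomorphically; instead I choose a set map $\kappa\dd F\to K$ with $p(f-\varphi(\kappa(f)))\leqsl\delta$ and $\kappa(\varphi(c))=c$ for $c\in K\cap H$, and build by hand a value $\lambda$ on the finite group $D=(H\times F)/D_0$, $D_0=\{(c,-\varphi(c))\dd\ c\in K\cap H\}$, through an infimum formula carrying discrete $\delta_D$-penalty terms of the type used in \LEM{A2} and \LEM{A3} (weighting $q$ on the $H$-part, $p$ on the $F$-part, and cross terms by $q(h-\kappa(f))$ together with a $\delta$-slack). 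Once $\lambda$ is shown to be a value in $\Gg_r(N)$ under which $H$ and $F$ embed isometrically, I extend the inclusion $F\hookrightarrow\GGG_r(N)$ to an isometric $\Theta\dd D\to\GGG_r(N)$ by (UEP) and set $\psi=\Theta\bigr|_H$; then \eqref{eqn:h-k} follows by inserting a nearby $f\in F$ between $\psi(h)$ and $\varphi(k)$ and combining the cross bound with $\dist_p(F,\varphi(K))\leqsl\delta$.

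The main obstacle is the hand construction of $\lambda$ when $N=0$: one must verify the triangle inequality for the infimum formula and, more delicately, that the penalty terms are large enough that neither the $H$-copy nor the $F$-copy is shortened (so both embeddings are genuinely isometric, and $\lambda$ is of class $\OOo_0$ and bounded by $r$), yet small enough that the cross distances stay within $\delta$-slack of $q(h-\kappa(f))$. This is precisely the balancing accomplished by the discrete penalties in \LEM{A2} and \LEM{A3}, so I expect the verification to run in close parallel to those proofs. With $\delta$ a small fixed fraction of $\epsi$, the accumulated errors (the $3\delta$ from approximating $\varphi(K)$ by $F$ plus the cross-slack) stay below $\epsi$, yielding \eqref{eqn:h-k}.
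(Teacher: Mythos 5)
Your argument for $N\neq0$ is correct and is in fact a shortcut the paper does not take: finite exponent makes $K_0=\grp{S}$ finite, so (UEP) applied to $\varphi|_{K_0}\dd K_0\to\GGG_r(N)$ inside the finite group $H+K_0$ gives exact equality of distances at the net points and the $2\delta$ estimate everywhere else. The problem is the case $N=0$, which is the essential one, and there your proof is incomplete at its central step. You correctly reach for \LEM{K2} and a finite subgroup $F$ with $\dist_p(F,\varphi(K))\leqsl\delta$, but the construction you defer --- a value on $(H\times F)/D_0$ defined by an infimum formula whose cross terms are $q(h-\kappa(f))$ for a set-theoretic section $\kappa\dd F\to K$ of the approximation --- is not ``in close parallel'' to \LEM{A2} and \LEM{A3}. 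Both of those lemmas depend crucially on the gluing data ($u$, $v$, $\varphi_1$, $\varphi_2$) being group homomorphisms: that is what makes the penalty and cross terms compatible with the arbitrary decompositions $z=\sum_i z_i$ over which the infimum ranges. Your $\kappa$ is not a homomorphism, $\kappa(\textstyle\sum_i f_i)\neq\sum_i\kappa(f_i)$, each occurrence of a cross term carries its own $\delta$-error which accumulates with the number of summands, and the whole verification (subadditivity, non-degeneracy, and above all that neither the $H$-copy nor the $F$-copy is shortened) has to be done from scratch. A version of it can probably be pushed through, but as written nothing is proved.

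The idea you are missing is that no hand-built value is needed: \LEM{A1} has no finiteness hypothesis, so one can amalgamate over the full compact group $K$ itself. The paper applies \LEM{A1} to the two isometric homomorphisms $\varphi\dd K\to\varphi(K)+F$ and $\id\dd K\to L$, obtaining $(D,\lambda)\in\Gg_r(N)$ with isometric copies $\Phi(L)$ and $\Psi(\varphi(K)+F)$ glued along $\Phi(k)=\Psi(\varphi(k))$. The cross-distances are then controlled for free by the triangle inequality in $D$ (insert $\Phi(k)$ and a nearby $\Psi(f)$), the group $\Phi(H)+\Psi(F)$ is finite, and (UEP) applied to the isometric map $\Psi^{-1}|_{\Psi(F)}\dd\Psi(F)\to\GGG_r(N)$ produces $\tau$ with $\psi=\tau\circ\Phi|_H$. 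This treats all $N$ uniformly and reduces the ``gluing'' entirely to a lemma already proved. If you wish to keep your route for $N=0$, you must actually write out and verify your infimum formula; the appeal to \LEM{A2}/\LEM{A3} does not cover it.
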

\begin{proof}
Let $\tilde{K} = \varphi(K)$. By \LEM{K2}, there is a finite subgroup $F$ of $\GGG_r(N)$ such that
\begin{equation}\label{eqn:auxF}
\dist_p(F,\tilde{K}) \leqsl \frac{\epsi}{2}.
\end{equation}
It follows from \LEM{A1}, applied to $\varphi_1 = \varphi\dd K \to \tilde{K} + F$ and $\varphi_2 =
\id\dd K \to L$, that there exist a valued group $(D,+,\lambda) \in \Gg_r(N)$ and isometric group homomorphisms $\Phi\dd L
\to D$ and $\Psi\dd \tilde{K} + F \to D$ for which $\Psi \circ \varphi = \Phi\bigr|_K$. Further, the group $\Phi(H) +
\Psi(F)$ is a finite subgroup of $D$ and the group homomorphism $\Psi^{-1}\bigr|_{\Psi(F)}\dd \Psi(F) \to \GGG_r(N)$ is
isometric. So, thanks to (UEP), there is an isometric group homomorphism $\tau\dd \Phi(H) + \Psi(F) \to \GGG_r(N)$ which
extends $\Psi^{-1}\bigr|_{\Psi(F)}$. Put $\psi = \tau \circ \Phi\bigr|_H$. Fix $h \in H$ and $k \in K$. Take $f \in F$
such that $p(f - \varphi(k)) \leqsl \epsi / 2$ (see \eqref{eqn:auxF}). Then $p(\psi(h) - \varphi(k)) \leqsl
p(\tau(\Phi(h)) - f) + \epsi / 2$ and
\begin{multline*}
p(\tau(\Phi(h)) - f) = p(\tau(\Phi(h)) - \tau(\Psi(f))) = \lambda(\Phi(h) - \Psi(f))\\ \leqsl \lambda(\Phi(h) - \Phi(k))
+ \lambda(\Phi(k) - \Psi(f)) = q(h - k)\\ + \lambda(\Psi(\varphi(k)) - \Psi(f)) = q(h - k) + p(\varphi(k) - f) \leqsl
q(h - k) + \frac{\epsi}{2}
\end{multline*}
which shows that $p(\psi(h) - \varphi(k)) \leqsl q(h - k) + \epsi$. Conversely,
\begin{multline*}
q(h - k) = \lambda(\Phi(h) - \Phi(k)) \leqsl \lambda(\Phi(h) - \Psi(f)) + \lambda(\Psi(f) - \Phi(k))\\ = p(\tau(\Phi(h))
- \tau(\Psi(f))) + \lambda(\Psi(f) - \Psi(\varphi(k)))\\= p(\psi(h) - f) + p(f - \varphi(k)) \leqsl p(\psi(h) - \varphi(k))
+ 2 p(f - \varphi(k))
\end{multline*}
which finishes the proof of \eqref{eqn:h-k}, because $2 p(f - \varphi(k)) \leqsl \epsi$.
\end{proof}

\begin{lem}{K4}
Let $\psi\dd H \to \GGG_r(N)$ be an isometric group homomorphism such that \eqref{eqn:h-k} is fulfilled for any $h \in H$
and $k \in K$. For each $\delta > 0$ there exists an isometric group homomorphism $\psi'\dd H \to \GGG_r(N)$ for which
$\|\psi - \psi'\|_{\infty} \leqsl \epsi + \delta$ and for all $h \in H$ and $k \in K$,
\begin{equation}\label{eqn:h-k'}
|p(\psi'(h) - \varphi(k)) - q(h - k)| \leqsl \delta.
\end{equation}
\end{lem}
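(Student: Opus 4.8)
The plan is to improve the approximate compatibility \eqref{eqn:h-k} by re-embedding the finite group $H$ through an amalgamation that simultaneously remembers the exact isometric copy $\varphi(K)$ and the old image $\psi(H)$. Write $\tilde{K} = \varphi(K)$, which is a compact subgroup of $\GGG_r(N)$ since $\varphi$ is isometric (hence continuous) and a homomorphism. First I would apply \LEM{K2} to $\tilde{K}$ to obtain a finite subgroup $F$ of $\GGG_r(N)$ with $\dist_p(F,\tilde{K}) \leqsl \delta/2$; this finite group is meant to serve as a finite anchor for the compact $\tilde{K}$, so that later a finite piece can be pushed back into $\GGG_r(N)$ via \textup{(UEP)}.

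Next I would invoke \LEM{A3} with $D_1 = L$ carrying the subgroups $E_1 = K$ and $E_2 = H$, with $D_2 = \GGG_r(N)$, and with $\varphi_1 = \varphi$, $\varphi_2 = \psi$ (both isometric, and $L,\GGG_r(N) \in \Gg_r(N)$). The required hypothesis \eqref{eqn:aux22} is exactly \eqref{eqn:h-k} read through $p(-z)=p(z)$, so \LEM{A3} supplies a valued group $(D,+,\lambda) \in \Gg_r(N)$ and isometric group homomorphisms $\Phi\dd L \to D$ and $\Xi\dd \GGG_r(N) \to D$ with $\Xi \circ \varphi = \Phi\bigr|_K$ and $\|\Phi\bigr|_H - \Xi \circ \psi\|_{\infty} \leqsl \epsi$. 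Inside $D$ the images $\Phi(H)$ and $\Phi(K) = \Xi(\tilde{K})$ now sit at the \emph{exact} distances $q(h-k)$, while $\Xi(\psi(H))$ is only $\epsi$-close to $\Phi(H)$.

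Then I would transport the relevant finite piece back into $\GGG_r(N)$. Set $G_D = \Phi(H) + \Xi(\psi(H)+F)$, a finite subgroup of $D$; the restriction of $\Xi^{-1}$ to $\Xi(\psi(H)+F)$ is an isometric homomorphism into $\GGG_r(N)$, so by \textup{(UEP)} it extends to an isometric $\tau\dd G_D \to \GGG_r(N)$ with $\tau \circ \Xi = \id$ on $\psi(H)+F$. I would then define $\psi' = \tau \circ \Phi\bigr|_H$, which is isometric as a composite of isometric maps. For the closeness estimate, since $\psi(h) \in \psi(H)+F$ one has $\psi(h) = \tau(\Xi(\psi(h)))$, so $p(\psi(h)-\psi'(h)) = \lambda((\Xi\circ\psi)(h) - \Phi(h)) \leqsl \epsi \leqsl \epsi+\delta$. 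For \eqref{eqn:h-k'}, given $h \in H$ and $k \in K$ I would choose $f \in F$ with $p(f-\varphi(k)) \leqsl \delta/2$ and combine the isometries of $\Phi,\Xi,\tau$ with the identities $\tau(\Xi f)=f$ and $\Phi(k)=\Xi(\varphi(k))$; two triangle-inequality chains then bound both $p(\psi'(h)-\varphi(k))-q(h-k)$ and its negative by $2\,p(f-\varphi(k)) \leqsl \delta$.

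The only delicate point is the precise choice of the finite subgroup $G_D$ to be re-embedded: it must contain $\Xi(\psi(H))$, so that $\tau$ undoes $\Xi$ on $\psi(H)$ (this is what forces $\psi'$ to stay $\epsi$-close to $\psi$), and at the same time it must contain $\Xi(F)$, so that $\tau$ restores the finite anchor $F$ approximating the compact $\tilde{K}$ (this is what upgrades the compatibility error from $\epsi$ down to $\delta$). Once $G_D \supseteq \Xi(\psi(H)+F)$ is arranged and \textup{(UEP)} is applied with the prescribed values $\tau\circ\Xi = \id$ on $\psi(H)+F$, the remaining verifications are routine triangle-inequality bookkeeping, and the hypothesis $\delta < \epsi$ is in fact not needed for the argument.
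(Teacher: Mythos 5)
Your argument is correct, and every estimate checks out: with $F$ a finite $\delta/2$-approximation of $\varphi(K)$ obtained from \LEM{K2}, the amalgamation of \LEM{A3} applied to $\varphi$ and $\psi$ (the hypothesis \eqref{eqn:aux22} being exactly \eqref{eqn:h-k}), and \textup{(UEP)} applied to the finite group $\Phi(H)+\Xi(\psi(H)+F)$ anchored on $\Xi(\psi(H)+F)$, your two triangle-inequality chains do yield \eqref{eqn:h-k'}, and in fact the sharper bound $\|\psi-\psi'\|_{\infty}\leqsl\epsi$. The paper's proof opens with the same move (\LEM{A3} with $D_1=L$, $E_1=K$, $E_2=H$ and $D_2=\varphi(K)+\psi(H)$), but then, instead of pre-approximating $\varphi(K)$ by a finite group \emph{before} amalgamating, it invokes \LEM{K3} a second time inside $D$ --- applied to the compact group $Z=w_G(\varphi(K)+\psi(H))$, the finite group $w_L(H)+w_G(\psi(H))$ and the isometric homomorphism $w_G^{-1}$ --- obtaining a $\xi$ that is only $\delta$-compatible with $w_G^{-1}$ on $Z$, which is where the $\epsi+\delta$ in the statement comes from. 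Your version inlines the content of \LEM{K3}: by letting $\Xi$ be defined on all of $\GGG_r(N)$ (any subgroup containing $\varphi(K)+\psi(H)+F$ would do) you carry the finite anchor $F$ into $D$ and can then apply \textup{(UEP)} directly, with exact equality $\tau\circ\Xi=\id$ on $\psi(H)+F$, thereby avoiding the second amalgamation hidden inside \LEM{K3}. Both proofs ultimately rest on \LEM{K2}, \LEM{A3} and \textup{(UEP)}, so the difference is organizational rather than conceptual, but yours is marginally more self-contained and gives a marginally stronger closeness estimate; you are also right that nothing in the statement requires $\delta<\epsi$.
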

\begin{proof}
By \LEM{A3}, there are a valued group $(D,+,\lambda) \in \Gg_r(N)$ and isometric group homomorphisms $w_L\dd L \to D$
and $w_G\dd \varphi(K) + \psi(H) \to D$ such that $w_L\bigr|_K = w_G \circ \varphi$ and
\begin{equation}\label{eqn:aux39}
\|w_L\bigr|_H - w_G \circ \psi\|_{\infty} \leqsl \epsi.
\end{equation}
Then $Z := w_G(\varphi(K) + \psi(H))$ and $F := w_L(H) + w_G(\psi(H))$ are, respectively, a compact and a finite subgroup
of $D$, and $w_G^{-1}\dd Z \to \GGG_r(N)$ is isometric. So, thanks to \LEM{K3}, there is an isometric group homomorphism
$\xi\dd F \to \GGG_r(N)$ such that for any $z \in Z$ and $f \in F$,
\begin{equation}\label{eqn:aux40}
|p(w_G^{-1}(z) - \xi(f)) - \lambda(z - f)| \leqsl \delta.
\end{equation}
Put $\psi' = \xi \circ w_L\bigr|_H\dd H \to \GGG_r(N)$. Now fix $h \in H$ and $k \in K$ and put $z = w_G(\varphi(k)) \in Z$
and $f = w_L(h) \in F$. Then we have $p(\psi'(h) - \varphi(k)) = p(\xi(f) - w_G^{-1}(z))$ and (since $w_G \circ \varphi
= w_L\bigr|_K$)
$$
q(h - k) = \lambda(w_L(h) - w_L(k)) = \lambda(w_L(h) - w_G(\varphi(k))) = \lambda(f - z).
$$
Hence \eqref{eqn:h-k'} follows from \eqref{eqn:aux40}. It suffices to show that $\|\psi - \psi'\|_{\infty} \leqsl \epsi +
\delta$. For $h \in H$ we get
$$
p(\psi(h) - \psi'(h)) \leqsl p(\psi(h) - \xi((w_G \circ \psi)(h))) + p(\xi((w_G \circ \psi)(h)) - \xi(w_L(h)))
$$
and, by \eqref{eqn:aux39}, $p(\xi((w_G \circ \psi)(h)) - \xi(w_L(h))) = \lambda((w_G \circ \psi)(h) - w_L(h)) \leqsl
\epsi$. So, it remains to check that $p(\psi(h) - \xi((w_G \circ \psi)(h))) \leqsl \delta$. But this follows from
\eqref{eqn:aux40} for $z = f = (w_G \circ \psi)(h) \in Z \cap F$.
\end{proof}

Finally, we have

\begin{thm}{CEP}
The assertion of \textup{(CEP)} is satisfied.
\end{thm}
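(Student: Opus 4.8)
The plan is to mimic the iteration used in the proof of \THM{UEP}, now feeding it with \LEM{K3} and \LEM{K4} in place of \LEM{QG3} and \LEM{QG4}, and then to glue $\varphi$ and the resulting map on $H$ together across $K \cap H$. Fix $(L,+,q)$, $K$, $H$ and $\varphi$ as in (CEP) and put $\epsi_n = 2^{-n}$. First I would invoke \LEM{K3} with $\epsi = \epsi_1$ to obtain an isometric group homomorphism $\psi_1\dd H \to \GGG_r(N)$ satisfying $|p(\psi_1(h) - \varphi(k)) - q(h-k)| \leqsl \epsi_1$ for all $h \in H$, $k \in K$. Then, assuming an isometric $\psi_n$ has been built satisfying the analogous inequality with bound $\epsi_n$, I would apply \LEM{K4} to $\psi_n$ (so that its $\epsi$ equals $\epsi_n$) with $\delta = \epsi_{n+1}$; this produces an isometric group homomorphism $\psi_{n+1}\dd H \to \GGG_r(N)$ satisfying the inequality with bound $\epsi_{n+1}$ and
$$
\|\psi_n - \psi_{n+1}\|_{\infty} \leqsl \epsi_n + \epsi_{n+1}.
$$

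Since $\sum_n (\epsi_n + \epsi_{n+1}) < \infty$ and $\GGG_r(N)$ is complete, for each $h \in H$ the sequence $(\psi_n(h))_{n=1}^{\infty}$ is Cauchy, hence convergent; define $\psi\dd H \to \GGG_r(N)$ by $\psi(h) = \lim_{n\to\infty} \psi_n(h)$. As a pointwise limit of group homomorphisms, $\psi$ is a group homomorphism, and since $p(\psi_n(h)) = q(h)$ for every $n$, passing to the limit gives $p(\psi(h)) = q(h)$, so $\psi$ is isometric. Moreover, letting $n \to \infty$ in $|p(\psi_n(h) - \varphi(k)) - q(h-k)| \leqsl \epsi_n$ yields the crucial limiting identity
$$
p(\psi(h) - \varphi(k)) = q(h - k) \qquad (h \in H,\ k \in K).
$$

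With this identity in hand, I would define $\Psi\dd K + H \to \GGG_r(N)$ by $\Psi(k + h) = \varphi(k) + \psi(h)$ and run four routine checks. For well-definedness, if $k + h = k' + h'$ with $k,k' \in K$ and $h,h' \in H$, then $g := k - k' = h' - h \in K \cap H$, and applying the limiting identity with $h = k = g$ gives $p(\psi(g) - \varphi(g)) = q(0) = 0$, whence $\psi(g) = \varphi(g)$; this forces $\varphi(k) + \psi(h) = \varphi(k') + \psi(h')$. That $\Psi$ is a group homomorphism is then immediate from additivity of $\varphi$ and $\psi$, and $\Psi$ extends $\varphi$ (take $h = 0$). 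For isometricity, replacing $k$ by $-k$ in the limiting identity and using $\varphi(-k) = -\varphi(k)$ gives $p(\varphi(k) + \psi(h)) = q(k + h)$, i.e. $p(\Psi(k+h)) = q(k+h)$.

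The substantive content is already carried by \LEM{K3} and \LEM{K4}; the iteration is essentially verbatim that of \THM{UEP} and the gluing is formal. The only genuinely new point, and the place where a little care is needed, is the passage from the approximate cross-distance bounds to the exact limiting identity, together with the observation that this single identity simultaneously forces agreement of $\psi$ and $\varphi$ on $K \cap H$ (hence well-definedness of $\Psi$) and the isometry of $\Psi$ on all of $K + H$. I do not anticipate any serious obstacle beyond this bookkeeping.
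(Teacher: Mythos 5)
Your proposal is correct and follows essentially the same route as the paper: the same iteration of Lemma \REF{lem:K3} and Lemma \REF{lem:K4} with $\epsi_n = 2^{-n}$, passage to the pointwise limit to obtain the exact identity $p(\psi(h)-\varphi(k)) = q(h-k)$, and the gluing formula $\Psi(k+h)=\varphi(k)+\psi(h)$. Your explicit verification of well-definedness and isometricity of $\Psi$ fills in the step the paper leaves to the reader (and quietly corrects a typo in the paper's final displayed formula).
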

\begin{proof}
Put $\epsi_n = \frac{1}{2^n}$. By \LEM{K3}, there is an isometric group homomorphism $\psi_1\dd H \to \GGG_r(N)$ such that
\eqref{eqn:h-k} (with $\psi$ replaced by $\psi_1$) is fulfilled for any $h \in H$ and $k \in K$. Now suppose that
$\psi_{n-1}\dd H \to \GGG_r(N)$ is constructed. We infer from \LEM{K4} that there is a group homomorphism $\psi_n\dd H \to
\GGG_r(N)$ such that
\begin{enumerate}[(1$_n$)]
\item $\psi_n$ is isometric,
\item $\|\psi_{n-1} - \psi_n\|_{\infty} \leqsl \epsi_{n-1} + \epsi_n \leqsl 2\epsi_{n-1}$,
\item $|p(\psi_n(h) - \varphi(k)) - q(h - k)| \leqsl \epsi_n$ for all $h \in H$ and $k \in K$.
\end{enumerate}
We conclude from (1$_n$) and (2$_n$) that a group homomorphism $\varphi_0\dd H \to \GGG_r(N)$ given by $\varphi_0(h) =
\lim_{n\to\infty} \psi_n(h)$ is well defined and isometric. What is more, (3$_n$) yields
\begin{equation}\label{eqn:aux41}
p(\varphi_0(h) - \varphi(k)) = q(h - k)
\end{equation}
for any $h \in H$ and $k \in K$. We easily deduce from \eqref{eqn:aux41} that the formula $\psi(h + k) = \varphi_0(h) +
\psi(k)$ (where $h \in H$ and $k \in K$) well defines an isometric group homomorphism $\psi\dd K + H \to \GGG_r(N)$ which
extends $\varphi$.
\end{proof}

\textit{Proof of \THM{main2}.} We begin with the note that the assertion of part (B) follows from (A), (G3)
and the back-and-forth method. Thus, it suffices to prove (A). First we shall show the easier point, (i). When $\varphi$
is not open, it suffices to put $\tau \equiv 0$, $\varrho = \id$ and to apply \COR{bdhom} in order to find $\omega \in
\Omega^*$. Similarly, if $\varphi$ is open as a map of $K$ onto $\varphi(K)$, the semivalue $q_0\dd K \ni x \mapsto
\dist_q(x,\ker \varphi) \in \RRR_+$ is continuous with respect to $p \circ \varphi$ and hence for $\tau_0 =
\frac{\id}{\id + 1}$ one may find (thanks to \LEM{bdom}) $\varrho_0 \in \Omega^*$ such that $\tau_0 \circ q_0 \leqsl
\varrho_0 \circ p \circ \varphi$. Further, take suitable $\omega_0 \in \Omega^*$ and apply the idea of \EXM{o-r-t} to
guarantee ($\omega$-$\rho$-$\tau$). We now pass to the main part of the theorem---to point (ii).\par
Put $\lambda_0\dd K \ni x \mapsto p(\varphi(x)) \in \RRR_+$. Then $\lambda_0$ is a semivalue on $K$ such that $\lambda_0
\leqsl (\omega \circ q)\bigr|_K$. \LEM{valom} ensures us the existence of a semivalue $\lambda$ on $H$ bounded by $r$
which extends $\lambda_0$ and satisfies suitable conditions. Put $H' = H / \ker \varphi$. Let $\pi\dd H \to H'$ be
the quotient group homomorphism, $\lambda'$ the value induced by $\lambda$ (recall that $\lambda^{-1}(\{0\}) =
\lambda_0^{-1}(\{0\}) = \ker \varphi$), $K' = \pi(K)$ and let $\varphi'\dd K' \to \GGG_r(N)$ be a group homomorphism such
that $\varphi' \circ \pi\bigr|_K = \varphi$. Notice that $(H',+,\lambda') \in \Gg_r(N)$ (thanks to \PRO{image}) and
\begin{equation}\label{eqn:aux43}
p(\varphi'(x)) = \lambda'(x)
\end{equation}
for every $x \in K'$ (because $\lambda$ extends $\lambda_0$). Now let $(\bar{H},+,\bar{\lambda}) \in \Gg_r(N)$ be
the completion of $(H',+,\lambda')$. The relation \eqref{eqn:aux43} ensures us that $\varphi'$ extends to an isometric
group homomorphism $\bar{\varphi}\dd \bar{K} \to \GGG_r(N)$ defined on a closed subgroup of $\bar{H}$. Since the closure
of $\varphi(K)$ is compact in $\Gg_r(N)$, $\bar{K}$ is compact. Enlarging, if needed, the group $\bar{H}$ (making use
of \THM{o0}) we may assume that $\bar{H}_{fin}$ is dense in $\bar{H}$. Now thanks to (CEP) and the induction argument
we see that there is an isometric group homomorphism $\bar{\psi}\dd \bar{H} \to \GGG_r(N)$ which extends $\bar{\varphi}$.
Define $\varphi_{\omega}\dd H \to \GGG_r(N)$ by $\varphi_{\omega} = \bar{\psi} \circ \pi$ and observe that $\ker \varphi =
\ker \varphi_{\omega}$ and $p(\varphi_{\omega}(h)) = \lambda(h)$ for each $h \in H$. The verification that all other
assertions are fulfilled is left for the reader.\qed
\vspace{0.3cm}

In the next section we shall show that $\GGG_r(2)$ is Urysohn as a metric space. Thus, \THM{main2} extends and strengthens
the results of \cite{pn1}.

\begin{cor}{univ}
\begin{enumerate}[\upshape(A)]
\item The group $\GGG_r(N)$ is universal for the class $\Gg_r(N)$; that is, every member of $\Gg_r(N)$ admits an isometric
   group homomorphism into $\GGG_r(N)$.
\item The groups $\GGG_1(0)$ and $\GGG_{\infty}(0)$ are topologically universal for the class of separable metrizable
   topological Abelian groups. What is more, for every $(G,+,q) \in \Gg$ the valued group $(G,+,q \wedge 1)$ (respectively
   $(G,+,q^{\alpha})$ with $0 < \alpha < 1$) admits an isometric group homomorphism into $\GGG_1(0)$ (respectively into
   $\GGG_{\infty}(0)$).
\end{enumerate}
\end{cor}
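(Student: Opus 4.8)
The plan is to derive both parts of the corollary directly from \THM{main2} together with the structural observations of Section~2, with essentially no new work.

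First I would prove (A). Given an arbitrary $(G,+,q) \in \Gg_r(N)$, I would apply part~(A) of \THM{main2} with $H = G$, with $K = \{0\}$ the trivial (hence closed) subgroup, and with $\varphi\dd \{0\} \to \GGG_r(N)$ the zero homomorphism. Its range $\{0\}$ is compact and $\varphi$ is vacuously isometric (indeed $p(\varphi(0)) = 0 = q(0)$), so all the hypotheses of that part are satisfied. The concluding ``in particular'' clause of \THM{main2}(A) then furnishes an isometric group homomorphism $\psi\dd G \to \GGG_r(N)$ extending $\varphi$, which is exactly the desired embedding. Note that any isometric group homomorphism is automatically injective, since $p(\psi(x)) = q(x) = 0$ forces $x = 0$, and its image is a subgroup of $\GGG_r(N)$.

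For (B), I would reduce the topological statement to (A) by choosing a suitable compatible value. Let $G$ be a separable metrizable topological Abelian group. By \EXM{o0} (via the Birkhoff--Kakutani theorem) $G$ carries a compatible value $q$ making $(G,+,q)$ a separable valued group of class $\OOo_0$, so $(G,+,q) \in \Gg$. Then $q \wedge 1$ and $q^{\alpha}$ (for $0 < \alpha < 1$) are again compatible values, and by \EXM{o0} one has $(G,+,q \wedge 1) \in \Gg_1(0)$ and $(G,+,q^{\alpha}) \in \Gg_{\infty}(0)$. Applying (A) yields isometric group homomorphisms of these valued groups into $\GGG_1(0)$ and $\GGG_{\infty}(0)$ respectively. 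Since each of $q \wedge 1$ and $q^{\alpha}$ induces the original topology of $G$, an isometric group homomorphism with respect to it is a homeomorphism onto its image and hence a topological-group embedding; this establishes topological universality. The second assertion of (B) is then immediate: for an arbitrary $(G,+,q) \in \Gg$ the same membership relations $(G,+,q \wedge 1) \in \Gg_1(0)$ and $(G,+,q^{\alpha}) \in \Gg_{\infty}(0)$ from \EXM{o0}, combined with (A), give the stated isometric group homomorphisms.

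Since the whole argument is a bookkeeping of earlier results, I do not expect a deep obstacle. The only points requiring care lie in (B): verifying the passage from ``isometric with respect to a compatible value'' to ``topological-group embedding'' (that is, checking the isometry is a homeomorphism onto its image carrying the subspace topology, which is immediate once one recalls that the value metrizes the group topology), and confirming that $q \wedge 1$ and $q^{\alpha}$ indeed induce the original topology of $G$ while landing in the correct classes $\Gg_1(0)$ and $\Gg_{\infty}(0)$.
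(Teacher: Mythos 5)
Your argument is correct and is precisely the route the paper intends: the corollary is stated without proof as an immediate consequence of \THM{main2}, obtained by taking $K=\{0\}$ and the zero homomorphism (whose range is trivially compact and which is trivially isometric), and part (B) reduces to part (A) exactly as you describe via \EXM{o0} and the compatibility of $q\wedge 1$ and $q^{\alpha}$ with the original topology. No gaps.
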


For two pairs $(r,N), (s,M) \in \{1,\infty\} \times (\ZZZ_+ \setminus \{1\})$ let us write $(r,N) \preccurlyeq (s,M)$
iff $r \leqsl s$ and $N | M$. The reader will easily check that

\begin{pro}{embed}
Let $(r,N) \in \{1,\infty\} \times (\ZZZ_+ \setminus \{1\})$.
\begin{enumerate}[\upshape(A)]
\item $\GGG_s(M)$ is embeddable in $\GGG_r(N)$ by means of an isometric group homomorphism iff $(s,M) \preccurlyeq (r,N)$.
\item Let $M | N$ (and $M \neq 1$). Then the group $\GGG_r(N,M) := \{x \in \GGG_r(N)\dd M \cdot x = 0\}$ is isometrically
   group isomorphic to $\GGG_r(M)$.
\end{enumerate}
\end{pro}

The above simple result has two interesting consequence, formulated in the next two results.

\begin{pro}{chain}
There is a family $$\{\Phi_{s,M}^{r,N}\dd \GGG_s(M) \to \GGG_r(N)\}_{(s,M) \preccurlyeq (r,N)}$$ of isometric group
homomorphisms such that for each $(r,N)$, $(s,M)$, $(t,L)$ with $(r,N) \preccurlyeq (s,M) \preccurlyeq (t,L)$:
\begin{enumerate}[\upshape(i)]
\item $\Phi_{r,N}^{r,N} = \id_{\GGG_r(N)}$,
\item $\Phi_{s,M}^{t,L} \circ \Phi_{r,N}^{s,M} = \Phi_{r,N}^{t,L}$.
\end{enumerate}
\end{pro}
\begin{proof}
Let $\Psi_{1,0}\dd \GGG_1(0) \to \GGG_{\infty}(0)$ be an isometric group homomorphism. Further, let $\Psi_{\infty,0} =
\id_{\GGG_{\infty}(0)}$ and for $N \geqsl 2$ let $\Psi_{\infty,N}\dd \GGG_{\infty}(N) \to \GGG_{\infty}(0,N)$ and
$\Psi_{1,N}\dd \GGG_{1,N} \to \GGG_{\infty}(0,N) \cap \Psi_{1,0}(\GGG_1(0))$ be isometric group isomorphisms (where
$\GGG_{\infty}(0,N)$ is as in the statement of \PRO{embed}). Now it suffices to put $\Phi_{s,M}^{r,N} = \Psi_{r,N}^{-1}
\circ \Psi_{s,M}$ provided $(s,M) \preccurlyeq (r,N)$.
\end{proof}

\begin{pro}{prod}
Suupose $N \neq 0$. If $N_1,\ldots,N_s \geqsl 2$ are mutually coprime and $N = N_1 \cdot \ldots \cdot N_s$, then
$\GGG_r(N)$ and $\GGG_r(N_1) \times \ldots \times \GGG_r(N_s)$ are isomorphic as topological groups.
\end{pro}
\begin{proof}
The assertion follows from \PRO{embed} and the fact that the function
$$
\GGG_r(N,N_1) \times \ldots \times \GGG_r(N,N_s) \ni (x_1,\ldots,x_s) \mapsto \sum_{j=1}^s x_j \in \GGG_r(N)
$$
is an isomorphism of topological groups.
\end{proof}

In the next section we shall show that $\GGG_r(N)$'s are pairwise nonisomorphic as topological groups.\par
The main assumption of \THM{main2} is that the closure of the image of a group homomorphism is compact. One may ask
whether one may weaken this condition. As the next result shows, nothing else may be done in this direction for group
homomorphims with (metrically) bounded images. (This result has its natural well-known counterpart for the Urysohn
metric space.)

\begin{pro}{almcomp}
Let $K$ be a closed bounded subgroup of $\GGG_r(N)$ such that for every finite group $H$ and every value $q$
on $K \times H$ such that $(K \times H,+,q) \in \Gg_r(N)$ and $q(x,0) = p(x)$ for $x \in K$ there is an isometric group
homomorphism $\psi\dd K \times H \to \GGG_r(N)$ with $\psi(x,0) = x$ for $x \in K$. Then $K$ is compact.
\end{pro}
\begin{proof}
Suppose $K$ is noncompact. We infer from the completeness of $K$ that then there exist $\epsi > 0$ and a sequence
$(x_n)_{n=1}^{\infty} \subset K$ such that $p(x_n - x_m) \geqsl \epsi$ for distinct $n$ and $m$. Put $A = \{x_n\dd\
n \geqsl 1\}$. For every $x \in \GGG_r(N)$ let $e_x\dd A \to \RRR_+$ be given by $e_x(a) = p(x - a)$. It is easily seen
that the set $E = \{e_x\dd\ x \in \GGG_r(N)\} \subset E_r(A)$ is separable (with respect to the supremum metric), since
the map $\GGG_r(N) \ni x \mapsto e_x \in E$ is a nonexpansive surjection. However, we shall show that $E$ contains
an uncountable discrete subset (which will finish the proof).\par
Suppose $f \in E_r(A)$ is such that $f$ satisfies \eqref{eqn:trv-N} for any $a_1,\ldots,a_N \in A$ provided $N > 2$.
Then, by \THM{trv}, $f$ is trivial in some valued group belonging to $\Gg_r(N)$ and of the form $K \times H$ with
$H$ finite. So, thanks to our assumption on $K$, $f$ is trivial in $\GGG_r(N)$ which means that $f \in E$.\par
Let $M = \diam(K) > 0$. Take $\delta \in (0,\epsi)$ such that $\delta < M / 2$. For a subset $J$ of $A$ let $f_J\dd A \to
\RRR_+$ be given by: $f_J(a) = M$ if $a \in J$ and $f_J(a) = M - \delta$ otherwise. A direct calculation shows that
$f_J \in E_r(A)$ and $f_J$ satisfies \eqref{eqn:trv-N} provided $N \neq 0$. So, according to the previous paragraph,
$f_J \in E$. But $\|f_J - f_{J'}\|_{\infty} = \delta$ whenever $J$ and $J'$ are different subsets of $A$ and hence $E$
cannot be separable.
\end{proof}

\begin{rem}{Melleray}
Melleray \cite{me1,me2} has shown that if every isometry between to subsets of the Urysohn metric space $\UUU$ which are
isometric to a given separable complete metric space $X$ is extendable to an isometry of $\UUU$ onto itself, then $X$ is
compact. A counterpart of this result in category of valued groups reads as follows:
\begin{quote}
\textit{If every isometric group isomorphism between two subgroups of $\GGG_r(N)$ which are isometrically group isomorphic
to a complete group $(H,+,q) \in \Gg_r(N)$ is extendable to an isometric group automorphism of $\GGG_r(N)$, then $H$ is
compact.}
\end{quote}
It follows from \PRO{almcomp} that every group $H$ having the above property and bounded value has to be compact. However,
the problem whether the above stated result is true in $\GGG_{\infty}(N)$ we leave open.
\end{rem}

\SECT{Geometry of $\GGG_r(N)$'s}

The part is mainly devoted to investigations of the groups $\GGG_r(N)$'s as metric spaces. We begin with

\begin{thm}{Ur}
The metric spaces $\GGG_r(N)$ with $N \in \{0,2\}$ are Urysohn. In particular, $\GGG_r(2)$ is a Boolean Urysohn metric
group introduced in \cite{pn1}.
\end{thm}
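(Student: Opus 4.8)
The plan is to use the quoted characterization: a nonempty separable complete metric space is Urysohn if and only if every inner Katětov map is trivial on each of its finite subsets. The space $\GGG_r(N)$ is complete by (G1) and separable, being the completion of the countable group $\QQQ\GGG_r(N)$ (cf. \THM{QG}); and $\diam\GGG_r(N)=r$ by universality (\COR{univ}), so $E^i(\GGG_r(N))=E_r(\GGG_r(N))$. Hence it suffices to fix a finite $A\subset\GGG_r(N)$ and $f\in E_r(A)$ and to produce $b\in\GGG_r(N)$ with $p(a-b)=f(a)$ for all $a\in A$. The two tools are \THM{trv}, which for $N\in\{0,2\}$ carries no side condition (\eqref{eqn:trv-N} is automatic for $N=2$ and not required for $N=0$), and the property (UEP), valid for $G=\GGG_r(N)$ by \THM{UEP}, which lets one pull finite extensions back into $\GGG_r(N)$.

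For $N=2$ the argument is immediate. Put $K=\grp{A}$; being finitely generated of exponent $2$ it is finite, and the inclusion $K\hookrightarrow\GGG_r(2)$ is an isometric group homomorphism. Applying the finite version of \THM{trv} with $Q=\RRR$ to $(K,p|_K)$ and $f$, I obtain a finite $(\tilde G,\tilde p)\in\Gg_r(2)$ with $\tilde G\supset K$ and some $b\in\tilde G$ with $\tilde p(a-b)=f(a)$ for $a\in A$. By (UEP) the inclusion extends to an isometric group homomorphism $\psi\dd\tilde G\to\GGG_r(2)$, and then $\psi(b)$ satisfies $p(a-\psi(b))=\tilde p(a-b)=f(a)$ for every $a\in A$ (using $\psi|_K=\id$ and the isometry of $\psi$). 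Thus $f$ is trivial on $A$ and $\GGG_r(2)$ is Urysohn.

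For $N=0$ the obstacle is that $\grp{A}$ need not be finite, so (UEP) does not apply to it directly; overcoming this is the crux. First I would obtain approximate trivializers. Given $\delta>0$, use (G3) to pick finite-rank $a_i'$ with $p(a_i-a_i')<\delta$; then $A'=\{a_i'\}$ generates a finite subgroup (a sum of finite cyclic groups), and the capped Katětov extension $g(x)=\bigl(\min_{a\in A}(f(a)+p(x-a))\bigr)\wedge r$ restricts to some $f'\in E_r(A')$ with $|f'(a_i')-f(a_i)|\leqsl\delta$ for each $i$. Running the $N=2$ argument verbatim on $(\grp{A'},A',f')$ yields $b_0\in\GGG_r(0)$ with $p(a_i'-b_0)=f'(a_i')$, whence $\max_i|p(a_i-b_0)-f(a_i)|\leqsl 2\delta$. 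So approximate trivializers exist for every $\delta$.

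To upgrade this to an exact trivializer I would iterate, which I expect to be the main technical step. Exactly as in the passage from \LEM{K3} to \LEM{K4} (amalgamation via \LEM{A3} followed by (UEP)), one proves an improvement step: from any $b$ with $\max_i|p(a_i-b)-f(a_i)|\leqsl\eta$ one builds $b'$ with $\max_i|p(a_i-b')-f(a_i)|\leqsl\eta'$ (for arbitrary $\eta'>0$) and $p(b-b')\leqsl C\eta$ for a fixed constant $C$. Starting from an approximate trivializer with $\eta=\epsi_1=\tfrac12$ and iterating with $\epsi_k=2^{-k}$ produces a sequence $(b_k)$ with $p(b_k-b_{k+1})\leqsl C\epsi_k$; by completeness of $\GGG_r(0)$ it converges to some $b$, and letting $k\to\infty$ in $|p(a_i-b_k)-f(a_i)|\leqsl\epsi_k$ gives $p(a_i-b)=f(a_i)$, exactly as in the iteration of \THM{CEP}. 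Hence $\GGG_r(0)$ is Urysohn. Finally, the closing assertion is immediate: $\GGG_r(2)$ is a valued group of exponent $2$ whose metric is Urysohn, i.e.\ a Boolean Urysohn metric group, and by the uniqueness of such a group of diameter $r$ it is isometrically group isomorphic to the one introduced in \cite{pn1}.
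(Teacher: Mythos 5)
Your argument is correct, and for $N=2$ it is exactly the paper's proof (finite $\grp{A}$, \THM{trv}, then (UEP)). For $N=0$ you take a genuinely longer route than the paper. The paper's one observation that makes the whole thing ``almost immediate'' is to work inside the dense subgroup $G_0=\GGG_r(N)_{fin}$: a finitely generated subgroup of $G_0$ is automatically finite (a finitely generated Abelian group all of whose elements have finite order is finite), so the $N=2$ argument applies verbatim to every finite $B\subset G_0$, the extension point lands back in $G_0$ (its image under the isometric homomorphism supplied by (UEP) has finite rank), and one concludes by the standard fact that the completion of a finitely injective space is Urysohn. Your route instead treats arbitrary finite $A\subset\GGG_r(0)$, derives approximate realizability via (G3) and the capped Kat\v{e}tov extension, and then upgrades to exact realizability by iteration; this works, and has the mild advantage of exhibiting exact trivializers for finite subsets of the whole space rather than of $G_0$, but it costs you an extra approximation-and-iteration layer that the paper avoids entirely. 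One caveat on your ``improvement step'': \LEM{A3} and \LEM{K4} are statements about group homomorphisms and cannot be invoked literally here, since for $N=0$ the group generated by $A\cup\{b\}$ may be infinite and unbounded, putting it outside the scope of \THM{trv}. What you actually need is the purely metric version of that step: adjoin an ideal point $z$ to $A\cup\{b\}$ with $d(z,a_i)=f(a_i)$ and $d(z,b)=\max_i|p(a_i-b)-f(a_i)|$ (a routine check shows this is a Kat\v{e}tov map on $A\cup\{b\}$), then apply your approximate realizability to it. Equivalently, you could skip the iteration altogether by quoting the approximate form of the Kat\v{e}tov characterization of Urysohn spaces, which is what really underlies the paper's appeal to ``the completion of $G_0$ is Urysohn''. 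This is a presentational gap, not a mathematical one.
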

\begin{proof}
This is an almost immediate consequence of (UEP) and \THM{trv}. Let $G_0 = \GGG_r(N)_{fin}$. Let $f \in E^i(G_0)$ and $B$
be a finite nonempty subset of $G_0$. Put $H = \grp{B}$. By \THM{trv}, $H$ may be enlarged to a finite group belonging
to $\Gg_r(N)$ in which $f\bigr|_B$ is trivial. Now we infer from (UEP) that $f\bigr|_B$ is indeed trivial in $G_0$.
So, it follows from the well-known result on the Urysohn space (see e.g. \cite{ur1,ur2}, \cite{kat}, \cite{me2}) that
$\GGG_r(N)$, as the completion of $G_0$, is Urysohn.
\end{proof}

\begin{rem}{QG}
The same argument as in the proof of \THM{Ur} shows that the metric space $\QQQ\GGG_r(N)$ for $N \in \{0,2\}$ is
the so-called rational Urysohn space.
\end{rem}

\THM{Ur} tells us `everything' about the metric spaces $\GGG_r(N)$ with $N \in \{0,2\}$. Therefore \textbf{from now on,
we assume $\pmb{N} \pmb{>} \pmb{2}$.} In \THM{noniso} we shall show that in that case $\GGG_r(N)$ is \textbf{not}
Urysohn.\par
In the same way as in the proof of \THM{Ur} one shows the next result the proof of which is left as an exercise (use
\THM{trv} and \REM{N}; recall that all elements are of finite rank).

\begin{pro}{kat-trv}
\begin{enumerate}[\upshape(A)]
\item Let $B$ be a finite nonempty subset of $\GGG_r(N)$ and $f \in E_r(B)$. Then $f$ is trivial in $\GGG_r(N)$ iff
   $f$ fulfills \eqref{eqn:trv-N} for any $a_1,\ldots,a_N \in B$. In particular, $f$ is trivial in $\GGG_r(N)$ iff
   $f\bigr|_A$ is trivial in $\GGG_r(N)$ for every subset $A$ of $B$ such that $0 < \card(A) \leqsl N$.
\item Let $H$ be a finite subgroup of $\GGG_r(N)$ and $f \in E_r(H)$. The map $f$ is trivial in $\GGG_r(N)$ iff
   $f$ fulfills \eqref{eqn:trvH-N}.
\end{enumerate}
\end{pro}

It turns out that the exponent $N$ is determined by the metric of $\GGG_r(N)$. This is a consequence of the above result
and the following

\begin{exm}{ZN}
Let $Z = \grp{b}$ be a cyclic group of rank $N$ and let $H = Z^N$. For $j=1,\ldots,N$ let $e_j = (e_{j1},\ldots,e_{jN})
\in H$ with $e_{jj} = b$ and $e_{jk} = 0$ for $k \neq j$. Further, let $F = \{\pm e_j\dd\ j=1,\ldots,e_N\} \cup
\{e_j - e_k\dd\ j,k=1,\ldots,N\} \subset H$. of course, $F$ generates $H$. Let $\|\cdot\|_F$ be the `norm' on $H$
generated by $F$ (in the terminology of finitely generated groups, cf. \cite{gro}), i.e. $\|\cdot\|_F$ is a value such
that for nonzero $h \in H$,
$$
\|h\|_F = \min\{n \geqsl 1|\ \exists f_1,\ldots,f_n \in F\dd\ h = \sum_{s=1}^n f_s\}
$$
(note that $F = -F$). We see that
\begin{equation}\label{eqn:aux44}
\|e_j - e_k\|_F = 1 \qquad \textup{for} \qquad j \neq k.
\end{equation}
We claim that if $n_1,\ldots,n_N \in \ZZZ_+$,
\begin{equation}\label{eqn:aux45}
\|\sum_{j=1}^N n_j e_j\|_F \leqsl N - \max(n_1,\ldots,n_N) \quad \textup{provided } \sum_{j=1}^N n_j = N.
\end{equation}
Indeed, suppose that e.g. $\max(n_1,\ldots,n_N) = n_N$ and observe that $\sum_{j=1}^N n_j e_j =
\sum_{j=1}^{N-1} n_j(e_j - e_N)$, thanks to the assumption in \eqref{eqn:aux45}. The latter equality gives
\eqref{eqn:aux45}. Further, we have
\begin{equation}\label{eqn:aux46}
\|\sum_{j=1}^N e_j\|_F = N-1.
\end{equation}
To see this, suppose (for the contrary) that $\|\sum_{j=1}^N e_j\|_F \leqsl N - 2$. This means (since $0 \in F$) that
there are $f_1,\ldots,f_{N-2} \in F$ which sum up to $\sum_{j=1}^N e_j$. Write $f_s = \sum_{j=1}^N \epsi_{js} e_j$
where $\epsi_{js} \in \{0,1,-1\}$. Since the rank of $b$ is $N$, we conclude from this that $\sum_{s=1}^{N-2} \epsi_{js}
= 1$ for $j=1,\ldots,N$ and hence $\sum_{j=1}^N \sum_{s=1}^{N-2} \epsi_{js} = N$. However, $\sum_{j=1}^N \epsi_{js} \in
\{-1,0,1\}$ for $s=1,\ldots,N-2$ (because $f_s \in F$) and therefore $\sum_{s=1}^{N-2} |\sum_{j=1}^N \epsi_{js}| < N-1$
which denies earlier conclusion. So, \eqref{eqn:aux46} is fulfilled (by \eqref{eqn:aux45}).\par
Now put $c = \max(1/2,1 - 2/N)$ and let $f\dd \{e_1,\ldots,e_N\} \to \RRR_+$ be constantly equal to $c$.
By \eqref{eqn:aux44}, $f$ is a Kat\v{e}tov map. Observe that (thanks to \eqref{eqn:aux46})
\begin{equation}\label{eqn:balls}
\|\sum_{j=1}^N e_j\|_F > \sum_{j=1}^N f(e_j)
\end{equation}
(here is the only moment where we need to have $N \geqsl 3$). So, $f$ fails to satisfy \eqref{eqn:trv-N} and thus $H$
cannot be enlarged to a valued group of exponent $N$ in which $f$ is trivial (by \PRO{kat-exp}). However, if $A$ is
a proper nonempty subset of $\{e_1,\ldots,e_N\}$ and $a_1,\ldots,a_N \in A$, then the inequality \eqref{eqn:trv-N} is
fulfilled, by \eqref{eqn:aux45}. Consequently, $H$ may be enlarged to a finite group belonging to $\Gg_r(N)$ in which
$f\bigr|_A$ is trivial.\par
If we now replace $\|\cdot\|_F$ by $\alpha \|\cdot\|_F$ with small enough $\alpha > 0$, we shall obtain an analogous
example in the class $\Gg_1(N)$.
\end{exm}

As a corollary of \THM{Ur}, \PRO{kat-trv} and the above example we obtain

\begin{thm}{noniso}
When $N > 2$, $N$ is the least natural number $k$ with the following property. For every finite nonempty subset $A$
of $\GGG_r(N)$ and each $f \in E_r(A)$, $f$ is trivial in $\GGG_r(N)$ iff $f\bigr|_{B}$ is trivial in $\GGG_r(N)$
for any nonempty $B \subset A$ with $\card(B) \leqsl k$.\par
In particular, for two distinct pairs $(r,N), (s,M) \in \{1,\infty\} \times (\ZZZ_+ \setminus \{1\})$, the metric spaces
$\GGG_r(N)$ and $\GGG_r(M)$ are isometric iff $r = s$ and $\{N,M\} = \{0,2\}$. Hence, $\GGG_r(N)$ is non-Urysohn
for $N>2$.
\end{thm}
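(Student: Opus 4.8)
The plan is to read the statement as a combination of three claims: that the exponent $N$ enjoys the $k$-point reduction property, that no smaller $k$ does, and that both $r$ and $N$ (up to the ambiguity $\{0,2\}$) can be recovered from the metric alone. To organise this I would attach to every metric space $X$ a numerical invariant $k(X)\in\{1,2,3,\dots\}$, namely the least $k$ for which, for all finite nonempty $A\subset X$ and every Kat\v{e}tov map $f$ on $A$ bounded by $\diam X$, triviality of $f$ in $X$ is equivalent to triviality of all restrictions $f\bigr|_B$ with $B\subset A$ nonempty and $\card(B)\leqsl k$. Since $\diam\GGG_r(N)=r$ this equivalence is exactly the property in the statement, and since triviality of a Kat\v{e}tov map on a finite set and the diameter are purely metric notions, both $k(X)$ and $\diam X$ are isometry invariants. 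The whole theorem then reduces to computing $k(\GGG_r(N))$.

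For the first assertion I would simply quote \PRO{kat-trv}(A): its final clause says precisely that for finite nonempty $A\subset\GGG_r(N)$ and $f\in E_r(A)$, $f$ is trivial iff $f\bigr|_B$ is trivial for every nonempty $B\subset A$ with $\card(B)\leqsl N$, so $k(\GGG_r(N))\leqsl N$. For minimality I would invoke \EXM{ZN}. Embedding its finite group $H$ isometrically into $\GGG_r(N)$ via \COR{univ} (with the value $\|\cdot\|_F$ when $r=\infty$ and a rescaled copy $\alpha\|\cdot\|_F$ when $r=1$, which leaves all pertinent (in)equalities and the bound by $r$ intact), the $N$-element set $A=\{e_1,\dots,e_N\}$ carries the constant Kat\v{e}tov map $f\equiv c$ for which \eqref{eqn:balls} shows that \eqref{eqn:trv-N} fails; hence $f$ is not trivial in $\GGG_r(N)$ by \PRO{kat-trv}(A). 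On the other hand, for every proper $A'\subsetneq A$ the inequality \eqref{eqn:trv-N} does hold, so $f\bigr|_{A'}$ is trivial, again by \PRO{kat-trv}(A). As every $B\subset A$ with $\card(B)<N$ is a proper subset, no $k<N$ can detect the nontriviality of $f$; therefore $k(\GGG_r(N))=N$ when $N>2$.

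It remains to compute $k$ in the Urysohn cases and to assemble the classification. When $N\in\{0,2\}$, \THM{Ur} makes $\GGG_r(N)$ a Urysohn space of diameter $r$, so every $f\in E_r(A)$ on a finite $A$ is trivial and every one-point restriction is trivial; both sides of the defining equivalence are then always true, giving $k(\GGG_r(N))=1$. Thus $k(\GGG_r(N))=N$ for $N>2$ and $=1$ for $N\in\{0,2\}$, while $\diam\GGG_r(N)=r$. An isometry $\GGG_r(N)\cong\GGG_s(M)$ therefore forces $r=s$ and equal $k$-values; for distinct pairs this is possible only when both exponents lie in $\{0,2\}$ (each giving $k=1$) and differ, i.e. $\{N,M\}=\{0,2\}$. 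Conversely, if $r=s$ and $\{N,M\}=\{0,2\}$ then both spaces are Urysohn of diameter $r$, hence isometric by the uniqueness of $\UUU_r$; in particular $\GGG_r(0)$ and $\GGG_r(2)$ are isometric. Finally, a Urysohn space has $k=1$, so $\GGG_r(N)$ with $N>2$, having $k=N>2$, cannot be Urysohn. The only genuinely delicate step is minimality, which rests entirely on the arithmetic of \EXM{ZN}, above all the identity $\|\sum_{j=1}^N e_j\|_F=N-1$ recorded in \eqref{eqn:aux46}; once that example is available, everything else is the bookkeeping of the isometry invariant $k$ sketched here.
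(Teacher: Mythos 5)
Your argument is correct and uses exactly the ingredients the paper itself cites for this theorem (which it states as a corollary without further proof): \PRO{kat-trv} for the bound $k\leqsl N$, \EXM{ZN} together with \COR{univ} for minimality, and \THM{Ur} with uniqueness of $\UUU_r$ for the classification. Packaging the bookkeeping through the isometry invariant $k(X)$ and the diameter is a clean way to make explicit what the paper leaves implicit, but it is the same proof.
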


Further geometric properties of $\GGG_r(N)$'s are stated below.

\begin{pro}{spheres}
Let $f \in E^i(\GGG_r(N))$.
\begin{enumerate}[\upshape(A)]
\item For every two-point subset $B$ of $\GGG_r(N)$ the map $f\bigr|_B$ is trivial in $\GGG_r(N)$.
\item If $N=4$, $f$ is trivial (in $\GGG_r(N)$) on every three-point subset of the space.
\item If $N\neq4$, there is a three-point set $C \subset \GGG_r(N)$ and a map $g \in E_r(C)$ which is nontrivial
   in $\GGG_r(N)$.
\end{enumerate}
\end{pro}
\begin{proof}
The points (b) and (c) are left as exercises. (To prove (c) for $N > 4$, take $Z$ as in \EXM{ZN}, $H = Z^3$, define
$e_1,e_2,e_3$, $F$ and $\|\cdot\|_F$ in a similar manner and consider $f\dd \{e_1,e_2,e_3\} \to \RRR_+$ constantly
equal to $1/2$. Show that $\|(k + r_1) e_1 + (k + r_2) e_2 + (k + r_3) e_3\|_F \geqsl \frac23 (N-1) > \frac{N}{2}$ for
$r_1,r_2,r_3 \in \{0,1,2\}$ which sum up to $r \in \{0,1,2\}$ such that $N = 3k + r$.)\par
To show (a), it remains to check that \eqref{eqn:trv-N} is fulfilled whenever $a_1,\ldots,a_N \in \{a,b\} \subset
\GGG_r(N)$. In that case \eqref{eqn:trv-N} reduces to $|p(j(a - b)) - f(a)| \leqsl (j-1)f(a) + (N-j)f(b)$ (since
$(N-j)b = -jb$) with $j=2,\ldots,N-1$ (for $j=1$ this inequality follows from the definition of a Kat\v{e}tov map).
Of course $f(a) - p(j(a-b)) \leqsl (j-1)f(a)$, so we only need to check that $p(j(a-b)) \leqsl j f(a) + (N-j)f(b)$.
By the symmetry ($j(a-b) = -(N-j)(a-b)$), we may assume that $j \leqsl N/2$. But then $p(j(a-b)) \leqsl
j(f(a) + f(b)) \leqsl j f(a) + (N-j) f(b)$.
\end{proof}

\begin{cor}{segm}
For any two distinct points $x$ and $y$ of $\GGG_r(N)$ there is an isometric arc of $[0,c]$ to $\GGG_r(N)$ joining
$x$ and $y$, where $c = p(x-y)$.
\end{cor}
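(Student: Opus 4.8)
The plan is to realize the arc as the completion of a geodesic built on the dyadic points of $[0,c]$, using only two facts about $\GGG_r(N)$: that it is complete (property (G1)) and that it admits midpoints between arbitrary pairs of points. First I would record the midpoint property. Given $u \neq v$ in $\GGG_r(N)$ with $d := p(u-v)$, the map $g\dd \{u,v\} \to \RRR_+$ defined by $g(u) = g(v) = d/2$ lies in $E_r(\{u,v\})$: indeed $|g(u)-g(v)| = 0 \leqsl d$ and $g(u)+g(v) = d$, while $d/2 \leqsl r$ since $d \leqsl \diam(\GGG_r(N),p) = r$. By \PRO{spheres}(A) --- equivalently by \PRO{kat-trv}(A), whose defining inequality \eqref{eqn:trv-N} is verified for two-point sets inside the proof of \PRO{spheres}(A) --- the map $g$ is trivial in $\GGG_r(N)$, so there is $w \in \GGG_r(N)$ with $p(w-u) = p(w-v) = d/2$, i.e.\ a midpoint of $u$ and $v$.

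Next I would construct $\gamma$ on the dyadic points by induction. Fix $x \neq y$, set $c = p(x-y)$, and write $D_n = \{kc/2^n \dd\ 0 \leqsl k \leqsl 2^n\}$ and $D = \bigcup_n D_n$, a dense subset of $[0,c]$. I define $\gamma$ so that $\gamma(0) = x$, $\gamma(c) = y$, and at each level consecutive images are at distance $c/2^n$: assuming $\gamma$ is defined on $D_n$ with $p(\gamma(kc/2^n) - \gamma((k+1)c/2^n)) = c/2^n$ for every $k$, I insert for each gap a midpoint of $\gamma(kc/2^n)$ and $\gamma((k+1)c/2^n)$ (possible by the midpoint property) and declare it to be the value of $\gamma$ at the intervening point of $D_{n+1}$. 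The old points keep their values, so the levels are nested and $\gamma$ is well defined on $D$.

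The crucial point, and the one requiring care, is that $\gamma$ is genuinely isometric on $D$ rather than merely correct on adjacent pairs: \PRO{spheres}(A) only controls configurations of two points, and for $N > 2$ a larger configuration need not embed (this is exactly the content of \EXM{ZN}). I would settle this by the elementary tightness observation. At level $n$ the points $p_0 = \gamma(0), \ldots, p_{2^n} = \gamma(c)$ satisfy $\sum_{i=0}^{2^n-1} p(p_i - p_{i+1}) = 2^n \cdot (c/2^n) = c = p(x-y) = p(p_0 - p_{2^n})$, so the chain realizes equality in the triangle inequality; hence every intermediate triangle inequality is an equality and $p(p_i - p_j) = |i-j|\,c/2^n$ for all $i,j$. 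Since any $s,t \in D$ lie in a common $D_n$, this yields $p(\gamma(s)-\gamma(t)) = |s-t|$ throughout $D$, so $\gamma\dd D \to \GGG_r(N)$ is a (uniformly continuous) isometric embedding of a dense subset of $[0,c]$.

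Finally I would pass to the limit. As $[0,c]$ is complete, $D$ is dense in it, and $\GGG_r(N)$ is complete by (G1), the isometry $\gamma$ extends uniquely to an isometry $\bar{\gamma}\dd [0,c] \to \GGG_r(N)$ with $\bar{\gamma}(0) = x$ and $\bar{\gamma}(c) = y$. This $\bar{\gamma}$ is the desired isometric arc joining $x$ and $y$ with $c = p(x-y)$. The only delicate step is the cross-scale consistency of the iterated midpoints, which is why I isolate the tightness argument instead of trying to control the many-point Kat\v{e}tov maps directly.
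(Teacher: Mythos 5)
Your proposal is correct and follows essentially the same route as the paper: the paper's proof is exactly ``midpoint property from \PRO{spheres}\,(A) plus completeness,'' leaving the standard dyadic-subdivision and tightness argument implicit, which you have simply written out in full (and your tightness observation correctly handles the only delicate point, namely that adjacent-pair control suffices because the chain saturates the triangle inequality).
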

\begin{proof}
By \PRO{spheres}, for any $x,y \in \GGG_r(N)$ there is $z \in \GGG_r(N)$ such that $p(x-z) = p (y-z) = p(x-y)/2$.
Since $\GGG_r(N)$ is complete, the assertion follows.
\end{proof}

The above result implies that every metric space which is the image of $\GGG_1(N)$ under a uniformly continuous
function has bounded metric. Since continuous group homomorphisms are uniformly continuous, as a consequence of this
we obtain the result announced in the previous section.

\begin{cor}{nonisog}
The groups $\GGG_r(N)$'s are pairwise nonisomorphic as topological groups.
\end{cor}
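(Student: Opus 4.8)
The plan is to separate the groups by two independent invariants: a purely algebraic one that distinguishes different values of $N$, and a metric one that distinguishes $r=1$ from $r=\infty$ for a fixed $N$. Throughout I would use that a topological isomorphism of topological groups is in particular an isomorphism of the underlying abstract groups (and simultaneously a uniform homeomorphism).

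First I would dispose of the case $N \neq M$ by comparing exponents. For $N \geqsl 2$ the group $\GGG_r(N)$ has exponent $N$, and by universality (\COR{univ}(A)) it contains an isometric group-homomorphic copy of $\ZZZ/N\ZZZ$ equipped with a value bounded by $r$; since an isometric group homomorphism is injective, $\GGG_r(N)$ then has an element of order exactly $N$, so its exponent is precisely $N$. For $N = 0$ the class $\Gg_r(0)$ contains every finite group (finite groups are automatically of class $\OOo_0$), so by the same corollary $\GGG_r(0)$ contains an element of order $k$ for every $k \geqsl 2$ and hence admits no finite exponent. As the exponent (or its nonexistence) is an abstract-group invariant, an isomorphism $\GGG_r(N) \cong \GGG_s(M)$ forces $N = M$, irrespective of $r$ and $s$.

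It then remains to show $\GGG_1(N) \not\cong \GGG_\infty(N)$ as topological groups. The two decisive facts are that $\GGG_1(N)$ has diameter at most $1$ (its value is bounded by $1$) and is geodesic by \COR{segm}, whereas $\GGG_\infty(N)$ is metrically unbounded: by \COR{univ}(A) it contains, for every radius, an isometric copy of a finite cyclic group of exponent $N$ (a copy of $\ZZZ/k\ZZZ$ when $N=0$) carrying a value of arbitrarily large diameter. Suppose $\varphi\dd \GGG_1(N) \to \GGG_\infty(N)$ were a topological isomorphism. Being a continuous homomorphism between metrizable groups with invariant metrics, $\varphi$ is uniformly continuous; hence, by the observation recorded immediately before this corollary (which converts the bounded geodesic structure supplied by \COR{segm} into boundedness of every uniformly continuous image of $\GGG_1(N)$), the set $\varphi(\GGG_1(N))$ has bounded metric. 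But $\varphi$ is surjective, so $\GGG_\infty(N)$ would be bounded, a contradiction. This rules out $r \neq s$ when $N=M$, and together with the exponent step finishes the proof.

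The algebraic step is immediate, so I expect the only point requiring genuine care to be the metric step. Concretely, the hard part will be making sure that the topological hypothesis upgrades correctly to \emph{uniform} continuity of $\varphi$ (so that the boundedness-of-image principle derived from \COR{segm} applies), and that $\GGG_\infty(N)$ really is unbounded for every admissible $N$, including the exponent cases $N \geqsl 2$. Both are secured by \COR{univ} together with the freedom to rescale the value on the finite cyclic subgroups embedded isometrically into $\GGG_\infty(N)$.
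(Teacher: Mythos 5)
Your proposal is correct and follows essentially the same route as the paper: the case $r\neq s$ is handled exactly by the remark preceding the corollary (uniformly continuous images of the bounded geodesic space $\GGG_1(N)$ are bounded, while $\GGG_\infty(M)$ is unbounded by universality), and the case $N\neq M$ by the purely algebraic exponent invariant, which the paper leaves implicit but which you justify correctly via \COR{univ}. No gaps.
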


The inequality \eqref{eqn:balls} in \EXM{ZN} implies that there are points $x_1,\ldots,x_N$ in $\GGG_r(N)$ and radii
$r_1,\ldots,r_N > 0$ such that $p(x_j - x_k) < r_j + r_k$ for any $j,k$, but the closed balls $\bar{B}(x_j,r_j)$
($j=1,\ldots,N$) have empty intersection. We conclude from this that the metric space $\GGG_r(N)$ fails to have
the property of extending isometric maps between finite subsets of $\GGG_r(N)$ to Lipschitz maps with Lipschitz
constants arbitrarily close to $1$. This is why it is not so easy (as in case of $\GGG_r(0)$ and $\GGG_r(2)$ which
are Urysohn spaces; compare with \cite{usp} or \cite{pn3} where it is shown that the Urysohn space is homeomorphic
to $l^2$) to prove that $\GGG_r(N)$ is an absolute retract (for metric spaces). We shall do this in Section~8.\par
Our last aim of this part is to show that each of the metric spaces $\GGG_r(N)$'s is metrically universal for
separable metric spaces of diameter no greater than $r$.\par
In what follows, $N \geqsl 2$ and $\ZZZ_N = \ZZZ / N\ZZZ$ represents a cyclic group of rank $N$. Moreover, let $\eE$ stand
for the generator of $\ZZZ_N$. Adapting the idea of Lipschitz-free Banach spaces generated by metric spaces (see e.g.
\cite{wea}, \cite{g-k}), we introduce

\begin{dfn}{ZN-free}
Let $X$ be a nonempty set. For $x \in X$ let $H_x = \ZZZ_N$ and let $\ZZZ_N[[X]] = \bigoplus_{x \in X} H_x$ be
the direct product of groups $H_x$'s. That is, $\ZZZ_N[[X]]$ consists of all functions $f\dd X \to \ZZZ_N$ for which
the set $\supp f := \{x \in X\dd\ f(x) \neq 0\}$ is finite and $\ZZZ_N[[X]]$ is equipped with the pointwise addition.
Further, let $\ZZZ_N[X] = \{f \in \ZZZ_N[[X]]\dd\ \sum_{x \in X} f(x) = 0\}$. It is clear that $\ZZZ_N[X]$ is a subgroup
of $\ZZZ_N[[X]]$.\par
For every $x \in X$ let $\widehat{x} \in \ZZZ_N[[X]]$ be such that $\widehat{x}(x) = \eE$ and $\widehat{x}(y) = 0$
for $y \neq x$. The set $\{\widehat{x} - \widehat{y}\dd\ x,y \in X\}$ generates the group $\ZZZ_N[X]$.\par
Whenever $d$ is a metric on $X$, define $p_d\dd \ZZZ_N[X] \to \RRR_+$ by
\begin{multline}\label{eqn:pd}
p_d(f) = \inf\{\sum_{j=1}^n d(x_j,y_j)\dd\\
n \geqsl 1,\ x_1,y_1,\ldots,x_n,y_n \in X,\ f = \sum_{j=1}^n (\widehat{x_j} - \widehat{y_j})\}.
\end{multline}
The triple $(\ZZZ_N[X],+,p_d)$ is called the \textit{valued Abelian group of exponent $N$ generated by the metric
space $(X,d)$}.
\end{dfn}

As we will see in the next result, $p_d$ is indeed a value. For need of this, let us introduce the following
notation. Let $(X,d)$ be a finite metric space. If $\card(X) < 2$, let $\mu(X) := 0$. Otherwise let
$$
\mu(X) := \max\bigr\{ \min\{d(x,y)\dd\ y \in X,\ y \neq x\}\bigr\} > 0.
$$
Then we have

\begin{pro}{pd}
Let $(X,d)$ be a nonempty metric space. For every $f \in \ZZZ_N[X]$:
\begin{enumerate}[\upshape(A)]
\item $p_d(f) \geqsl \mu(\supp f)$; in particular, $p_d$ is a value,
\item if $N = 2$ and $f \neq 0$,
   \begin{multline}\label{eqn:pd2}
   p_d(f) = \min\{\sum_{j=1}^k d(x_j,y_j)\dd\ x_1,y_1,\ldots,x_k,y_k \textup{ are all different}\\
   \textup{and } \{x_1,y_1,\ldots,x_k,y_k\} = \supp(f)\}.
   \end{multline}
\end{enumerate}
\end{pro}
\begin{proof}
First observe that---thanks to the triangle inequality---in the formula \eqref{eqn:pd} we may consider only such
systems $x_1,y_1,\ldots,x_n,y_n \in X$ that
\begin{equation}\label{eqn:aux50}
\{x_1,\ldots,x_n\} \cap \{y_1,\ldots,y_n\} = \varempty
\end{equation}
and---for the same reason---if $N = 2$, we may also restrict to systems $x_1,\ldots,x_n$ and $y_1,\ldots,y_n$ in which
elements are different. This proves (B). Now assume that $N \geqsl 3$, that
\begin{equation}\label{eqn:aux51}
f = \sum_{j=1}^n (\widehat{x_j} - \widehat{y_j})
\end{equation}
and \eqref{eqn:aux50} is fulfilled. Let $F = \{x_1,y_1,\ldots,x_n,y_n\}$. For $a,b \in F$ we write $a \sim b$ provided
there is $j \in \{1,\ldots,n\}$ with $\{a,b\} = \{x_j,y_j\}$; and $a \equiv b$ iff either $a = b$ or there are
$c_0,c_1,\ldots,c_k \in F$ for which $c_0 = a$, $c_k = b$ and $c_j \sim c_{j-1}$ for $j=1,\ldots,k$. It is clear that
`$\equiv$' is an equivalence on $F$ such that
\begin{equation}\label{eqn:aux52}
\{j\dd\ x_j \equiv a\} = \{j\dd\ y_j \equiv a\} \quad \textup{for each } a \in A.
\end{equation}
We infer from \eqref{eqn:aux50} and \eqref{eqn:aux51} that $\supp(f) \subset F$ and for each $a \in A$, $f(a) =
\card(\{j \in \{1,\ldots,n\}\dd\ x_j = a\}) \eE$ or $f(a) = -\card(\{j \in \{1,\ldots,n\}\dd\ y_j = a\}) \eE$, and hence
\begin{equation}\label{eqn:aux53}
\begin{cases}
\card(\{j\dd\ a \in \{x_j,y_j\}\}) \not\equiv 0 \mod N & \textup{if } a \in \supp(f),\\
\card(\{j\dd\ a \in \{x_j,y_j\}\}) \equiv 0 \mod N & \textup{otherwise}.
\end{cases}
\end{equation}
Fix $a \in A$. It follows from \eqref{eqn:aux52} and \eqref{eqn:aux53} that there is $b \in \supp(g) \setminus \{a\}$
such that $b \equiv a$. So, there are $c_0,\ldots,c_k \in A$ for which $c_0 = a$, $c_k = b$ and $c_j \sim c_{j-1}$ for
$j=1,\ldots,k$. Passing into a suitable subset of $\{0,\ldots,k\}$ we may assume that all $c_j$'s are different. This
means that there are distinct indices $\nu_1,\ldots,\nu_k$ such that $\{c_j,c_{j-1}\} = \{x_{\nu_j},y_{\nu_j}\}$. But
then $\sum_{j=1}^n d(x_j,y_j) \geqsl \sum_{s=1}^k d(c_s,c_{s-1}) \geqsl d(a,b)$. So, the assertion follows from
the arbitrarity of $a \in \supp(f)$.
\end{proof}

\begin{cor}{pd-d}
Let $(X,d)$ be a nonempty metric space and $a \in X$. The function
$$
(X,d) \ni x \mapsto \widehat{x} - \widehat{a} \in (\ZZZ_N[X],p_d)
$$
is isometric.
\end{cor}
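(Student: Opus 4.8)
The plan is to reduce the claim to a single value computation and then invoke \PRO{pd}. Write $\iota\dd X \to \ZZZ_N[X]$, $\iota(x) = \widehat{x} - \widehat{a}$, for the map in question. Since $p_d$ induces the invariant metric on $\ZZZ_N[X]$, the distance between $\iota(x)$ and $\iota(y)$ is $p_d(\iota(x) - \iota(y))$. The first observation is that the base point cancels: $\iota(x) - \iota(y) = (\widehat{x} - \widehat{a}) - (\widehat{y} - \widehat{a}) = \widehat{x} - \widehat{y}$. Hence it suffices to prove that $p_d(\widehat{x} - \widehat{y}) = d(x,y)$ for all $x, y \in X$, and the whole statement follows.

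For $x = y$ both sides vanish, so I would assume $x \neq y$. The upper bound $p_d(\widehat{x} - \widehat{y}) \leqsl d(x,y)$ is immediate from the defining formula \eqref{eqn:pd}: the single-term representation with $n = 1$, $x_1 = x$, $y_1 = y$ represents $\widehat{x} - \widehat{y}$ at cost $d(x,y)$, and $p_d$ is the infimum taken over all such representations.

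For the reverse inequality I would appeal to \PRO{pd}(A), which yields $p_d(f) \geqsl \mu(\supp f)$ for every $f \in \ZZZ_N[X]$. Here one computes $\supp(\widehat{x} - \widehat{y}) = \{x,y\}$: the function $\widehat{x} - \widehat{y}$ takes the value $\eE$ at $x$ and $-\eE$ at $y$, and both are nonzero because $N \geqsl 2$ (so $-\eE = (N-1)\eE \neq 0$). For a two-point set, the quantity $\mu(\{x,y\})$ equals $d(x,y)$, since each of the two inner minima reduces to $d(x,y) = d(y,x)$. Thus $p_d(\widehat{x} - \widehat{y}) \geqsl \mu(\{x,y\}) = d(x,y)$, and combining with the upper bound gives equality, completing the reduction.

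I expect essentially no obstacle here, as the real content has already been packaged into \PRO{pd}; the corollary is a direct consequence. The only points requiring a moment's care are the cancellation of the base point $a$ and the verification that $\supp(\widehat{x} - \widehat{y})$ is genuinely the two-element set $\{x,y\}$ rather than a singleton, so that $\mu$ returns $d(x,y)$ and not $0$. This is precisely where the standing hypothesis $N \geqsl 2$ (guaranteeing $-\eE \neq 0$) enters.
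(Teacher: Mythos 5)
Your proof is correct and is exactly the argument the paper intends: the corollary is stated without proof as an immediate consequence of Proposition \textup{\ref{pro:pd}}, and your reduction (base-point cancellation, the trivial upper bound from the single-term representation in \eqref{eqn:pd}, and the lower bound $p_d(\widehat{x}-\widehat{y}) \geqsl \mu(\{x,y\}) = d(x,y)$ from part (A)) is the intended route. The care you take with $\supp(\widehat{x}-\widehat{y})=\{x,y\}$ via $-\eE\neq 0$ is exactly the right detail to check.
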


If we replace in the above result $p_d$ by $p_d \wedge r$, the assertion of \COR{pd-d} well remain true. It is also clear
that $\ZZZ_N[X]$ is separable provided so is $X$. So, an application of \COR{univ} yields

\begin{thm}{univ}
Every separable metric space of diameter no greater than $r$ admits an isometric embedding into the metric space
$\GGG_r(N)$.
\end{thm}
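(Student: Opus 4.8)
The plan is to realize the given metric space inside a suitable member of $\Gg_r(N)$ and then transport it into $\GGG_r(N)$ using universality. We may assume $X \neq \varempty$, and I would first treat the generic case $N \geqsl 2$ (the value $N=0$ is handled separately at the end, since the construction of \DEF{ZN-free} requires $N \geqsl 2$). Fix a base point $a \in X$ and form the valued group $(\ZZZ_N[X],+,p_d \wedge r)$. The first thing to verify is that this lies in $\Gg_r(N)$: each summand $H_x = \ZZZ_N$ has exponent $N$, so $\ZZZ_N[X]$ does too; the value $p_d \wedge r$ is bounded by $r$; and $\ZZZ_N[X]$ is separable because, taking a countable dense $D \subset X$, the finite $\ZZZ_N$-combinations of the generators $\widehat{x}-\widehat{y}$ with $x,y \in D$ form a countable dense subgroup. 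Hence $(\ZZZ_N[X],+,p_d \wedge r) \in \Gg_r(N)$.

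Next I would record that the map $\iota\dd x \mapsto \widehat{x} - \widehat{a}$ embeds $(X,d)$ isometrically into this group. By \COR{pd-d}, the assignment $x \mapsto \widehat{x}-\widehat{a}$ is an isometry of $(X,d)$ into $(\ZZZ_N[X],p_d)$, and by the remark immediately following \COR{pd-d} the same holds with $p_d$ replaced by $p_d \wedge r$. The only point to check here is that the truncation at $r$ does nothing on the range of $\iota$, which is exactly where the diameter hypothesis enters: $p_d(\widehat{x}-\widehat{a}) = d(x,a) \leqsl \diam(X,d) \leqsl r$ for every $x \in X$, so $(p_d \wedge r)(\widehat{x}-\widehat{a}) = d(x,a)$ and $\iota$ stays isometric.

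The remaining step is to push $\iota$ forward into $\GGG_r(N)$. By \COR{univ}(A) there is an isometric group homomorphism $\Psi\dd \ZZZ_N[X] \to \GGG_r(N)$. I would then observe that every isometric group homomorphism is automatically an isometry of the underlying metric spaces onto its image, since for the value $p$ of $\GGG_r(N)$ one has $p(\Psi(g)-\Psi(h)) = p(\Psi(g-h)) = (p_d \wedge r)(g-h)$; composing, $\Psi \circ \iota$ is the desired isometric embedding of $(X,d)$ into the metric space $\GGG_r(N)$.

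Finally I would dispose of the excluded value $N=0$. The cleanest route is to invoke \THM{Ur}: $\GGG_r(0)$ is Urysohn, hence by its universality axiom it is metrically universal for all separable metric spaces of diameter no greater than $r$; alternatively one can embed $X$ into $\GGG_r(2)$ by the case already proved and compose with the isometric group homomorphism $\GGG_r(2) \to \GGG_r(0)$ given by \PRO{embed}(A), which applies because $2 \mid 0$. I do not expect any genuinely hard step: the whole argument is an assembly of earlier results, and the only places needing care are the verification that $(\ZZZ_N[X],+,p_d \wedge r)$ genuinely belongs to $\Gg_r(N)$ and the separate, slightly degenerate treatment of $N=0$.
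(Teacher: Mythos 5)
Your proposal is correct and follows essentially the same route as the paper: form $(\ZZZ_N[X],+,p_d\wedge r)\in\Gg_r(N)$, embed $X$ isometrically via $x\mapsto\widehat{x}-\widehat{a}$ using \COR{pd-d} and the remark after it, and transport into $\GGG_r(N)$ by \COR{univ}\POINT{A}; your separate treatment of $N=0$ via \THM{Ur} matches the paper's standing convention that the construction is only needed for $N\geqsl 2$. The only nitpick is that isometry of $\iota$ should be checked on differences $\widehat{x}-\widehat{y}$ of pairs of image points rather than just against the base point, but the same bound $p_d(\widehat{x}-\widehat{y})=d(x,y)\leqsl\diam X\leqsl r$ disposes of this immediately.
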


\begin{exm}{odd}
If $X$ is a nonempty set, $H$ is a group of exponent $N$, every function $u\dd X \to H$ induces a unique (well defined)
group homomorphism $\widetilde{u}\dd \ZZZ_N[X] \to H$ such that $\widetilde{u}(\widehat{x} - \widehat{y}) = u(x) - u(y)$
for any $x,y \in X$. What is more, if $d$ is a metric on $X$, $q$ is a value on $H$ and $u\dd (X,d) \to (H,q)$ is
Lipschitz, it easily follows from the formula for $p_d$ that $\widetilde{u}\dd (\ZZZ_N[X],p_d) \to (H,q)$ satisfies
the Lipschitz condition with the same constant as $u$. Similarly, every function $v\dd X \to Y$ between two sets induces
a group homomorphism $\widehat{v}\dd \ZZZ_N[X] \to \ZZZ_N[Y]$ and if $v\dd (X,d) \to (Y,\varrho)$ is Lipschitz, then
$\widehat{v}\dd (\ZZZ_N[X],p_d) \to (\ZZZ_N[Y],p_{\varrho})$ satisfies the Lipschitz condition with the same constant
as $v$.\par
In particular, if $A$ is a subset of $(X,d)$, the inclusion map of $A$ into $X$ induces a nonexpansive group homomorphism
of $\ZZZ_N[A]$ into $\ZZZ_N[X]$. In case of $N = 2$, the latter group homomorphism is isometric, which easily follows from
\PRO{pd} (similar result holds true for the Lipschitz-free Banach spaces generated by metric spaces---see \cite{wea}).
It turns out that for $N > 2$ this homomorphism may not be isometric, which is rather strange. Let us give an example
based on \EXM{ZN}.\par
Let $X = \{0,1,2,\ldots,N\}$ and $A = X \setminus \{0\}$. We equip $X$ with a metric $d$ such that for distinct $j,k \in
A$, $d(j,k) = 1$ and $d(0,j) = \max(1/2,1 - 2/N)$. For clarity, let $\varrho = d_{A \times A}$. Put $f = \sum_{a \in A}
\widehat{a} \in \ZZZ_N[A]$. It follows from the argument in \EXM{ZN} that $p_{\varrho}(f) = N-1$. However, in $\ZZZ_N[X]$,
$f = \sum_{a \in A} (\widehat{a} - \widehat{0})$ and thus $p_d(f) \leqsl N \cdot \max(1/2,1 - 2/N) < N - 1$.\par
Taking into account \THM{trv}, \THM{Ur}, \THM{noniso} and the above example, we see that the case of $N > 2$ is
`singular'.
\end{exm}

\SECT{Pseudovector groups}

Pseudovector groups were introduced in \cite{pn2}. Here we shall generalize the results of \cite{pn2} on pseudovector
structures on the (unbounded) Boolean Urysohn group, mainly in order to prove that all the groups $\GGG_r(N)$'s are
homeomorphic to the Hilbert space. However, some of results of this part may be seen interesting and can play important
role in theory of topological pseudovector groups. All terms, beside the notion of a $\nabla$-norm, are repeated
from the introductory work \cite{pn2}. As before, we restrict our considerations only to Abelian groups.\par
We begin with

\begin{dfn}{pvg}
Let $\FFF$ be a subfield of $\RRR$. A triple $(G,+,*)$ is said to be a \textit{pseudovector (Abelian) group} over $\FFF$
(briefly, an $\FFF$-PV group, or a PV group provided $\FFF = \RRR$) iff $(G,+)$ is an Abelian group and $*\dd \FFF_+
\times G \to G$ is an action such that for any $s,t \in \FFF_+$ and $x \in G$, $0 * x = 0$, $1 * x = x$, $(st) * x =
s * (t * x)$ and the function $G \ni y \mapsto t * y \in G$ is a group homomorphism.\par
A \textit{topological pseudovector group} over $\FFF$ is an $\FFF$-PV group $(G,+,*)$ equipped with a topology $\tau$
such that $(G,+,\tau)$ is a topological group and the action `$*$' is continuous.\par
A \textit{norm} on an $\FFF$-PV group $(G,+,*)$ is a value $\|\cdot\|\dd G \to \RRR_+$ such that $\|t * x\| = t \|x\|$
for each $t \in \FFF_+$ and $x \in G$. The norm $\|\cdot\|$ is \textit{topological} iff the action `$*$' is continuous with
respect to $\|\cdot\|$. In that case we speak of a \textit{normed topological pseudovector group} over $\FFF$ (for short,
a NTPV group over $\FFF$).\par
If $G$ is an $\FFF$-PV group, $\KKK$ is a subfield of $\FFF$ and $A$ any subset of $G$, by $\lin_{\KKK} A$ we denote
the $\KKK$-PV subgroup of $G$ generared by $A$, that is, $\lin_{\KKK} A = \{\sum_{j=1}^n t_j * a_j\dd\ n \geqsl 1,\
t_1,\ldots,t_n \in \KKK_+,\ a_1,\ldots,a_n \in A \cup (-A) \cup \{0\}\}$. If $\KKK = \RRR$, we write $\lin A$ instead
of $\lin_{\RRR} A$.\par
A group homomorphism $u\dd G \to H$ between two $\FFF$-PV groups is \textit{linear} if $u(t * x) = t * u(x)$ for every
$t \in \FFF_+$ and $x \in G$.
\end{dfn}

Our aim is to show that each of the groups $\GGG_r(N)$'s may be endowed with a `normed-like' topological pseudovector
structure. Since every norm on a nontrivial group is unbounded, none of the groups $\GGG_1(N)$ is isometrically group
isomorphic to a NTPV group. Thus, we have to generalize the notion of a norm.

\begin{dfn}{norming}
A function $\kappa\dd \RRR_+ \to \RRR_+$ is said to be \textit{norming} iff $\kappa$ satisfies the following conditions:
\begin{enumerate}[(NF1)]
\item $\kappa(1) = 1$ and $\kappa(x) \geqsl x$ for each $x \geqsl 0$,
\item $\kappa(xy) \leqsl \kappa(x) \kappa(y)$ for any $x,y \in \RRR_+$,
\item $\kappa$ is monotone increasing, i.e. $\kappa(x) \leqsl \kappa(y)$ provided $0 \leqsl x \leqsl y$,
\item $\kappa$ is Lipschitz, that is, there is a constant $L' > 0$ such that $|\kappa(x) - \kappa(y)| \leqsl
   L' |x - y|$ for $x,y \geqsl 0$.
\end{enumerate}
(Notice that it is \textbf{not} assumed that $\kappa(0) = 0$.)\par
A (semi)value $\|\cdot\|$ on an $\FFF$-PV group $(G,+,*)$ is said to be a \textit{$\kappa$-(semi)norm} iff
$\|t * x\| \leqsl \kappa(t) \|x\|$ for each $t \in \FFF_+$ and $x \in G$. When $\|\cdot\|$ is a $\kappa$-norm
on an $\FFF$-PV group $(G,+,*)$, the quadruple $(G,+,*,\|\cdot\|)$ is said to be a \textit{$\kappa$-normed $\FFF$-PV
group}.
\end{dfn}

\begin{rem}{NF3}
In the whole paper, we use (NF3) only two times: in (P6) (which finds no application in this paper) and in the proof
of \LEM{00-PV} below, which may be improved so that (NF3) will not be applied. The reason for adding (NF3) to the axioms
of a norming function is just a matter of our personal `taste'.
\end{rem}

The most important examples on norming functions are $\id\dd \RRR_+ \to \RRR_+$ and $\nabla = \id \vee 1$. We leave these
as simple exercises that $\nabla$ is indeed norming and that a value is an $\id$-norm iff it is a norm. We call
a $\nabla$-norm also a \textit{subnorm}, and \textit{subnormed} means $\nabla$-normed. In fact we need only these two
functions. However, no additional work is needed for arbitrary norming functions and it seems to us instructive to point
out which properties of $\id$ and $\nabla$ are relevant in our considerations.\par
The reader should check with no difficulties the following properties of every norming function $\kappa$. Below we assume
that $L'$ is as in (NF4).
\begin{enumerate}[(P1)]
\item If $\kappa(0) = 0$, then $\id \leqsl \kappa \leqsl L' \id$.
\item If $\kappa(0) \neq 0$, then $\nabla \leqsl \kappa \leqsl (L' + 1) \nabla$.
\item Every norm is a $\kappa$-norm.
\item If $\kappa(0) \neq 0$, every $\nabla$-norm is a $\kappa$-norm.
\item if $\kappa(0) \neq 0$ and $\|\cdot\|$ is a $\kappa$-norm, then $\|\cdot\| \wedge 1$, $\frac{\|\cdot\|}{1+\|\cdot\|}$
   and $\|\cdot\|^{\alpha}$ with $0 < \alpha < 1$ are $\kappa$-norms as well.
\item Composition of two norming functions is a norming function.
\end{enumerate}

As the following result shows, $\nabla$-norms appear much more often than norms.

\begin{pro}{metriz}
Every topological pseudovector group metrizable as a topological space admits a $\nabla$-norm inducing its topology.
\end{pro}
\begin{proof}
Let $(G,+,*)$ be a topological pseudovector group. By a well-known theorem (see e.g. \cite[Theorem~8.8]{ber}), there is
a value $p$ on $G$, bounded by $1$, which induces the topology of $G$. Now define $\|\cdot\|\dd G \to \RRR_+$ by
$$
\|x\| = \sup\{\frac{p(\alpha * x)}{\alpha \vee 1}\dd\ \alpha \geqsl 0\}.
$$
We leave this as an exercise that $\|\cdot\|$ is a subnorm equivalent to $p$ (use the boundedness of $p$
and the continuity of `$*$').
\end{proof}

Note that the above result may be applied to any metrizable topological vector space.\par
From now to the end of the section $\kappa$ is a fixed norming function and $L \geqsl 1$ is such a constant that
for all $x, y \geqsl 0$,
\begin{equation}\label{eqn:L}
\begin{cases}
|\kappa(x) - \kappa(y)| \leqsl L |x - y|, &\\
\kappa(x) \leqsl L \nabla(x)&
\end{cases}
\end{equation}
(cf. (NF4), (P1) and (P2)). Note also that it follows from (P1) that $\kappa(x) \leqsl L x$ for $x \geqsl 0$ provided
$\kappa(0) = 0$.\par
Everywhere below $\FFF$ is a subfield of $\RRR$ and $(G,+,*,\|\cdot\|_G)$ is a $\kappa$-normed pseudovector group. At this
moment, we do not assume that the action `$*$' is continuous.\par
As in \cite{pn2}, we call an element $a \in G$ \textit{continuous} (respectively \textit{Lipschitz}) provided the function
$\FFF_+ \ni t \mapsto t * a \in G$ is continuous (Lipschitz). The set of all continuous (Lipschitz) elements of $G$ is
denoted by $\CcC_{\FFF}(G)$ ($\LlL_{\FFF}(G)$) and in case $\FFF = \RRR$ we write $\CcC(G)$ ($\LlL(G)$). $G$ is said to be
\textit{Lipschitz} if $\LlL_{\FFF}(G) = G$. Similarly, a $\kappa$-seminorm $\|\cdot\|_0$ on $G$ is \textit{Lipschitz} iff
for every $a \in G$ there is $C_a \in \RRR_+$ such that $\|t * a - s * a\|_0 \leqsl C_a |t - s|$ for any $s,t \in
\RRR_+$.\par
The proofs of the next two results are omitted.

\begin{pro}{cont}
\begin{enumerate}[\upshape(A)]
\item For $a \in G$, $a \in \CcC_{\FFF}(G)$ iff $$\lim_{\substack{t\to0\\t\in\FFF_+}} \|t * a\|_G = 0 \qquad \textup{and}
   \qquad \lim_{\substack{t\to1^-\\t\in\FFF^+}} \|a - t * a\|_G = 0.$$
\item For $a \in G$, $a \in \LlL_{\FFF}(G)$ iff the function $\FFF \cap [0,1] \ni t \mapsto t * a \in G$ is Lipschitz.
\item The set $\CcC_{\FFF}(G)$ is a \textbf{closed} pseudovector subgroup (over $\FFF$) of $G$ and $\CcC_{\FFF}(G)$ is
   a topological pseudovector group. The set $\LlL_{\FFF}(G)$ is a pseudovector subgroup (over $\FFF$)
   of $\CcC_{\FFF}(G)$.
\item For every $a \in \CcC_{\FFF}(G)$ the map $\FFF_+ \ni t \mapsto t * a \in G$ is uniformly continuous (as a map
   between metric spaces) on every bounded subset of $\FFF_+$.
\end{enumerate}
\end{pro}

\begin{pro}{compl}
Suppose $\CcC_{\FFF}(G) = G$. Let $(\bar{G},+,\|\cdot\|)$ be the completion of $(G,+,\|\cdot\|_G)$. There is an action
$\bar{*}\dd \RRR_+ \times \bar{G} \to \bar{G}$ such that $(\bar{G},+,\bar{*},\|\cdot\|)$ is a $\kappa$-normed topological
PV group and $t * x = t \bar{*} x$ for $t \in \FFF_+$ and $x \in G$. Moreover, $\LlL_{\FFF}(G) = \LlL(\bar{G}) \cap G$.
\end{pro}

The next result will be used several times by us.

\begin{lem}{bd}
If there is $a \in G \setminus \{0\}$ such that $M := \sup\{\|t * a\|_G\dd\ t \in \FFF_+\} < \infty$, then $\kappa(0)
\neq 0$.
\end{lem}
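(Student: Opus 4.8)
The plan is to argue by contraposition: assuming $\kappa(0) = 0$, I will show that the $*$-orbit $\{t * a\dd\ t \in \FFF_+\}$ of \emph{every} nonzero $a \in G$ is unbounded in norm, so that no $a \in G \setminus \{0\}$ with finite $M$ can exist. The whole point is to exploit that $\FFF$ is a \emph{field}, so that positive scalars admit multiplicative inverses lying again in $\FFF_+$, together with the continuity of $\kappa$ at $0$ that (NF4) provides for free.

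First I would fix a nonzero $a \in G$ and a nonzero $t \in \FFF_+$; then $t^{-1} \in \FFF_+$ as well. Using $1 * a = a$ and the action axiom $(st) * x = s * (t * x)$, I would write $a = 1 * a = (t^{-1} t) * a = t^{-1} * (t * a)$, and then apply the defining $\kappa$-norm inequality $\|s * x\|_G \leqsl \kappa(s) \|x\|_G$ (with $s = t^{-1}$ and $x = t * a$) to obtain
\[
\|a\|_G = \|t^{-1} * (t * a)\|_G \leqsl \kappa(t^{-1}) \, \|t * a\|_G .
\]
This is the key inequality; it converts control over the small scalar $t^{-1}$ into a bound on $\|a\|_G$ in terms of a single member of the orbit.

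Now I would bound $\|t * a\|_G$ by $M$, giving $\|a\|_G \leqsl \kappa(t^{-1}) \, M$, and let $t \to \infty$ through $\FFF_+$ (possible because $\QQQ \subset \FFF$, so $\FFF_+$ is unbounded). Then $t^{-1} \to 0^+$, and since $\kappa$ is Lipschitz by (NF4), hence continuous, one gets $\kappa(t^{-1}) \to \kappa(0) = 0$. Consequently $\|a\|_G = 0$; as $\|\cdot\|_G$ is a value (a $\kappa$-norm is in particular a value), this forces $a = 0$, contradicting $a \neq 0$. Therefore the assumption $\kappa(0) = 0$ is untenable, i.e. $\kappa(0) \neq 0$.

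I do not expect a genuine obstacle: the argument is short and elementary. The only points requiring care are that the inverse $t^{-1}$ lies in $\FFF_+$ — which is precisely why the field hypothesis on $\FFF$, rather than merely a subring such as $\ZZZ$, is what is used — and that one may send $t \to \infty$ inside $\FFF_+$; both are guaranteed by $\QQQ \subset \FFF$. The hypothesis $\kappa(0) = 0$ enters only to make $\lim \kappa(t^{-1})$ vanish, the continuity underlying this limit being supplied by (NF4).
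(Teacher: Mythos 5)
Your argument is correct and is essentially the paper's own proof: both exploit the field structure of $\FFF$ to write an orbit element as a small scalar acting on another orbit element, apply the $\kappa$-norm inequality to get $\|a\|_G\leqsl\kappa(t^{-1})M$ (the paper's version is $M\leqsl M\kappa(s)$, i.e.\ $\kappa(s)\geqsl 1$ for all positive $s\in\FFF$), and then use the continuity of $\kappa$ at $0$ supplied by (NF4). The only cosmetic difference is that you phrase it as a contraposition while the paper derives $\kappa(s)\geqsl1$ directly.
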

\begin{proof}
For positive $t, s \in \FFF$ we have $\|t * x\|_G \leqsl \kappa(s) \|\frac{t}{s} * x\|_G \leqsl M \kappa(s)$ and thus
$M \leqsl M \kappa(s)$. So, $\kappa(s) \geqsl 1$ and the assertion follows.
\end{proof}

\begin{lem}{constM}
For every $a \in \CcC_{\FFF}(G)$ and $\delta > 0$ there exists a constant $M = M(a,\delta) \in \RRR_+$ such that for any
$s,t \in \RRR_+$,
\begin{equation}\label{eqn:M}
\|s * a - t * a\|_G \leqsl M |s - t| + \delta \kappa(s \vee t).
\end{equation}
\end{lem}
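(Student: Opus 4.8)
The plan is to reduce to $s \geqsl t \geqsl 0$ (the left-hand side is symmetric in $s,t$ by (V2), as is $s \vee t$) and to build everything on one algebraic identity: for $t > 0$ and $r := s/t \in \FFF_+$ one has $s * a - t * a = t * (r * a - a)$, since $(tr) * a = t * (r * a)$ and $y \mapsto t * y$ is a group homomorphism. Passing to the $\kappa$-norm this yields $\|s * a - t * a\|_G \leqsl \kappa(t)\,\|r * a - a\|_G$. I would feed this two consequences of $a \in \CcC_{\FFF}(G)$ drawn from \PRO{cont}: first $\lim_{u \to 1}\|u * a - a\|_G = 0$, which lets me fix $\rho \in \FFF$, $\rho > 1$, with $\|u * a - a\|_G \leqsl \delta/L$ for all $u \in \FFF_+ \cap [1,\rho]$ (here $L$ is the constant from \eqref{eqn:L}); and second $\lim_{u \to 0^+}\|u * a\|_G = 0$.

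With $\rho$ fixed I would split $s \geqsl t \geqsl 0$ into a bounded-ratio and a large-ratio regime. When $1 \leqsl s/t \leqsl \rho$, the identity together with the choice of $\rho$ and the monotonicity of $\kappa$ gives immediately $\|s*a - t*a\|_G \leqsl \kappa(t)\,\delta/L \leqsl \kappa(s)\,\delta/L \leqsl \delta\kappa(s)$, so this regime contributes only to the $\delta\kappa(s \vee t)$ term; the point making it work is simply that $\kappa$ is increasing, so $\kappa(t) \leqsl \kappa(s)$. When $t = 0$ or $s/t > \rho$ the two scales are comparable: there $t \leqsl s/\rho$, whence $s \leqsl \tfrac{\rho}{\rho-1}(s-t)$, and I would estimate crudely $\|s*a - t*a\|_G \leqsl \|s*a\|_G + \|t*a\|_G \leqsl 2\kappa(s)\|a\|_G$ (using $\kappa(t) \leqsl \kappa(s)$); converting $\kappa(s)$ into a multiple of $s$ by means of \eqref{eqn:L} turns this into a bound of the shape $M(s-t)$, feeding the Lipschitz term.

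The genuine obstacle is the small-scale regime, where $s$ and $t$ are both near $0$: continuity forces the left-hand side to be small, but if $\kappa(0) = 0$ the permitted error $\delta\kappa(s)$ is itself small, and the naive conversion $\kappa(s) \approx s/\eta$ breaks down as $s \to 0$. I would dissolve this with the dichotomy of (P1)--(P2). If $\kappa(0) = 0$, then $\kappa \leqsl L\,\id$, so $\|u*a\|_G \leqsl L u \|a\|_G$ is \emph{both} vanishing and Lipschitz-controlled; the large-ratio/$t=0$ estimate above then already absorbs the small-scale part into $M(s-t)$, with $M \geqsl 2L\|a\|_G\,\rho/(\rho-1)$. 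If instead $\kappa(0) > 0$, then $\kappa \geqsl \nabla \geqsl 1$, which supplies a floor $\delta\kappa(s) \geqsl \delta$; here I would fix $\eta \in \FFF \cap (0,1)$ with $\|u*a\|_G \leqsl \delta/2$ for $u \leqsl \eta$, dispatch the case $s \leqsl \eta$ by $\|s*a - t*a\|_G \leqsl \delta \leqsl \delta\kappa(s)$ (since $\kappa(s) \geqsl 1$), and for $s > \eta$ use $\nabla(s) \leqsl s/\eta$ to push the large-ratio estimate through with $M \geqsl 2L\|a\|_G\,\rho/(\eta(\rho-1))$. Taking $M$ to be the larger constant in whichever case obtains, and combining the three regimes, establishes \eqref{eqn:M}.
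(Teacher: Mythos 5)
Your proof is correct, and it covers every case, but it manufactures the constant $M$ by a genuinely different mechanism than the paper. The paper's proof is essentially two lines: it sets $M = L\sup\bigl\{\tfrac{\|s*a-t*a\|_G-\delta}{|s-t|}\dd\ s,t\in\FFF\cap[0,1],\ s\neq t\bigr\}$, obtains $M<\infty$ from the uniform continuity of $u\mapsto u*a$ on bounded sets (point (D) of \PRO{cont}), and then scales the resulting estimate $\|u*a-a\|_G\leqsl (M/L)(1-u)+\delta$ for $u\in[0,1]$ by $\kappa$ of the larger parameter, invoking $\kappa(t)\leqsl Lt$ in exactly the places you do (always when $\kappa(0)=0$; only for $t>1$ when $\kappa(0)\neq 0$, the range $s\vee t\leqsl 1$ being absorbed directly by the definition of $M$ together with $\kappa\geqsl 1$). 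Your multiplicative decomposition --- ratio in $[1,\rho]$ versus ratio larger than $\rho$ --- trades that for a longer case analysis and a worse constant, but it buys a more elementary input: you need continuity of $u\mapsto u*a$ only at the single points $u=1$ and $u=0$, plus the trivial bound $\|u*a\|_G\leqsl\kappa(u)\|a\|_G$, rather than the full uniform-continuity statement; the large-ratio regime is then pure bookkeeping via $s\leqsl\tfrac{\rho}{\rho-1}(s-t)$, and your treatment of the small-scale subcase of $\kappa(0)\neq0$ (using the floor $\kappa\geqsl 1$ and a cutoff $\eta$) is a correct substitute for the paper's appeal to the definition of $M$ on $[0,1]^2$. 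One wrinkle you share with the paper rather than introduce: both arguments apply information about $\|u*a-a\|_G$ for $u$ ranging in $\FFF$ to the ratio $u=s/t$ of two arbitrary elements of $\RRR_+$, which need not lie in $\FFF$; this is harmless in every application in the paper, where $\FFF=\RRR$, but strictly speaking it is a gap in the general-$\FFF$ statement for both proofs alike.
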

\begin{proof}
Let $$M = L \sup\bigl\{\frac{\|s * a - t * a\|_G - \delta}{|s - t|}\dd\ s, t \in \FFF \cap [0,1],\ s \neq t\bigr\}.$$
We infer from point (D) of \PRO{cont} that $M < \infty$. We shall show that $M$ satisfies \eqref{eqn:M}. First assume that
$\kappa(0) = 0$. We know that then $\kappa(t) \leqsl Lt$ for every $t \in \RRR_+$. So, for $t > s \geqsl 0$ we have
$\|s * a - t * a\|_G \leqsl \kappa(t) \|(s/t) * a - a\|_G \leqsl \kappa(t) [(M / L) (1 - s/t) + \delta] \leqsl M (t - s)
+ \delta \kappa(t)$.\par
When $\kappa(0) \neq 0$, then $\kappa(t) \geqsl 1$ for every $t \in \RRR_+$ and hence \eqref{eqn:M} is fulfilled when
$t \vee s \leqsl 1$, by the definition of $M$. Finally, for $t > s \geqsl 0$ and $t > 1$, $\kappa(t) \leqsl Lt$ (see
\eqref{eqn:L}) and therefore we may repeat the estimations from the previous paragraph to get the assertion.
\end{proof}

\begin{lem}{APV}
Let $(D_j,+,*,\|\cdot\|_j) \in \Gg_r(N)$ be Lipschitz $\kappa$-normed PV groups over $\FFF$ ($j=0,1,2$) and $\varphi_j\dd
D_0 \to D_j$ ($j=1,2$) be isometric linear homomorphisms. Then there is a Lipschitz PV group $(D,+,*,\|\cdot\|) \in
\Gg_r(N)$ over $\FFF$ and isometric linear homomorphisms $\psi_j\dd D_j \to D$ ($j=1,2$) such that $\psi_2 \circ \varphi_2
= \psi_1 \circ \varphi_1$.
\end{lem}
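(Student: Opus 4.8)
The plan is to reproduce the construction in the proof of \LEM{A1}, carrying the pseudovector action and the $\kappa$-norm along at every step. First I would equip the product group $D_1 \times D_2$ with the coordinatewise action $t * (x_1,x_2) = (t * x_1, t * x_2)$ and with the product value $(x_1,x_2) \mapsto \|x_1\|_1 + \|x_2\|_2$; this makes $D_1 \times D_2$ a Lipschitz $\kappa$-normed $\FFF$-PV group which is separable, of exponent $N$ (when $N \neq 0$) and of class $\OOo_0$, all inherited termwise from $D_1,D_2$ (the $\kappa$-norm inequality and Lipschitzness split across the two coordinates, and class $\OOo_0$ follows since $p_{0*}$ on the product is the sum of the two coordinate $p_{0*}$'s). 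Setting $\tilde D_0 = \{(\varphi_1(u),-\varphi_2(u))\dd\ u \in D_0\}$, the linearity of $\varphi_1,\varphi_2$ forces $\tilde D_0$ to be invariant under the action, so the quotient $\tilde D = (D_1 \times D_2)/\tilde D_0$ inherits a PV structure and the maps $\pi,\tilde\psi_1,\tilde\psi_2$ of \eqref{eqn:pre} become linear homomorphisms with $\tilde\psi_2 \circ \varphi_2 = \tilde\psi_1 \circ \varphi_1$.

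Next, as in \LEM{A1}, I define $\tilde\lambda(z) = \inf\{\|x_1\|_1 + \|x_2\|_2\dd\ (x_1,x_2) \in \pi^{-1}(\{z\})\}$. The genuinely new verification is that $\tilde\lambda$ is a $\kappa$-seminorm: since $t * \pi^{-1}(\{z\}) \subset \pi^{-1}(\{t*z\})$, one gets $\tilde\lambda(t*z) \leqsl \inf\{\|t*x_1\|_1 + \|t*x_2\|_2\} \leqsl \kappa(t)\,\tilde\lambda(z)$. In particular $\tilde\lambda^{-1}(\{0\})$ is an action-invariant subgroup, so the further quotient $D = \tilde D / \tilde\lambda^{-1}(\{0\})$ again carries a PV structure and the induced value $\lambda$ is a $\kappa$-norm. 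Putting $\psi_j = \tilde\pi \circ \tilde\psi_j$ yields linear homomorphisms with $\psi_2\circ\varphi_2 = \psi_1\circ\varphi_1$, and the identity $\lambda(\psi_j(x)) = \|x\|_j$ is verified verbatim as in \LEM{A1}: the infimum computing $\lambda(\psi_1(x))$ equals $\inf_{u \in D_0}(\|x+\varphi_1(u)\|_1 + \|\varphi_1(u)\|_1)$, which is attained at $u=0$ by the triangle inequality (using that $\varphi_1,\varphi_2$ are isometric). Every element of $D$ has the form $\psi_1(x_1)+\psi_2(x_2)$, so linearity of the $\psi_j$ and Lipschitzness of $D_1,D_2$ give $\lambda(t*z - s*z) \leqsl \|t*x_1 - s*x_1\|_1 + \|t*x_2 - s*x_2\|_2 \leqsl (C_{x_1}+C_{x_2})|t-s|$, whence $D$ is Lipschitz; separability, exponent $N$ and class $\OOo_0$ pass to $D$ exactly as in \LEM{A1}, using \PRO{image} for the last property.

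The single point where the PV setting demands more than \LEM{A1} is the truncation enforcing $\lambda \leqsl r$ in the case $r=1$ (for $r=\infty$ none is needed). I expect this to be the main obstacle: one cannot simply pass to $\lambda \wedge 1$, because a norm (the case $\kappa(0)=0$) is unbounded on any nontrivial group, so $\lambda \wedge 1$ would fail to be a $\kappa$-norm there. The resolution is \LEM{bd}: if $r=1$ and some $D_j$ is nontrivial, it already contains a nonzero element whose orbit is bounded by $1$, forcing $\kappa(0)\neq0$; property (P5) then shows $\lambda \wedge 1$ is again a $\kappa$-norm, while $\|\cdot\|_j \leqsl 1$ keeps the $\psi_j$ isometric and Lipschitzness intact. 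If all $D_j$ are trivial, then $D$ is trivial and nothing is to be proved. This furnishes $(D,+,*,\|\cdot\|)\in\Gg_r(N)$ with all the asserted properties.
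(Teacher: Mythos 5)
Your proposal is correct and follows essentially the same route as the paper: the paper's own proof simply notes the trivial case, invokes \LEM{bd} to get $\kappa(0)\neq0$ when $r=1$ so that (P5) justifies the truncation, and then says to repeat the proof of \LEM{A1} with the details skipped. You have merely written out those skipped details (invariance of $\tilde{D}_0$ under the action, the $\kappa$-seminorm property of the quotient semivalue, and the persistence of the Lipschitz property), all of which check out.
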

\begin{proof}
If both $D_1$ and $D_2$ are trivial, it suffices to take $D$ trivial as well. If $D_1$ or $D_2$ is nontrivial, we may
apply \LEM{bd} from which it follows that $\kappa(0) \neq 0$ if $r < \infty$. Thus, in the latter case we may involve (P5).
These comments ensure us that we may repeat the proof of \LEM{A1}. The details are skipped.
\end{proof}

For every valued group $(H,+,q)$ let $L_0[H]$ consist of all functions $u\dd \RRR_+ \to H$ for which there are numbers
$0 = t_0 < t_1 < \ldots < t_n < \infty$ such that $u$ is constant on each of the intervals $[t_{j-1},t_j)$
($j=1,\ldots,n$), say $u([t_{j-1},t_j)) = \{u_j\}$, and $u(t) = 0$ for $t \geqsl t_n$. We shall abbreviate this by writing
$$
u = \sum_{j=1}^n u_j \chi_{[t_{j-1},t_j)}.
$$
$L_0[H]$ is a \textbf{Lipschitz} normed PV group when it is equipped with the pointwise addition, the action given by
$(t * u)(s) = u(s/t)$ for $t > 0$, $s \geqsl 0$ and $u \in L_0[H]$ (of course, $0 * u \equiv 0$) and the norm $\|u\|_q =
\int_0^{\infty} q(u(t)) \dint{t}$ (cf. \cite{pn2}). What is more, for every nonzero $u \in L_0[H]$ there are unique
$n \geqsl 1$, real numbers $0 < t_1 < \ldots < t_n$ and nonzero elements $h_1,\ldots,h_n \in H$ such that $u = \sum_{j=1}^n
t_j * \widehat{h_j}$ where for $h \in H$, $\widehat{h}(s) = h$ for $s \in [0,1)$ and $\widehat{h}(s) = 0$ for $s \geqsl 1$.
Note that the function $H \ni h \mapsto \widehat{h} \in L_0[H]$ is an isometric group homomorphism and every group
homomorphism $\psi\dd H \to K$ of $H$ into a group $K$ induces a unique linear homomorphism $\widehat{\psi}\dd L_0[H] \to
K$ determined by condition $\widehat{\psi}(\widehat{h}) = \psi(h)$ for $h \in H$. Thus, $(L_0[H],+,*)$ may be called
the \textit{free PV group generated by $H$}. A straightforward verification shows that $(L_0[H],+,\|\cdot\|_q)$ is of class
$\OOo_0$ or of exponent $N$ provided so is $(H,+,q)$.\par
From now on, $(G,+,*,\|\cdot\|_G)$ is a $\kappa$-normed PV group. As in the previous sections, we fix $r \in \{1,\infty\}$
and $N \in \ZZZ_+ \setminus \{1\}$. Additionally, we assume the following elementary property:
\begin{equation*}\tag{AX}
G \neq \{0\} \qquad \textup{or} \qquad \kappa(0) \neq 0 \qquad \textup{or} \qquad r = \infty.
\end{equation*}
(This is because of \LEM{bd}. Precisely, we want to enlarge $\kappa$-normed PV groups of class $\Gg_r(N)$. When $G$ is
trivial and $r$ is finite, this is impossible unless $\kappa(0) \neq 0$.)\par
Our aim is to prove counterparts of several results from Section~1 on enlarging groups. As we will see, in case of PV
groups this is much more complicated.

\begin{thm}{g-PVG}
Let $(H,+,q)$ be a finite valued group, $K$ its subgroup and $\|\cdot\|_0$ a Lipschitz $\kappa$-seminorm on $L_0[K]$
such that $\|\widehat{h}\|_0 = q(h)$ for $h \in K$. Then there is a Lipschitz $\kappa$-seminorm on $L_0[H]$ such that
$\|f\| = \|f\|_0$ for $f \in L_0[K]$ and $\|\widehat{h}\| = q(h)$ for $h \in H$.
\end{thm}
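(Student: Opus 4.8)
The plan is to imitate the extension construction of \LEM{valom}, replacing plain semivalues by $\kappa$-seminorms and working inside the free pseudovector group $L_0[H]$. A first, provisional candidate is the infimal convolution of $\|\cdot\|_0$ with the \emph{free} seminorm $\rho$ on $L_0[H]$, defined on a reduced form $u=\sum_l s_l*\widehat{g_l}$ (distinct $s_l>0$, $g_l\in H\setminus\{0\}$) by $\rho(u)=\sum_l\kappa(s_l)\,q(g_l)$; that is,
\[
\|f\|:=\inf\Big\{\|g\|_0+\textstyle\sum_{j}\kappa(t_j)\,q(h_j)\dd\ g\in L_0[K],\ f=g+\textstyle\sum_j t_j*\widehat{h_j}\Big\}.
\]
That $\|\cdot\|$ is a $\kappa$-seminorm is straightforward: symmetry and the triangle inequality are inherited from $\|\cdot\|_0$ and from $q$, while $\|t*f\|\leqsl\kappa(t)\|f\|$ follows by scaling a decomposition of $f$ term by term and using submultiplicativity (NF2), $\kappa(tt_j)\leqsl\kappa(t)\kappa(t_j)$, together with $\|t*g\|_0\leqsl\kappa(t)\|g\|_0$.

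The identities $\|f\|=\|f\|_0$ on $L_0[K]$ and $\|\widehat{h}\|=q(h)$ are extracted by a \emph{slicing} argument. Given any representation $\sum_j t_j*\widehat{h_j}$ of an element $w$, let $s_1<\dots<s_m$ be the distinct values of the $t_j$; then $w(s)=\sum_{t_j>s}h_j$ is constant on each $[s_{l-1},s_l)$ and its downward jumps are $g_l:=\sum_{t_j=s_l}h_j$, so $w=\sum_l s_l*\widehat{g_l}$ and, by the triangle inequality for $q$, $\sum_l\kappa(s_l)q(g_l)\leqsl\sum_j\kappa(t_j)q(h_j)$. When $w\in L_0[K]$ its values, hence its jumps $g_l$, lie in $K$, so $\|w\|_0\leqsl\sum_l\kappa(s_l)q(g_l)\leqsl\sum_j\kappa(t_j)q(h_j)$; this yields $\|f\|\geqsl\|f\|_0$ (the reverse being the choice $g=f$). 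For the generators $\|\widehat{h}\|\leqsl q(h)$ is the trivial decomposition; for the converse, in any $\widehat{h}=g+u$ the only jump of $\widehat{h}$ sits at time $1$, whence the jump of $u$ there is $h-\gamma$ with $\gamma\in K$ the jump of $g$ at $1$, giving $\rho(u)\geqsl q(h-\gamma)$. Reading off $\widehat{\gamma}=g-\sum_{\sigma_i\neq1}\sigma_i*\widehat{G_i}$ from the reduced form of $g$ gives $q(\gamma)=\|\widehat{\gamma}\|_0\leqsl\|g\|_0+\sum_{\sigma_i\neq1}\kappa(\sigma_i)q(G_i)$, so that $\|g\|_0+\rho(u)\geqsl q(\gamma)+q(h-\gamma)\geqsl q(h)$; here the hypothesis $\|\widehat{k}\|_0=q(k)$ for $k\in K$ is used decisively.

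The hard part is the \emph{Lipschitz} requirement, and it is here that the provisional $\rho$ is inadequate: a thin block $h\chi_{[s,t)}=t*\widehat{h}-s*\widehat{h}$ carries two jumps and $\rho(h\chi_{[s,t)})=(\kappa(s)+\kappa(t))q(h)$ does not tend to $0$ as $t\to s$, so the seminorm above is not Lipschitz (projecting to $H/K$ even shows that $\|t*\widehat{h}-s*\widehat{h}\|$ stays bounded away from $0$ for $h\notin K$). The remedy is to replace $\rho$ by a genuinely Lipschitz $\kappa$-seminorm majorant $\Lambda$ on $L_0[H]$ that still dominates $\|\cdot\|_0$ on $L_0[K]$ and satisfies $\Lambda(\widehat{h})=q(h)$; this is legitimate precisely because $\|\cdot\|_0$ is assumed \emph{Lipschitz}, so it already charges thin blocks lightly. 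Concretely one smears the jump charges of $\rho$ into measure charges, the admissible trade-off being quantified by \LEM{constM} (the bound $M|s-t|+\delta\kappa(s\vee t)$): one fixes, for each $\delta>0$, constants absorbing the jump contributions into a Lipschitz term and passes to the limit. Re-running the slicing computation with $\Lambda$ in place of $\rho$ then reproduces the value identities, while the Lipschitz estimates for $\|\cdot\|_0$ and $\Lambda$ give the Lipschitz estimate for $\|\cdot\|=\|\cdot\|_0\,\square\,\Lambda$ through a fixed decomposition of each element. Producing $\Lambda$ with all three properties at once is the obstacle I expect to be the crux of the proof.
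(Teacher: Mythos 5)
Your first half is sound and in fact coincides with the skeleton of the paper's construction: the provisional seminorm you write down is exactly the paper's formula with its first term deleted, your slicing argument is the same reindexing the paper performs when it passes from its definition of $\|\cdot\|$ to the equivalent form \eqref{eqn:aux61}, and your diagnosis that the pure infimal convolution of $\|\cdot\|_0$ with $\rho$ cannot be Lipschitz (a thin block $t*\widehat{h}-s*\widehat{h}$ with $h\notin K$ keeps cost $\geqsl\mu$ as $t\to s$) is correct and is precisely why the paper does something more. But the proof stops where the theorem actually lives. You do not construct the Lipschitz correction $\Lambda$; you only list the three properties it would need and concede that producing it is ``the obstacle I expect to be the crux of the proof.'' Moreover, \LEM{constM} is not the relevant tool here --- it concerns continuous elements of an already given $\kappa$-normed PV group and is used later in Lemmas \REF{lem:N-PV} and \REF{lem:00-PV}, not in this theorem --- so the one concrete pointer you give for building $\Lambda$ does not lead anywhere.

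The paper's actual fix is to allow the decomposition to be \emph{inexact} and to charge the residual at the integral norm: it sets
$\|f\| = \inf\{A\|f - g - \sum_j t_j*\widehat{h_j}\|_q + \sum_j\kappa(t_j)q(h_j) + \|g\|_0\}$
with $A=(\lambda+ML)/\mu$, where $\lambda$ is a Lipschitz constant for $\|\cdot\|_0$ on the generators, $M=\max q(H)$, $\mu=\min\{q(h)\dd h\neq0\}$ and $L$ is the Lipschitz constant of $\kappa$. The Lipschitz property is then free, since $\|\cdot\|\leqsl A\|\cdot\|_q$; the entire difficulty migrates to the lower bounds $\|f\|\geqsl\|f\|_0$ and $\|\widehat{h}\|\geqsl q(h)$, which your slicing argument no longer gives because the residual term lets competitors stray outside $L_0[K]+\rho$-atoms. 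The paper handles this by rewriting the infimum over slicings $0=s_0<\dots<s_m$ with ``error'' increments $\epsi_k\in H$ and proving the two inequalities \eqref{eqn:aux62} and \eqref{eqn:aux65} by induction on $m$, the constant $A$ being calibrated so that any interval on which the accumulated error $\sum_{k\geqsl j}\epsi_k$ is nonzero contributes at least $A\mu\,\Delta s_j=(\lambda+ML)\Delta s_j$ --- enough to pay simultaneously for the $\|\cdot\|_0$-variation (rate $\lambda$) and for the variation of $\kappa(s)q(\cdot)$ (rate $ML$, via $|\kappa(s)-\kappa(s')|\leqsl L|s-s'|$) incurred when two adjacent slices are merged. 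That double induction, several pages long, is the substance of the proof and is entirely absent from your proposal; re-running the slicing computation ``with $\Lambda$ in place of $\rho$'' is not a routine substitution but is exactly this argument.
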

\begin{proof}
We assume that $H \neq K$. Since $K$ is finite and $\|\cdot\|_0$ is Lipschitz, there is a constant $\lambda \geqsl 1$ such
that $\|s * \widehat{h} - t * \widehat{h}\|_0 \leqsl \lambda |t - s|$ for every $h \in H$ and $s, t \in \RRR_+$. Let $M =
\max q(H)$, $\mu = \min \{q(h)\dd\ h \in H \setminus \{0\}\}$ and $A = (\lambda + ML) / \mu$. For $f \in L_0[H]$ put
\begin{multline}\label{eqn:aux60}
\|f\| = \inf\{A\|f - g - \sum_{j=1}^n t_j * \widehat{h_j}\|_q + \sum_{j=1}^n \kappa(t_j) q(h_j) + \|g\|_0\dd\\
n \geqsl 1,\ t_1,\ldots,t_n \geqsl 0,\ h_1,\ldots,h_n \in H,\ g \in L_0[K]\}.
\end{multline}
It is clear that $\|\cdot\|$ is a $\kappa$-seminorm on $L_0[H]$ (by (NF2) and (P3)). What is more, $\|\cdot\| \leqsl
A \|\cdot\|_q$ which yields that $\|\cdot\|$ is Lipschitz. Thus, we only need to show that $\|f\| = \|f\|_0$ and
$\|\widehat{h}\| = q(h)$ for $f \in L_0[K]$ and $h \in H$. The inequalities `$\leqsl$' are immediate (since $\kappa(1) =
1$). To prove the inverse inequalities, first of all note that
\begin{multline}\label{eqn:aux61}
\|f\| = \inf\Bigl\{\|\sum_{k=1}^m s_k * \widehat{g_k}\|_0 + A \sum_{j=1}^m (s_j - s_{j-1}) q(\sum_{k=j}^m \epsi_k)\\
+ \sum_{k=1}^m \kappa(s_k) q(f_k - f_{k+1} + g_k + \epsi_k)\dd\
m \geqsl 1,\ 0 = s_0 < \ldots < s_m,\\ \epsi_1,\ldots,\epsi_m,f_1,\ldots,f_m \in H,\ f_{m+1} = 0,\
g_1,\ldots,g_m \in K,\\ f = \sum_{k=1}^m s_k * (\widehat{f_k} - \widehat{f_{k+1}})\Bigr\}.
\end{multline}
Indeed, under the notation of \eqref{eqn:aux61}, it suffices to substitute
$$
t_k = s_k, \qquad g = -\sum_{k=1}^m s_k * \widehat{g_k} \qquad \textup{and} \qquad h_k = f_k - f_{k+1} + g_k + \epsi_k
$$
to obtain the expression as is \eqref{eqn:aux60} (observe that $f = \sum_{k=1}^m s_k * (\widehat{f_k} - \widehat{f_{k+1}})$
iff $f = \sum_{k=1}^m f_k \chi_{[s_{k-1},s_k)}$; and $\|f - g - \sum_{j=1}^m s_j * \widehat{h_j}\|_q = \sum_{j=1}^m
(s_j - s_{j-1}) q(\sum_{k=j}^m \epsi_k)$). Conversely, in \eqref{eqn:aux60} we may assume that $t_1,\ldots,t_n$ are
positive and different and $h_j \neq 0$. Consequently, we may assume that $0 < t_1 < \ldots < t_n < \infty$. Then take
$0 = s_0 < \ldots < s_m < \infty$ such that $f = \sum_{k=1}^m f_k \chi_{[s_{k-1},s_k)}$, $g = \sum_{k=1}^m v_k
\chi_{[s_{k-1},s_k)}$ ($v_k \in K$) and $\sum_{j=1}^n t_j * \widehat{h_j} = \sum_{k=1}^m u_k \chi_{[s_{k-1},s_k)}$. Put
$f_{m+1} = g_{m+1} = u_{m+1} = 0$, $g_k = v_{k+1} - v_k \in K$ and $\epsi_k = u_k - u_{k+1} - (f_k - f_{k+1}) - g_k$ and
check that $\sum_{j=1}^n \kappa(t_j) q(h_j) = \sum_{k=1}^m \kappa(s_k) q(u_k - u_{k+1})$, $\|f - g - \sum_{j=1}^n t_j *
\widehat{h_j}\|_q = \sum_{j=1}^m (s_j - s_{j-1}) q(f_j - v_j - u_j)$ and $f_j - v_j - u_j = -\sum_{k=j}^m \epsi_k$. This
proves the equivalence of \eqref{eqn:aux60} and \eqref{eqn:aux61} (the details are left for the reader).\par
So, if $f \in L_0[K]$, the inequality `$\|f\| \geqsl \|f\|_0$' is equivalent to
\begin{multline*}
\sum_{k=1}^m \kappa(s_k) q(f_k - f_{k+1} + g_k + \epsi_k) + A \sum_{j=1}^m (s_j - s_{j-1}) q(\sum_{k=j}^m \epsi_k) \geqsl\\
\geqsl \|\sum_{k=1}^m s_k * (\widehat{f_k} - \widehat{f_{k+1}})\|_0 - \|\sum_{k=1}^m s_k * \widehat{g_k}\|_0
\end{multline*}
(under the notation of \eqref{eqn:aux61}; in that case
$f_1,\ldots,f_m \in K$). So, substituting $h_k = f_k - f_{k+1} + g_k \in K$ and $\Delta s_j = s_j - s_{j-1}$, the latter
inequality follows from
\begin{equation}\label{eqn:aux62}
A \sum_{j=1}^m (\Delta s_j) q(\sum_{k=j}^m \epsi_k) + \sum_{k=1}^m \kappa(s_k) q(h_k + \epsi_k) \geqsl \|\sum_{k=1}^m s_k *
\widehat{h_k}\|_0.
\end{equation}
We shall prove \eqref{eqn:aux62} by induction on $m$.\par
When $m=1$, we have to show that $A s q(\epsi) + \kappa(s) q(h + \epsi) \geqsl \|s * \widehat{h}\|_0$. If $\epsi = 0$, this
follows from the fact that $\|\cdot\|_0$ is a $\kappa$-seminorm. And if $\epsi \neq 0$, we have
$$
A s q(\epsi) + \kappa(s) q(h + \epsi) \geqsl A\mu s \geqsl \lambda |s - 0| \geqsl \|s * \widehat{h}\|_0.
$$
Now assume that \eqref{eqn:aux62} is satisfied for $m-1$. We distinguish between three cases.\par
Firstly, assume that $\sum_{k=j}^m \epsi_k \neq 0$ for each $j \in \{1,\ldots,m\}$. In that case the left-hand side
expression of \eqref{eqn:aux62} is no less than $A \mu \sum_{j=1}^m \Delta s_j$ and
\begin{multline*}
A \mu \sum_{j=1}^m \Delta s_j \geqsl \lambda \sum_{j=1}^m |s_j - s_{j-1}| \geqsl\\ \geqsl \|\sum_{j=1}^m [s_j *
(\sum_{k=j}^m \widehat{h_k}) - s_{j-1} * (\sum_{k=j}^m \widehat{h_k})]\|_0 = \|\sum_{k=1}^m s_k * \widehat{h_k}\|_0,
\end{multline*}
which gives \eqref{eqn:aux62}. Secondly, if $\epsi_m = 0$, the assertion follows from \eqref{eqn:aux62} for `$m-1$'.\par
Thirdly, assume that $\epsi_m \neq 0$ and there is $z \in \{1,\ldots,m\}$ such that $\sum_{k=z}^m \epsi_k = 0$. We may
assume that $z$ is the largest natural number with this property. This yields that $z < m$ and
\begin{equation}\label{eqn:aux63}
\sum_{k=z}^m \epsi_k = 0 \neq \sum_{k=z+1}^m \epsi_k.
\end{equation}
Let us define
\begin{equation}\label{eqn:s-e-h}
\begin{cases}
s_0',\ldots,s_{m-1}' \equiv s_0,\ldots,s_{z-1},s_{z+1},\ldots,s_m,&\\
\epsi_1',\ldots,\epsi_{m-1}' \equiv \epsi_1,\ldots,\epsi_{z-1},\epsi_z + \epsi_{z+1},\epsi_{z+2},\ldots,\epsi_m,&\\
h_1',\ldots,h_{m-1}' \equiv h_1,\ldots,h_{z-1},h_z + h_{z+1},h_{z+2},\ldots,h_m.
\end{cases}
\end{equation}
We infer from induction hypothesis that
\begin{equation}\label{eqn:aux64}
A \sum_{j=1}^{m-1} (\Delta s_j') q(\sum_{k=j}^{m-1} \epsi_k') + \sum_{k=1}^{m-1} \kappa(s_k') q(h_k' + \epsi_k') \geqsl
\|\sum_{k=1}^{m-1} s_k' * \widehat{h_k'}\|_0.
\end{equation}
It is easily verified, thanks to \eqref{eqn:aux63} and \eqref{eqn:L}, that
\begin{multline}\label{eqn:aux67}
A \sum_{j=1}^m (\Delta s_j) q(\sum_{k=j}^m \epsi_k) =\\ = A \sum_{j=1}^{m-1} (\Delta s_j') q(\sum_{k=j}^{m-1} \epsi_k')
+ A (\Delta s_{z+1}) q(\sum_{k=z+1}^m \epsi_k)
\end{multline}
and
\begin{multline}\label{eqn:aux68}
A (\Delta s_{z+1}) q(\sum_{k=z+1}^m \epsi_k) \geqsl (\lambda + ML) (\Delta s_{z+1}) \geqsl\\
\geqsl \|s_{z+1} * \widehat{h_z} - s_z * \widehat{h_z}\|_0 + q(h_z + \epsi_z) [\kappa(s_{z+1}) - \kappa(s_z)].
\end{multline}
Further,
\begin{multline}\label{eqn:aux69}
\sum_{k=1}^m \kappa(s_k) q(h_k + \epsi_k) = \sum_{k=1}^{m-1} \kappa(s_k') q(h_k' + \epsi_k')\\ + [\kappa(s_z)
q(h_z + \epsi_z) + \kappa(s_{z+1}) q(h_{z+1} + \epsi_{z+1}) - \kappa(s_z') q(h_z' + \epsi_z')]
\end{multline}
and
\begin{equation}\label{eqn:aux70}
[q(h_z + \epsi_z) + q(h_{z+1} + \epsi_{z+1})] \kappa(s_{z+1}) - \kappa(s_z') q(h_z' + \epsi_z') \geqsl 0.
\end{equation}
So, \eqref{eqn:aux67}, \eqref{eqn:aux68}, \eqref{eqn:aux69} and \eqref{eqn:aux70} yield
\begin{multline*}
A \sum_{j=1}^m (\Delta s_j) q(\sum_{k=j}^m \epsi_k) + \sum_{k=1}^m \kappa(s_k) q(h_k + \epsi_k) \geqsl
A \sum_{j=1}^{m-1} (\Delta s_j') q(\sum_{k=j}^{m-1} \epsi_k')\\ + \sum_{k=1}^{m-1} \kappa(s_k') q(h_k' + \epsi_k')
+ \|s_{z+1} * \widehat{h_z} - s_z * \widehat{h_z}\|_0
\end{multline*}
and hence \eqref{eqn:aux62} follows from \eqref{eqn:aux64}.\par
We now pass to the proof that $\|\widehat{x}\| \geqsl q(x)$ for $x \in H \setminus \{0\}$. Under the notation
of \eqref{eqn:aux61}, when $f = \widehat{x}$, there is $l \in \{1,\ldots,m\}$ for which $s_l = 1$ and then $f_j = x$
for $j=1,\ldots,l$ and $f_j = 0$ otherwise. Hence, \eqref{eqn:aux61} has the form (recall that $\kappa(1) = 1$):
\begin{multline*}
A\sum_{j=1}^m (\Delta s_j) q(\sum_{k=j}^m \epsi_k) + \sum_{k \neq l} \kappa(s_k) q(g_k + \epsi_k) + \|\sum_{k=1}^m s_k *
\widehat{g_k}\|_0\\ + q(x + g_l + \epsi_l) \geqsl q(x),
\end{multline*}
which is equivalent, after replacing $g_j$ by $h_j$, to
\begin{multline}\label{eqn:aux65}
A\sum_{j=1}^m (\Delta s_j) q(\sum_{k=j}^m \epsi_k) + \sum_{k \neq l} \kappa(s_k) q(h_k + \epsi_k)\\ + \|\sum_{k=1}^m s_k *
\widehat{h_k}\|_0 \geqsl q(h_l + \epsi_l)
\end{multline}
(where $h_j \in K$). As before, we shall prove \eqref{eqn:aux65} by induction on $m$. When $m=1$, also $l = 1$ and hence
we need to show that $$A q(\epsi) + \|\widehat{h}\|_0 \geqsl q(h + \epsi)$$ which easily follows since $A \geqsl 1$ and
$\|\widehat{h}\|_0 = q(h)$ for $h \in K$.\par
Suppose \eqref{eqn:aux65} is fulfilled for $m-1$. We distinguish between several cases. When $\sum_{k=j}^m \epsi_k \neq 0$
for each $j$, the left-hand side expression of \eqref{eqn:aux65} is no less than $A \mu s_m \geqsl M s_l \geqsl
q(h_l + \epsi_l)$.\par
If $\epsi_m = 0$ and $m \neq l$, \eqref{eqn:aux65} follows from induction hypothesis. Now assume that $\epsi_l = 0$.
Let $s_j'$, $h_j'$, $\epsi_j'$ ($j=1,\ldots,m-1$) be systems obtained from $s_j$, $h_j$, $\epsi_j$ by erasing the $l$-th
members. Making use of \eqref{eqn:aux62}, we see that
\begin{multline*}
A \sum_{j=1}^m (\Delta s_j) q(\sum_{k=j}^m \epsi_k) + \sum_{k \neq l} \kappa(s_k) q(h_k + \epsi_k) + \|\sum_{k=1}^m s_k *
\widehat{h_k}\|_0 =\\ A \sum_{j=1}^{m-1} (\Delta s_j') q(\sum_{k=j}^{m-1} \epsi_k') + \sum_{k=1}^{m-1} \kappa(s_k')
q(h_k' + \epsi_k') + \|\sum_{k=1}^{m-1} s_k' * \widehat{h_k'} + s_l * \widehat{h_l}\|_0\\
\geqsl \|\sum_{k=1}^{m-1} s_k' * \widehat{h_k'}\|_0 + \|\sum_{k=1}^{m-1} s_k' * \widehat{h_k'} + \widehat{h_l}\|_0
\geqsl \|\widehat{h_l}\|_0 = q(h_l + \epsi_l).
\end{multline*}
So, we may now assume that $\epsi_m \neq 0$ and there is $z$ such that $\sum_{k=z}^m \epsi_k = 0$. Again, let $z$ be
the largest natural number with this property. We conclude from this that $z < m$ and \eqref{eqn:aux63}. We shall
separately consider the cases when $z \notin \{l-1,l\}$, $z = l-1$ or $z = l$.\par
When $z \neq l-1,l$, define systems $s_j'$, $\epsi_j'$ and $h_j'$ as in \eqref{eqn:s-e-h}, and let $l' \in
\{1,\ldots,m-1\}$ corresponds to $l$. Note that \eqref{eqn:aux67}, \eqref{eqn:aux68} and \eqref{eqn:aux70} are fulfilled.
Moreover, instead of \eqref{eqn:aux69} we have
\begin{multline*}
\sum_{k \neq l} \kappa(s_k) q(h_k + \epsi_k) = \sum_{k\neq z,z+1,l} \kappa(s_k) q(h_k + \epsi_k)\\
+ [\kappa(s_z) q(h_z + \epsi_z) + \kappa(s_{z+1}) q(h_{z+1} + \epsi_{z+1})] = \sum_{\substack{k<m\\k\neq l'}} \kappa(s_k')
q(h_k' + \epsi_k')\\ + [\kappa(s_z) q(h_z + \epsi_z) + \kappa(s_{z+1}) q(h_{z+1} + \epsi_{z+1}) - \kappa(s_z')
q(h_z' + \epsi_z')].
\end{multline*}
The reader will now easily check that \eqref{eqn:aux65} is satisfied, thanks to the induction hypothesis (since $h_{l'}'
+ \epsi_{l'}' = h_l + \epsi_l$).\par
If $z = l-1$ ($l>1$), proceed in the same way. Here $l' = l-1$ and instead of \eqref{eqn:aux69} one has
\begin{multline*}
\sum_{k \neq l} \kappa(s_k) q(h_k + \epsi_k) = \sum_{k\neq l-1,l} \kappa(s_k) q(h_k + \epsi_k)
+ \kappa(s_{l-1}) q(h_{l-1} + \epsi_{l-1})\\ = \sum_{\substack{k<m\\k\neq l'}} \kappa(s_k')
q(h_k' + \epsi_k') + \kappa(s_{l-1}) q(h_{l-1} + \epsi_{l-1}).
\end{multline*}
Combination of induction hypothesis, \eqref{eqn:aux67}, \eqref{eqn:aux68} and the above inequality reduces
\eqref{eqn:aux65} to
$$
q(h_{l'}' + \epsi_{l'}') + q(h_{l-1} + \epsi_{l-1}) \geqsl q(h_l + \epsi_l)
$$
which is fulfilled because $h_{l'}' = h_{l-1} + h_l$ and $\epsi_{l'}' = \epsi_{l-1} + \epsi_l$.\par
It remains to check what happens if $z = l$. From the maximality of $z$ we infer that $\sum_{k=j} \epsi_k \neq 0$ for
$j > l$ and thus
\begin{multline}\label{eqn:aux71}
A \sum_{j=l+1}^m (\Delta s_j) q(\sum_{k=j}^m \epsi_k) \geqsl A\mu \sum_{j=l+1}^m (s_j - s_{j-1}) \geqsl\\
\geqsl \lambda \sum_{j=l+1}^m |s_j - s_{j-1}| \geqsl \|\sum_{j=l+1}^m [s_j * (\sum_{k=j}^m \widehat{h_k}) - s_{j-1} *
(\sum_{k=j}^m \widehat{h_k})]\|_0 =\\= \|\sum_{k=l+1}^m [s_k * \widehat{h_k} - s_l * \widehat{h_k}]\|_0
= \|\sum_{k=l+1}^m s_k * \widehat{h_k} - \sum_{k=l+1}^m \widehat{h_k}\|_0.
\end{multline}
Further, it follows from \eqref{eqn:aux62} that
$$
A \sum_{j=1}^{l-1} (\Delta s_j) q(\sum_{k=j}^{l-1} \epsi_k) + \sum_{k=1}^{l-1} \kappa(s_k) q(h_k + \epsi_k) \geqsl
\|\sum_{k=1}^{l-1} s_k * \widehat{h_k}\|_0.
$$
This, combined with \eqref{eqn:aux71}, gives
\begin{multline*}
A\sum_{j=1}^m (\Delta s_j) q(\sum_{k=j}^m \epsi_k) + \sum_{k \neq l} \kappa(s_k) q(h_k + \epsi_k) + \|\sum_{k=1}^m s_k *
\widehat{h_k}\|_0 =\\= A \sum_{j=1}^{l-1} (\Delta s_j) q(\sum_{k=j}^{l-1} \epsi_k) + \sum_{k=1}^{l-1} \kappa(s_k) q(h_k +
\epsi_k) + \sum_{k=l+1}^m \kappa(s_k) q(h_k + \epsi_k)\\ + A \sum_{j=l+1}^m (\Delta s_j) q(\sum_{k=j}^m \epsi_k) +
\|\sum_{k=1}^m s_k * \widehat{h_k}\|_0 \geqsl \|\sum_{k=1}^{l-1} s_k * \widehat{h_k}\|_0\\ + \sum_{k=l+1}^m s_k q(h_k +
\epsi_k) + \|\sum_{k=l+1}^m s_k * \widehat{h_k} - \sum_{k=l+1}^m \widehat{h_k}\|_0 + \|\sum_{k=1}^m s_k * \widehat{h_k}\|_0
\geqsl \\ \geqsl \sum_{k=l+1}^m q(h_k + \epsi_k) + \|s_l * \widehat{h_l} + \sum_{k=l+1}^m \widehat{h_k}\|_0 \geqsl \\
\geqsl q(\sum_{k=l+1}^m h_k + \sum_{k=l+1}^m \epsi_k) + q(\sum_{k=l}^m h_k) \geqsl q(h_l - \sum_{k=l+1}^m \epsi_k)
\end{multline*}
but $-\sum_{k=l+1}^m \epsi_k = \epsi_l$ (by \eqref{eqn:aux63}), which finally finishes the proof.
\end{proof}

As a corollary of the above result, we obtain

\begin{thm}{g-PV}
Let $(H,+,q) \in \Gg_r(N)$ be a finite valued group, $K$ be a subgroup of $H$, $(G,+,*,\|\cdot\|_G)$ be a Lipschitz
$\kappa$-normed pseudovector group such that $(G,+,\|\cdot\|_G) \in \Gg_r(N)$ and \textup{(AX)} is fulfilled. And let
$\varphi\dd K \to G$ be an isometric group homomorphism. There is a Lipschitz $\kappa$-normed PV group
$(\bar{G},+,*,\|\cdot\|) \supset (G,+,*,\|\cdot\|_G)$ and an isometric group homomorphism $\psi\dd H \to \bar{G}$ such that
$(\bar{G},+,\|\cdot\|) \in \Gg_r(N)$ and $\psi\bigr|_K = \varphi$.
\end{thm}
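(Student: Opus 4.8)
The plan is to build $\bar{G}$ by amalgamating $G$ with the free PV group $L_0[H]$, transporting the norm by means of \THM{g-PVG}. First I would manufacture a seminorm on $L_0[K]$ out of the data. Since $\varphi\colon K\to G$ is a group homomorphism into the PV group $G$, the universal property of the free PV group supplies a linear homomorphism $\widehat{\varphi}\colon L_0[K]\to G$ with $\widehat{\varphi}(\widehat{h})=\varphi(h)$; set $\|f\|_0=\|\widehat{\varphi}(f)\|_G$. Linearity of $\widehat{\varphi}$ and the fact that $\|\cdot\|_G$ is a $\kappa$-norm make $\|\cdot\|_0$ a $\kappa$-seminorm; it is Lipschitz because $G=\LlL_{\FFF}(G)$ consists of Lipschitz elements; and $\|\widehat{h}\|_0=\|\varphi(h)\|_G=q(h)$ since $\varphi$ is isometric. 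Thus \THM{g-PVG} applies and yields a Lipschitz $\kappa$-seminorm $\|\cdot\|'$ on $L_0[H]$ with $\|f\|'=\|f\|_0$ for $f\in L_0[K]$, with $\|\widehat{h}\|'=q(h)$ for all $h\in H$, and (from its construction) dominated by $A\|\cdot\|_q$ for a constant $A$.

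Next I would turn these seminorms into honest members of $\Gg_r(N)$ and amalgamate. When $r=1$ I replace $\|\cdot\|'$ by $\|\cdot\|'\wedge 1$: by \textup{(AX)} together with \LEM{bd} one has $\kappa(0)\neq 0$ whenever $r$ is finite, so \textup{(P5)} guarantees the truncation is again a Lipschitz $\kappa$-seminorm, and it is unchanged on the $\widehat{h}$'s and on $L_0[K]$ (all of whose relevant values are $\leqsl 1$). Let $D_0=L_0[K]/\|\cdot\|_0^{-1}(\{0\})$ and $D_2=L_0[H]/(\|\cdot\|')^{-1}(\{0\})$ carry the induced $\kappa$-norms; these are Lipschitz $\kappa$-normed PV groups over $\FFF$, and $\widehat{\varphi}$ and the inclusion $L_0[K]\hookrightarrow L_0[H]$ descend to isometric linear homomorphisms $\varphi_1\colon D_0\to G$ and $\varphi_2\colon D_0\to D_2$. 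Both $D_0$ and $D_2$ lie in $\Gg_r(N)$: exponent $N$ is purely algebraic and hence inherited when $N\neq 0$; $D_0$ is isometrically (linearly) isomorphic to the subgroup $\varphi_1(D_0)$ of $G$, so it is separable and of class $\OOo_0$ by \PRO{image}; and for $D_2$ the domination $\|\cdot\|'\leqsl A\|\cdot\|_q$ makes the identity out of $(L_0[H],\|\cdot\|_q)$ a bounded homomorphism, whence separability and (for $N=0$) class $\OOo_0$ follow again from \PRO{image}.

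Now I would invoke \LEM{APV} on $D_0$, $D_1:=G$ and $D_2$ with $\varphi_1,\varphi_2$, obtaining a Lipschitz $\kappa$-normed PV group $(\bar{G},+,*,\|\cdot\|)\in\Gg_r(N)$ and isometric linear homomorphisms $\psi_1\colon G\to\bar{G}$, $\psi_2\colon D_2\to\bar{G}$ with $\psi_1\circ\varphi_1=\psi_2\circ\varphi_2$. Identifying $G$ with $\psi_1(G)$ (an isometric linear embedding) realises $\bar{G}\supset(G,+,*,\|\cdot\|_G)$. Finally I define $\psi\colon H\to\bar{G}$ by sending $h$ to $\psi_2$ of the class of $\widehat{h}$ in $D_2$; then $\|\psi(h)\|=\|\widehat{h}\|'=q(h)$, so $\psi$ is an isometric group homomorphism. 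For $h\in K$, writing $c_h$ for the class of $\widehat{h}$ in $D_0$, the class of $\widehat{h}$ in $D_2$ is $\varphi_2(c_h)$, so $\psi(h)=\psi_2(\varphi_2(c_h))=\psi_1(\varphi_1(c_h))=\psi_1(\varphi(h))=\varphi(h)$ under the identification; hence $\psi\bigr|_K=\varphi$, as required.

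The substantive analytic work is entirely contained in \THM{g-PVG}, which is already proved; in this corollary the only points demanding attention are the verification that the auxiliary quotients $D_0,D_2$ belong to $\Gg_r(N)$ --- in particular the truncation securing the bound by $r$, where \textup{(AX)} and \LEM{bd} are exactly what is needed to force $\kappa(0)\neq 0$ --- and the simultaneous linearity and isometry of all the homomorphisms, which is what allows \LEM{APV} to be applied without modification.
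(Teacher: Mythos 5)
Your proposal is correct and follows essentially the same route as the paper: induce $\widehat{\varphi}$ on $L_0[K]$, pull back $\|\cdot\|_G$ to a Lipschitz $\kappa$-seminorm, extend it to $L_0[H]$ via \THM{g-PVG}, pass to the quotients by the null subgroups, and amalgamate with $G$ via \LEM{APV} (the paper phrases this as applying \LEM{APV} to $\widetilde{\varphi}\dd\widetilde{K}\to G$ and the inclusion $\widetilde{K}\hookrightarrow\widetilde{H}$, which is your cospan $D_0\to G$, $D_0\to D_2$). Your extra care about the truncation by $r$ and the $\Gg_r(N)$-membership of the quotients only makes explicit what the paper dispatches with ``involve \LEM{bd} and (P5) if needed.''
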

\begin{proof}
Let $\widehat{\varphi}\dd L_0[K] \to G$ be the linear homomorphism induced by $\varphi$. Define $\|\cdot\|_0\dd L_0[K] \to
\RRR_+$ by $\|f\|_0 = \|\widehat{\varphi}(f)\|_G$. Then $\|\cdot\|_0$ is a Lipschitz $\kappa$-seminorm on $L_0[K]$ such
that $\|\widehat{h}\|_0 = q(h)$ for $h \in K$ (since $\varphi$ is isometric). So, according to \THM{g-PVG}, there is
a Lipschitz $\kappa$-seminorm $\|\cdot\|_H$ on $L_0[H]$ which extends $\|\cdot\|_0$ and satisfies $\|\widehat{h}\|_H =
q(h)$ for $h \in H$. Now let $\widetilde{H} = L_0[H]/\{f \in L_0[H]\dd\ \|f\|_H = 0\}$ be the PV group equipped with
the Lipschitz $\kappa$-norm induced by $\|\cdot\|_H$, let $\pi\dd L_0[H] \to \widetilde{H}$ be the quotient linear
homomorphism and $\widetilde{K} = \pi(L_0[K])$. Observe that there is an isometric linear homomorphism
$\widetilde{\varphi}\dd \widetilde{K} \to G$ such that $\widetilde{\varphi} \circ \pi\bigr|_{L_0[K]} = \widehat{\varphi}$.
To this end, apply \LEM{APV} to $\widetilde{\varphi}$ and the inclusion map of $\widetilde{K}$ into $\widetilde{H}$
in order to obtain $\bar{G}$. Finally, involve \LEM{bd} and (P5) if needed.
\end{proof}

\begin{lem}{N-PV}
Suppose $a \in \CcC(G) \setminus \{0\}$ is of finite rank. For each $\epsi > 0$ there is a PV group
$(\bar{G},+,*,\|\cdot\|) \supset (G,+,*,\|\cdot\|_G)$ and $b \in \LlL(\bar{G})$ such that $\rank(b) = \rank(a)$ and
$\|a - b\| \leqsl \epsi$.
\end{lem}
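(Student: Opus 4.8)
The plan is to mirror the proof of \LEM{QG1}, replacing the Gurari\v{\i}-type approximation used there by the almost-Lipschitz estimate of \LEM{constM}, and to manufacture the Lipschitz element out of a free pseudovector group. Write $k = \rank(a) \geqsl 2$ (as $a \neq 0$) and $f(t) = \|t*a\|_G$; by \PRO{cont} the function $f$ is continuous with $f(0)=0$, and $f(t) \leqsl \kappa(t) f(1)$. Since $a \in \CcC(G)$, \LEM{constM} supplies, for any prescribed $\delta > 0$, a constant $M = M(a,\delta)$ with $\|s*a - t*a\|_G \leqsl M|s-t| + \delta\kappa(s \vee t)$ for all $s,t \geqsl 0$; thus the orbit of $a$ is $M$-Lipschitz up to a $\delta\kappa$-error. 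This is exactly the room one needs: once a genuinely Lipschitz $b$ with $\|a-b\| \leqsl \delta$ has been built, the homomorphism identity $t*(a-b) = t*a - t*b$ together with the $\kappa$-norm inequality forces $\|t*a - t*b\| \leqsl \kappa(t)\delta$, so $a$'s orbit must shadow the Lipschitz orbit of $b$ within $\kappa(t)\delta$ — a demand that is consistent precisely because of \LEM{constM}.

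First I would fix $\delta$ (hence $M$) small in terms of $\epsi$, introduce a cyclic group $Z = \grp{e} \cong \ZZZ_k$ carrying the value $q_0(je) := \|ja\|_G$ (well defined and a value, since $j \mapsto ja$ is an isometric isomorphism of $Z$ onto $\grp{a}$, using $ka=0$), and pass to the free pseudovector group $L_0[Z]$, in which $\widehat{e}$ is a Lipschitz element of rank $k$. The group underlying $\bar G$ will be the direct sum $G \oplus L_0[Z]$ with the diagonal action $t*(g,u) = (t*g, t*u)$, which is a pseudovector group over $\FFF$ of exponent $N$ (or of class $\OOo_0$ when $N=0$), since both summands are. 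The crucial object is the norm: I would define $\|\cdot\|$ on $G \oplus L_0[Z]$ by an infimal-convolution formula in the spirit of \THM{o0} and of the norm constructed in \THM{g-PVG}, namely one that routes an element through the near-identification of $a \in G$ with $b := (0,\widehat{e})$ at a cost of order $\delta$, while charging an $M$-Lipschitz penalty for the scales at which the two orbits are compared. The formula is to be arranged so that $\|(g,0)\| = \|g\|_G$ for $g \in G$ (so $G$ embeds as a pseudovector subgroup), $\|b\| = \|a\|_G$, $\rank(b) = k$, and $\|(a,0) - b\| \leqsl \delta$; finally one replaces $\|\cdot\|$ by $\|\cdot\| \wedge r$ when $r=1$, which is legitimate as a $\kappa$-norm by (P5), since $\kappa(0) \neq 0$ here by \LEM{bd} applied to the bounded orbit of $a$.

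The main obstacle is the verification that this infimal-convolution formula is genuinely a value (indeed a $\kappa$-norm) and that the asserted equalities and the estimate $\|(a,0)-b\| \leqsl \delta$ hold; as in \THM{o0}, \LEM{A1} and \THM{g-PVG}, everything reduces to showing that the relevant infima are not degenerate, i.e. that no cheaper route exists. This is where the almost-Lipschitz bound of \LEM{constM} enters decisively: it is what reconciles the merely continuous orbit $t \mapsto t*a$ of $G$ with the genuinely Lipschitz orbit $t \mapsto t*b$ coming from $L_0[Z]$, guaranteeing simultaneously that $b$ lands in $\LlL(\bar G)$ and that $\|(a,0)-b\|$ cannot be driven below the intended value. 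Once the formula is validated, choosing $\delta$ small enough that $\delta \leqsl \epsi$ yields a Lipschitz element $b$ of rank $k$ with $\|a-b\| \leqsl \epsi$, and $(\bar G,+,*,\|\cdot\|) \supset (G,+,*,\|\cdot\|_G)$ is the desired enlargement.
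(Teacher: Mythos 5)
Your plan is structurally identical to the paper's proof: the paper also takes $\bar G = G \times L_0[H]$ with $H = \grp{a}$ (your $L_0[Z]$ with $Z\cong\ZZZ_k$ is the same object, and the diagonal action is the same), also invokes \LEM{constM} with the parameter $\delta=\epsi/N$, and also defines the new $\kappa$-norm by an infimal convolution that lets mass pass from $a$ to the Lipschitz generator of $L_0[H]$ at cost $\epsi$ per unit of $\kappa$-mass while charging a Lipschitz penalty $A=NM$ against the discrete value on $H$. So the route is the right one, and your diagnosis that \LEM{constM} is the lemma that reconciles the continuous orbit of $a$ with the Lipschitz orbit of $b$ is exactly correct.

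The problem is that the proposal stops precisely where the proof begins: you never commit to a formula, and you explicitly defer ``the main obstacle,'' namely the non-degeneracy of the infimum. That verification is the entire content of the lemma. The paper sets
$$
\|(g,f)\| = \inf\{\|g - \psi_G(u)\|_G + \epsi \|u\|_{\kappa} + A \|\psi_H(u) - f\|_{\delta}\dd\ u \in L_0[\ZZZ]\},
$$
where $\psi_G,\psi_H$ are the linear homomorphisms of $L_0[\ZZZ]$ induced by $k\mapsto ka$, $\|u\|_\kappa=\sum_j\kappa(t_j)$ in canonical form, and $\delta$ is the discrete value on $H$; everything then reduces to the single inequality $\epsi\|u\|_\kappa + A\|\psi_H(u)\|_\delta \geqsl \|\psi_G(u)\|_G$, which after an Abel summation comes down to $\|s*(ma)-t*(ma)\|_G \leqsl A|s-t|\,\delta(ma) + \epsi\,\kappa(s\vee t)$. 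This is where $A=NM(a,\epsi/N)$ is consumed: for $ma\neq 0$ one may take $0<m<N$ and estimate $\|s*(ma)-t*(ma)\|_G\leqsl m\|s*a-t*a\|_G\leqsl N[M|s-t|+(\epsi/N)\kappa(s\vee t)]$. Until you write the formula and carry out this estimate, you have a design document rather than a proof. Two smaller points: the requirement $\|b\|=\|a\|_G$ that you impose is neither needed nor delivered by the construction (what matters is $\|(0,f)\|\leqsl A\|f\|_\delta$, which is what puts $b$ in $\LlL(\bar G)$), and the truncation by $r$ is not part of this lemma at all --- it is performed later, in \THM{GrN-PV}, via (P5) and \LEM{bd}.
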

\begin{proof}
Let $H = \grp{a}$. The homomorphism $\ZZZ \ni k \mapsto ka \in H \subset G$ induces linear homomorphisms $\psi_G\dd
L_0[\ZZZ] \to G$ and $\psi_H\dd L_0[\ZZZ] \to L_0[H]$ such that $\psi_G(\widehat{1}) = a$ and $\psi_H(\widehat{1}) =
\widehat{a}$. Let $\delta$ be the discrete value on $H$ and $N \geqsl 2$ be the rank of $a$. We equip $L_0[\ZZZ]$ with
the $\kappa$-norm $\|\cdot\|_{\kappa}$ defined as follows. When $f \in L_0[\ZZZ] \setminus \{0\}$, there are unique
$n \geqsl 1$, $0 < t_1 < \ldots < t_n < \infty$ and $k_1,\ldots,k_n \in \ZZZ \setminus \{0\}$ such that $f = \sum_{j=1}^n
t_j * \widehat{k_j}$. We put $\|f\|_{\kappa} = \sum_{j=1}^n \kappa(t_j)$. Further, let $M = M(a,\epsi/N)$ be
as in \LEM{constM}. Put $A = NM$, $\bar{G} = G \times L_0[H]$ and define a $\kappa$-seminorm $\|\cdot\|$ on $\bar{G}$ by
$$
\|(g,f)\| = \inf\{\|g - \psi_G(u)\|_G + \epsi \|u\|_{\kappa} + A \|\psi_H(u) - f\|_{\delta}\dd\ u \in L_0[\ZZZ]\}.
$$
It is easily seen that $\|\cdot\|$ is a $\kappa$-seminorm on $\bar{G}$ such that $\|(g,f)\| > 0$ for $f \neq 0$ and
$\|(g,0)\| \leqsl \|g\|_G$ for every $g \in G$; $\|(a,\widehat{a})\| \leqsl \epsi$ and $\|(0,f)\| \leqsl A \|f\|_{\delta}$
for $f \in L_0[H]$. The latter will imply that $(0,\widehat{a}) \in \LlL(\bar{G})$. So, if we show that $\|(g,0)\| \geqsl
\|g\|$, $\|\cdot\|$ will be a $\kappa$-norm and the proof will be completed. The latter is equivalent to
\begin{equation}\label{eqn:aux75}
\epsi \|u\|_{\kappa} + A \|\psi_H(u)\|_{\delta} \geqsl \|\psi_G(u)\|_G
\end{equation}
where $u \in L_0[\ZZZ]$. We may assume that $u \neq 0$. Writing $u = \sum_{k=1}^m s_k * \widehat{l_k}$ with $0 = s_0 <
\ldots < s_m$ and $l_1,\ldots,l_m \in \ZZZ \setminus \{0\}$, and putting $\nu_j = \sum_{k=j}^m l_k$ we obtain
$\|u\|_{\kappa} = \sum_{k=1}^m \kappa(s_k)$, $u = \sum_{k=1}^m (s_k * \widehat{\nu_k} - s_{k-1} * \widehat{\nu_k})$,
$\psi_H(u) = \sum_{k=1}^m (s_k * \widehat{\nu_k a} - s_{k-1} * \widehat{\nu_k a})$ and $\psi_G(u) = \sum_{k=1}^m [s_k *
(\nu_k a) - s_{k-1} * (\nu_k a)]$. Consequently, $\|\psi_H(u)\|_{\delta} = \sum_{k=1}^m (s_k - s_{k-1})\delta(\nu_k a)$ and
\begin{multline*}
\epsi \|u\|_{\kappa} + A \|\psi_H(u)\|_{\delta} - \|\psi_G(u)\|_G \geqsl\\ \sum_{k=1}^m [\epsi \kappa(s_k)
+ A (s_k - s_{k-1}) \delta(\nu_k a) - \|s_k * (\nu_k a) - s_{k-1} * (\nu_k a)\|_G].
\end{multline*}
So, \eqref{eqn:aux75} will be fulfilled if only
$$
\|s * (ma) - t * (ma)\|_G \leqsl A |s - t| \delta(ma) + \epsi \kappa(s \vee t)
$$
for every $s,t \in \RRR_+$ and $m \in \ZZZ$. Since $\rank(a) = N$, we may assume that $0 < m < N$. But then,
by the definition of $M$, $\|s * (ma) - t * (ma)\|_G \leqsl m \|s * a - t * a\|_G \leqsl N [M |s - t| + (\epsi/N)
\kappa(s \vee t)] = A |s - t| \delta(ma) + \epsi \kappa(s \vee t)$ and we are done.
\end{proof}

\begin{lem}{00-PV}
Suppose $a \in \CcC(G) \setminus \{0\}$ is such that $\lim_{n\to\infty} \|na\|_G/n = 0$. For every $\epsi > 0$ there is
a PV group $(\bar{G},+,*,\|\cdot\|) \supset (G,+,*,\|\cdot\|_G)$ and an element $b \in \LlL(\bar{G})_{fin}$
such that $\|a - b\| \leqsl \epsi$.
\end{lem}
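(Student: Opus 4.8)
The plan is to combine the construction of \LEM{N-PV} with the ``folding'' trick from the proof of \THM{o0}. Using $\lim_{n\to\infty}\|na\|_G/n=0$ I would first fix an integer $N\geqsl2$ with $\|na\|_G\leqsl\epsi n$ for all $n\geqsl N$ and a constant $M>0$ with $M\geqsl\|ja\|_G$ for $0\leqsl j\leqsl N$. Let $H=\grp{c}$ be a cyclic group of rank $N$, let $\pi\dd\ZZZ\to H$ be the homomorphism sending $1$ to $c$, and let $\psi_G\dd L_0[\ZZZ]\to G$ and $\psi_H\dd L_0[\ZZZ]\to L_0[H]$ be the linear homomorphisms induced by $k\mapsto ka$ and by $\pi$. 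On $L_0[H]$ I use the genuine norm $\|\cdot\|_{\delta_H}$ attached to the discrete value $\delta_H$, so that $\widehat{c}$ is a \emph{Lipschitz} element of rank $N$. Putting $\bar G=G\times L_0[H]$, I would define
\[
\|(g,f)\|=\inf_{u\in L_0[\ZZZ]}\bigl\{\|g-\psi_G(u)\|_G+C(u)+A\,\|\psi_H(u)-f\|_{\delta_H}\bigr\},
\]
where $A\geqsl1$ is a constant and $C$ is a suitable $\kappa$-seminorm on $L_0[\ZZZ]$ to be chosen below. As in \LEM{A1} and \LEM{N-PV}, this infimum-convolution is automatically a $\kappa$-seminorm: subadditivity comes from the triangle inequalities together with $C$ being subadditive, and $\kappa$-homogeneity follows because each of the three summands scales by a factor at most $\kappa(t)$ under $u\mapsto t*u$ (using \textup{(NF2)} and $\kappa\geqsl\id$). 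Passing to the quotient $\bar G/\|\cdot\|^{-1}(\{0\})$ then yields a $\kappa$-normed PV group containing $(G,+,*,\|\cdot\|_G)$.

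Granting such a $C$ with $C(\widehat1)\leqsl\epsi$, the easy points go exactly as in \LEM{N-PV}. Taking $u=0$ gives $\|(0,f)\|\leqsl A\|f\|_{\delta_H}$, so the linear map $f\mapsto(0,f)$ sends the Lipschitz element $\widehat c$ of $L_0[H]$ to a Lipschitz element of $\bar G$; hence $b:=-(0,\widehat c)$ lies in $\LlL(\bar G)_{fin}$ and has rank dividing $N$. Taking $u=\widehat1$ gives $\psi_G(\widehat1)=a$ and $\psi_H(\widehat1)=\widehat c$, whence $\|a-b\|=\|(a,\widehat c)\|\leqsl C(\widehat1)\leqsl\epsi$. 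Finally, the inclusion $G\hookrightarrow\bar G$ is isometric once one shows $\|(g,0)\|\geqsl\|g\|_G$, which, by the triangle inequality applied to $\|g\|_G\leqsl\|g-\psi_G(u)\|_G+\|\psi_G(u)\|_G$, reduces to the \emph{reconstruction inequality}
\[
C(u)+A\,\|\psi_H(u)\|_{\delta_H}\geqsl\|\psi_G(u)\|_G\qquad(u\in L_0[\ZZZ]).
\]

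To attack the reconstruction inequality I would write $u=\sum_k s_k*\widehat{l_k}$ with $0=s_0<s_1<\ldots<s_n$, set $\nu_k=\sum_{j\geqsl k}l_j$, and estimate $\|\psi_G(u)\|_G\leqsl\sum_k\|s_k*(\nu_k a)-s_{k-1}*(\nu_k a)\|_G$, while $\|\psi_H(u)\|_{\delta_H}=\sum_k(s_k-s_{k-1})\delta_H(\pi(\nu_k))$. This splits the problem into one estimate per constancy interval, indexed by the integer $m=\nu_k$. Here \LEM{constM}, applied to $ma\in\CcC(G)$, is the decisive tool: for a parameter $\gamma>0$ it yields $\|s*(ma)-t*(ma)\|_G\leqsl M(ma,\gamma)(s-t)+\gamma\kappa(s\vee t)$. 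One then argues precisely as in \THM{o0}, distinguishing two regimes. For $|m|\geqsl N$ the bound $\|ma\|_G\leqsl\epsi|m|$ (sublinearity) makes the whole contribution small and is charged to $C$; for $0<|m|<N$ one has $N\nmid m$, so $\delta_H(\pi(m))=1$ and the folding credit $A(s_k-s_{k-1})$ absorbs the ``steep'' part $M(ma,\gamma)(s_k-s_{k-1})$ (choosing $A$ large), leaving only the small nonlinear correction $\gamma\kappa(s_k)$ to be charged to $C$.

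The hard part is building $C$ so that it is \emph{simultaneously} a bona fide $\kappa$-seminorm, bounded by $\epsi$ at $\widehat1$, and large enough for the reconstruction inequality. The obstruction reflects the two features in which $a$ departs from the finite-rank data of \LEM{N-PV}: because $a$ is merely continuous (not Lipschitz), \LEM{constM} forces a genuinely nonlinear correction term $\gamma\kappa(s\vee t)$ that the cost must carry even as the constancy value ranges over all of $\ZZZ$; and because $a$ has infinite rank, the cost must be allowed to grow with that value, which is exactly where sublinearity is exploited, as against the constant cost $\epsi\|\cdot\|_\kappa$ used in \LEM{N-PV}. The delicate point is to reconcile this per-generator cost with the telescoped (integral) form of the folding credit coming from $L_0[H]$, while keeping $C(\widehat1)\leqsl\epsi$; I expect this balancing of the value chosen on $\ZZZ$, the correction parameters, and the constant $A$ to be the technical heart of the proof. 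Once it is in place, the argument finishes by passing to the quotient and, when $r<\infty$, invoking \LEM{bd} and \textup{(P5)} just as at the end of \LEM{N-PV} and \THM{g-PV}.
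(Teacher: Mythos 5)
You have correctly identified the architecture of the construction (the product $\bar G = G\times L_0[H]$ with $H$ cyclic of rank $N$, an infimum-convolution $\kappa$-seminorm, a credit term $A\|\cdot\|_{\delta}$ attached to the discrete value, \LEM{constM} as the main analytic tool, and sublinearity of $n\mapsto\|na\|_G$ as the resource to be spent), but the step you defer as ``the technical heart of the proof'' --- the choice of the cost $C$ together with the reconstruction inequality --- is precisely where your sketch breaks, and the way you propose to close it does not work. Your plan is to estimate $\|\psi_G(u)\|_G$ one constancy interval at a time and to split according to whether the constancy value $m=\nu_k$ satisfies $|m|\geqsl N$ or $0<|m|<N$, charging the first regime to $C$ via $\|ma\|_G\leqsl\epsi|m|$. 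This fails for two reasons. First, for $|m|\geqsl N$ the only uniform bound on a single interval is of the form $\|s_k*(ma)-s_{k-1}*(ma)\|_G\leqsl 2\kappa(s_k)\,\epsi|m|$ (you cannot invoke \LEM{constM} for $ma$ with a constant independent of $m$), and summing these charges over the intervals of, say, $u=\sum_{k=1}^n s_k*\widehat{1}$ produces a total of order $\epsi\,n\sum_k\kappa(s_k)$, i.e.\ $n$ times any budget of the form $C(u)\leqsl nC(\widehat 1)\leqsl n\epsi$ distributed over jumps --- no subadditive $C$ with $C(\widehat 1)\leqsl\epsi$ can absorb it. Second, intervals with $N\mid\nu_k$ but $\nu_k\neq 0$ receive \emph{no} credit from $A\|\psi_H(u)\|_{\delta}$ and are not individually small either.

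The paper's proof avoids both problems by a different bookkeeping. The infimum defining $\|(g,f)\|$ ranges only over representations $\sum_j s_j*(l_ja)$ with $l_j\in\{-1,1\}$, each unit jump costing $\epsi\kappa(s_j)$ (so a jump of height $k$ costs $|k|$ units, which is where sublinearity will pay off), and the reconstruction inequality is proved not interval-by-interval but by partitioning the sorted indices into maximal consecutive blocks each of whose signed sums equals $\pm N$ (conditions (E0)--(E2)). Inside a block the \emph{block-relative} partial sums have absolute value $<N$, so one telescopes and applies \LEM{constM} to $a$ itself with multiplicity $<N$; the resulting $N\bigl(M|\Delta s_j|+\tfrac{\epsi}{2N}\kappa(s_j)\bigr)$ is absorbed by the credit $A\,\Delta s_j$ (with $A=NM$) plus $\tfrac\epsi2\kappa(s_j)$, the credit being available because block-relative sums are never divisible by $N$ except at block boundaries. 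The boundary of each block contributes one term $\kappa(s_{\nu_k})\|Na\|_G\leqsl\tfrac\epsi2 N\kappa(s_{\nu_k})$, and since each block contains at least $N$ indices, $N\sum_k\kappa(s_{\nu_k})\leqsl\sum_j\kappa(s_j)$ by monotonicity of $\kappa$ (this is the one place (NF3) is used), so the boundary terms fit into the remaining half of the budget $\epsi\sum_j\kappa(s_j)$. Without this block decomposition and the restriction to $\pm1$ jumps, your reconstruction inequality is not provable, so as it stands the proposal has a genuine gap rather than a routine omission.
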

\begin{proof}
Take $N \geqsl 2$ such that for every $n \geqsl N$,
\begin{equation}\label{eqn:aux80}
\frac{\|n a\|_G}{n} \leqsl \frac{\epsi}{2}.
\end{equation}
Further, let $M = M(a,\epsi /(2 N))$ be as in \LEM{constM}. Put $A = NM$. Let $H = \grp{b}$ be a cyclic group of rank $N$
and $\delta$ denote the discrete value on $H$. Now put $\bar{G} = G \times L_0[H]$ and define $\|\cdot\|\dd \bar{G} \to
\RRR_+$ by
\begin{multline*}
\|(g,f)\| = \inf\{\|g - \sum_{j=1}^m s_j * (l_j a)\|_G + \epsi \sum_{j=1}^m \kappa(s_j) + A \|\sum_{j=1}^m s_j *
(l_j \widehat{b}) - f\|_{\delta}\dd\\ m \geqsl 0,\ s_1,\ldots,s_m \geqsl 0,\ l_1,\ldots,l_m \in \{-1,1\}\}.
\end{multline*}
As in the previous proof, observe that $\|\cdot\|$ is a $\kappa$-seminorm such that $\|(g,f)\| > 0$ provided $f \neq 0$,
$\|(g,0)\| \leqsl \|g\|_G$, $\|(0,f)\| \leqsl A \|f\|_{\delta}$ ($g \in G$, $f \in L_0[H]$) and $\|(a,\widehat{b})\|
\leqsl \epsi$. So, if only we show that $\|(g,0)\| \geqsl \|g\|_G$ for $g \in G$, the proof will be completed (because then
$\|\cdot\|$ will be a $\kappa$-norm and $\widehat{b} \in \LlL(\bar{G})$). The latter is equivalent to
\begin{equation}\label{eqn:aux81}
\epsi \sum_{k=1}^m \kappa(s_k) + A \|\sum_{k=1}^m s_k * (l_k \widehat{b})\|_{\delta} \geqsl \|\sum_{k=1}^m s_k *
(l_k a)\|_G
\end{equation}
for every $m \geqsl 0$, $s_1,\ldots,s_m \in \RRR_+$ and $l_1,\ldots,l_m \in \{-1,1\}$. We may assume that $m \geqsl 1$
and then, when $m$ and $l_1,\ldots,l_m$ are fixed, both the left-hand and the right-hand side expressions
of \eqref{eqn:aux71} are continuous with respect to $s_1,\ldots,s_m$. Hence we may assume that $s_1,\ldots,s_m$ are
positive and different. Finally, after renumeration, we may have $0 = s_0 < s_1 < \ldots < s_m$. But then $\|\sum_{k=1}^m
s_k * (l_k \widehat{b})\|_{\delta} = \sum_{j=1}^m (\Delta s_j) \delta(\sum_{k=j}^m l_k b)$ (where, as usually, $\Delta s_j
= s_j - s_{j-1}$). So, \eqref{eqn:aux81} changes into
\begin{multline}\label{eqn:aux82}
\frac{\epsi}{2} \sum_{j=1}^m \kappa(s_j) + \sum_{j=1}^m [A (\Delta s_j) \delta(\sum_{k=j}^m l_k b) + \frac{\epsi}{2}
\kappa(s_j)] \geqsl\\ \geqsl \|\sum_{j=1}^m s_j * (l_j a)\|_G.
\end{multline}
Now we shall construct a system of natural numbers $\{\nu_0,\ldots,\nu_z\}$ (for some $z \geqsl 1$) for which
\begin{enumerate}[(E1)]\addtocounter{enumi}{-1}
\item $0 = \nu_z < \ldots < \nu_0 = m+1$,
\item for every $j,k$ with $1 \leqsl k \leqsl z$ and $\nu_k < j < \nu_{k-1}$,
   $$|\sum_{s=j}^{\nu_{k-1}-1} l_s| < N,$$
\item for every $k$ with $1 \leqsl k < z$, $$|\sum_{s=\nu_k}^{\nu_{k-1}-1} l_s| = N.$$
\end{enumerate}
Put $\nu_0 = m+1$ and suppose $\nu_p$ is defined for some $p \geqsl 0$. If either $\nu_p = 1$ or $|\sum_{s=j}^{\nu_p - 1}
l_s| < N$ for every $j \in \{1,\ldots,\nu_p - 1\}$, put $\nu_{p+1} = 0$ and $z = p+1$ and finish the construction.
Otherwise, take the greatest natural number $\nu_{p+1} \in \{1,\ldots,\nu_p - 1\}$ such that $|\sum_{s=\nu_{p+1}}^{\nu_p-1}
l_s| \geqsl N$. Since $|l_s| = 1$ for every $s$, one has $|\sum_{s=\nu_{p+1}}^{\nu_p-1} l_s| = N$. The verification that
(E0)--(E2) are fulfilled is left as an exercise.\par
For simplicity, let $l_0 = 0$. Now it follows
from (E2) that
\begin{enumerate}[(E1)]\addtocounter{enumi}{2}
\item for every $j,k$ with $0 \leqsl k < z$ and $\nu_{k+1} \leqsl j < \nu_k$,
   $$\sum_{s=j}^m l_s b = \sum_{s=j}^{\nu_k-1} l_s b$$
\end{enumerate}
(recall that $\rank(b) = N$). Thanks to (E3), we have
\begin{multline}\label{eqn:aux83}
\sum_{j=1}^m [A (\Delta s_j) \delta(\sum_{k=j}^m l_k b) + \frac{\epsi}{2} \kappa(s_j)] \geqsl\\
\geqsl \sum_{k=0}^{z-1} \sum_{j=\nu_{k+1}+1}^{\nu_k-1} [A (\Delta s_j) \delta(\sum_{q=j}^{\nu_k-1} l_q b)
+ \frac{\epsi}{2} \kappa(s_j)].
\end{multline}
Further, when $0 \leqsl k < z$,
\begin{multline*}
\sum_{j=\nu_{k+1}}^{\nu_k-1} s_j * (l_j a) = \sum_{j=\nu_{k+1}}^{\nu_k-1} s_j * (\sum_{q=j}^{\nu_k-1} l_q a
- \sum_{q=j+1}^{\nu_k-1} l_q a) =\\= \sum_{j=\nu_{k+1}}^{\nu_k-1} s_j * (\sum_{q=j}^{\nu_k-1} l_q a)
- \sum_{j=\nu_{k+1}+1}^{\nu_k} s_{j-1} * (\sum_{q=j}^{\nu_k-1} l_q a) =\\
= \sum_{j=\nu_{k+1}+1}^{\nu_k-1} (\sum_{q=j}^{\nu_k-1} l_q) (s_j *  a - s_{j-1} * a) + s_{\nu_{k+1}} *
(\sum_{q=\nu_{k+1}}^{\nu_k-1} l_q a)
\end{multline*}
and therefore (by (E2), \eqref{eqn:aux80} and the fact that $s_{\nu_z} = 0$):
\begin{multline*}
\|\sum_{j=\nu_{k+1}}^{\nu_k-1} s_j * (l_j a)\|_G = \|\sum_{k=0}^{z-1} \sum_{j=\nu_{k+1}}^{\nu_k-1} s_j * (l_j a)\|_G
\leqsl \\ \leqsl \sum_{k=0}^{z-1} \sum_{j=\nu_{k+1}+1}^{\nu_k-1} \Bigl|\sum_{q=j}^{\nu_k-1} l_q\Bigr| \cdot \|s_j * a
- s_{j-1} * a\|_G + \sum_{k=1}^{z-1} \kappa(s_{\nu_k}) \|N a\|_G \leqsl\\ \leqsl \sum_{k=0}^{z-1}
\sum_{j=\nu_{k+1}+1}^{\nu_k-1} \Bigl|\sum_{q=j}^{\nu_k-1} l_q\Bigr| \cdot \|s_j * a - s_{j-1} * a\|_G + \frac{\epsi}{2} N
\sum_{k=1}^{z-1} \kappa(s_{\nu_k}).
\end{multline*}
So, taking into account \eqref{eqn:aux83}, we see that \eqref{eqn:aux82} will be satisfied provided
\begin{equation}\label{eqn:aux84}
\Bigl|\sum_{q=j}^{\nu_k-1} l_q\Bigr| \cdot \|s_j * a - s_{j-1} * a\|_G \leqsl A (\Delta s_j) \delta(\sum_{q=j}^{\nu_k-1}
l_q b) + \frac{\epsi}{2} \kappa(s_j)
\end{equation}
whenever $\nu_{k+1} < j < \nu_k$ ($k=0,\ldots,z-1$) and
\begin{equation}\label{eqn:aux85}
N \sum_{k=1}^{z-1} \kappa(s_{\nu_k}) \leqsl \sum_{j=1}^m \kappa(s_j).
\end{equation}
To prove \eqref{eqn:aux84}, put $\lambda = \Bigl|\sum_{q=j}^{\nu_k-1} l_q\Bigr|$, $t = s_j$ and $s = s_{j-1}$. By (E1),
$0 \leqsl \lambda < N$. When $\lambda = 0$, \eqref{eqn:aux84} is clear. And if $\lambda \neq 0$, $\delta(\lambda b) = 1$
and then, by the definition of $M$,
$$
\lambda \|t * a - s * a\|_G \leqsl N (M |t - s| + \frac{\epsi}{2N} \kappa(t \vee s)) = A |t - s| \delta(\lambda b)
+ \frac{\epsi}{2} \kappa(t)
$$
which gives \eqref{eqn:aux84}.\par
Now we pass to \eqref{eqn:aux85}. By (E2), $N \leqsl \nu_{k-1} - \nu_k$ for $k=1,\ldots,z-1$. So, by (NF3) we obtain
$$
N \sum_{k=1}^{z-1} \kappa(s_{\nu_k}) \leqsl \sum_{k=1}^{z-1} \sum_{j=\nu_k}^{\nu_{k-1}-1} \kappa(s_{\nu_k})
\leqsl \sum_{k=1}^{z-1} \sum_{j=\nu_k}^{\nu_{k-1}-1} \kappa(s_j) \leqsl \sum_{j=1}^m \kappa(s_j)
$$
which finishes the proof.
\end{proof}

As an immediate consequence of (P5) and Lemmas \ref{lem:bd}, \ref{lem:N-PV} and \ref{lem:00-PV} we obtain

\begin{thm}{GrN-PV}
If $(G,+,*,\|\cdot\|_G)$ is a $\kappa$-normed topological pseudovector group such that $(G,+,\|\cdot\|_G) \in \Gg_r(N)$,
there is a $\kappa$-normed pseudovector group $(\bar{G},+,*,\|\cdot\|) \supset (G,+,*,\|\cdot\|_G)$ such that
$(\bar{G},+,\|\cdot\|) \in \Gg_r(N)$ and the set $\LlL(\bar{G})_{fin}$ is dense in $\bar{G}$.
\end{thm}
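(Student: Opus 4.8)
The plan is to reduce the statement to a single one-step enlargement and then iterate, exactly in the spirit of the proof of \THM{o0}. The one-step claim I would isolate is: for every $a \in G \setminus \{0\}$ and every $\epsi > 0$ there is a $\kappa$-normed PV group $(G',+,*,\|\cdot\|') \supset (G,+,*,\|\cdot\|_G)$ with $(G',+,\|\cdot\|') \in \Gg_r(N)$, all of whose elements are continuous, and an element $b \in \LlL(G')_{fin}$ with $\|a-b\|' \leqsl \epsi$. Granting this, I would build a countable increasing chain $G = G^{(0)} \subseteq G^{(1)} \subseteq \cdots$ of such groups: since every member of $\Gg_r(N)$ is separable, I fix at each stage a countable dense subset of the group built so far and, scheduling by a pairing function the countably many tasks ``approximate the $k$-th point of the dense set of $G^{(n)}$ to within $1/j$'' (processing a task only once $G^{(n)}$ exists), I perform one application of the claim per task. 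Putting $\bar G = \bigcup_n G^{(n)}$, the union of such a chain is again a $\kappa$-normed PV group in $\Gg_r(N)$, since separability, exponent $N$ (resp.\ membership in $\OOo_0$) and the bound by $r$ are witnessed element-by-element and so are inherited from the $G^{(n)}$'s. Each $b$ produced keeps its finite rank and, as the action and norm of $\bar G$ restrict to those of the stage where $b$ was created, keeps its Lipschitz character, so $b \in \LlL(\bar G)_{fin}$. A triangle-inequality argument then gives density: any $x \in \bar G$ lies in some $G^{(m)}$, is $\epsi/2$-close to a point $d$ of the fixed dense set of $G^{(m)}$, and $d$ was approximated within $1/j < \epsi/2$ by one of the $b$'s. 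Note that, unlike in \THM{o0}, no completion is needed, because $\Gg_r(N)$ does not demand completeness and the theorem asks only for a $\kappa$-normed PV group.

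To prove the one-step claim, the two prepared lemmas do the work, and I would split on $N$. Since $G$ is a topological PV group its action is continuous, so $\CcC(G) = G$; the same persists through the construction, because each enlargement has underlying set $G^{(s-1)} \times L_0[H]$ with $H$ finite and $L_0[H]$ Lipschitz, whence every $(g,f)$ is continuous. If $N \neq 0$, then $a$ has finite rank dividing $N$ and $a \in \CcC(G)\setminus\{0\}$, so \LEM{N-PV} yields $G'$ and $b \in \LlL(G')$ of the same finite rank with $\|a-b\|' \leqsl \epsi$, i.e.\ $b \in \LlL(G')_{fin}$. If $N = 0$, then $(G,+,\|\cdot\|_G)$ is of class $\OOo_0$, so $\lim_{n\to\infty}\|na\|_G/n = 0$ and $a \in \CcC(G)\setminus\{0\}$, and \LEM{00-PV} produces $G'$ and $b \in \LlL(G')_{fin}$ with $\|a-b\|'\leqsl\epsi$. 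In both cases $G' = G \times L_0[H]$ (modulo the kernel of a $\kappa$-seminorm) with $H$ finite, so $G'$ is separable, and since $L_0[H]$ has exponent $N$ (resp.\ is of class $\OOo_0$) whenever $H$ does, so does $G'$.

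The single point needing care is keeping $G'$ bounded by $r$ when $r = 1$, since the $\kappa$-norm delivered by the lemmas need not be bounded; this is where (AX), \LEM{bd} and (P5) enter. When $r = 1$ and $G \neq \{0\}$, membership in $\Gg_1(N)$ gives $\|t*a\|_G \leqsl 1$ for all $t \geqsl 0$, so \LEM{bd} forces $\kappa(0) \neq 0$; by (P5) the truncation $\|\cdot\|' \wedge 1$ is then again a $\kappa$-norm, it is bounded by $1$, it still extends $\|\cdot\|_G$ (already $\leqsl 1$), and $b$ stays finite-rank and Lipschitz for it, while $\|a-b\|'\wedge 1 \leqsl \epsi$ once $\epsi \leqsl 1$. (If $G = \{0\}$ there is nothing to approximate, and by (AX) either $r = \infty$ or $\kappa(0) \neq 0$, so the trivial group already satisfies the conclusion.) Replacing $\|\cdot\|'$ by $\|\cdot\|' \wedge r$ thus lands $G'$ in $\Gg_r(N)$ and finishes the one-step claim, hence the theorem. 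The entire substance of the proof is concentrated in \LEM{N-PV} and \LEM{00-PV}; the remaining obstacle is purely organizational, namely the bookkeeping that forces the finite-rank Lipschitz elements to become dense in the limit while every intermediate stage is held inside $\Gg_r(N)$.
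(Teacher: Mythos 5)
Your proposal is correct and is essentially the argument the paper intends: the paper gives no written proof beyond declaring the theorem an immediate consequence of (P5), \LEM{bd}, \LEM{N-PV} and \LEM{00-PV}, and your write-up supplies exactly the missing pieces — the one-step approximation via \LEM{N-PV} (exponent $N\neq 0$) or \LEM{00-PV} (class $\OOo_0$), the truncation by $r$ justified through \LEM{bd} and (P5), and the standard countable bookkeeping iteration in the style of \THM{o0}. No gaps; the observations that continuity of all elements persists through each enlargement $G\times L_0[H]$ and that no completion is needed are the right points to check.
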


\SECT{Proof of \THM{main3}}

Let $r \in \{1,\infty\}$, $N \in \ZZZ_+ \setminus \{1\}$ and $\kappa$ be a norming function. As it is easily seen,
\THM{main3} immediately follows from

\begin{thm}{3}
Let $(G,+,*,\|\cdot\|_G)$ be a $\kappa$-normed topological pseudovector group such that $(G,+,\|\cdot\|_G) \in \Gg_r(N)$
and \textup{(AX)} is fulfilled. There is a $\kappa$-normed pseudovector group $(\bar{G},+,*,\|\cdot\|) \supset
(G,+,*,\|\cdot\|_G)$ such that the set $\LlL(\bar{G})_{fin}$ is dense in $\bar{G}$ and the valued groups
$\bar{G}$ and $\GGG_r(N)$ are isometrically group isomorphic.
\end{thm}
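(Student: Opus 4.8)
The plan is to reproduce, inside the category of separable $\kappa$-normed pseudovector groups belonging to $\Gg_r(N)$ in which the finite-rank Lipschitz elements are dense, the Fra\"{\i}ss\'{e}-type construction of Section~3, and then to read off the conclusion from the completion results already at our disposal. The target is the characterization furnished by \PRO{completion}: taking $Q = \RRR$ in \eqref{eqn:Q}, it suffices to build a $\kappa$-normed topological PV group $(\bar{G},+,*,\|\cdot\|) \supset (G,+,*,\|\cdot\|_G)$ whose underlying valued group lies in $\Gg_r(N)$, which contains a dense subgroup $G_\infty$ satisfying \textup{(QG1)} and \textup{(QG2)} (with $Q = \RRR$), and whose finite-rank Lipschitz elements are dense. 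With $Q = \RRR$, \textup{(QG2)} reduces to the density of the finite-rank elements in $G_\infty$, and \textup{(QG1)} becomes the plain $\epsi$-almost-isometric extension property for \emph{all} finite groups of $\Gg_r(N)$; once $G_\infty$ has both, \PRO{completion}\,(C) identifies its completion with $\GGG_r(N)$ as a valued group.

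First I would normalize the starting data. Applying \THM{GrN-PV} to $(G,+,*,\|\cdot\|_G)$ produces an enlargement in which the finite-rank Lipschitz elements are dense; this secures \textup{(QG2)} at the outset and provides a dense, separable, Lipschitz, finite-rank subgroup in $\Gg_r(N)$ over which the extension steps can safely be run (recall that \THM{g-PV} and \THM{g-PVG} require the relevant seminorm to be Lipschitz, that is, the subgroups being extended must consist of Lipschitz elements). Note that \textup{(AX)} persists under all enlargements and that $\CcC(G)=G$ because the action is continuous, so \PRO{compl} will apply at the end.

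Next I would build a chain $G = G_0 \subset G_1 \subset G_2 \subset \cdots$ of separable $\kappa$-normed PV groups in $\Gg_r(N)$, each with dense finite-rank Lipschitz subgroup, by interleaving two kinds of steps. For the extension (universal-disposition) steps I fix a countable family of finite groups $(H,+,q) \in \Gg_r(N)$ with structure constants in $\QQQ$ together with their subgroups $K$, and a countable dense reservoir of finite-rank Lipschitz elements of the current stage; running \THM{g-PV} over each approximate isometric placement $\varphi\dd K \to G_n$ adjoins an isometric copy of $H$ extending $\varphi$, keeping the stage in $\Gg_r(N)$, Lipschitz on the new part, and separable. For the density steps I reapply \THM{GrN-PV} (equivalently \LEM{N-PV} and \LEM{00-PV}) to a dense sequence of elements so as to keep the finite-rank Lipschitz elements dense. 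A bookkeeping enumeration of all tasks generated along the way, combined with the approximation of an arbitrary real-valued finite $H$ and an arbitrary isometric $\varphi$ by a rational one and by a placement into the dense Lipschitz reservoir (this is the pseudovector counterpart of Lemmas \textup{\ref{lem:QG1}}--\textup{\ref{lem:QG4}} and of \THM{UEP}), ensures that the union $G_\infty = \bigcup_n G_n$ satisfies \textup{(QG1)} for \emph{all} finite members of $\Gg_r(N)$, with the approximations landing in $G_\infty$ itself, while the finite-rank elements stay dense.

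Finally I would complete. Since $\CcC(G_\infty) = G_\infty$, \PRO{compl} equips the completion $\bar{G}$ with a $\kappa$-normed topological PV structure extending that of $G_\infty$ (hence of $G$), and gives $\LlL(\bar{G})\cap G_\infty = \LlL(G_\infty)$; as the finite-rank Lipschitz elements of $G_\infty$ are dense in $G_\infty$, they are dense in $\bar{G}$, so $\LlL(\bar{G})_{fin}$ is dense, which is the first assertion. By \PRO{completion}\,(C) with $Q=\RRR$, the valued group $\bar{G}$ is isometrically group isomorphic to $\GGG_r(N)$, which is the second assertion. The main obstacle I expect is precisely the tension highlighted above: the extension lemma \THM{g-PV} is available only for Lipschitz data, whereas $\bar{G}$ must contain the possibly non-Lipschitz original group $G$. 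Resolving this requires running the universal-disposition steps entirely over the dense finite-rank Lipschitz subgroup while carrying the non-Lipschitz part of $G$ through the general amalgamation underlying these constructions (\LEM{A1}, as used in \THM{g-PVG}), and then verifying that the $\QQQ$-valued, exact extensions delivered by \THM{g-PV} can be converted into the $\RRR$-valued, $\epsi$-almost-isometric extension property \textup{(QG1)} demanded by \PRO{completion}.
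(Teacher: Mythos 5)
Your overall architecture is the paper's: normalize with \THM{GrN-PV}, run a Fra\"{\i}ss\'{e}-type tower of extensions via \THM{g-PV} over a countable dense reservoir of finite-rank Lipschitz elements, take the union, complete, and identify the completion through \PRO{compl} and \PRO{completion}. Two points in your write-up, however, would not survive as stated, and they are exactly where the paper's proof spends its care.

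First, the tower cannot start at $G$ itself. \THM{g-PV} requires the ambient $\kappa$-normed PV group to be \emph{Lipschitz}, whereas the given $G$ is only topological; the seminorm built in \THM{g-PVG} needs the Lipschitz constant $\lambda$ of the data being extended. The paper therefore sets $G_0 = \lin A_0$ for a countable dense $A_0 \subset \LlL(G)_{fin}$ and keeps every stage $G_n$ inside the Lipschitz, finite-rank world; the original $G$ reappears only after completion, because $G_0$ is dense in $G$ and the extended action on $\bar{G}$ agrees with the original one on $G$ by continuity. Your proposal to ``carry the non-Lipschitz part of $G$ through the general amalgamation'' is thus both unnecessary and unsupported by the lemmas you cite; the correct move is simply to drop $G$ from the tower and recover it at the end.

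Second, invoking \PRO{completion}(C) with $Q=\RRR$ hides the real bookkeeping. A countable tower realizes only countably many extension tasks, so \textup{(QG1)} for \emph{all} real-valued finite groups must come from the approximation machinery of Section~3 (\LEM{QG3} together with \REM{interesting}); but that machinery presupposes \textup{(QG1)} and \textup{(QG2)} for a countable $Q$ \emph{containing the norms of the dense reservoir} --- with ``rational structure constants'' alone, the value $\lambda_0 = p\circ\psi_0$ occurring in the proof of \LEM{QG3} need not be $Q$-valued and \LEM{valQ} cannot be applied. This is precisely why the paper grows a chain of countable subfields with $\FFF_n \supset \FFF_{n-1}\cup\{\|g\|_{n-1}\dd\ g\in\lin_{\FFF_{n-1}}D_{n-1}\}$ alongside the groups, and then identifies $\lin_{\FFF}D$ with $\FFF\GGG_r(N)$ so that \PRO{completion}(A) applies directly. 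Your plan is repairable by inserting this device, but as written the verification of \textup{(QG1)} does not close.
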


The above result strengthens and generalizes \cite[Theorem~4.3]{pn2}. The proof of \THM{3} will be preceded by

\begin{lem}{count}
Let $(G',+,*,\|\cdot\|')$ be a Lipschitz $\kappa$-normed pseudovector group such that $(G',+,\|\cdot\|') \in \Gg_r(N)$
and \textup{(AX)} is satisfied. Let $D$ and $\FFF$ be, respectively, a countable subset of $G'$ and a countable
subfield of $\RRR$. For every countable family $\Hh$ of finite valued groups of class $\Gg_r(N)$ there is a Lipschitz
$\kappa$-normed pseudovector group $(G'',+,*,\|\cdot\|'') \supset (G',+,*,\|\cdot\|')$ such that $(G'',+,\|\cdot\|'')
\in \Gg_r(N)$ and with the following property. Whenever $H \in \Hh$, $K$ is a subgroup of $H$ and $\varphi\dd K \to
\lin_{\FFF} D \subset G'$ is an isometric group homomorphism, there is an isometric group homomorphism $\psi\dd H \to
G''$ which extends $\varphi$.
\end{lem}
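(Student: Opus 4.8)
The plan is to obtain $G''$ as the union of an increasing chain of Lipschitz $\kappa$-normed PV groups produced by iterating the one-step extension \THM{g-PV}, exactly in the spirit of the Fra\"{\i}ss\'{e}-type construction used for $\QQQ\GGG_r(N)$ in \THM{QG}. The crucial preliminary observation is that the family of extension problems we must solve is countable. Since $\FFF$ and $D$ are countable and every element of $\lin_{\FFF} D$ is a finite $\FFF_+$-combination of members of $D \cup (-D) \cup \{0\}$, the set $\lin_{\FFF} D$ is countable. Each $H \in \Hh$ is finite, hence has finitely many subgroups $K$, and any group homomorphism $\varphi\dd K \to \lin_{\FFF} D$ is determined by the finitely many images of a generating set of $K$; thus there are only countably many isometric group homomorphisms of subgroups of members of $\Hh$ into $\lin_{\FFF} D$. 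I would enumerate all such data as a sequence $(H_n,K_n,\varphi_n)_{n=1}^{\infty}$.

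Next I would construct inductively a chain $(G_n,+,*,\|\cdot\|_n)$ of Lipschitz $\kappa$-normed PV groups in $\Gg_r(N)$ with $G_0 = G'$ and $(G_{n-1},+,*,\|\cdot\|_{n-1}) \subset (G_n,+,*,\|\cdot\|_n)$. At stage $n$ the map $\varphi_n$ sends $K_n$ isometrically into $\lin_{\FFF} D \subset G' \subset G_{n-1}$, and \textup{(AX)} holds for $G_{n-1}$: if $G' \neq \{0\}$ then $G_{n-1} \supset G'$ is nontrivial, while the alternatives $\kappa(0) \neq 0$ and $r = \infty$ are independent of the group. Hence \THM{g-PV}, applied with $H = H_n$, $K = K_n$ and $\varphi = \varphi_n$, yields $(G_n,+,*,\|\cdot\|_n) \supset (G_{n-1},+,*,\|\cdot\|_{n-1})$ in $\Gg_r(N)$ together with an isometric group homomorphism $\psi_n\dd H_n \to G_n$ extending $\varphi_n$.

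Finally I would put $(G'',+,*,\|\cdot\|'') = \bigcup_{n=0}^{\infty} (G_n,+,*,\|\cdot\|_n)$, the common extension of the chain. This is a Lipschitz $\kappa$-normed PV group: each $x \in G''$ lies in some $G_n$, so $\|t * x\|'' = \|t * x\|_n \leqsl \kappa(t) \|x\|''$, and the Lipschitz behavior of $t \mapsto t * x$ is inherited from $G_n$. Moreover $G'' \in \Gg_r(N)$, since a countable increasing union of separable groups of value bounded by $r$ is again separable (a countable union of countable dense sets is dense) and bounded by $r$, and the relevant class condition is pointwise and stable along the chain (exponent $N$ when $N \neq 0$; and for $N = 0$ the quantity $p_{0*}(x)$ is computed inside any $G_n$ containing $x$, cf. \PRO{image}). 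The required property is then immediate: a given isometric $\varphi\dd K \to \lin_{\FFF} D$ with $H \in \Hh$ equals some $\varphi_n$, and $\psi_n\dd H_n \to G_n \subset G''$ is the desired extension, still isometric because $\|\cdot\|''$ extends $\|\cdot\|_n$. All the genuine difficulty is concentrated in the single-step \THM{g-PV}; the present argument is only the bookkeeping of the chain, and the one point demanding care is verifying that the PV structure and the membership in $\Gg_r(N)$ pass to the union.
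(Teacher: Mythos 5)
Your proposal is correct and follows essentially the same route as the paper: the paper's proof likewise observes that $\lin_{\FFF} D$ is countable, hence the family of extension problems $\{\varphi\dd K \to \lin_{\FFF}D\}$ is countable, and then obtains $G''$ by induction on \THM{g-PV}. You have merely spelled out the bookkeeping (the enumeration, the preservation of (AX) along the chain, and the passage of the Lipschitz $\kappa$-normed PV structure and of membership in $\Gg_r(N)$ to the union) that the paper leaves implicit.
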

\begin{proof}
Let $D' = \lin_{\FFF} D$. Since every member of $\Hh$ is a finite group and $D'$ is countable, the family $\{\varphi\dd
K \to D'\}$, where $K$ runs over all subgroups of members of $\Hh$ and $\varphi$ is an isometric group homomorphism, is
countable. So, the assertion follows from induction and \THM{g-PV}.
\end{proof}

\textit{Proof of \THM{3}}.
Thanks to \THM{GrN-PV}, we may and do assume that $A = \LlL(G)_{fin}$ is dense in $G$. Let $A_0$ be a countable
dense subset of $A$. Note that $\lin A_0$ is a dense Lipschitz PV subgroup of $G$ and, as a group, of class $\OOo_{fin}$.
For purpose of this proof, let us call a $\kappa$-normed PV group $(E,+,*,\|\cdot\|_E)$ \textit{of class $\Ll_r(N)_{fin}$}
provided $(E,+,\|\cdot\|_E) \in \Gg_r(N)$ and $E = \LlL(E)_{fin}$.\par
We shall construct, making use of induction, sequences $(\FFF_n)_{n=0}^{\infty}$, $(G_n,+,*,\|\cdot\|_n)$
and $(D_n)_{n=0}^{\infty}$ such that for every $n \geqsl 0$:
\begin{enumerate}[(1$_n$)]
\item $\FFF_n$ is a countable subfield of $\RRR$,
\item $(G_n,+,*,\|\cdot\|_n) \in \Ll_r(N)_{fin}$,
\item $D_n$ is a countable subset of $G_n$ and $G_n = \lin D_n$,
\item $\FFF_0 = \QQQ$ and for $n > 0$,
   $$\FFF_n \supset \FFF_{n-1} \cup \{\|g\|_{n-1}\dd\ g \in \lin_{\FFF_{n-1}} D_{n-1}\},$$
\item $G_0 = \lin A_0$ and $\|\cdot\|_0$ is inherited from $\|\cdot\|_G$; and for $n > 0$,
   $$(G_n,+,*,\|\cdot\|_n) \supset (G_{n-1},+,*,\|\cdot\|_{n-1}) \quad \textup{and} \quad D_n \supset D_{n-1},$$
\item for $n > 0$: whenever $(H,+,q) \in \Gg_r(N)$ is a finite valued group with $q(H) \subset \FFF_{n-1}$ and $\varphi\dd
   K \to \lin_{\FFF_{n-1}} D_{n-1}$ is an isometric group homomorphism of a subgroup $K$ of $H$, there is an isometric
   group homomorphism $\psi\dd H \to \lin_{\FFF_n} D_n$ which extends $\varphi$.
\end{enumerate}
Define $\FFF_0$ and $G_0$ as in (4$_0$) and (5$_0$) and put $D_0 = A_0$. Suppose that for some $n > 0$ we have defined
$\FFF_{n-1}$, $G_{n-1}$ and $D_{n-1}$ with suitable properties. Let $\Hh$ be a countable family of all (up to isometric
group isomorphism) finite $\FFF_{n-1}$-groups of class $\Gg_r(N)$ (cf. \DEF{Qg}). Take a Lipschitz $\kappa$-normed PV
group $(G'',+,*,\|\cdot\|'')$ which witnesses the assertion of \LEM{count} for $\Hh$, $G' = G_{n-1}$, $\FFF = \FFF_{n-1}$
and $D = D_{n-1}$. Further, let $\Ff$ be the family of all pairs $(\varphi,H)$ with $H \in \Hh$ and $\varphi\dd K \to
\lin_{\FFF_{n-1}} D_{n-1}$ an isometric group homomorphisms where $K$ is a subgroup of $H$. The collection $\Ff$ is
countable. For each $(\varphi,H) \in \Ff$ let $\widehat{\varphi}\dd H \to G''$ be an isometric group homomorphism
extending $\varphi$. Now put $D_n = D_{n-1} \cup \bigcup_{(\varphi,H) \in \Ff} \widehat{\varphi}(H)$ and $G_n = \lin D_n
\subset G''$. Let $\|\cdot\|_n$ be the $\kappa$-norm on $G_n$ inherited from $\|\cdot\|''$ and $\FFF_n$ be the subfield
of $\RRR$ generated by $\FFF_{n-1} \cup \{\|g\|_n\dd\ g \in D_n\}$. It is easy to verify that conditions (1$_n$)--(6$_n$)
are fulfilled.\par
Having the sequences $(\FFF_n)_{n=0}^{\infty}$, $(D_n)_{n=0}^{\infty}$ and $(G_n)_{n=0}^{\infty}$, define the PV group
$(\bar{G},+,*,\|\cdot\|)$ as the completion of $\bigcup_{n=0}^{\infty} (G_n,+,*,\|\cdot\|_n)$ (cf. \PRO{compl}), $\FFF =
\bigcup_{n=0}^{\infty} \FFF_n$ and $D = \bigcup_{n=0}^{\infty} D_n$. Note that $$(\bar{G},+,*,\|\cdot\|) \supset
(G,+,*,\|\cdot\|_G)$$ (since $G_0$ is dense in $G$) and $\widehat{D} = \lin_{\FFF} D$ is dense in $\bar{G}$. Further,
$\widehat{D}$ is isometrically group isomorphic to $\FFF\GGG_r(N)$, which follows from the construction. Thus, to this end
it remains to apply \PRO{completion}.\qed
\vspace{0.3cm}

As sonsequences of \THM{3} we obtain (see \PRO{metriz})

\begin{thm}{enl-PV}
Let $r \in \{1,\infty\}$. Every separable metrizable topological pseudovector (Abelian) group is isomorphic
(as a topological PV group) to a pseudovector subgroup of a subnormed topological PV group which is isometrically group
isomorphic to $\GGG_r(0)$.
\end{thm}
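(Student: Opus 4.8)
The plan is to derive \THM{enl-PV} as an essentially immediate corollary of \THM{3}, taken with $\kappa = \nabla$ and $N = 0$. All the genuine content sits inside \THM{3}; what remains is to equip the given group with a \emph{bounded} subnorm so that the hypotheses of that theorem are met, and then to read off the conclusion. So the proof will be an assembly rather than a new construction.

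First I would fix a separable metrizable topological pseudovector group $(G,+,*)$ and apply \PRO{metriz} to obtain a $\nabla$-norm $q$ inducing its topology. Since $\nabla(0) = 1 \neq 0$, property (P5) applies and shows that $\|\cdot\|_G := q \wedge 1$ is again a $\nabla$-norm; it is now bounded by $1$, and, agreeing with $q$ on a neighbourhood of $0$, it induces the same (original) topology on $G$. Thus $(G,+,*,\|\cdot\|_G)$ is a subnormed topological pseudovector group whose subnorm is bounded by $1$, and the action $*$ is continuous with respect to $\|\cdot\|_G$ because it was continuous for the original topology.

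Next I would verify that this group satisfies every hypothesis of \THM{3} for $N = 0$ and for both values of $r$ simultaneously. As $\|\cdot\|_G \leqsl 1$ is a bounded value, $(G,+,\|\cdot\|_G)$ is of class $\OOo_0$ (every group with bounded value is of this class), and since $G$ is separable we get $(G,+,\|\cdot\|_G) \in \Gg_1(0) \subset \Gg_\infty(0)$, hence $(G,+,\|\cdot\|_G) \in \Gg_r(0)$ for $r \in \{1,\infty\}$. Condition (AX) holds automatically here, as $\kappa(0) = \nabla(0) = 1 \neq 0$ (so no case distinction on $r$ or on triviality of $G$ is needed). Applying \THM{3} then yields a $\nabla$-normed pseudovector group $(\bar G,+,*,\|\cdot\|) \supset (G,+,*,\|\cdot\|_G)$ with $\LlL(\bar G)_{fin}$ dense in $\bar G$ and with $\bar G$ isometrically group isomorphic to $\GGG_r(0)$. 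Because this $\bar G$ arises through the completion construction of \PRO{compl} (its elements being continuous, indeed Lipschitz), it is in fact a topological pseudovector group, i.e.\ a subnormed topological PV group.

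Finally I would observe that the inclusion $(G,+,*,\|\cdot\|_G) \hookrightarrow (\bar G,+,*,\|\cdot\|)$ exhibits $G$ as a pseudovector subgroup of $\bar G$: it preserves $+$ and $*$, and since $\|\cdot\|$ extends $\|\cdot\|_G$ the subspace topology inherited from $\bar G$ coincides with the original topology of $G$. Hence the inclusion is an isomorphism of topological pseudovector groups onto its image, and $(G,+,*)$ is isomorphic, as a topological PV group, to a pseudovector subgroup of the subnormed topological PV group $\bar G \cong \GGG_r(0)$, as required. I do not expect a genuine obstacle: the substance is packaged in \THM{3}, and the only points demanding care are the verification, via (P5), that the truncation $q \wedge 1$ remains a $\nabla$-norm inducing the correct topology, and the observation that $\bar G$ inherits continuity of the action from \PRO{compl}.
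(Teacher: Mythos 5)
Your proposal is correct and follows exactly the route the paper intends: the paper presents \THM{enl-PV} as an immediate consequence of \THM{3} together with \PRO{metriz}, and your fleshing-out (truncating the $\nabla$-norm via (P5) to land in $\Gg_r(0)$, noting $\nabla(0)\neq 0$ gives (AX), and recovering continuity of the action on $\bar G$ from density of $\LlL(\bar G)_{fin}$ and \PRO{cont}) supplies precisely the details the paper leaves implicit.
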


\SECT{Topology of $\GGG_r(N)$'s}

This part is devoted to the proof of the following

\begin{thm}{top}
Each of the topological spaces $\GGG_r(N)$'s is homeomorphic to the Hilbert space $l_2$.
\end{thm}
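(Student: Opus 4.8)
The plan is to reduce the statement to the infinite-dimensional topology fact that a separable completely metrizable (Polish) topological group which is a contractible absolute neighborhood retract is homeomorphic to $l^2$, and then to supply the two nonobvious ingredients — contractibility and the ANR property — from the pseudovector structure furnished by \THM{main3}. Each $\GGG_r(N)$ is separable (it belongs to $\Gg$) and complete (by \textup{(G1)}), hence Polish, and it is a topological group; so once we know it is a contractible ANR we may invoke the theorem of Dobrowolski and Toru\'{n}czyk (a separable complete ANR admitting a topological group structure is an $l^2$-manifold) together with the classification of $l^2$-manifolds by homotopy type (a contractible $l^2$-manifold is homeomorphic to $l^2$). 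For $N \in \{0,2\}$ no pseudovector input is needed: by \THM{Ur} the metric space $\GGG_r(N)$ is Urysohn, and the Urysohn space (bounded or unbounded) is homeomorphic to $l^2$ by Uspenskij \cite{usp} (see also \cite{pn3}). It therefore remains to treat the case $N > 2$, although the argument below applies uniformly to all $N$.

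First I would exhibit a nontrivial subnormed topological pseudovector group whose underlying valued group lies in $\Gg_r(N)$. A convenient choice is the free pseudovector group $L_0[H]$ over a finite group $H$ of the required type (say $H = \ZZZ/N\ZZZ$ for $N \geqsl 2$, or $H = \ZZZ$ for $N = 0$) equipped with its discrete value $\delta$: as recorded in Section~7 this is a Lipschitz normed PV group lying in $\Gg_{\infty}(N)$, and a norm is in particular a subnorm (a $\nabla$-norm, by \textup{(P3)}). For $r = 1$ I would replace its norm by $\|\cdot\|_{\delta} \wedge 1$; this truncation induces the very same topology (so the action `$*$' remains continuous), and by \textup{(P5)} it is again a subnorm, now bounded, placing the group in $\Gg_1(N)$. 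Applying the subnorm form of \THM{main3} (equivalently \THM{3}, whose hypothesis \textup{(AX)} is satisfied here) enlarges this group to a subnormed topological pseudovector group $(\bar{G},+,*,\|\cdot\|)$ with $\LlL(\bar{G})_{fin}$ dense in $\bar{G}$ and with $\bar{G}$ isometrically group isomorphic, as a valued group, to $\GGG_r(N)$. In particular the topological spaces $\bar{G}$ and $\GGG_r(N)$ are homeomorphic, so it suffices to prove $\bar{G} \cong l^2$. Contractibility is then immediate: the continuous map $H\colon [0,1] \times \bar{G} \to \bar{G}$, $H(t,x) = t * x$, satisfies $H(0,\cdot) \equiv 0$ and $H(1,\cdot) = \id$, so it contracts $\bar{G}$ to its neutral element.

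The genuine difficulty — and the main obstacle — is to show that $\bar{G}$ (equivalently $\GGG_r(N)$) is an absolute retract. As already remarked in Section~5, for $N > 2$ one cannot extend isometries between finite subsets to Lipschitz maps with constant arbitrarily close to $1$, so the Urysohn-style argument is unavailable; moreover the naive Dugundji-type extension via a partition of unity $\{\phi_j\}$ fails, since a generalized combination $\sum_j \phi_j(z) * f(a_j)$ does \emph{not} tend to $f(a)$ as $z \to a$ even when $\sum_j \phi_j(z) = 1$, the action being only pseudovector (there is no distributivity $\sum_j t_j * x = (\sum_j t_j) * x$). The plan is instead to exploit the subnormed pseudovector structure, following and extending \cite{pn2}: using the dense Lipschitz part $\LlL(\bar{G})_{fin}$ together with the $L^1$-type free PV groups $L_0[H]$ as building blocks (whose completions are the $l^2$-like measure-function groups), one establishes local equiconnectedness of $\bar{G}$ and verifies that it is an absolute extensor for metric spaces, i.e.\ an AR. Combining the AR property with the Polish topological group structure, the Dobrowolski--Toru\'{n}czyk theorem makes $\GGG_r(N)$ an $l^2$-manifold, and being an AR it is contractible (the explicit contraction above also witnesses this), whence the $l^2$-manifold classification gives $\GGG_r(N) \cong l^2$, completing the proof.
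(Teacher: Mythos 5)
Your overall frame agrees with the paper's: both reduce the statement to the Dobrowolski--Toru\'{n}czyk theorem for Polish groups and both draw contractibility from a pseudovector action supplied by \THM{main3}. The problem is that the one step you yourself identify as ``the genuine difficulty --- and the main obstacle'', namely that $\GGG_r(N)$ is an absolute retract, is not actually carried out: the sentence ``one establishes local equiconnectedness of $\bar{G}$ and verifies that it is an absolute extensor for metric spaces'' is a restatement of the goal, not an argument, and you have (correctly) just explained why the two standard routes to it --- the Urysohn-style extension of isometries and the Dugundji-type convex combination --- are both unavailable for $N>2$. A single global subnormed PV structure on $\GGG_r(N)$, which is what your application of \THM{main3} produces, gives contractibility of the whole space but says nothing about why small pieces of the space (e.g.\ intersections of balls) are homotopically trivial, which is what any ANR/AR criterion will demand; the radial contraction $t*x$ has no reason to preserve an arbitrary ball intersection.

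The paper closes exactly this gap with two devices you do not use. First, it invokes Toru\'{n}czyk's criterion (\THM{base}): $X$ is an AR provided every finite intersection of open balls with centres in a fixed dense subset is homotopically trivial. Second --- and this is the key idea --- it applies \THM{main3} not once but \emph{separately for each finite configuration of centres} $x_1,\ldots,x_n$ taken from the dense set $\GGG_r(N)_{fin}$: it forms the finite group $H=\grp{x_1,\ldots,x_n}$, embeds it isometrically into the free PV group $L_0[H]$ via $h\mapsto\widehat{h}$, enlarges by \THM{main3} to a (sub)normed topological PV group isometrically group isomorphic to $\GGG_r(N)$, and transports the action back by (UEP). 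Because in $L_0[H]$ one has $\|t*\widehat{h}-\widehat{h}\|_q=(1-t)q(h)$, the radial homotopy $(y,t)\mapsto\psi^{-1}(t*\psi(y))$ satisfies $p(H(y,t)-x_j)\leqsl t\,p(y-x_j)+(1-t)p(x_j)<r_j$, i.e.\ it stays inside the given ball intersection --- precisely because the PV structure was chosen \emph{adapted to those centres}. (In the bounded case $r=1$ the paper additionally contracts only compact subsets of the intersection, using a finite $\epsi$-net of auxiliary points $w_k$ adjoined to $H$, since a subnorm only gives $\|t*z\|\leqsl\|z\|$ for $t\leqsl1$ rather than $t\|z\|$.) Without this per-configuration use of \THM{main3} and the ball-intersection criterion, your proposal does not establish the AR property, so the proof is incomplete at its decisive point.
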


Note that \THM{top} for $N \in \{0,2\}$ follows from \THM{Ur} and the result of Uspenskij \cite{usp}.
However, the proof presented here needs no additional work for such $N$ and thus we give the proof
of \THM{top} in its full generality. We shall do this making use of \THM{main3}.\par
Before we pass to the proof of \THM{top}, we have to recall some notions and results of infinite-dimensional
topology. First of all, recall that the space $l_2$ is the Banach space consisting of all square summable
real sequences equipped with the norm $\|(a_n)_{n=1}^{\infty}\|_2 = (\sum_{n=1}^{\infty} a_n^2)^{1/2}$.\par
A metrizable space $X$ is said to be an \textit{absolute retract} iff it is a retract of any metrizable
space in which $X$ is embedded as a closed set. The space $X$ is \textit{homotopically trivial} provided
every map of the boundary of $[0,1]^n$ (for arbitrary $n \geqsl 1$) into $X$ is extendable to a map
of the whole cube $[0,1]^n$ into $X$. Observe that under such a definition the empty space is homotopically
trivial. A subset $Y$ of the space $X$ is said to be \textit{contractible in $X$} if there is a map $H\dd
Y \times [0,1] \to X$ such that $H(y,1) = y$ for each $y \in Y$ and the map $H(\cdot,0)$ is constant.
An elementary result concerning contractibility and homotopical triviality says that if every compact
nonempty subset of a metrizable space $M$ is contractible in $M$, then $M$ is homotopically trivial.\par
Toru\'{n}czyk in his famous works \cite{to1,to2} has given a characterization of metric spaces which are
homeomorphic to $l_2$. Based on his results, Dobrowolski and Toru\'{n}czyk \cite{d-t} has proved
the following theorem, which will be one of tools of this section.

\begin{thm}{D-T}
A separable completely metrizable topological group is homeomorphic to $l_2$ iff it is a non-locally compact
absolute retract.
\end{thm}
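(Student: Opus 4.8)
The plan is to prove the two implications separately, with essentially all the content residing in the ``if'' direction. For the ``only if'' direction I would simply invoke homeomorphism invariance: $l_2$ is separable, completely metrizable, and a topological group, and as a Banach space it is an absolute retract (by Dugundji's extension theorem), while by Riesz's lemma no infinite-dimensional normed space is locally compact. Since both ``being an absolute retract'' and ``being non-locally-compact'' are topological properties, any $G$ homeomorphic to $l_2$ is automatically a non-locally-compact absolute retract. This direction uses nothing about the group structure.

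For the converse the decisive instrument is Toru\'{n}czyk's topological characterization of $l_2$ from \cite{to1,to2}: a separable completely metrizable absolute retract $X$ is homeomorphic to $l_2$ if and only if $X$ satisfies the \emph{discrete approximation property}, i.e. every sequence of maps of finite-dimensional compacta (cubes) into $X$ can be approximated, arbitrarily closely with respect to a prescribed open cover, by maps whose images form a discrete family in $X$. By hypothesis $G$ is a separable completely metrizable absolute retract, so the entire problem reduces to \textbf{verifying the discrete approximation property for $G$}, and it is precisely here that both the group structure and the non-local compactness must enter.

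The core mechanism I would use is homogeneity. Fixing a left-invariant compatible metric $d$ (available by the metrization theorem for metrizable groups, cf. \cite[Theorem~8.8]{ber}), right translation $r_a\dd g \mapsto ga$ moves every point exactly by $d(a,e)$; hence composing a given map with $r_a$ for $a$ close to the identity is a perturbation as small as desired, so the homogeneity of $G$ lets one relocate images while respecting a given open cover $\Uu$. Non-local compactness then supplies the \emph{separating directions}: no neighbourhood of $e$ is totally bounded, so each small ball contains an infinite uniformly separated sequence, and one can in fact extract a null sequence whose finite products remain separated. Given maps $f_i$ of cubes and a cover $\Uu$, I would first thin each image (using the ANR structure together with infinite-dimensionality) and then push the $i$-th image by a carefully chosen small right translation so that the resulting images become pairwise far apart and locally finite, yet stay $\Uu$-close to the $f_i$; completeness of $G$ guarantees that the limiting maps still land in $G$.

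The hard part will be arranging closeness and discreteness \emph{simultaneously} for the whole countable family: small perturbations by themselves cannot pull apart genuinely overlapping images, so translation must be combined with a prior thinning of each image inside ever-smaller neighbourhoods, exploiting that non-local compactness renders such neighbourhoods ``infinite-dimensional enough'' to absorb finite-dimensional compacta off one another (the $Z$-set and engulfing machinery underlying \cite{to1,to2}). One must also pin down the precise, strong form of the discrete approximation property that Toru\'{n}czyk's criterion demands, and confirm that non-local compactness is genuinely indispensable --- consistently, a locally compact absolute retract group such as $\RRR^n$ fails the discrete approximation property and is not homeomorphic to $l_2$. Once the property is established, \cite{to1,to2} yields $G \cong l_2$ at once, completing the converse.
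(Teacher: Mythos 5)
First, a remark on the comparison you ask for: the paper itself offers \emph{no} proof of this statement --- it is imported verbatim as a tool, with the proof delegated to Dobrowolski and Toru\'{n}czyk \cite{d-t}, which in turn rests on Toru\'{n}czyk's characterization theorems \cite{to1,to2}. So your proposal can only be judged on whether it would stand alone. Your ``only if'' direction is correct and routine, and your overall strategy for the converse --- reduce everything to Toru\'{n}czyk's criterion and use the group structure together with non-local compactness to verify the discrete approximation property --- is exactly the strategy of the cited proof. The plan is sound.

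As a proof, however, it has a genuine gap, and you essentially name it yourself: the verification of the discrete approximation property is never carried out, and that verification \emph{is} the theorem (everything else is quotation of \cite{to1,to2} plus soft topology). What your tools actually give is this: a left-invariant compatible metric $d$ makes right translation $r_a$ move every point by exactly $d(e,a)$, and completeness plus non-local compactness shows every ball about $e$ fails to be totally bounded, hence contains an infinite uniformly separated sequence $(a_i)$; choosing the $a_i$ inductively, one can make the translated images $f_i(I^{n_i})a_i$ \emph{pairwise disjoint}, since at stage $i$ one only needs $a_i$ to avoid the finitely many compacta $f_i(I^{n_i})^{-1}f_j(I^{n_j})a_j$, $j<i$. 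But pairwise disjointness is strictly weaker than discreteness of the family: one must also exclude accumulation of points of distinct images, which for a fixed compactum $K$ follows from ``if $k_m a_{i_m}\to x$ with $k_m\in K$ then $(a_{i_m})$ converges, contradiction'', but for images $f_i(I^{n_i})$ varying with $i$ requires further bookkeeping that you do not supply. More seriously, small right translations can never separate images that genuinely overlap; they must first be ``thinned'', and that thinning step --- which you wave at with ``$Z$-set and engulfing machinery'' and defer with ``the hard part will be\dots'' --- is precisely the technical content of \cite{d-t}. Until that step is written down, the proposal is a correct reading of the literature's proof architecture rather than a proof. (A smaller point: state precisely which form of Toru\'{n}czyk's criterion you use. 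What \cite{to1} gives is that a Polish ANR with the discrete approximation property is an $l_2$-manifold; passing from ``AR $+$ manifold'' to ``homeomorphic to $l_2$'' additionally uses the classification of $l_2$-manifolds by homotopy type, which your formulation silently folds in.)
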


Thus, according to \THM{D-T}, in order to prove \THM{top}, we only need to show that each of the spaces
$\GGG_r(N)$'s is an absolute retract (that $\GGG_r(N)$ is non-locally compact it easily follows from
\THM{univ}). We shall prove this involving a very convenient criterion due to Toru\'{n}czyk \cite{to0}
(cf. \cite[Theorem~?.??]{vMb}) a special case of which is formulated below (compare with the proof
in \cite{usp}).

\begin{thm}{base}
If the intersection of every finite nonempty collection of open balls in a metric space $(X,d)$ with centres
in a given dense subset of $X$ is homotopically trivial, then $X$ is an absolute retract.
\end{thm}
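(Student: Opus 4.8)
The plan is to verify the defining property of an absolute retract directly: given an embedding of $X$ as a closed subset of an arbitrary metric space $(Y,\rho)$, I will construct a retraction $r\dd Y \to X$. On $X$ itself $r$ will be the identity, so the whole difficulty is to define $r$ continuously on the open set $U = Y \setminus X$ in such a way that $r(y)$ tends to $x$ whenever $y$ approaches a point $x \in X$. Let $S$ denote the fixed dense subset of $X$ appearing in the hypothesis and write $f(y) = \dist(y,X)$ for $y \in Y$, so that $f > 0$ on $U$.

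First I would set up a Dugundji-type combinatorial datum over $U$. Using paracompactness of the metric space $U$, I would choose a locally finite open cover $\{W_j\}_{j \in J}$ of $U$, a subordinate partition of unity $\{\varphi_j\}_{j \in J}$, and for each $j$ a centre $a_j \in S$, arranged so that both $\diam_\rho(W_j)$ and $\rho(y,a_j)$ (for $y \in W_j$) are comparable to $f(y)$, say bounded by a fixed multiple of $\inf_{W_j} f$, while $a_j$ is taken within distance $\approx f$ of the nearest points of $X$. The partition of unity yields the canonical map $\kappa\dd U \to |\NnN|$ into the geometric realization of the nerve $\NnN$ of the cover, given by barycentric coordinates $\kappa(y) = \sum_j \varphi_j(y) v_j$. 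The retraction on $U$ will be obtained as $r = \phi \circ \kappa$ for a suitable map $\phi\dd |\NnN| \to X$ sending each vertex $v_j$ to $a_j$.

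The construction of $\phi$ proceeds by induction on the skeleta of $\NnN$, and this is where the homotopical triviality hypothesis is used. A $k$-simplex $\sigma = \langle v_{j_0},\ldots,v_{j_k}\rangle$ of $\NnN$ corresponds to cover members with $W_{j_0} \cap \cdots \cap W_{j_k} \neq \varempty$; by the fineness of the cover all the centres $a_{j_0},\ldots,a_{j_k}$ then lie in a common region of $X$ whose scale is controlled by $f$ on that intersection. Choosing radii $\lambda_{j_0},\ldots,\lambda_{j_k}$ slightly larger than these scales (and increasing mildly with the dimension $k$), I would arrange that the already-defined restriction $\phi|_{\partial\sigma}$ takes values in the set $C_\sigma = \bigcap_{i=0}^{k} B(a_{j_i},\lambda_{j_i})$, an intersection of finitely many open balls with centres in $S$. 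Since $\partial\sigma$ is homeomorphic to the boundary of $[0,1]^k$ and $C_\sigma$ is homotopically trivial by hypothesis, $\phi|_{\partial\sigma}$ extends to a map $\sigma \to C_\sigma$; carrying this out compatibly over all simplices of dimension $k$ defines $\phi$ on the $k$-skeleton with images still confined to the sets $C_\sigma$. Local finiteness of the cover guarantees that $\kappa$ meets only finitely many simplices, of bounded dimension, near each point, so $\phi\circ\kappa$ is well defined and continuous on $U$.

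The main obstacle is the continuity of $r$ across the boundary $X$, i.e. the requirement $r(y)\to x$ as $y \to x \in X$; this is a purely quantitative matter that must be built into the choices above rather than into the homotopy extension. The point is that every simplex $\sigma$ met by $\kappa(y)$ has all its centres $a_{j_i}$ within $O(f(y))$ of $y$, and the confinement of $\phi$ to the sets $C_\sigma$ forces $\rho(r(y),y) = O(f(y)) \to 0$; hence $r$ extends continuously by the identity on $X$, giving the desired retraction. Taking singleton collections in the hypothesis shows that individual balls are homotopically trivial, which is what makes the base case and the local behaviour work, while allowing balls of \emph{all} radii is precisely what upgrades the conclusion from a local (ANR-type) statement to the global extension producing a genuine retraction. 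Since such an $r$ can be produced for every closed embedding of $X$, the space $X$ is an absolute retract by definition, which completes the proof.
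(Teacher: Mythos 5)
The paper itself offers no proof of this statement: it is quoted as a special case of a criterion of Toru\'{n}czyk \cite{to0} (cf. \cite{vMb}), so your argument can only be measured against the known proofs. Your overall strategy --- a Dugundji-type locally finite cover of $U = Y \setminus X$ at scale $f = \dist(\cdot,X)$, the barycentric map $\kappa$ into the nerve, and a map $\phi$ of the nerve into $X$ obtained by homotopy extension inside finite intersections of balls centred in $S$ --- is indeed the standard route. The genuine gap is exactly at the point you call ``the main obstacle'', the continuity of $r$ at points of $X$. In your skeletal induction the radii $\lambda_{j_i}$ must grow with the dimension $k$ of the simplex, and the growth per dimension cannot be made small: to force $\phi(\partial\sigma')\subset C_{\sigma'}$ you must enlarge each radius by at least the mutual distance of adjacent centres, and since the centres lie in $X$ while the cover elements lie at distance about $f$ from $X$, adjacent centres are in general $\Theta(f)$ apart and cannot be chosen closer. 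Hence $\lambda^{(k)}=\Theta(kf)$ and your boundary estimate really reads $\rho(r(y),y)=O(k(y)f(y))$, where $k(y)$ is the multiplicity of the cover at $y$. Local finiteness bounds $k(y)$ at each point of $U$, but not uniformly as $y$ approaches $X$: when $Y\setminus X$ is infinite-dimensional (e.g.\ $Y$ a Banach space, which is the case one must handle), every cover of the strip $\{f\approx 2^{-m}\}$ by sets of diameter much smaller than $2^{-m}$ has points of arbitrarily large multiplicity, so $k(y)f(y)$ need not tend to $0$ and continuity of $r$ on $X$ does not follow. This is precisely where the theorem stops being a routine exercise.

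The repair keeps your cover and your sets but reorganizes the extension over the barycentric subdivision of the nerve, so that the radii never grow. Put $C_\sigma=\bigcap_{v\in\sigma}B(a_v,2r_v)$ with radii independent of $\dim\sigma$; these sets are nonempty (they contain each $a_v$, since adjacent centres lie within $2r_v$ of one another --- a point that must be checked, because under the paper's convention the empty set is homotopically trivial) and satisfy the anti-monotonicity $C_{\sigma'}\subset C_\sigma$ for $\sigma\leqsl\sigma'$. Send the barycentre of $\sigma$ to a point of $C_\sigma$ and extend over a chain-simplex $\sigma_0<\cdots<\sigma_m$ of the subdivision inside $C_{\sigma_0}$: every proper face of the chain has least element $\geqsl\sigma_0$, hence is already mapped into some $C_{\sigma_i}\subset C_{\sigma_0}$, so homotopical triviality of $C_{\sigma_0}$ applies and the invariant ``each chain maps into $C$ of its least element'' propagates with fixed radii. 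Then $\rho(r(y),y)=O(f(y))$ with an absolute constant and the retraction is continuous on all of $Y$. (Alternatively, one can prove that $X$ is an ANR and observe that $X$ is homotopically trivial --- any compact image of the boundary of $[0,1]^n$ lies in a single ball centred in $S$ --- and then invoke Whitehead's theorem; but the ANR step faces the same dimension issue and requires the same device.)
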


Now we are ready to prove \THM{top}. It turns out that the argument is slightly more complicated in case
of bounded groups than in the unbounded case (the main reason for this is that there are no nontrivial
bounded norms on PV groups). Thus, we divide the proof into these two cases.
\vspace{0.3cm}

\textit{Proof of \THM{top} for $r=\infty$}.
Let $D = \GGG_r(N)_{fin}$ and $p$ be the value of $\GGG_{\infty}(N)$. By (G3), $D$ is dense in $\GGG_{\infty}(N)$. Take
arbitrary points $x_1,\ldots,x_n \in D$ and radii $r_1,\ldots,r_n > 0$. Let $B = \bigcap_{j=1}^n B_p(x_j,r_j)$ where
$B_p(x_j,r_j) = \{z \in \GGG_{\infty}(N)\dd\ p(z - x_j) < r_j\}$. We may assume that $B$ is nonempty. We shall show that
$B$ is contractible (in itself). Since translations $x \mapsto x + a$ ($a \in \GGG_{\infty}(N)$) are (bijective) isometries
on $\GGG_{\infty}(N)$, we may also assume that $0 \in B$. This means that
\begin{equation}\label{eqn:zero}
p(x_j) < r_j \qquad (j=1,\ldots,n).
\end{equation}
Let $H = \grp{x_1,\ldots,x_n}$. Observe that $H$ is a finite group. Denote by $q$ the restriction of $p$
to $H$. Then $(H,+,q) \in \Gg_{\infty}(N)$ and consequently $(L_0[H],+,\|\cdot\|_q) \in \Gg_{\infty}(N)$.
Notice that $\varphi\dd H \ni h \mapsto \widehat{h} \in L_0[H]$ is an isometric group homomorphism.
By \THM{main3}, there is a normed topological PV group $(G,+,*,\|\cdot\|) \supset (L_0[H],+,*,\|\cdot\|_q)$
such that $G$ and $\GGG_{\infty}(N)$ are isometrically group isomorphic. So, thanks to (UEP), there is
an isometric group isomorphism $\psi\dd \GGG_{\infty}(N) \to G$ which extends $\varphi$. Define $H\dd B
\times [0,1] \to \GGG_{\infty}(N)$ by $H(y,t) = \psi^{-1}(t * \psi(y))$. We see that $H$ is continuous and
$H(y,0) = 0$ and $H(y,1) = y$ for $y \in B$. So, it suffices to check that $H(B \times [0,1]) \subset B$.\par
For $y \in B$, $t \in [0,1]$ and $j \in \{1,\ldots,n\}$ we have, by \eqref{eqn:zero},
\begin{multline*}
p(H(y,t) - x_j) = \|t * \psi(y) - \psi(x_j)\| \leqsl\\ \leqsl \|t * \psi(y) - t * \psi(x_j)\|
+ \|t * \psi(x_j) - \psi(x_j)\| =\\
= t \|\psi(y - x_j)\| + \|t * \widehat{x_j} - \widehat{x_j}\|_q = t p(y - x_j) + (1-t) p(x_j) < r_j
\end{multline*}
and we are done.\qed
\vspace{0.3cm}

\textit{Proof of \THM{top} (for arbitrary $r$)}.
We shall improve the previous argument so that it will work also in bounded case. Let $D$; $p$; $x_1,\ldots,x_n \in D$;
$r_1,\ldots,r_n > 0$ and $B$ be as in the previous proof. As there, we may and do assume that \eqref{eqn:zero} is
fulfilled. Let $K$ be a compact subset of $B$. We shall show that $K$ is contractible in $B$.\par
For $j \in \{1,\ldots,n\}$, let $\varrho_j = \max\{p(z - x_j)\dd\ z \in K \cup \{0\}\} < r_j$. Let $\epsi > 0$ be such that
\begin{equation}\label{eqn:aux88}
\varrho_j + 2 \epsi < r_j \qquad (j=1,\ldots,n).
\end{equation}
Further, take $w_1,\ldots,w_l \in D$ for which
\begin{equation}\label{eqn:aux89}
B_p(w_k,\epsi) \cap K \neq \varempty\quad (k=1,\ldots,l)\quad \textup{and}\quad K \subset \bigcup_{k=1}^l B_p(w_k,\epsi).
\end{equation}
The above implies that
\begin{equation}\label{eqn:aux90}
p(w_k - x_j) \leqsl \varrho_j + \epsi \qquad (j \in \{1,\ldots,n\},\ k \in \{1,\ldots,l\}).
\end{equation}
Let $H = \grp{x_1,\ldots,x_n;w_1,\ldots,w_l}$, $q$ denote the restriction of $p$ to $H$ and let $\|\cdot\|_H = \|\cdot\|_q
\wedge r$. Then $(L_0[H],+,*,\|\cdot\|_H)$ is a Lipschitz subnormed PV group such that $(L_0[H],+,\|\cdot\|_H) \in
\Gg_r(N)$ and the function $\varphi\dd H \ni h \mapsto \widehat{h} \in L_0[H]$ is an isometric group homomorphism. Again,
we conclude from \THM{main3} that there is a subnormed topological PV group $(G,+,*,\|\cdot\|) \supset
(L_0[H],+,*,\|\cdot\|_H)$ such that $G$ and $\GGG_r(N)$ are isometrically group isomorphic. Therefore, there is
an isometric group isomorphism $\psi\dd \GGG_r(N) \to G$ which extends $\varphi$. Define $H\dd K \times [0,1] \to
\GGG_r(N)$ as before: $H(y,t) = \psi^{-1}(t * \psi(y))$. We only need to show that $H$ takes values in $B$. To this end,
let $y \in K$, $t \in [0,1]$ and $j \in \{1,\ldots,n\}$. Take $k \in \{1,\ldots,l\}$ such that $p(y - w_k) < \epsi$ and,
using \eqref{eqn:aux88} and \eqref{eqn:aux90}, observe that
\begin{multline*}
p(H(y,t) - x_j) = \|t * \psi(y) - \psi(x_j)\| \leqsl\\
\leqsl \|t * \psi(y - w_k)\| + \|t * \psi(w_k) - \psi(x_j)\|\leqsl\\
\leqsl \nabla(t) \|\psi(y - w_k)\| + \|t * \widehat{w_k} - \widehat{x_j}\| =\\
= p(y - w_k) + t p(w_k - x_j) + (1-t) p(x_j) \leqsl\\
\leqsl \epsi + t (\varrho_j + \epsi) + (1 - t) \varrho_j \leqsl \varrho_j + 2 \epsi < r_j
\end{multline*}
which finishes the proof.\qed

\SECT{Subnormed topological PV groups of class $\OOo_{00}$}

In this section all groups are subnormed topological pseudovector. Observe that if $\|\cdot\|$ is a subnorm on $G$, then
for every $g \in G$ the function $(0,\infty) \ni t \mapsto \|t * g\|/t \in \RRR_+$ is monotone decreasing. Consequently,
there is a finite limit
$$
\|g\|_{*0} = \lim_{t\to\infty} \frac{\|t * g\|}{t}.
$$
The function $\|\cdot\|_{*0}\dd G \to \RRR_+$ is a seminorm on $G$ such that $\|\cdot\|_{*0} \leqsl \|\cdot\|$.
In particular, $G_{*0} = \{g \in G\dd\ \|g\|_{*0} = 0\}$ is a closed PV subgroup of $G$. We call $G$ \textit{of class
$\OOo_{*0}$} iff $\|\cdot\|_{*0} \equiv 0$ or, equivalently, if $G = G_{*0}$.

\begin{dfn}{o00}
A subnormed PV group $G$ is \textit{of class $\OOo_{00}$} if $G$ is of both classes $\OOo_0$ and $\OOo_{*0}$. That is,
$G$ is of class $\OOo_{00}$ if for every $g \in G$,
$$
\lim_{t\to\infty} \frac{\|t * g\|}{t} = \lim_{n\to\infty} \frac{\|n g\|}{n} = 0.
$$
\end{dfn}

We call an element $a$ of $G$ \textit{bounded} provided $\lin \{a\}$ is a (metrically) bounded subset of $G$. Let us denote
by $G_{bd}$ the set of all bounded elements of $G$. Let $\EeE(G) = \LlL(G) \cap G_{fin} \cap G_{bd}$. So, $\EeE(G)$
is a PV subgroup of $G$ consisting of all finite rank elements which are both Lipschitz and bounded. It is easily seen that
$G$ is of class $\OOo_{00}$ provided $\EeE(G)$ is dense. We want to prove the `converse' of this statement, namely

\begin{thm}{PV00}
A subnormed topological PV group $G$ is of class $\OOo_{00}$ iff it may be enlarged to a subnormed PV group $\widetilde{G}$
such that $\EeE(\widetilde{G})$ is dense in $\widetilde{G}$. Moreover, if $G \in \Gg_r(N) \cap \OOo_{00}$, the above
$\widetilde{G}$ may be chosen so that $\widetilde{G} \in \Gg_r(N)$.
\end{thm}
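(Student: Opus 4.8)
The plan is to prove the two implications separately, reducing the ``enlargeability'' direction to a single approximation lemma that is then iterated, exactly as in the proofs of \THM{o0} and \THM{GrN-PV}.

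\emph{Necessity} is routine. Suppose $\widetilde{G} \supset G$ is a subnormed PV group with $\EeE(\widetilde{G})$ dense. If $b \in \EeE(\widetilde{G})$, then $b$ has finite rank, so $\{nb\dd n \in \ZZZ\}$ is finite and $\|nb\|/n \to 0$; and $b$ is bounded, so $\{t*b\dd t \geqsl 0\}$ is bounded and $\|t*b\|/t \to 0$. Hence $\EeE(\widetilde{G})$ is contained in the intersection of the closed subgroups $\widetilde{G}_{0*}$ and $\widetilde{G}_{*0}$ (closed because $p_{0*}$ and $\|\cdot\|_{*0}$ are continuous seminorms, dominated by the subnorm). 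Density forces $\widetilde{G} = \widetilde{G}_{0*} \cap \widetilde{G}_{*0}$, i.e. $\widetilde{G} \in \OOo_{00}$, and since both defining limits are computed with the subnorm inherited by a PV subgroup, $G$ is of class $\OOo_{00}$ as well (cf. \PRO{image} for the $\OOo_0$ part).

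For \emph{sufficiency} I would first dispose of the bounded case: if $G \in \Gg_1(N)$ then $\|\cdot\|_G \leqsl 1$, so \emph{every} element of any $\Gg_1(N)$-enlargement is bounded, and \THM{GrN-PV} already yields $\bar{G} \in \Gg_1(N)$ with $\LlL(\bar{G})_{fin} = \EeE(\bar{G})$ dense. The substance is the unbounded case, where boundedness is a genuine extra demand governed precisely by the hypothesis $\|\cdot\|_{*0} \equiv 0$. The key step is a strengthening of \LEM{00-PV}: given a subnormed PV group $(G,+,*,\|\cdot\|_G)$ and $a \in G \setminus \{0\}$ (automatically continuous, as $G$ is topological) with $\lim_{n} \|na\|_G/n = 0$ and $\lim_{t} \|t*a\|_G/t = 0$, for every $\epsi > 0$ there is an enlargement $(\bar{G},+,*,\|\cdot\|) \supset (G,+,*,\|\cdot\|_G)$ and $b \in \EeE(\bar{G})$ with $\|a-b\| \leqsl \epsi$, with $\bar{G}$ kept in $\Gg_r(N)$ when $G$ is. The construction copies \LEM{00-PV} but \textbf{caps} the new coordinate so as to bound the orbit of the adjoined generator. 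Choose $m$ as in \LEM{00-PV} ($m=N$ if $N \neq 0$, otherwise $m$ large), let $H = \grp{c}$ be cyclic of rank $m$ with discrete value $\delta$, pick $T$ with $\|t*a\|_G \leqsl \tfrac{\epsi}{2}t$ for $t \geqsl T$ (using $\|\cdot\|_{*0} \equiv 0$), set $M \geqsl 1$ with $M \geqsl \sup_{0 \leqsl t \leqsl T}\|t*a\|_G$, and put $A = mM'$ with $M' = M(a,\epsi/(2m))$ from \LEM{constM}. On $\bar{G} = G \times L_0[H]$ define
$$
\|(g,f)\| = \inf\Bigl\{\|g - \sum_j s_j*(l_j a)\|_G + \epsi \sum_j \kappa(s_j) + A\bigl(\|\sum_j s_j*(l_j \widehat{c}) - f\|_{\delta} \wedge M\bigr)\Bigr\},
$$
the infimum over $s_j \geqsl 0$, $l_j \in \{-1,1\}$. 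The cap $\wedge M$ forces $\|(0,F)\| \leqsl AM$ for every $F \in L_0[H]$ (take the empty system), so $b := (0,-\widehat{c})$ is bounded; it is Lipschitz (with constant $A$, since $\|t*b - s*b\| \leqsl A|t-s|$) and of finite rank dividing $m$, and $\|a-b\| = \|(a,\widehat{c})\| \leqsl \epsi$ by the single term $s_1=l_1=1$, $f=\widehat{c}$, $g=a$.

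\emph{The main obstacle} is the reverse inequality $\|(g,0)\| \geqsl \|g\|_G$, which guarantees that $G$ embeds isometrically; capping can only lower the infimum, so one must verify that the double truncation (integer wraparound and scaling cap) never collapses the original subnorm, and this is where both hypotheses are spent at once. I would prove it by the same induction on the number of summands as in \LEM{00-PV}: the rank-$m$ wraparound together with $\|na\|_G/n \to 0$ handles blocks of indices whose signed multiplicities sum to a multiple of $m$, while the estimate $\|s*a - t*a\|_G \leqsl M'|s-t| + \tfrac{\epsi}{2m}\kappa(s\vee t)$ from \LEM{constM}, combined with the cap level $\|t*a\|_G \leqsl \tfrac{\epsi}{2}t$ for $t \geqsl T$, absorbs large scalings against the term $\epsi\sum_j\kappa(s_j)$ exactly when the capped $\delta$-penalty saturates; preservation of $\Gg_r(N)$ is then obtained as in the earlier lemmas by replacing $\|\cdot\|$ with $\|\cdot\| \wedge r$ and invoking (P5) and \LEM{bd} under (AX). Finally I would assemble the theorem by the standard countable induction: fixing a dense sequence, build an increasing chain $G = G_0 \subset G_1 \subset \cdots$ in which at stage $n$ each point of a countable dense subset of $G_{n-1}$ is approximated to within $1/n$ by an element of $\EeE(G_n)$ via the lemma. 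Since finiteness of rank, the Lipschitz property, and boundedness are all inherited upward, $\bigcup_n \EeE(G_n) \subset \EeE(\widetilde{G})$ is dense in $\bigcup_n G_n$, and taking $\widetilde{G}$ to be the completion of $\bigcup_n G_n$ (with PV structure from \PRO{compl}) gives the desired group, lying in $\Gg_r(N)$ whenever $G$ does by \PRO{image}.
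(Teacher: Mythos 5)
Your overall strategy is exactly the paper's: the necessity direction is handled as you do it, and sufficiency is reduced to a single approximation lemma (the paper's \LEM{PV00}, which modifies \LEM{N-PV}/\LEM{00-PV} by capping the $\delta$-penalty term) and then iterated by a countable induction followed by completion. The bounded case, the verification that the cap makes $b$ bounded and Lipschitz, and the final assembly are all fine.

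There is, however, a concrete gap in the key lemma: your cap level is wrong. You truncate the penalty at $M \geqsl \sup_{0 \leqsl t \leqsl T}\|t*a\|_G$, whereas it must be truncated at $T$ itself (this is what the paper does). The point is that the reverse inequality $\|(g,0)\| \geqsl \|g\|_G$ is only in danger when the cap is active, and your stated justification --- that saturation of the capped $\delta$-penalty is absorbed by $\epsi\sum_j\kappa(s_j)$ via the estimate $\|t*a\|_G \leqsl \tfrac{\epsi}{2}t$ for $t \geqsl T$ --- presupposes that saturation forces some scaling parameters $s_k$ to exceed $T$. Since
$$
\Bigl\|\sum_k s_k*(l_k\widehat{c})\Bigr\|_{\delta} \leqsl \max_k s_k,
$$
a cap at level $T$ does have this property: if the penalty exceeds $T$ then $\max_k s_k > T$, one splits the sum at the last index with $s_z \leqsl T$, applies the uncapped inequality of \LEM{00-PV} to the head (whose $\delta$-norm is then $\leqsl T$, so its contribution is covered by $\epsi\sum\kappa(s_k) + AT$, i.e. exactly the saturated capped term), and absorbs the tail termwise by $\|s_k*(l_ka)\|_G \leqsl \epsi\kappa(s_k)$. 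With your cap at $M$ this dichotomy fails whenever $M < T$ --- which is the typical situation, e.g. whenever $t \mapsto \|t*a\|_G$ grows sublinearly, since then $\sup_{t\leqsl T}\|t*a\|_G \ll T$ --- because the penalty can saturate at level $M$ while every $s_k$ stays in $(M,T]$; there is then no tail to absorb, and you must recover a deficit of up to $A(T-M)$ from nowhere. Crude bounds such as $\|\sum_k s_k*(l_ka)\|_G \leqsl m\cdot\sup_{t\leqsl T}\|t*a\|_G$ do not close this case (they lose a factor of the number of summands), so the argument as written does not establish that $G$ embeds isometrically into $\bar{G}$. The repair is immediate --- replace $\wedge\,M$ by $\wedge\,T$ (boundedness of $b$ is still guaranteed, now by $\|t*b\| \leqsl AT$) --- and with that change your proof coincides with the paper's.
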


\THM{PV00} is an almost immediate consequence of

\begin{lem}{PV00}
Let $a \in \CcC(G) \setminus \{0\}$ be such that $a \in G_{0*} \cap G_{*0}$. For every $\epsi > 0$ there is a subnormed PV
group $(\bar{G},+,*,\|\cdot\|)$ enlarging $G$ and $b \in \EeE(\bar{G})$ such that $\|a - b\| \leqsl \epsi$. What is more,
if $\rank(a) < \infty$, then $\rank(b) = \rank(a)$; and $\|\cdot\|$ is bounded by $1$ provided so is the subnorm of $G$.
\end{lem}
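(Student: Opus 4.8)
The plan is to fuse the constructions of \LEM{N-PV} and \LEM{00-PV} and to graft onto them one genuinely new feature, namely boundedness of the approximant $b$; this is exactly where the hypothesis $a\in G_{*0}$ (and not merely $a\in G_{0*}$) will be used. First I would fix the rank of $b$ as in \LEM{00-PV}: if $\rank(a)=N<\infty$ take this $N$ (then $\lim_n\|na\|_G/n=0$ is automatic and $a\in G_{*0}$ is assumed), and otherwise, using $a\in G_{0*}$, choose $N\geqsl2$ so large that $\|na\|_G/n\leqsl\epsi/2$ for all $n\geqsl N$. Let $\mathsf M=M(a,\epsi/(2N))$ be the constant furnished by \LEM{constM} (with $\kappa=\nabla$), put $A=N\mathsf M$, and let $H=\grp{b}$ be cyclic of rank $N$ with discrete value $\delta$. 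The decisive extra choice uses $a\in G_{*0}$, i.e. $\|t*a\|_G/t\to0$: I would pick a truncation level $R\geqsl1$ so large that $\|t*a\|_G\leqsl\epsi\,t$ for every $t\geqsl R$. On $\bar G=G\times L_0[H]$ I then define a $\nabla$-seminorm by the coupled infimum
\begin{equation*}
\|(g,f)\|=\inf\Big\{\big\|g-\textstyle\sum_{j=1}^m s_j*(l_j a)\big\|_G+\epsi\sum_{j=1}^m\nabla(s_j)+A\big(\big\|\sum_{j=1}^m s_j*(l_j\widehat b)-f\big\|_{\delta}\wedge R\big)\Big\},
\end{equation*}
the infimum running over $m\geqsl0$, $s_1,\dots,s_m\geqsl0$ and $l_1,\dots,l_m\in\{-1,1\}$. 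This is precisely the formula of \LEM{00-PV} with the length penalty $\|\cdot\|_{\delta}$ replaced by its numerical truncation $\|\cdot\|_{\delta}\wedge R$.

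The elementary properties of $b:=(0,-\widehat b)$ are then built in. Taking $m=0$ in the infimum gives $\|(0,f)\|\leqsl A(\|f\|_{\delta}\wedge R)\leqsl AR$ for \emph{every} $f$, so the whole subgroup $\{0\}\times L_0[H]$ has $\|\cdot\|\leqsl AR$; since $\lin\{b\}\subseteq\{0\}\times L_0[H]$, the element $b$ is \emph{bounded}. The same estimate with $s*\widehat b-t*\widehat b$ in place of $f$ yields $\|s*b-t*b\|\leqsl A\,|s-t|$, so $b\in\LlL(\bar G)$; and $Nb=(0,-N\widehat b)=0$ gives $\rank(b)\mid N$, with $\rank(b)=N$ once $b\neq0$ is known. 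Finally $a-b=(a,\widehat b)$, and the one–term representation $s_1=1,l_1=1$ gives $\|(a,\widehat b)\|\leqsl\epsi$. Thus, modulo showing that $\|\cdot\|$ is a value extending $\|\cdot\|_G$ and that $b\neq0$, we will have $b\in\EeE(\bar G)$ with $\|a-b\|\leqsl\epsi$, and $\rank(b)=\rank(a)$ when $a$ has finite rank.

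The main obstacle is the inequality $\|(g,0)\|\geqsl\|g\|_G$, which (as in \LEM{00-PV}) reduces to showing, for every representation,
\begin{equation*}
\epsi\sum_{j=1}^m\nabla(s_j)+A\big(\big\|\textstyle\sum_{j=1}^m s_j*(l_j\widehat b)\big\|_{\delta}\wedge R\big)\geqsl\big\|\textstyle\sum_{j=1}^m s_j*(l_j a)\big\|_G .
\end{equation*}
Truncating the penalty \emph{weakens} the left–hand side, so this is strictly harder than in \LEM{00-PV}; this is where the whole difficulty sits. Rather than redo the three–case induction of \LEM{00-PV}, I plan a reduction: cap every dilation, setting $s_j':=s_j\wedge R$ (keeping the same $l_j$). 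The capped profile has $\big\|\sum_j s_j'*(l_j\widehat b)\big\|_{\delta}\leqsl R$ and $\leqsl\big\|\sum_j s_j*(l_j\widehat b)\big\|_{\delta}$, so its \emph{untruncated} penalty is dominated by $A(\|\cdot\|_{\delta}\wedge R)$; to the capped representation the original inequality of \LEM{00-PV} applies verbatim. The capping error in the $G$–part is $\sum_{s_j>R}\|s_j*a-R*a\|_G\leqsl\sum_{s_j>R}(\|s_j*a\|_G+\|R*a\|_G)\leqsl2\epsi\sum_{s_j>R}\nabla(s_j)$ by the choice of $R$, i.e. it is absorbed by the (untruncated) term $\epsi\sum_j\nabla(s_j)$ up to a fixed constant. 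Carrying the whole construction out with $\epsi$ replaced by a fixed fraction of itself (the accuracy $\epsi$ being arbitrary) removes this constant, giving $\|(g,0)\|=\|g\|_G$; together with the routine observation (as in \LEM{00-PV}) that $\|(g,f)\|>0$ whenever $f\neq0$, this makes $\|\cdot\|$ a value, forces $b\neq0$, and fixes $\rank(b)=N$.

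It remains to record the side conditions. Membership $\bar G\in\Gg_r(N)$ when $G\in\Gg_r(N)$ is clear from the construction, since $L_0[H]$ has exponent $N$ and, for the bounded case, one truncates $\|\cdot\|$ by $\wedge r$ invoking \LEM{bd} and (P5) exactly as at the end of \LEM{00-PV} and in \THM{GrN-PV}. For the last clause, if the subnorm of $G$ is bounded by $1$ I would replace $\|\cdot\|$ by $\|\cdot\|\wedge1$, which is again a subnorm by (P5) (as $\nabla(0)=1\neq0$), agrees with $\|\cdot\|_G$ on $G$, keeps $b$ bounded, Lipschitz and of rank $N$, and preserves $\|a-b\|\leqsl\epsi$ (assuming harmlessly $\epsi\leqsl1$). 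This completes the proof plan.
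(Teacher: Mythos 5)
Your construction is correct and is essentially the paper's own proof: both truncate the $\delta$-penalty at a finite level ($R$ here, $T$ in the paper) supplied by the hypothesis $a \in G_{*0}$, and both dispose of representations exceeding that level via the estimate $\|t * a\|_G \leqsl \epsi t$ for large $t$. The only (harmless) difference is that you cap each dilation at $R$ and absorb the capping error into the term $\epsi \sum_j \nabla(s_j)$ at the cost of a constant factor, removed by decoupling the coefficient in the formula from the constants $A$, $R$, whereas the paper splits the ordered representation at the last $s_z \leqsl T$ and bounds the discarded tail termwise, which avoids any loss.
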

\begin{proof}
We mimic the proofs of \LEM{N-PV} and \LEM{00-PV}. Let $\|\cdot\|_G$ denote the subnorm of $G$. If $a$ has finite rank,
let $N = \rank(a)$. Otherwise let $N \geqsl 2$ be as in \eqref{eqn:aux80}. Put $\kappa = \nabla$. Take $T > 1$ such that
for each $t \geqsl T$,
\begin{equation}\label{eqn:aux95}
N \|t * a\|_G \leqsl \epsi t.
\end{equation}
If $\rank(a) < \infty$, repeat the proof of \LEM{N-PV} (below we use the same notation as there) with $\|\cdot\|$
replaced by
\begin{multline*}
\|(g,f)\| = \inf \{\|g - \psi_G(u)\|_G + \epsi \|u\|_{\kappa} + A (\|\psi_H(u) - f\|_{\delta} \wedge T)\dd\\
u \in L_0[\ZZZ]\}.
\end{multline*}
If $\rank(a) = \infty$, repeat the proof of \LEM{00-PV} with
\begin{multline*}
\|(g,f)\| = \inf\{\|g - \sum_{j=1}^m s_j * (l_j a)\|_G + \epsi \sum_{j=1}^m \kappa(s_j)\\ + A \Bigl[\|\sum_{j=1}^m s_j *
(l_j \widehat{b}) - f\|_{\delta} \wedge T\Bigr]\dd\\ m \geqsl 0,\ s_1,\ldots,s_m \geqsl 0,\ l_1,\ldots,l_m \in \{-1,1\}\}.
\end{multline*}
In order to show that $\|(g,0)\| \geqsl \|g\|_G$ for $g \in G$, distinguish between two cases: when $\|\psi_H(u)\|_{\delta}
\leqsl T$ (respectively $\|\sum_{j=1}^m s_j * (l_j \widehat{b})\|_{\delta} \leqsl T$) and when the latter is false.
In the first case just copy the original proof of suitable lemma. Here we only show how to derive the second case.\par
First assume $\rank(a) < \infty$. Write $u = \sum_{k=1}^m s_k * \widehat{l_k}$ with $0 = s_0 < \ldots < s_m$ and
$l_1,\ldots,l_m \in \ZZZ \setminus \{0\}$. Let $z \in \{0,\ldots,m\}$ be such that $s_z \leqsl T$ and $s_{z+1} > T$. Put
$u' = \sum_{k=0}^z s_k * \widehat{l_k} \in L_0[\ZZZ]$. Notice that $\|\psi_H(u')\|_{\delta} \leqsl T$, $\|\psi_G(u)\|_G
\leqsl \|\psi_G(u')\|_G + \sum_{k=z+1}^m \|s_k * (l_k a)\|_G$ and, by \eqref{eqn:aux95}, $\|s_k * (l_k a)\|_G \leqsl
\epsi s_k = \epsi \kappa(s_k)$ for $k > z$. So,
\begin{multline*}
\|\psi_G(u)\|_G \leqsl \|\psi_G(u')\|_G + \epsi \sum_{k=z+1}^m \kappa(s_k) \leqsl\\ \leqsl \epsi \sum_{k=1}^z \kappa(s_k)
+ A \|\psi_H(u')\|_{\delta} + \epsi \sum_{k=z+1}^m \kappa(s_k) \leqsl \epsi \sum_{k=1}^m \kappa(s_k) + AT =\\
= \epsi \|u\|_{\kappa} + A (\|\psi_H(u)\|_{\delta} \wedge T)
\end{multline*}
which gives the assertion.\par
When $\rank(a) = \infty$, argue in a similar way. Assuming that $0 = s_0 < \ldots < s_m$ and taking $z \in \{0,\ldots,m\}$
for which $s_z \leqsl T$ and $s_{z+1} > T$, observe that $\|\sum_{k=1}^z s_k * (l_k \widehat{b})\|_{\delta} \leqsl T$ and
(by \LEM{00-PV})
$$
\|\sum_{k=1}^z s_k * (l_k a)\|_G \leqsl \epsi \sum_{k=1}^z \kappa(s_k)
+ A \|\sum_{k=1}^z s_k * (l_k \widehat{b})\|_{\delta}.
$$
Further, it follows from \eqref{eqn:aux95} that $\|s_k * (l_k a)\|_G \leqsl \epsi \kappa(s_k)$ for $k > z$ (recall that
$l_k \in \{-1,1\}$). Hence
\begin{multline*}
\|\sum_{k=1}^m s_k * (l_k a)\|_G \leqsl \|\sum_{k=1}^z s_k * (l_k a)\|_G + \epsi \sum_{k=z+1}^m \kappa(s_k) \leqsl\\
\leqsl \epsi \sum_{k=1}^m \kappa(s_k) + AT \leqsl \epsi \sum_{k=1}^m \kappa(s_k) + A \Bigl[\|\sum_{k=1}^m s_k *
(l_k \widehat{b})\|_{\delta} \wedge T\Bigr]
\end{multline*}
which is equivalent to $\|(g,0)\| \geqsl \|g\|_G$.\par
Finally, replace $\|\cdot\|$ by $\|\cdot\| \wedge 1$ if $\|\cdot\|_G \leqsl 1$. The details are left for the reader.
\end{proof}

\THM{main1} and the existence of the Gurari\v{\i} Banach space suggests to adapt these ideas in the realm of subnormed
topological pseudovector (Abelian) groups. It may be done in a few ways. One of them is proposed below.\par
We fix $r \in \{1,\infty\}$, $N \in \ZZZ_+ \setminus \{0\}$. For simplicity, denote by $\PpP\VvV\Gg_r(N)$ the class of all
subnormed \textbf{topological} PV groups which belong, as valued groups, to $\Gg_r(N)$ and are of class $\OOo_{00}$.
In particular, $\PpP\VvV\Gg_{\infty}(0)$ consists of all separable subnormed topological PV groups of class $\OOo_{00}$.
Let us call a subnormed PV group $H \in \PpP\VvV\Gg_r(N)$ \textit{of class $\EeE_r(N)_{fin}$} if $H = \EeE(H)$ and $H$ is
\textit{finitely generated}, that is, $H = \lin F$ for some finite subset of $H$. Observe that the subnorm of a PV group
of class $\EeE_r(N)_{fin}$ is bounded.\par
`Gurari\v{\i}-like' space in category of subnormed topological PV groups of class $\PpP\VvV\Gg_r(N)$ may be defined
as follows.

\begin{dfn}{Gurarii-like}
A subnormed PV group $(G,+,*,\|\cdot\|_G)$ is said to be \textit{$\EeE_r(N)$-Gurari\v{\i}} iff the following two
conditions are satisfied:
\begin{enumerate}[(GPV1)]
\item $(G,+,\|\cdot\|_G) \in \Gg_r(N)$, $G$ is complete and the set $\EeE(G)$ is dense in $G$,
\item whenever $(H,+,*,\|\cdot\|_H) \in \EeE_r(N)_{fin}$, $K$ is a finitely generated PV subgroup of $H$ and $\varphi\dd K
   \to G$ is an isometric linear homomorphism, for every $\epsi \in (0,1)$ there exists an $\epsi$-almost isometric linear
   homomorphism $\psi\dd H \to G$ such that $\|\varphi(x) - \psi(x)\|_G \leqsl \epsi$ for every $x \in K$.
\end{enumerate}
\end{dfn}

Notice that, thanks to (GPV1), every $\EeE_r(N)$-Gurari\v{\i} PV group is of class $\PpP\VvV\Gg_r(N)$ (by \PRO{cont} and
\THM{PV00}). Below we list other properties of them.

\begin{pro}{Gurarii}
Every $\EeE_r(N)$-Gurari\v{\i} pseudovector group is isometrically group isomorphic to $\GGG_r(N)$.
\end{pro}
\begin{proof}
Let $G$ be a subnormed $\EeE_r(N)$-Gurari\v{\i} PV group. Thanks to \PRO{completion}, it suffices to check that
$G_0 = \EeE(G)$ satisfies conditions (QG1) and (QG2) for $Q = \RRR_+$. But this easily follows from \THM{g-PV} and (GPV2).
(Indeed, note that the PV group $\bar{G}$ obtained from \THM{g-PV} may be constructed so that $\bar{G} \in
\EeE_r(N)_{fin}$.)
\end{proof}

We shall now show that every $\EeE_r(N)$-Gurari\v{\i} PV group satisfies the counterpart of (UEP). Precisely, that
in (GPV2) one may put $\epsi = 0$. We can provide this by repeating the arguments of Sections~2 and~3. The crucial point
here is that the PV groups of class $\EeE_r(N)_{fin}$ have bounded subnorms. We start with

\begin{lem}{APV2}
Let $(D_j,+,*,\|\cdot\|_j) \in \EeE_r(N)_{fin}$ ($j=1,2$) and $D_0$ be a finitely generated PV subgroup of $D_1$. Let $u\dd
D_0 \to D_2$ and $v\dd D_1 \to D_2$ be an isometric and, respectively, an $\epsi$-almost isometric linear homomorphism
(where $\epsi \in (0,1)$) such that $\|u - v\bigr|_{D_0}\|_{\infty} \leqsl \epsi$. Then there are a subnormed PV group
$(G,+,*,\|\cdot\|) \in \EeE_r(N)_{fin}$ and isometric linear homomorphisms $\psi_j\dd D_j \to G$ ($j=1,2$) such that
$\psi_1\bigr|_{D_0} = \psi_2 \circ u$ and $\|\psi_1 - \psi_2 \circ v\|_{\infty} \leqsl A\epsi$ where $A = 1 +
\diam(D_1,\|\cdot\|_1)$.
\end{lem}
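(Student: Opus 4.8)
The plan is to replicate, in the category of subnormed pseudovector groups, the amalgamation carried out in \LEM{A2}; the only genuinely new features are that $D_1$ and $D_2$ are no longer finite (so the infimum defining the glued norm need not be attained) and that the glued seminorm must be recognised as a $\nabla$-norm, not merely a value. First I would put $\tilde{D}_0 = \{(x,-u(x))\dd\ x \in D_0\} \subset D_1 \times D_2$ and observe that, since $u$ is linear, $\tilde{D}_0$ is a pseudovector subgroup of the product PV group $D_1 \times D_2$ (because $t * (x,-u(x)) = (t * x, -u(t * x))$). Consequently $\tilde{D}$, $\pi$, $\tilde{\psi}_1$, $\tilde{\psi}_2$ of \eqref{eqn:pre} live in the PV category, with $\pi,\tilde{\psi}_1,\tilde{\psi}_2$ linear, and exactly as in \LEM{A2} one gets $\tilde{\psi}_2 \circ u = \tilde{\psi}_1\bigr|_{D_0}$ and $\tilde{D} = \tilde{\psi}_1(D_1) + \tilde{\psi}_2(D_2)$. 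Note that $A = 1 + \diam(D_1,\|\cdot\|_1)$ is finite precisely because the subnorm of a member of $\EeE_r(N)_{fin}$ is bounded.

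Next I would copy the defining formula of \LEM{A2} verbatim: for $z \in \tilde{D}$ set
$$
\tilde{\lambda}(z) = \inf\{\|x_1-x_0\|_1 + A\epsi\,\delta_{\tilde{D}}(\tilde{\psi}_1(x_0) - (\tilde{\psi}_2\circ v)(x_0)) + \|x_2 + v(x_0)\|_2\dd\ x_0,x_1 \in D_1,\ x_2 \in D_2,\ z = \tilde{\psi}_1(x_1) + \tilde{\psi}_2(x_2)\}.
$$
Symmetry and subadditivity are the usual infimal-convolution computations. The step where the PV structure enters is showing $\tilde{\lambda}$ is a $\nabla$-seminorm: given $t \geqsl 0$ and a witness $(x_0,x_1,x_2)$ for $z$, the triple $(t * x_0,t * x_1,t * x_2)$ is a witness for $t * z$ (using linearity of $\tilde{\psi}_1,\tilde{\psi}_2,v$), and each summand grows by a factor of at most $\nabla(t)$ — for the first two this is the $\nabla$-norm inequality in $D_1$, $D_2$ together with linearity of $v$, and for the penalty term it follows from $\delta_{\tilde{D}}(t * d) \leqsl 1 \leqsl \nabla(t)\,\delta_{\tilde{D}}(d)$ whenever $d \neq 0$. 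Passing to the infimum yields $\tilde{\lambda}(t * z) \leqsl \nabla(t)\,\tilde{\lambda}(z)$.

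Then I would prove $\tilde{\lambda}(\tilde{\psi}_j(x)) = \|x\|_j$ for $x \in D_j$ (the inequality ``$\leqsl$'' being immediate) and $\|\tilde{\psi}_1 - \tilde{\psi}_2 \circ v\|_{\infty} \leqsl A\epsi$. The content of the identity is the pair of inequalities \eqref{eqn:aux20}--\eqref{eqn:aux21} of \LEM{A2}; since these involve only the norm values at fixed elements and never the action, the argument transfers unchanged, the two cases $\tilde{\psi}_1(x_0) = (\tilde{\psi}_2\circ v)(x_0)$ and $\tilde{\psi}_1(x_0) \neq (\tilde{\psi}_2\circ v)(x_0)$ being handled exactly as there by the isometricity of $u$, the $\epsi$-almost isometricity of $v$ and \eqref{eqn:aux19}.

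Because $D_1,D_2$ are infinite I cannot deduce positive-definiteness from attainment of the infimum as in \LEM{A2}; instead I would follow the end of \LEM{A1} and pass to $G = \tilde{D}/\tilde{\lambda}^{-1}(\{0\})$, endowed with the induced $\nabla$-norm $\|\cdot\|$ and induced action, setting $\psi_j = \tilde{\pi} \circ \tilde{\psi}_j$ for the quotient linear homomorphism $\tilde{\pi}$. The identity $\tilde{\lambda}(\tilde{\psi}_j(x)) = \|x\|_j$ keeps each $\psi_j$ an isometric linear homomorphism, and the relations $\psi_1\bigr|_{D_0} = \psi_2 \circ u$ and \eqref{eqn:w1-w2} survive the quotient; if $r = 1$ I would replace $\|\cdot\|$ by $\|\cdot\| \wedge r$, which preserves the $\nabla$-norm property. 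It remains to check $(G,+,*,\|\cdot\|) \in \EeE_r(N)_{fin}$: $G = \psi_1(D_1) + \psi_2(D_2)$ is finitely generated and, as a quotient of $D_1 \times D_2$, of exponent $N$ (of finite rank if $N = 0$); its subnorm is bounded since $\|\psi_1(x_1) + \psi_2(x_2)\| \leqsl \|x_1\|_1 + \|x_2\|_2$; and each element is Lipschitz, being a sum of images of Lipschitz elements. The steps I expect to demand the most care are verifying that $\tilde{\lambda}$ is honestly a $\nabla$-seminorm (the scaling of the penalty term) and the bookkeeping placing $G$ in $\EeE_r(N)_{fin}$, as these are exactly the ingredients absent from the purely value-theoretic \LEM{A2}.
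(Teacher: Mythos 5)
Your proposal is correct and follows essentially the same route as the paper: the paper's own proof simply says to define $D$, $w_1$, $w_2$ and the semivalue $\lambda$ exactly as in Lemma \ref{lem:A2}, observe that $D$ is a PV group, $\lambda$ a semisubnorm and $w_1,w_2$ linear, and then pass to the quotient $D/\lambda^{-1}(\{0\})$. Your elaborations (linearity of $\tilde{D}_0$, the $\nabla$-scaling of the penalty term via $\delta_{\tilde{D}}(t*d)\leqsl\nabla(t)\,\delta_{\tilde{D}}(d)$, and the quotient step replacing the attained-infimum argument) are exactly the details the paper leaves implicit.
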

\begin{proof}
Define $D$, $w_1$, $w_2$ and a semivalue $\lambda$ on $D$ in exactly the same way as in the proof of \LEM{A2}. Observe that
$D$ is a PV group, $\lambda$ is a semisubnorm and $w_1$ and $w_2$ are linear homomorphisms. To this end, let $G$ be
the quotient subnormed PV group $D / \lambda^{-1}(\{0\})$ and $\psi_1$ and $\psi_2$ be the homomorphisms naturally induced
by $w_1$ and $w_2$.
\end{proof}

Now repeating the proofs of \LEM{QG4} and \THM{UEP} we obtain the next two results.

\begin{lem}{APV3}
Let $G$ be an $\EeE_r(N)$-Gurari\v{\i} PV group, $(H,+,*,\|\cdot\|_H) \in \EeE_r(N)_{fin}$ be a subnormed PV group and $K$
its finitely generated PV subgroup. Further, let $\varphi\dd K \to G$ and $\psi\dd H \to G$ be, respectively, an isometric
and an $\epsi$-almost isometric linear homomorphism (where $\epsi \in (0,1)$) such that $\|\psi\bigr|_K
- \varphi\|_{\infty} \leqsl \epsi$. For every $\delta \in (0,\epsi)$ there is a $\delta$-almost isometric linear
homomorphism $\psi'\dd H \to G$ such that $\|\psi'\bigr|_K - \varphi\|_{\infty} \leqsl \delta$ and $\|\psi
- \psi'\|_{\infty} \leqsl C \epsi$ where $C = 3 + 2 \diam (H,\|\cdot\|_H)$.
\end{lem}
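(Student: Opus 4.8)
The plan is to transcribe the proof of \LEM{QG4} into the category of subnormed pseudovector groups, replacing the group-amalgamation \LEM{A2} by its pseudovector counterpart \LEM{APV2}, and the almost-isometric extension \LEM{QG3} by the defining Gurari\v{\i} property (GPV2) of $G$. First I would set $L = \varphi(K) + \psi(H)$ and equip it with the subnorm inherited from $\|\cdot\|_G$, regarding $\varphi$ and $\psi$ as corestricted to maps into $L$. Since $K$ and $H$ are finitely generated, so are $\varphi(K)$ and $\psi(H)$, whence $L$ is a finitely generated PV subgroup of $G$; because linear homomorphisms carry Lipschitz (respectively finite-rank, respectively bounded) elements to elements of the same kind, and because the subnorm of $H$ is bounded, one checks that $L$ consists of Lipschitz, bounded, finite-rank elements, i.e. $(L,+,*,\|\cdot\|_G\bigr|_L) \in \EeE_r(N)_{fin}$.

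Next I would apply \LEM{APV2} with $D_1 = H$, $D_2 = L$ and $D_0 = K$, taking $u = \varphi\dd K \to L$ (isometric) and $v = \psi\dd H \to L$ ($\epsi$-almost isometric); the hypothesis $\|u - v\bigr|_{D_0}\|_{\infty} \leqsl \epsi$ is exactly the standing assumption on $\psi$ and $\varphi$. This yields a PV group $(D,+,*,\|\cdot\|) \in \EeE_r(N)_{fin}$ together with isometric linear homomorphisms $w_H\dd H \to D$ and $w_L\dd L \to D$ satisfying $w_H\bigr|_K = w_L \circ \varphi$ and $\|w_H - w_L \circ \psi\|_{\infty} \leqsl A\epsi$, where $A = 1 + \diam(H,\|\cdot\|_H)$. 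Writing $D_0 := w_L(L)$, which is a finitely generated PV subgroup of $D$, the inverse $w_L^{-1}\dd D_0 \to L \subset G$ is an isometric linear homomorphism. Now the Gurari\v{\i} property (GPV2), applied to $D \in \EeE_r(N)_{fin}$, its subgroup $D_0$ and the map $w_L^{-1}$, furnishes for the given $\delta$ a $\delta$-almost isometric linear homomorphism $v\dd D \to G$ with $\|v\bigr|_{D_0} - w_L^{-1}\|_{\infty} \leqsl \delta$.

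Finally I would put $\psi' = v \circ w_H$; as a composition of an isometric and a $\delta$-almost isometric linear homomorphism it is $\delta$-almost isometric. For $x \in K$ one has $w_H(x) = (w_L \circ \varphi)(x) \in D_0$, so $\|\varphi(x) - \psi'(x)\|_G = \|w_L^{-1}((w_L \circ \varphi)(x)) - v((w_L \circ \varphi)(x))\|_G \leqsl \delta$, giving $\|\psi'\bigr|_K - \varphi\|_{\infty} \leqsl \delta$. For $h \in H$, since $\psi(h) \in L$ and $w_L^{-1}((w_L \circ \psi)(h)) = \psi(h)$, the triangle inequality together with the $\delta$-almost isometricity of $v$ and the second estimate of \LEM{APV2} give $\|\psi(h) - \psi'(h)\|_G \leqsl \delta + (1 + \delta) A\epsi \leqsl \epsi + 2A\epsi = C\epsi$, where $C = 1 + 2A = 3 + 2\diam(H,\|\cdot\|_H)$; here I use $\delta < \epsi < 1$.

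The main obstacle I anticipate is not the formal chain of estimates, which copies \LEM{QG4} essentially verbatim, but the bookkeeping needed to guarantee that every auxiliary object ($L$, the amalgam $D$, and the subgroup $D_0$) really lies in $\EeE_r(N)_{fin}$, so that \LEM{APV2} and (GPV2) are applicable. In the purely group-theoretic \LEM{QG4} this was automatic because all groups in sight were finite; here one must instead verify finite generation and boundedness of the subnorms, and that linearity together with the Lipschitz, bounded and finite-rank properties are preserved under the homomorphisms involved.
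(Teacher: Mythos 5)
Your proposal is correct and follows exactly the route the paper intends: the paper's own proof of this lemma is literally the instruction to repeat the proof of \LEM{QG4}, substituting \LEM{APV2} for \LEM{A2} and the Gurari\v{\i} property (GPV2) for \LEM{QG3}, which is precisely what you have written out. Your additional bookkeeping verifying that $L$, $D$ and $w_L(L)$ lie in $\EeE_r(N)_{fin}$ (finite generation, and preservation of Lipschitz, bounded and finite-rank elements under linear homomorphisms) is the only content the paper leaves implicit, and you handle it correctly.
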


\begin{thm}{PVUEP}
Let $G$ be an $\EeE_r(N)$-Gurari\v{\i} PV group, let $H \in \EeE_r(N)_{fin}$ and $K$ be a finitely generated PV subgroup
of $H$. Every isometric linear homomorphism of $K$ into $G$ is extendable to an isometric linear homomorphism of $H$
into $G$.
\end{thm}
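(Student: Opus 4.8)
The plan is to transcribe, almost verbatim, the iteration used in the proof of \THM{UEP}, now in the pseudovector category. The two ingredients of that argument are replaced by their PV counterparts: the initial almost-isometric map is furnished by (GPV2) in place of \LEM{QG3}, and the successive approximation step is \LEM{APV3} in place of \LEM{QG4}. All applications of \LEM{APV3} are made with the \emph{same} data $G$, $H$, $K$ and $\varphi$, only the approximating map varying; in particular the boundedness of the subnorm of $H$ (which holds because $H \in \EeE_r(N)_{fin}$) guarantees that the constant $C = 3 + 2\diam(H,\|\cdot\|_H)$ occurring there is finite.

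Concretely, first I would set $\epsi_n = 2^{-n}$ and, by (GPV2), choose an $\epsi_1$-almost isometric linear homomorphism $\psi_1\dd H \to G$ with $\|\psi_1\bigr|_K - \varphi\|_{\infty} \leqsl \epsi_1$. Given $\psi_{n-1}$ (an $\epsi_{n-1}$-almost isometric linear homomorphism agreeing with $\varphi$ on $K$ up to $\epsi_{n-1}$), I apply \LEM{APV3} with $\psi = \psi_{n-1}$, $\epsi = \epsi_{n-1}$ and $\delta = \epsi_n$ to produce a linear homomorphism $\psi_n\dd H \to G$ that is $\epsi_n$-almost isometric, satisfies $\|\psi_n\bigr|_K - \varphi\|_{\infty} \leqsl \epsi_n$, and obeys $\|\psi_n - \psi_{n-1}\|_{\infty} \leqsl C \epsi_{n-1}$. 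Since $\sum_n C \epsi_{n-1} < \infty$, the sequence $(\psi_n(h))_{n=1}^{\infty}$ is Cauchy for every $h \in H$, and the completeness of $G$ (part of (GPV1)) lets me define $\psi\dd H \to G$ by $\psi(h) = \lim_{n\to\infty}\psi_n(h)$.

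It then remains to check that $\psi$ has the required properties. Being a pointwise limit of group homomorphisms, $\psi$ is a group homomorphism; letting $\epsi_n \to 0$ in the almost-isometry estimate shows $\psi$ is isometric, and the bound $\|\psi_n\bigr|_K - \varphi\|_{\infty} \leqsl \epsi_n$ gives $\psi\bigr|_K = \varphi$. The one genuinely new point compared with \THM{UEP} is linearity of the limit: for $t \in \RRR_+$ and $x \in H$ we have $\psi_n(t * x) = t * \psi_n(x)$, and because the action `$*$' of the topological PV group $G$ is continuous, passing to the limit yields $\psi(t * x) = t * \psi(x)$. Thus $\psi$ is an isometric linear homomorphism extending $\varphi$.

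As for the main obstacle: essentially all the hard work has already been absorbed into \LEM{APV2}, \THM{g-PVG} and \THM{g-PV}, which underlie \LEM{APV3}, so within the present proof nothing substantial remains. The only step demanding a moment's care is verifying that linearity survives the pointwise limit, which—as noted—is immediate from the continuity of the action on $G$.
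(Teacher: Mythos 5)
Your proof is correct and coincides with the paper's own argument: the paper establishes \THM{PVUEP} precisely by ``repeating the proofs of \LEM{QG4} and \THM{UEP}'', i.e.\ the iteration you describe, with (GPV2) supplying the initial almost-isometric linear homomorphism and \LEM{APV3} the successive corrections, the finiteness of $C = 3 + 2\diam(H,\|\cdot\|_H)$ being guaranteed by $H \in \EeE_r(N)_{fin}$. Your extra observation that linearity survives the pointwise limit (via continuity of $x \mapsto t * x$, which is $\nabla(t)$-Lipschitz on a subnormed PV group) is the only point the paper leaves implicit, and you handle it correctly.
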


\begin{cor}{uniq}
Each two subnormed $\EeE_r(N)$-Gurari\v{\i} PV groups are isometrically isomorphic as subnormed PV groups.
\end{cor}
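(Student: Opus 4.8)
The plan is to prove \COR{uniq} by a back-and-forth argument whose single engine is the extension property \THM{PVUEP}. Note first that \PRO{Gurarii} already identifies each $\EeE_r(N)$-Gurari\v{\i} group with $\GGG_r(N)$ as a \emph{valued} group, but this does not respect the action `$*$'; to obtain an isomorphism of subnormed PV groups we must produce a bijection that is simultaneously isometric and \emph{linear}, and for this the back-and-forth between the two groups is needed. So let $G_1$ and $G_2$ be two subnormed $\EeE_r(N)$-Gurari\v{\i} PV groups. By (GPV1) each $\EeE(G_i)$ is dense in $G_i$, and since $G_i \in \Gg_r(N)$ is separable I can fix countable dense subsets $\{a_n\} \subset \EeE(G_1)$ and $\{b_n\} \subset \EeE(G_2)$.

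The plan is to construct increasing chains of finitely generated PV subgroups $K_0 \subset K_1 \subset \ldots$ of $G_1$ and $L_0 \subset L_1 \subset \ldots$ of $G_2$, together with compatible isometric linear isomorphisms $\Phi_n\dd K_n \to L_n$, starting from $K_0 = L_0 = \{0\}$. The key preliminary observation is that every finitely generated PV subgroup $H$ of $\EeE(G_i)$ belongs to the class $\EeE_r(N)_{fin}$: its elements are of finite rank, Lipschitz and bounded (Lipschitzness and boundedness being intrinsic, hence inherited), so $H = \EeE(H)$, and $H \in \Gg_r(N)$ and is of class $\OOo_{00}$ because each generator has finite rank and bounded $\lin$-orbit. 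At an odd step, to capture $a_k$ I put $K' = \lin(K_n \cup \{a_k\})$, which lies in $\EeE_r(N)_{fin}$ and contains $K_n$ as a finitely generated PV subgroup; since $G_2$ is $\EeE_r(N)$-Gurari\v{\i}, \THM{PVUEP} extends the isometric linear homomorphism $\Phi_n\dd K_n \to G_2$ to an isometric linear homomorphism on $K'$, whose image is $L_{n+1}$. At an even step I symmetrically enlarge the range by $\lin(L_{n+1} \cup \{b_k\})$ and apply \THM{PVUEP} with the Gurari\v{\i} group $G_1$ and the map $\Phi_{n+1}^{-1}$. Alternating these two moves so that every $a_k$ enters some $K_n$ and every $b_k$ enters some $L_n$ yields a chain with $\bigcup_n K_n$ dense in $G_1$ and $\bigcup_n L_n$ dense in $G_2$.

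The union $\Phi = \bigcup_n \Phi_n$ is then an isometric linear isomorphism between dense PV subgroups of $G_1$ and $G_2$. Because both groups are complete, $\Phi$ extends uniquely to an isometric bijection $\bar{\Phi}\dd G_1 \to G_2$; it remains a group homomorphism by continuity, and since both are \emph{topological} PV groups, continuity of the actions together with continuity of $\bar{\Phi}$ upgrades the identity $\bar{\Phi}(t * x) = t * \bar{\Phi}(x)$ from the dense subgroup to all of $G_1$ (this is exactly the completion mechanism of \PRO{compl}). Hence $\bar{\Phi}$ is an isomorphism of subnormed PV groups, which is the assertion. I expect the only genuinely delicate point to be the bookkeeping of the back-and-forth—ensuring that the alternating enlargements actually exhaust countable dense subsets on both sides while keeping every $\Phi_n$ an exact (not merely $\epsi$-almost) isometric linear \emph{isomorphism}; the analytic content is entirely absorbed into \THM{PVUEP}, and the membership check $\lin F \subset \EeE_r(N)_{fin}$ above is the small lemma that makes its hypotheses applicable at each step.
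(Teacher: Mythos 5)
Your proof is correct and is exactly the argument the paper intends: the corollary is left without an explicit proof because it follows from \THM{PVUEP} by the standard back-and-forth method, which is precisely what you carry out. The two supporting observations you make explicit — that a finitely generated PV subgroup of $\EeE(G_i)$ belongs to $\EeE_r(N)_{fin}$ (so \THM{PVUEP} applies at each step), and that the image of such a subgroup under an isometric linear homomorphism again lies in $\EeE(G_j)$ (so the process can be reversed) — are the right hypotheses to check, and your passage to the completion via continuity of the actions matches the mechanism of \PRO{compl}.
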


Thanks to \COR{uniq}, we may reserve a special symbol for a $\EeE_r(N)$-Gurari\v{\i} PV group. We shall denote it
(if it only exists) by $\PPP\VVV\GGG_r(N)$.\par
It follows from \THM{PV00} and \THM{PVUEP} that

\begin{pro}{univ}
Every subnormed PV group of class $\PpP\VvV\Gg_r(N)$ is embeddable into $\PPP\VVV\GGG_r(N)$ by means
of an isometric linear homomorphism.
\end{pro}

We believe $\PPP\VVV\GGG_r(N)$ exists for every $r$ and $N$. The existence of it for $N = 0$ would have an interesting
application in functional analysis (cf. \PRO{metriz}).

\begin{pro}{TVS}
If there exists $\PPP\VVV\GGG_r(0)$, there is a separable complete subnormed topological vector space $V$ of class
$\PpP\VvV\Gg_r(0)$ which is universal for all subnormed topological vector spaces of this class. Precisely, every subnormed
topological vector space of class $\OOo_{00}$ whose subnorm is bounded by $r$ admits an isometric linear homomorphism into
$V$. Moreover, every separable metrizable topological vector space admits a homeomorphic linear embedding into $V$.
\end{pro}
\begin{proof}
Put
$$
V = \{x \in \PPP\VVV\GGG_r(0)\dd\ (t + s) * x = t * x + s * y \textup{ for every } t,s \in \RRR_+\}.
$$
Equip $V$ with the subnorm inherited from the one of $\PPP\VVV\GGG_r(0)$. We leave this as a simple exercise that $V$
is a vector space (over $\RRR$) when the multiplication by reals is defined by $t v = t * v$ and $(-t) v = - (t * v)$
for $t \in \RRR_+$ and $v \in V$; and that the assertion of the proposition is fulfilled (use \PRO{metriz} and \PRO{univ}).
\end{proof}

Then next result corresponds to \PRO{embed}.

\begin{pro}{N,M}
If $M | N$, then $$\PPP\VVV\GGG_r(N,M) := \{x \in \PPP\VVV\GGG_r(N)\dd\ M \cdot x = 0\}$$ is isometrically isomorphic
as a subnormed PV group to $\PPP\VVV\GGG_r(M)$.
\end{pro}
\begin{proof}
For simplicity put $G = \PPP\VVV\GGG_r(N,M)$. We only need to check that the set $\EeE(G)$ is dense if $G$. Let $a \in G$,
$\epsi > 0$ and let $\|\cdot\|$ stand for the subnorm of $\PPP\VVV\GGG_r(N)$. By (GPV1), there is $b \in
\EeE(\PPP\VVV\GGG_r(N))$ such that $\|a - b\| \leqsl \epsi$. At the same time, according to \THM{PV00}, there is
a subnormed PV group $(G',+,*,\|\cdot\|')$ of class $\PpP\VvV\Gg_r(M)$ which enlarges $\lin \{a\}$ and such that $\EeE(G')$
is dense in $G'$. In particular, there is $a' \in \EeE(G')$ with $\|a - a'\|' \leqsl \epsi$. Now using a standard argument
of amalgamation, we see that there is a subnormed PV group $(D,+,*,\|\cdot\|_D)$ and isometric linear homomorphisms
$\psi\dd \lin \{a,b\} \to D$ and $\psi'\dd G' \to D$ which coincide on $\lin \{a\}$ (we omit the details). Put $H = \lin
\{\psi(b),\psi'(a')\} \subset D$ and $K = \lin \{\psi'(b)\} \subset H$. Notice that $H \in \EeE_r(N)_{fin}$ and apply
(GPV2) to get an $\epsi$-almost isometric linear homomorphism $\varphi\dd H \to \PPP\VVV\GGG_r(N)$ for which
$\|b - \varphi(a')\| \leqsl \epsi$. To this end, observe that $\varphi(a') \in \EeE(G)$ and $\|a - \varphi(a')\| \leqsl
2\epsi$.
\end{proof}

\begin{cor}{exist}
$\PPP\VVV\GGG_r(N)$ exists for each $N \geqsl 2$ provided $\PPP\VVV\GGG_r(0)$ exists.
\end{cor}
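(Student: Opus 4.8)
The plan is to realize $\PPP\VVV\GGG_r(N)$ (for $N \geqsl 2$) as the $N$-torsion subgroup of the group $\PPP\VVV\GGG_r(0)$, whose existence is the hypothesis. Concretely, I would set
$$
G = \{x \in \PPP\VVV\GGG_r(0)\dd\ N \cdot x = 0\},
$$
endowed with the subnorm and the action `$*$' inherited from $\PPP\VVV\GGG_r(0)$. Since $N \cdot (t * x) = t * (N \cdot x)$, the set $G$ is a PV subgroup, and as the preimage of $\{0\}$ under the continuous map $x \mapsto N \cdot x$ it is closed; hence $G$ is a complete subnormed topological PV group. Having exponent $N$ and inheriting both separability and the bound $r$ on the subnorm, $(G,+,\|\cdot\|_G) \in \Gg_r(N)$. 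It then suffices to check that $G$ satisfies \textup{(GPV1)} and \textup{(GPV2)} of \DEF{Gurarii-like}: then $G$ is $\EeE_r(N)$-Gurari\v{\i}, so that $\PPP\VVV\GGG_r(N)$ exists (and equals $G$), uniqueness being \COR{uniq}.

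For \textup{(GPV2)} the crucial remark is that $\EeE_r(N)_{fin} \subset \EeE_r(0)_{fin}$, because a group of exponent $N$ is of class $\OOo_{fin} \subset \OOo_0$, whence every $H \in \EeE_r(N)_{fin}$ also belongs to $\PpP\VvV\Gg_r(0)$. Thus, given $H \in \EeE_r(N)_{fin}$, a finitely generated PV subgroup $K$ of $H$, an isometric linear homomorphism $\varphi\dd K \to G \subset \PPP\VVV\GGG_r(0)$ and $\epsi \in (0,1)$, I would apply \textup{(GPV2)} for $\PPP\VVV\GGG_r(0)$ to obtain an $\epsi$-almost isometric linear homomorphism $\psi\dd H \to \PPP\VVV\GGG_r(0)$ with $\|\varphi(x) - \psi(x)\| \leqsl \epsi$ for $x \in K$. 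As $H$ has exponent $N$ and $\psi$ is a homomorphism, $N \cdot \psi(h) = \psi(N \cdot h) = 0$, so $\psi$ in fact takes values in $G$ and is the map required by \textup{(GPV2)} for $G$.

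The substantial part is the density of $\EeE(G)$ demanded by \textup{(GPV1)}, and here I would follow the amalgamation argument in the proof of \PRO{N,M}, with the ambient group $\PPP\VVV\GGG_r(N)$ there replaced by $\PPP\VVV\GGG_r(0)$ and the exponent $M$ by $N$. Given $a \in G$ and $\epsi > 0$, I would first use \textup{(GPV1)} for $\PPP\VVV\GGG_r(0)$ to choose a Lipschitz, bounded, finite rank anchor $b \in \EeE(\PPP\VVV\GGG_r(0))$ with $\|a - b\| \leqsl \epsi$; then, observing that $\lin\{a\} \in \Gg_r(N)$ is of class $\OOo_{00}$, I would invoke \THM{PV00} to enlarge $\lin\{a\}$ to some $G' \in \Gg_r(N)$ with $\EeE(G')$ dense, and pick $a' \in \EeE(G')$ with $\|a - a'\| \leqsl \epsi$ (note $N \cdot a' = 0$). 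Amalgamating $\lin\{a,b\}$ and $G'$ over their common subgroup $\lin\{a\}$ --- the same PV amalgamation taken for granted in \PRO{N,M} --- yields a subnormed PV group $D$ and isometric linear homomorphisms $\iota\dd \lin\{a,b\} \to D$ and $\iota'\dd G' \to D$ agreeing on $\lin\{a\}$. Putting $H = \lin\{\iota(b),\iota'(a')\}$ and $K = \lin\{\iota(b)\}$, so that $H \in \EeE_r(0)_{fin}$, and applying \textup{(GPV2)} for $\PPP\VVV\GGG_r(0)$ to the isometric embedding $\iota(b) \mapsto b$, I would obtain an $\epsi$-almost isometric linear $\varphi\dd H \to \PPP\VVV\GGG_r(0)$ sending $\iota(b)$ within $\epsi$ of $b$. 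Transporting $a'$, I would set $c = \varphi(\iota'(a'))$: then $N \cdot c = \varphi(\iota'(N \cdot a')) = 0$ puts $c$ in $G$, the continuity and linearity of $\varphi$ keep $c$ finite rank, Lipschitz and bounded, and a triangle estimate through $\iota'(a') \approx \iota'(a) = \iota(a) \approx \iota(b)$ gives $\|a - c\| \leqsl C\epsi$ for an absolute constant $C$. Since $\epsi$ is arbitrary, $\EeE(G)$ is dense in $G$.

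The step I expect to be the main obstacle is precisely this transport: guaranteeing simultaneously that the element carried out of the auxiliary group $G'$ lands in the $N$-torsion subgroup $G$ and stays in $\EeE(G)$, and that the \textup{(GPV2)} output is anchored near $a$ through the Lipschitz surrogate $b$ rather than through $a$ itself, which need not be Lipschitz. Everything else is bookkeeping: the amalgamation producing $D$ is the one already used (with details omitted) in \PRO{N,M}, and the verification of \textup{(GPV2)} for $G$ reduces cleanly to that for $\PPP\VVV\GGG_r(0)$ via the inclusion $\EeE_r(N)_{fin} \subset \EeE_r(0)_{fin}$.
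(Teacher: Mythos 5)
Your proposal is correct and follows essentially the same route as the paper: the corollary is obtained by applying \PRO{N,M} with the ambient group $\PPP\VVV\GGG_r(0)$ (recall $N\,|\,0$) and identifying $\PPP\VVV\GGG_r(N)$ with the $N$-torsion PV subgroup, the only substantive point being the density of $\EeE(G)$, which you verify by the same amalgamation argument through a Lipschitz anchor $b$ and an auxiliary enlargement $G'$ of $\lin\{a\}$ supplied by \THM{PV00}. Your additional checks of \textup{(GPV1)} completeness and of \textup{(GPV2)} via the inclusion $\EeE_r(N)_{fin} \subset \EeE_r(0)_{fin}$ are exactly the details the paper leaves implicit.
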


\begin{exm}{many}
Let $(H,+,*,\|\cdot\|)$ be a separable subnormed topological PV group. Then $(H,+,*,\|\cdot\|^{\alpha}) \in
\PpP\VvV\Gg_{\infty}(0)$ for every $\alpha \in (0,1)$. So, as in case of valued groups, every separable subnormed
topological PV group may be `approximated' by members of class $\PpP\VvV\Gg_{\infty}(0)$.
\end{exm}

\SECT{Concluding remarks}

In \PRO{no3} we have proved that the Urysohn universal space $\UUU$ admits no structure of an Abelian valued group
of exponent $3$. At the same time, as it was shown in \cite{pn1}, $\UUU$ admits a unique structure of a valued group
of exponent $2$. Taking into account these two remarks, we pose the following
\begin{quote}
\textbf{Conjecture.} \textit{For $r \in \{1,\infty\}$, $\GGG_r(2)$ is a unique (up to isometric group isomorphism) valued
Abelian group of finite exponent and of diameter $r$ which is Urysohn as a metric space.}
\end{quote}

If the above conjecture was false, it would be interesting to know for which exponents suitable groups exist and for which
exponents they are unique.\par
Cameron and Vershik \cite{c-v} have shown that the Urysohn universal metric space admits a structure of a monothetic valued
group. Taking this into account, the following may also be interesting.
\vspace{0.3cm}

\textbf{Question 1.} Are the topological groups $\GGG_1(0)$ and $\GGG_{\infty}(0)$ monothetic?
\vspace{0.3cm}

There is a fascinating phenomenom related to universal disposition property. Namely, the Urysohn universal space $\UUU$
is a unique (separable complete) metric space with universal disposition property for finite metric spaces;
$\GGG_{\infty}(0)$ is a unique (separable complete with dense set of finite rank elements) valued Abelian group with
universal disposition property for finite valued Abelian groups and $\PPP\VVV\GGG_{\infty}(0)$ (if only exists) is a unique
(separable complete with dense set of finite rank bounded Lipschitz elements) subnormed topological PV group with universal
disposition property for finitely generated bounded Lipschitz subnormed PV groups. It follows from \THM{Ur}
and \PRO{Gurarii} that all these three metric spaces are isometric and both the above valued groups are isometrically group
isomorphic. So, in fact we just enrich the structure of a single space. It seems to us important to answer the following
question with which we end the paper.
\vspace{0.3cm}

\textbf{Question 2.} Do there exist the PV groups $\PPP\VVV\GGG_1(0)$ and $\PPP\VVV\GGG_{\infty}(0)$~?

\end{document}